\newcommand{\cat}[1]{\ensuremath{\mathbf{#1}}}
\newcommand{\Cat}[1]{\ensuremath{\mathbf{#1}}}
\newcommand{\after}{\ensuremath{\circ}}
\newcommand{\relto}{\ensuremath{\xymatrix@1@C-2ex{\ar|(.4)@{|}[r] &}}}
\newcommand{\defined}{\ensuremath{\mathop{\downarrow}}}
\newcommand{\ext}{\ensuremath{^{\mathrm{ext}}}}
\newcommand{\func}{\ensuremath{^{\mathrm{func}}}}
\newcommand{\mfunc}{\ensuremath{^{\mathrm{mfunc}}}}
\newcommand{\name}[1]{\ensuremath{\ulcorner #1 \urcorner}}
\newcommand{\swapmap}{\ensuremath{\sigma}}
\newcommand{\Id}{\operatorname{Id}}
\newtheorem{definition}{Definition}[section]
\newtheorem{proposition}[definition]{Proposition}
\newtheorem{lemma}[definition]{Lemma}
\newtheorem{theorem}[definition]{Theorem}
\newtheorem{remark}[definition]{Remark}
\newtheorem{example}[definition]{Example}
\newtheorem{corollary}[definition]{Corollary}
\newtheorem{conjecture}[definition]{Conjecture}
\begin{document}
\title{  Relational symplectic groupoids and Poisson sigma models with boundary}
\author{Iv\'an Guillermo Contreras Palacios}
\maketitle
\section*{Acknowledgements}

First of all, I thank my PhD supervisor, Alberto Cattaneo; his guidance, support and patience during these years made this work possible and enjoyable, every discussion with him, concerning maths or any other topic,  gave me new insights and motivation to keep going. \\

I am  also grateful to those who gave me important remarks, comments and ideas; specially  Camilo Arias Abad and Pavel Mn\"ev helped me all this time with useful explanations on many aspects of mathematics and physics. I thank Iakovos Androulidakis, David Martinez, Florian Sch\"atz and Marco Zambon for their useful remarks and for taking the time to have a look at preliminary stages of this document.\\

I thank in particular Chris Heunen for introducing me to relative Frobenius and H*-algebras and for his crucial contribution for the results in Chapter \ref{Frobenius}.\\

I am also indebted to the commitee of referees: Henrique Bursztyn, Giovanni Felder, Marco Gualtieri and Alan Weinstein who provided me with quite useful and 
pertinent observations. \\

I should also mention my gratitude to all my friends at the Institute of Mathematics at University of Z\"urich  and ETH who made my stay in Switzerland quite pleasant. Only to mention a few, Anna-Lena, Urs, Davide, Michele, Emanuele(s), all the Thursdays soccer friends and the German course colleagues.\\

Last but not least I want to thank to those who have been close to me with their support and affection: my mother and my brother to whom I will be eternally indebted.\\

And above all, I thank my wife Eva, who supported me at every time with her love and delightful presence and to Sofia, my daughter,  the responsible of many hours of laughter  and happiness.

\newpage
$\,$

\newpage
\emph{To my ladies, Eva and Sofia}
\newpage

\section*{Synopsis}
Lie algebroids and their integration have been present as an important area of research in Lie theory, Poisson geometry and foliation theory, among others. The important developments made by Weinstein, Mackenzie-Xu, Cattaneo-Felder, Crainic-Fernandes,  Zhu-Tseng   among many others give a solid framework to study the links between the geometry and topology of Lie algebroids, Lie groupoids and several aspects of mathematical physics.\\

More precisely, the work of Cattaneo and Felder on Poisson sigma models (PSM) gives an explicit contruction of the symplectic groupoid associated to an integrable Poisson manifold in terms of the reduced phase space of the Poisson sigma model with certain boundary conditions. \\

The main purpose of this thesis is to make further developments on this construction, bringing on the same footing integrable and non integrable Poisson manifolds, through the construction of what we called \emph{relational symplectic groupoids}, which are, roughly speaking, a version before reduction of the phase space of PSM with boundary. \\

We describe such objects in categorical terms, as special objects in the \textit{extended symplectic category} for which the defining axioms are written in terms of special canonical relations (immersed Lagrangian submanifolds of weak symplectic manifolds). We prove natural properties of the construction, for instance, the connection between the symplectic structure on the relational groupoid and the Poisson structure on the base. We give finite dimensional examples of this construction and introduce the notion of equivalence of relational symplectic groupoids, which is helpful to connect our construction with the usual version of symplectic groupoids.\\

The main example comes from Poisson geometry, where the relational symplectic groupoid is infinite dimensional:  the cotangent bundle of the path space of a given Poisson manifold. In this case, we characterize the integrability conditions given by Crainic-Fernandes in terms of the relational symplectic groupoid and we also describe the different integrations of Poisson manifolds in this new perspective. \\

This motivational example suggested the study of the relational symplectic groupoid in more generality, which gives rise to the description of such objects (or their analogous) in different monoidal dagger categories. It turns out that this new way to integrate Poisson manifolds fits better with quantization and the study of PSM with branes.
\newpage

\section*{Zusammenfasung}

Lie  Algebroide und deren Integration kommen unter anderem in der Lie-Theorie, der Poisson-Geometrie und
der Folierung-Theorie vor.
Wichtige Erkenntnisse von Weinstein, Mackenzie-Xu, Cattaneo-Felder, Crainic-Fernandes,  Zhu-Tseng geben einen festen Rahmen um die Beziehung zwischen Geometrie und Topologie der Lie Algebroide, Lie Gruppoide und Aspekte der mathematischen Physik zu untersuchen.
Die Arbeit von Cattaneo und Felder betreffend Poisson
Sigma-Modelle (PSM) bietet eine Konstruktion der symplektischen Gruppoide assoziiert mit einer integrierbaren Poisson Mannigfaltigkeit. Diese Konstruktion bezieht sich den reduzierten Phasenraum des Poisson Sigma-Modells mit
bestimmten Randbedingungen.\\

Der Hauptzweck dieser Arbeit ist die Weiterentwicklung dieser
Konstruktion. Integrierbare und nicht integrierbare Poisson-
Mannigfaltigkeiten k\"onnen durch die Konstruktion von \emph{relationale symplektische Gruppoide} gleichgestellt werden.  Diese sind eine Version des Phasenraums PSM mit Rand vor der Reduktion.\\

Wir beschreiben ein solches Objekt in kategorischen Begriffen, als ein besonderes Objekt in der \emph{erweiterten symplektische Kategorie}, f\"ur
welches die definierenden Axiome in Bezug auf die besonderen kanonischen Relationen geschrieben werden.
Wir zeigen nat\"urliche Eigenschaften dieser Konstruktion, wie zum Beispiel, die
 Verbindung zwischen der symplektischen Struktur auf dem relationalen
 Gruppoid und der Poisson-Struktur auf der Basis. Wir geben
 endlichdimensionale Beispiele dieser Konstruktion und f\"uhren den Begriff
der
 \"Aquivalenz von relationalen symplektischen Gruppoiden ein, die hilfreich 
 ist um unsere Konstruktion mit der \"ublichen Version von symplektischen
 Gruppoiden zu verbinden.\\
 
 Das wichtigste Beispiel geht aus der Poisson-Geometrie hervor, wobei 
 das relationale symplektische Gruppoid unendlichdimensional ist: die
 Kotangentialb\"undel des Pfades innerhalb einer bestimmten Poisson
 Mannigfaltigkeit. In diesem Fall charakterisieren wir
 Integrierbarkeitsbedingungen von Crainic-Fernandes 
 anhand des relationalen symplektischen Groupoids und wir beschreiben die
 verschiedenen Integrationen von Poisson Mannigfaltigkeiten aus dieser
 neuen Perspektive.\\
 
 Dieses Beispiel motiviert zur Untersuchung des
 relationalen symplektischen Gruppoids in gr\"osserer Allgemeinheit, und gibt Anlass
 zur Beschreibung eines solchen Objektes in
 verschiedenen monoidalen Dolch Kategorien. 
Diese neue Art der Integration von Poisson Mannigfaltigkeiten ist bestens geeignet um PSM mit Branen sowie Quantifizierung zu studieren.
\newpage
\tableofcontents
\chapter{Introduction}
\section{Conventions and notations}
Most of the main definitions in the thesis contain an explanation on the conventions and notations. However, we include a list of general remarks on the conventions that can be useful for reading:
\begin{itemize}
\item The symbol $\nrightarrow$ denotes a relation and $\to$ denotes a map.
\item The special object $pt$ denotes the connected zero dimensional (symplectic) manifold.
\item The symbol $\langle, \rangle$ denotes the natural pairing between $TM$ and $T^*M$.
\item For a vector bundle $E\to M$,  $\Gamma^k(E)$ denotes the space of $k-$ differentiable sections of $E$. If $k$ does not appear, it is assumed that we consider smooth sections.
\item $\mathfrak X (M)$ denotes the space of smooth vector fields on a smooth manifold $M$.
\item $\triangle(M)$ symbolizes the diagonal subspace (or submanifold) of $M \times M$.
\item $\Id$ refers to the identity map.
\item $\dagger$ denotes in general the adjoint, either for objects or morphisms.
\end{itemize}
\newpage
\section{Motivation}

Symplectic groupoids have been studied in detail since their introduction independently by Coste, Dazord and Weinstein \cite{Cost}, Karas\"ev \cite {Karasev} and Zakrzewski \cite{Zakrzewski},  
and they appear naturally in Poisson and symplectic geometry, foliation theory and, as we will see later,  in some instances of the study of topological field theories (TFT).\\
 
In terms of category theory, a groupoid is a small category with invertible morphisms and, depending on the case, 
one can be interested in studying groupoids over sets, topological spaces, manifolds, etc. 
For example, in the symplectic world one would like to impose that the spaces and structure maps defining the groupoid are smooth maps and that there exists a symplectic structure on the space of morphisms compatible with the groupoid multiplication; 
more precisely, if $G \rightrightarrows M$ denotes a groupoid over $M$, there exists a symplectic form $\omega \in \Omega^2(G)$ such that the graph of the multiplication is a Lagrangian submanifold of $(G,\omega)\times(G,\omega)\times (G,-\omega)$. \\

Concerning the link between symplectic groupoids and field theories,  it was proven \cite{Cat} that the reduced phase space of a 2-dimensional TFT, called the Poisson sigma model (PSM), under certain boundary conditions and assuming that such reduced space is a smooth manifold, has the structure of a symplectic groupoid and it integrates the cotangent bundle of a given Poisson manifold $M$, regarded as a Lie algebroid over $M$.\\

The integration of Lie algebroids appears
as a generalization of what is called in the literature the \textbf{Lie third theorem} that can be stated as follows (see e.g. \cite{Duistermaat}):
\begin{itemize}
\item Let $\mathfrak g$ be a finite dimensional Lie algebra. Then there exists a unique simply connected Lie group $G$ with Lie algebra isomorphic 
to $\mathfrak g$.
\end{itemize}
A question that would expect the same type of answer can be stated in the case of Lie algebroids and Lie groupoids:
\begin{itemize}
\item Is there a Lie groupoid $G\rightrightarrows M$ such that its infinitesimal version corresponds to a given Lie algebroid $A$?
\end{itemize}
In particular,  the  case when the Lie algebroid is $T^*M$, where $M$ is  a Poisson manifold has particular interest since the construction of a smooth groupoid with a compatible symplectic structure gives, between other things, a symplectic realization of the given Poisson manifold. 
More precisely, we deal with the following question:
\begin{itemize}
\item Given a Poisson manifold $(M, \Pi)$, is there a symplectic groupoid $G\rightrightarrows M$ such that its space of objects is precisely $M$, the (target) source is an (anti) Poisson map and $M$ is a Lagrangian submanifold of $G$?
\end{itemize}

In general, a Poisson manifold can fail to be integrable and this means that such groupoid might be non smooth. In fact, the obstruction to integrability of Poisson manifolds has topological nature, depending on the second homotopy group of the leaves of the symplectic foliation of the Poisson manifold (see the work of Crainic and Fernandes \cite{Crai})
and it is given by the uniformly discreteness  of  what  are called \textit{monodromy groups} 
of the infinitesimal foliation whose space of leaves determines the groupoid $G$.\\ For example, the linear Poisson structure of $\mathfrak{su}(2)$ can be deformed in such way that the monodromy group of  certain leaves of the foliation is not uniform discrete, violating the afore mentioned integrability conditions.\\

For integrable cases, there exists a unique source fiber simply connected groupoid integrating $T^*M$ and one can wonder if such groupoid can be explicitly constructed. At this point, the link to topological field theories makes its appearance. In the work of Cattaneo and Felder \cite{Cat}, the symplectic groupoid for integrable Poisson manifolds 
is constructed as the space of reduced boundary fields of PSM, with the source space homeomorphic to a disc and with vanishing boundary conditions.\\


The properties of $G\rightrightarrows M$ are of special interest in Poisson geometry, since it is possible to equip $G$ with a symplectic structure $\omega$ compatible 
with the multiplication map in such a way that $G$ is a symplectic realization for $(M,\Pi)$ (i.e. the source map is Poisson, the target is anti-Poisson).\\


The main purpose of this thesis is the study of the geometric, algebraic and categorical features of the non reduced phase space of the  PSM, also in the case of more general boundary conditions. This leads in particular to a generalization of what is known on integration of Poisson manifolds, extending the construction of the symplectic groupoid associated to a Poisson manifold $M$ to the infinite dimensional setting,  through the construction of what we call \emph{relational symplectic groupoid}, that is, roughly speaking, a groupoid in the \emph{extended symplectic category} defined ``up to equivalence'', where the structure maps of the usual groupoids are replaced by special morphisms in the $\mbox{\textbf {Symp}}^{Ext}$, which contains as objects 
(possibly weak) symplectic manifolds and a special type of immersed canonical relations, as morphisms; namely,
the multiplication, inverse and unit of usual symplectic groupoids are replaced by infinite dimensional immersed Lagrangian submanifolds, obeying certain compatibility axioms.\\

It is important to remark here that the extended symplectic category is not formally speaking a category! \footnote{In the literature this is sometimes called a \emph{categroid}, (e.g. \cite{Kashaev})}, since the composition of smooth canonical relations is not smooth in general, and such composition is not continuous with respect to the topology for the space of morphisms.
However, for our construction, the defining morphisms will be well defined and composition of them make sense in $\mbox{\textbf {Symp}}^{Ext}$. \\

A particular type of relational symplectic groupoid , called \textit{regular},  admits the notion of smooth space of objects, denoted by $M$, that is constructed as a quotient using the defining data of $\mathcal G$, and also has source and target relations from $\mathcal G$ to $M$. We prove that $M$ can be equipped with a unique Poisson structure compatible with the symplectic structure on $M$ as a generalization of the unique Poisson structure that can be constructed in the space of objects of an usual symplectic groupoid \cite{Cost}.\\

It can be checked, for instance,  that usual symplectic groupoids are natural examples of finite dimensional relational symplectic groupoids. Fortunately,  this is not the only source for finite dimensional examples. Given a symplectic manifold $M$ and an immersed Lagrangian submanifold $L$, we can construct a relational symplectic groupoid out of it in a natural way. In addition, powers of symplectic groupoids appear also naturally as finite dimensional examples of this construction.\\

 However, the motivational example of relational symplectic groupoids is infinite dimensional and is associated, as we mentioned before, to the phase space of PSM with boundary.\\
In order to understand the reasons why this type of object requires special attention, some further motivation has to be done, since the challenges to fully understand the connection between integration of Poisson manifolds and PSM appear in an early stage.\\

In a more recent perspective (see \cite{AlbertoPavel1, AlbertoPavel2}), the study of the phase space before reduction of the PSM plays a crucial role, in the context of Lagrangian field theories with boundary. In general, the space of reduced boundary fields does not have to be a smooth finite dimensional manifold and in fact, the failure of being smooth is controlled by the integrability conditions of Poisson manifolds.\\

The construction of relational symplectic groupoids allows us  to deal with with non integrable Poisson structures, for which the reduced phase space is singular, on an equal footing as 
the integrable ones. 
This new approach differs from the stacky perspective of Zhu, Tseng (see \cite{Chenchang}) or more recently Ca\~nes \cite{Canes}; however, it is interesting to see the connections between these points of view. Relational symplectic groupoids seem to be better adapted to symplectic geometry and to quantization (see Section \ref{Perspectives}). \\

The important feature is that  relational symplectic groupoids can always be defined independently of the integrability of the Poisson manifold  and that it reduces, in the integrable cases, to the usual version of symplectic groupoids.\\ 
With the previous construction, there is a compatibility between the Poisson structure in the target and the symplectic structure in the relational symplectic groupoid
(see Theorem \ref{Theo1Poisson1}) and the integrability conditions translate into embeddability conditions for certain canonical relations in the new construction (see Theorem \ref{Embeddability}).\\


In particular, the axioms of the relational symplectic groupoid allow to extend the construction to special categories different from $\mbox{\textbf{Symp}}^{Ext}$, namely, the category $\mbox{\textbf{Hilb}}^{Ext}$ of Hilbert spaces as objects and linear subspaces as morphisms. In this setting , the quantization of relational symplectic groupoids would correspond \footnote{Assuming that the quantization is functorial.} to a functor from monoids in $\mbox{\textbf{Symp}}^{Ext}$ to monoids in $\mbox{\textbf{Hilb}}^{Ext}$.\\

The existence of this construction for general Poisson manifolds is guaranteed by proving that the morphisms defining the relational symplectic groupoid are \emph{immersed} Lagrangian submanifolds of certain Banach manifolds, that are in this setting particular types of spaces of maps, where the use of adapted versions of Frobenius and the inverse function theorem is allowed \cite{Lang, Lang1}. This restricts us to dealing with the regularity type of the space of boundary fields as well as the equations defining the dynamics of PSM. However, the generalization in the case where the space of boundary fields is of type $\mathcal{C}^{\infty}$ is discussed in Chapter \ref{PSMmain} and this leads us to the extension of relational symplectic groupoids in the framework of Frech\'et manifolds, where some of the main features of the construction are expected to hold. The full development of the construction in the Fr\'echet category is still work in progress.\\

Some extensions of this construction in different dagger categories lead the work developped in \cite{FrobeniusChris}, where the connection between groupoids and Frobenius algebras is made precise. Namely, there is a way to understand groupoids in the category \textbf{Set} as what we called \emph{Relative Frobenius algebras}, a special type of dagger Frobenius 
algebra \cite{Zakrzewski} in the category \textbf{Rel}, where the objects are sets and the morphisms are relations.\\
In addition, there exists an adjunction between a special type of semigroupoids 
(a more relaxed version of groupoids where the identities or inverses do not necessarily exist) and H*-algebras, a structure similar to Frobenius algebras 
but without unitality conditions and a more relaxed Frobenius relation.\\

This suggested the study of the defining axioms of the relational symplectic groupoid in different categories, namely, dagger monoidal categories, where the notion of adjoint morphism makes sense. In this direction we define weak monoids, weak *-monoids and cyclic weak *-monoids, which are more relaxed versions of relational symplectic groupoids, but they appear in some other scenarios, for instance, deformation quantization.\\

Following this construction there are many open questions and developments to be made. For instance, the non reduced version of the phase space 
for PSM with branes deserves special attention (see, e.g. \cite{Coiso,Branes}) in an effort to connect PSM with special boundary conditions and what is known in the literature as dual pairs 
\cite{Dual}. The presence of handles in relational symplectic groupoids would conjecturally give more geometric information on PSM with genera and the non regular version of the 
construction could give the correct framework to work with Poisson sigma models with singular target space. The formal link between the relational symplectic groupoids and symplectic microgeometry \cite{Benoit, Benoit2,Benoit3} is still work in progress.



\newpage
\section{Summary of the main results}

This section summarizes the main results of this thesis. 
After defining in general the notion of relational symplectic groupoids, we are able to relate this construction in the regular case  with the usual notion of topological and symplectic groupoids, in particular, the symplectic structure in the latter is given via symplectic reduction. More precisely, we define the relational symplectic groupoid $\mathcal G$ in terms of certain immersed Lagrangian submanifolds $L_1, L_2, L_3$ of $\mathcal G, \, \mathcal G \times \overline {\mathcal G}$  and $\mathcal G \times \mathcal G \times \overline{\mathcal G}$ respectively, satisfying certain compatibility relations. In particular, the immersed canonical relation $L_2$ induces 
an equivalence relation on a coisotropic submanifold $C$ of $\mathcal G$. (see Equation \ref{C} on page \pageref{C} ) 
We prove the following

\begin{enumerate}

\item \textbf{Theorem \ref{TheoSym}}
\emph{ Let $(\mathcal G,L,I)$ be a regular relational symplectic groupoid. 
Then $G:= C/L_2\rightrightarrows M$ is a topological groupoid over $M$. Moreover, if $G$ is smooth manifold, then $G\rightrightarrows M$ is a symplectic groupoid over $M:= L_1/L_2$.
}\\

In the regular case, we also prove the connection between the symplectic structure of the relational symplectic groupoid structure and the Poisson structure on the space of objects. Namely, 
it corresponds with the analogue existence/uniqueness of a Poisson structure on the space of objects of usual symplectic groupoids, where the (target) source of the groupoid is an (anti) Poisson map. The compatibility in our  construction  is described in terms of Dirac maps between presymplectic and Poisson manifolds.\\
\item \textbf{Theorem \ref{Theo1Poisson1}}
\emph{ Let $(\mathcal G, L, I)$ be a regular relational symplectic groupoid, with $M= L_1/ L_2$.  Then, assuming that the $s$-fibers are connected, there exists a unique Poisson structure $\Pi$ on $M$ such that the map $s\colon C\to M$ (or $t$) is a forward-Dirac map.
}\\

Conjecturally, the connectedness condition of the $s$-fiber can be dropped (see Conjecture \ref{Theo1PoissonC}).


The following theorem leads to the discussion of relational symplectic groupoids coming from the PSM, namely, relational symplectic groupoids correspond to a generalization of the construction presented by Cattaneo and Felder in \cite{Cat}, as now we deal with the case of singular reduced phase spaces. In fact,  this theorem holds also for singular Poisson manifolds, for instance, in Example \ref{torus}. Studying the phase space of the Poisson sigma model before reduction we prove the following
 \item \textbf{Theorem \ref{Relational}}. 
 \emph{
Given an arbitrary Poisson manifold $(M, \Pi)$, there exists a relational symplectic groupoid $(\mathcal G, L,I)$ integrating it.}\\

By integration in this theorem we mean the following 
\begin{enumerate}
\item The existence of an infinite dimensional relational symplectic groupoid that is regular, such that $L_1/ L_2=M$, regarded as a Poisson manifold.
\item  In the case where the Lie algebroid $T^*M$ is integrable in the usual sense, the reduction $\underline C$ of $(\mathcal G, L,I)$ corresponds to the usual symplectic groupoid $G \rightrightarrows M$.
\end{enumerate} 
This infinite dimensional integration for Poisson manifolds extends the usual integration through the phase space of PSM, since it can be defined independently whether the reduced phase space of the PSM is smooth or not.


More precisely, the following theorem states the connection between the symplectic groupoid for an integrable Poisson manifold and the associated relational symplectic groupoid.

  \item \textbf{Theorem \ref{reduction}} \emph{Let $(M,\Pi)$ be an integrable Poisson manifold. Let $\mathcal{G}$ be the relational symplectic groupoid associated to the cotangent bundle of the path space $T^*PM$ and let $G= \underline{C_{\Pi}}$ be the symplectic Lie groupoid associated to the characteristic foliation on $C_{\Pi}$. Then $\mathcal{G}$ and $G$ are equivalent as relational groupoids.}

 
 The following theorem states the connection between the construction of the relational symplectic groupoid for a Poisson manifold $(M, \Pi)$ and the integrability obstructions of the Lie algebroid $T^*M$. More concretely, if $(M, \Pi)$ is integrable, the uniform discreteness of the monodromy groups described in \cite{Crai} implies that there exists a tubular neighborhood of the zero section of $T^*PM$, denoted by $N(\Gamma_0(T^*PM))$, such that, for 
$$\overline{L_1}:= \bigcup_{x_0 \in M} T^*_{(\overline{X,\eta})}PM \cap L_1,$$
where $(\overline{X,\eta})= \{(X,\eta) \vert X \equiv X_0, \eta \in \ker \Pi^{\#}\}$, the following theorem holds
 
  \item \textbf{Theorem \ref{Embeddability}}
 \emph{If $T^*M$ is integrable, then $\overline{L_1}\cap N(\Gamma_0(T^*PM))$ is an embeded submanifold of $T^*(PM)$}\\

In this point of view, $L_1$ is regarded as the space of $T^*M-$ 
paths that are $T^*M-$ homotopy equivalent to the trivial $T^*M-$ paths.


 In addition, the equivalence of relational symplectic groupoids allows to compare different integrations for Poisson manifolds. In this new perspective, different groupoids integrating a given Poisson manifold are equivalent.

 \item \textbf{Proposition \ref{Integrations}} 
 \emph{Let $G\rightrightarrows M$ and $G^{'}\rightrightarrows M$ be two s-fiber connected symplectic groupoid integrating the same Poisson manifold $(M, \Pi)$. Then $(G,L,I)$ and $(G^{'}, L^{'}, I^{'})$ are equivalent as relational symplectic groupoids.}
 The proof of this theorem is based on the fact that any $s$- fiber connected integration of a given Lie algebroid $A$ comes from a quotient with respect to the action of a discrete group on the s-fiber simply connected Lie groupoid that integrates $A$.\\

Trying to understand groupoid structures in particular types of categories, it is possible to link groupoids and what are called 
\emph{relative Frobenius algebras}. In \cite{FrobeniusChris} the interaction between groupoids in the category \textbf{Set} and relative 
Frobenius algebras in the category \textbf{Rel} is studied, as well as the version ``before reduction'', namely, an adjunction between what is called \emph{locally cancellative regular semigroupoids} in \textbf{Set} 
and relative H*-algebras in \textbf{Rel}.
 \item \textbf{Theorem \ref{Chris1}}
 \emph{There exists an equivalence of categories between the category $\mbox{\textbf{Frob(Rel)}}^{Rel}$ of 
relative Frobenius algebras with suitable morphisms and $\mbox{\textbf{Gpd}}^{Rel}$ of groupoids over sets whose morphisms are subgroupoids of the cartesian product}.

\item \textbf{Theorem \ref{Chris2}}
\emph{There exists an adjunction of categories between the category \newline $\mbox{\textbf{(H*-alg)(Rel)}}^{Rel}$of relative
 $H$*- algebras with suitable morphims and the category $\mbox{\textbf{LRSgpd}}$ of locally cancelative semigroupoids. }
\\


\end{enumerate}
\newpage
\section {Structure of the thesis}
Each one of the chapters of the thesis has a brief introduction at the beginning and they are organized as follows:\\

Chapter 2 is an introduction to symplectic linear algebra, symplectic groupoids and to the problem of integration of Lie algebroids specialized to the case of Poisson manifolds. 
The notation and main concepts on symplectic  and Poisson geometry used throughout the 
thesis will be introduced here. \\

Chapter 3 deals with the main features of PSM with boundary, in particular, their space of 
boundary fields. Here, the main construction, the \emph{relational symplectic groupoid} is introduced; its axioms are stated and we 
give the main examples as well. \\

Chapter 4 is devoted to discuss algebraic and categorical aspects of the construction, namely, the definition of \textit{monoid} and 
\textit{groupoid} like structures in more general categories and examples of them. In the same direction we describe the connection between groupoids and Frobenius 
algebras, encoded in Theorems 
\ref{Chris1} and \ref{Chris2}.\\ 

At the end of the Chapter we expose possible generalizations and the perspectives of the construction of relational 
symplectic groupoids.
\newpage

\chapter{Symplectic groupoids}
The idea of the first part of this chapter is to discuss general facts and results on symplectic linear algebra which will be used throughout the thesis.
The first section intends to unify the treatment of both finite and infinite dimensional symplectic vector spaces, 
therefore, in the sequel, the vector spaces are possibly infinite dimensional. We discuss briefly about Poisson algebras and Poisson manifolds and 
then we state the main facts about integration of Poisson manifolds.
\section{Symplectic linear algebra } 
\subsection{Definitions}
\begin{definition}
\emph{Let $V$ be vector space over $\mathbb R$. A skew symmetric form $\omega \in \Lambda^2(V^*)$ is called \textit{non degenerate} if 
the induced linear map
\begin{eqnarray*}
\omega^{\#}\colon V &\to& V^*\\
\omega^{\#}(v)(w)&:=& \omega(v,w)
\end{eqnarray*}
is an isomorphism}
\end{definition}
\begin{definition}\emph{$\omega$ is called \emph{weakly non degenerate} if $\omega^{\#}$ is injective.
}
 \end{definition}

\begin{remark} \emph{If $V$ is finite dimensional, a \emph{weakly non degenerate} form is also non degenerate.}
\end{remark}

\begin{definition}
\emph{A bilinear symmetric form  $\omega$ on $V$ is called \textit{weak symplectic} if is skew symmetric and weakly non degenerate.}
\end{definition}
\begin{definition}\label{weak}\emph{ A vector space $V$ equipped with a weak symplectic form $\omega$ is called a \emph{weak symplectic vector space}.}
\end{definition}
\begin{definition}\label{symplectomorphism}\emph{ A bijective linear map $f: (V,\omega_V)\to (W, \omega_W)$ is called a \emph{symplectomorphism} if $f^*\omega_W=\omega_V$.
}
\end{definition}

\begin{definition}
\emph{Let $V$ be a symplectic space and $W$ a linear subspace of $V$. We define its \textit{symplectic orthogonal space}, by
\[W^{\perp}:= \{v \in V \vert\,  \omega(v,w)=0, \forall w \in W \}.\]} 

\end{definition}
Some properties of the symplectic orthogonal spaces that will be used from now on, are the following
\begin{proposition} \label{properties}\emph{ 
Let $V$ be a weak symplectic space and $W$, $Z$ subspaces of $V$. Then:
\begin{enumerate}
 \item $W \subset Z \Longrightarrow Z^{\perp} \subset W^{\perp}$.
\item $(W+Z)^{\perp} = W^{\perp} \cap Z^{\perp}$.
\item $W^{\perp}+ Z^{\perp} \subset (W \cap Z)^{\perp}$.
\item $W \subset W^{\perp \perp}$.
\item $W^{\perp}= W^{\perp \perp \perp}$.
\end{enumerate}
}
\end{proposition}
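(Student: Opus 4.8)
The plan is to verify the five statements directly from the definition of $W^{\perp}$, using only bilinearity and skew-symmetry of $\omega$; weak non-degeneracy plays no role here, so the argument is purely formal and the chief ``obstacle'' is merely careful bookkeeping with inclusions. I would establish items (1), (2) and (4) first as the basic facts, and then deduce (3) and (5) from them with no further computation.

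For (1): assuming $W \subset Z$, take $v \in Z^{\perp}$; then $\omega(v,z)=0$ for every $z\in Z$, in particular for every $z\in W$, so $v\in W^{\perp}$. For (2): the inclusion $(W+Z)^{\perp}\subset W^{\perp}\cap Z^{\perp}$ follows from (1) applied to $W\subset W+Z$ and $Z\subset W+Z$; conversely, if $v$ annihilates both $W$ and $Z$ then $\omega(v,w+z)=\omega(v,w)+\omega(v,z)=0$ by bilinearity, hence $v\in(W+Z)^{\perp}$. For (4): if $w\in W$ then for every $v\in W^{\perp}$ we have $\omega(w,v)=-\omega(v,w)=0$, so $w\in (W^{\perp})^{\perp}=W^{\perp\perp}$.

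Items (3) and (5) then drop out formally. For (3): since $W\cap Z\subset W$ and $W\cap Z\subset Z$, item (1) gives $W^{\perp}\subset(W\cap Z)^{\perp}$ and $Z^{\perp}\subset(W\cap Z)^{\perp}$; as $(W\cap Z)^{\perp}$ is a linear subspace it is closed under addition, so $W^{\perp}+Z^{\perp}\subset(W\cap Z)^{\perp}$. For (5): applying (4) to the subspace $W^{\perp}$ yields $W^{\perp}\subset(W^{\perp})^{\perp\perp}=W^{\perp\perp\perp}$; on the other hand, applying (1) to the inclusion $W\subset W^{\perp\perp}$ provided by (4) gives $(W^{\perp\perp})^{\perp}\subset W^{\perp}$, that is, $W^{\perp\perp\perp}\subset W^{\perp}$. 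The two inclusions give the desired equality.

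I expect no genuine difficulty: the only point worth flagging in the write-up is that none of these identities requires $\omega$ to be (weakly) symplectic — skew-symmetric bilinearity alone suffices — so I would be careful not to invoke non-degeneracy anywhere in the argument.
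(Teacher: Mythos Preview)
Your proof is correct and complete; each of the five items is verified cleanly from the definition, and your observation that weak non-degeneracy is not used anywhere is accurate. The paper in fact states this proposition without proof, treating these properties as standard elementary facts, so there is no alternative argument to compare against --- your write-up simply fills in what the paper omits.
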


We can define $\omega^{\# W}\colon V \to W^{*}$ as the restriction of $\omega^{\#}(V)$ to $W$, namely,
\[\omega^{\#}(v)(w):= \omega (v,w), \forall v \in V, w \in W.\]
In this way,
\[W^{\perp}= \ker \omega^{\# W}\] and therefore, we have the induced map $\omega^{\# W}\colon V / W^{\perp} \hookrightarrow W^*$.
\begin{remark}
\emph{In the finite dimensional setting
\[V / W^{\perp} \simeq W^*\]
and this implies that $\dim W^{\perp}= \dim V -\dim W $.}
\end{remark}
Denoting by $\omega \vert_ W$ the restriction of $\omega$ to $W$ we get that this induces a bilinear form on $W$ and 
\[(\omega\vert_W)^{\#}= (\omega^{\# W})\vert_W.\] Therefore
\[\ker(\omega \vert_W)^{\#}= W \cap W^{\perp}.\]
Now we define special subspaces of special interest for our purposes.
\begin{definition}
\emph{ A subspace $W$ of $V$ is called
\begin{enumerate}
 \item  \textit{symplectic} if $\omega\vert_W$ is a symplectic form or equivalently, 
$W \cap W^{\perp}= \{0\}.$
 \item \textit{isotropic} if $W \subset W^{\perp}$.
\item \textit{cosiotropic} if $W^{\perp} \subset W$.
\item \textit{Lagrangian} if $W^{\perp}=W$. 
\end{enumerate}}

\end{definition}
\subsection{Symplectic reduction}\label{properties}
Let $W$ be a linear subspace of a symplectic vector space $V$. We define
\[ \underline{W}:= W / W \cap W^{\perp},\]
called the \textit{reduction} of $V$. The form $\omega$ induces a symplectic form $\underline{\omega}$ on $\underline{W}$ given by
\[\underline{\omega}([w_1],[w_2]):= \omega(w_1, w_2).\] It can be easily proven that $\underline{\omega}$ is an antisymmetric, non degenerate form.
The following properties for symplectic reductions and the special subspaces of symplectic spaces hold
\begin{enumerate}
 \item In the finite dimensional case, if $W$ is a Lagrangian subspace, then $\dim W= \frac 1 2 \dim V$.
 \item If $W$ is coisotropic, 
\[\underline {W}= W / W^{\perp}.\]
\item $W$ is isotropic $\Longleftrightarrow \underline{W}= \{0\}.$ 
\item If $ Z \subset L$, where $L$ is Lagrangian, then $Z$ is isotropic.
\item If $ L \subset Z$, where $L$ is Lagrangian, then $Z$ is coisotropic.
\item If $L \subset Z$, where $L$ is Lagrangian and $Z$ is isotropic, then $Z=L$. That is why a Lagrangian space is also called 
\textit{maximally isotropic}.
\end{enumerate}
The following are some propositions that  will be useful later on.
\begin{proposition}\label{Isotropic}
\emph{Let $W$ and $L$ be a coisotropic and a Lagrangian subspace of $V$, respectively. Let $P_{W}\colon W \to \underline{W}$ be the quotient map and let 
\[L_W:= P_W (L\cap W)= L \cap W \diagup L\cap W^{\perp},\]
then $L_W$ is isotropic in $\underline{W}$.} 
\end{proposition}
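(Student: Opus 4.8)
The plan is to show $L_W := P_W(L \cap W)$ is isotropic in $\underline W$ by a direct computation using the definition of the reduced form $\underline\omega$. By definition $\underline\omega([w_1],[w_2]) = \omega(w_1,w_2)$ for $w_1,w_2 \in W$, so to prove $L_W \subset L_W^{\perp_{\underline W}}$ it suffices to take arbitrary $a, b \in L \cap W$ and verify $\omega(a,b) = 0$. But $a, b \in L$ and $L$ is Lagrangian, hence isotropic (i.e.\ $L \subset L^\perp$), so $\omega(a,b) = 0$ immediately. Thus $\underline\omega([a],[b]) = 0$ for all classes $[a],[b] \in L_W$, which is exactly the statement that $L_W$ is isotropic.

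First I would note the well-definedness of $L_W$ as presented: the claimed identity $P_W(L\cap W) = (L\cap W)/(L \cap W^\perp)$ needs $L \cap W^\perp = (L\cap W) \cap (W \cap W^\perp)$, i.e.\ that the kernel of $P_W$ restricted to $L \cap W$ is $L \cap W^\perp$. Since $W \cap W^\perp \subset W^\perp$ we have $(L\cap W)\cap(W\cap W^\perp) = L \cap W \cap W^\perp$; and because $W^\perp \subset W$ is false in general (unless $W$ is coisotropic — which it is here), actually $W^\perp \subset W$ gives $W \cap W^\perp = W^\perp$, so $L \cap W \cap W^\perp = L \cap W^\perp$. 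This uses the coisotropy of $W$ in an essential way and should be recorded explicitly.

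After that, the isotropy computation is the short argument above. I would phrase it as: given $[a],[b]\in L_W$ with representatives $a,b \in L\cap W$, compute $\underline\omega([a],[b]) = \omega(a,b) = 0$ since $a,b \in L = L^\perp \supset L$, so $L$ is isotropic; therefore $L_W \subset L_W^{\perp}$ in $\underline W$.

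The main (and really only) obstacle is bookkeeping about the quotient: making sure the two descriptions of $L_W$ agree and that $\underline\omega$ is genuinely well-defined on $\underline W = W/(W\cap W^\perp)$, both of which lean on $W$ being coisotropic so that $W \cap W^\perp = W^\perp$. Once the quotient is set up correctly, the Lagrangian (hence isotropic) property of $L$ does all the work, and no finite-dimensionality is needed.
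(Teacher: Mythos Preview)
Your proof is correct and considerably more direct than the paper's. You use the definition of the reduced form $\underline\omega$ head-on: any two classes in $L_W$ have representatives in $L\cap W\subset L$, and since $L$ is isotropic, $\omega$ vanishes on them, so $\underline\omega$ vanishes on their classes. That is the whole argument, and your remark that coisotropy of $W$ (giving $W\cap W^\perp=W^\perp$) is what makes the two descriptions of $L_W$ match is a good point to record.

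The paper instead works at the level of orthogonal complements: it computes $P_W^{-1}(L_W^{\perp})$ as $(L\cap W+W^\perp)^\perp\cap W=(L\cap W)^\perp\cap W^{\perp\perp}\cap W$, uses $W\subset W^{\perp\perp}$ and coisotropy to simplify, and then projects back to obtain $L_W^{\perp}\supseteq P_W(L^\perp\cap W)\supseteq P_W(L\cap W)=L_W$. This route is heavier machinery for the same conclusion; its only advantage is that it exercises the orthogonal-complement identities of Proposition~\ref{properties} and yields intermediate information about $P_W^{-1}(L_W^\perp)$. For the bare statement that $L_W$ is isotropic, your argument is cleaner and, as you note, uses only that $L$ is isotropic and $W$ is coisotropic, with no dimension hypotheses.
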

\begin{proof}
We get that
\begin{eqnarray*}
P_{W}^{-1}(L_W^{\perp})&=& (P_W^{-1}(L_W))^{\perp}\cap W\\
                       &=&(L\cap W + W^{\perp})^{\perp}\cap W \\
		       &=& (L\cap W)^{\perp} \cap W^{\perp \perp}\cap W\\
		       &\stackrel{W \subset W^{\perp \perp}}{=}& (L\cap W)^{\perp} \cap W\\
		       &\supseteq& (L^{\perp})\\
		       &\stackrel{Coisotropicity \, of \, W}{\supseteq}& L^{\perp} \cap W + W^{\perp}.
\end{eqnarray*}

Applying $P_W$ in both sides we get 
\begin{eqnarray*}
L_W^{\perp} &\supseteq& P_W(L^{\perp}\cap W + W^{\perp})\\
            &=& P_{W}(L^{\perp} \cap W)\\
	    &\stackrel{Isotropicity \, of\, L}{\supseteq}& P_W(L\cap W)\\
	    &=& L_W,
\end{eqnarray*}
 as we wanted. 
\end{proof}
\begin{corollary}\label{Lag projection}
\emph{In the finite dimensional case, it follows from dimensional reasons that if $L$ is Lagrangian, then $L_W$ is Lagrangian.} 
\end{corollary}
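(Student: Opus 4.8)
The plan is to derive the statement purely from Proposition~\ref{Isotropic} together with a dimension count, so almost all of the real content is already in hand. Recall that for a coisotropic subspace $W$ of a finite dimensional symplectic space $V$ one has $\dim\underline{W}=\dim W-\dim W^{\perp}=2\dim W-\dim V$, and that a subspace of $\underline{W}$ is Lagrangian precisely when it is isotropic and has dimension $\tfrac12\dim\underline{W}$. Since Proposition~\ref{Isotropic} already gives that $L_W$ is isotropic in $\underline{W}$, the only thing left to check is the dimension equality $\dim L_W=\tfrac12\dim\underline{W}=\dim W-\tfrac12\dim V$.

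First I would compute $\dim L_W$ directly from its description $L_W=(L\cap W)\big/(L\cap W^{\perp})$, so that $\dim L_W=\dim(L\cap W)-\dim(L\cap W^{\perp})$. For the first term I would use the standard identity $\dim(L\cap W)=\dim L+\dim W-\dim(L+W)$, and then rewrite $\dim(L+W)$ via symplectic orthogonality: since $L$ is Lagrangian, $L=L^{\perp}$, so $(L+W)^{\perp}=L^{\perp}\cap W^{\perp}=L\cap W^{\perp}$ by Proposition~\ref{properties}(2), hence $\dim(L+W)=\dim V-\dim(L\cap W^{\perp})$. Substituting and using $\dim L=\tfrac12\dim V$ gives $\dim(L\cap W)=\tfrac12\dim V+\dim W-\dim V+\dim(L\cap W^{\perp})=\dim W-\tfrac12\dim V+\dim(L\cap W^{\perp})$. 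Therefore $\dim L_W=\dim(L\cap W)-\dim(L\cap W^{\perp})=\dim W-\tfrac12\dim V$, which is exactly $\tfrac12\dim\underline{W}$.

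Combining this with the isotropy statement of Proposition~\ref{Isotropic}, $L_W$ is an isotropic subspace of $\underline{W}$ of half the dimension of $\underline{W}$, hence Lagrangian; this is the desired conclusion. I would also make a small remark that the argument only uses $L$ Lagrangian and $W$ coisotropic (coisotropy of $W$ being needed already for Proposition~\ref{Isotropic} and implicitly for the formula $\dim\underline{W}=2\dim W-\dim V$, which in fact holds for any subspace but is cleanest to state here), so no extra hypotheses are required.

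I do not expect any genuine obstacle here: the statement is essentially a bookkeeping corollary, and the phrase ``it follows from dimensional reasons'' in the corollary signals that the author intends exactly this kind of short dimension count. The only place one must be slightly careful is to make sure the two descriptions of $L_W$ — as $P_W(L\cap W)$ and as $(L\cap W)/(L\cap W^{\perp})$ — are identified correctly, i.e.\ that $\ker(P_W|_{L\cap W})=L\cap W\cap W^{\perp}=L\cap W^{\perp}$, which holds because $W^{\perp}\subset W$ by coisotropy; this identification is already implicit in the statement of Proposition~\ref{Isotropic}, so it can be invoked rather than reproved.
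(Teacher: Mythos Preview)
Your argument is correct and is exactly the dimension count the paper has in mind: the paper gives no proof beyond the phrase ``from dimensional reasons,'' and your computation $\dim L_W=\dim W-\tfrac12\dim V=\tfrac12\dim\underline{W}$ together with Proposition~\ref{Isotropic} is precisely that. The only minor comment is that your reference to ``Proposition~\ref{properties}(2)'' should point to the list of orthogonal-complement identities stated just before the reduction subsection, but the identity $(L+W)^{\perp}=L^{\perp}\cap W^{\perp}$ you use is indeed recorded there.
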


\begin{proposition}\label{Coisotropic}
\emph{Let $(V,\omega)$ be a symplectic space. Let $C$ be a coisotropic subspace of V. Let $L$ be a subspace such that 
\[C^{\perp} \subset L\subset C \subset V. \]
Assume that $\underline{L}:= L / C^{\perp}$ is Lagrangian in $\underline{C}:= C / C^{\perp}.$ Then, $L$ is a Lagrangian subspace of $V$.}
\end{proposition}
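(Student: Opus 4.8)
The plan is to show $L^{\perp} = L$ directly, using the hypotheses $C^{\perp}\subset L\subset C$ and the Lagrangian condition for $\underline L = L/C^\perp$ in $\underline C = C/C^\perp$. The strategy is the same flavor as the proof of Proposition \ref{Isotropic}: translate the statement ``$\underline L$ is Lagrangian'' through the quotient map $P_C\colon C\to\underline C$ into an identity between $L^{\perp}$ and $L$ inside $V$, by carefully applying the properties in Proposition \ref{properties} (antitonicity of $\perp$, the fact $(W+Z)^\perp = W^\perp\cap Z^\perp$, $W\subset W^{\perp\perp}$, and coisotropy $C^{\perp}\subset C$).

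First I would establish the inclusion $L\subset L^{\perp}$, i.e. that $L$ is isotropic in $V$. Since $P_C^{-1}(\underline L) = L$ (because $C^\perp\subset L$), and $\underline L$ isotropic means $\underline L\subset\underline L^{\perp}$ in $\underline C$, I would compute $P_C^{-1}(\underline L^{\perp})$. Using the relation (valid for a coisotropic $C$, exactly as used in Proposition \ref{Isotropic} with $W=C$) that $P_C^{-1}(\underline L^{\perp}) = (L + C^{\perp})^{\perp}\cap C = (L^{\perp}\cap C^{\perp\perp})\cap C = L^{\perp}\cap C^{\perp\perp}\cap C$. Now $C\subset C^{\perp\perp}$ by Proposition \ref{properties}(4), so this is $L^{\perp}\cap C$. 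Since $L\subset C$ and $\underline L\subset\underline L^\perp$ pulls back to $L\subset L^\perp\cap C$, I get $L\subset L^{\perp}$.

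For the reverse inclusion $L^{\perp}\subset L$, I would use that $\underline L$ is \emph{co}isotropic in $\underline C$, i.e. $\underline L^{\perp}\subset\underline L$. Pulling back along $P_C$: on one hand $P_C^{-1}(\underline L^{\perp}) = L^{\perp}\cap C$ from the computation above; on the other $P_C^{-1}(\underline L) = L$. Hence $L^{\perp}\cap C\subset L$. It therefore remains only to check $L^{\perp}\subset C$, i.e.\ $L^{\perp}\cap C = L^{\perp}$. This follows from $C^{\perp}\subset L$: applying Proposition \ref{properties}(1) gives $L^{\perp}\subset C^{\perp\perp}$, and combined with $C\subset C^{\perp\perp}$ one still needs a little care — the cleanest route is $L^\perp \subset C^{\perp\perp}$ together with coisotropy of $C$ (which gives $C^{\perp\perp}\subset C^{\perp\perp\perp\perp}$, not immediately helpful), so instead I would argue: from $C^\perp\subset L\subset C$ we get $C^\perp\subset L\subset C\subset C^{\perp\perp}\subset C^{\perp\perp\perp\perp}=\dots$; more efficiently, $C$ coisotropic means $\underline C$ is genuinely symplectic and the correspondence $L\mapsto \underline L$ between subspaces containing $C^\perp$ and subspaces of $\underline C$ preserves $\perp$ in the sense that $\underline{L^\perp\cap C}=\underline L^\perp$ and $L^\perp = (L^\perp\cap C) + C^\perp$ when $C^\perp\subset L$ (since $C^\perp\subset L\Rightarrow L^\perp\subset C^{\perp\perp}=C$ by coisotropy). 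That last implication, $L^\perp\subset C$, is the one clean fact that makes everything collapse: $C^\perp\subset L$ gives $L^\perp\subset (C^\perp)^\perp = C^{\perp\perp}$, and $C$ coisotropic gives $C^{\perp\perp}= C$ (indeed $C^\perp\subset C\Rightarrow C^{\perp\perp}\supset C^\perp{}^\perp$... ) — so the main obstacle is pinning down $C^{\perp\perp}=C$ for coisotropic $C$ in the possibly-infinite-dimensional weak symplectic setting, where equality of double orthogonals is not automatic. I expect that in this context $C$ is assumed closed (or the relevant subspaces are), so that $C^{\perp\perp}=C$ holds; granting that, the two inclusions above give $L^{\perp}=L$, and $L$ is Lagrangian in $V$.

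In summary, the steps in order are: (1) record $P_C^{-1}(\underline L)=L$ and the formula $P_C^{-1}(\underline L^{\perp}) = L^{\perp}\cap C^{\perp\perp}\cap C$; (2) use $C\subset C^{\perp\perp}$ to simplify this to $L^{\perp}\cap C$; (3) from $\underline L\subset\underline L^{\perp}$ deduce $L\subset L^{\perp}$; (4) from $\underline L^{\perp}\subset\underline L$ deduce $L^{\perp}\cap C\subset L$; (5) use $C^{\perp}\subset L$ together with coisotropy ($C^{\perp\perp}=C$) to get $L^{\perp}\subset C$, hence $L^{\perp}\subset L$; (6) conclude $L^{\perp}=L$. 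The delicate point is step (5), the identification $C^{\perp\perp}=C$ in the weak symplectic category.
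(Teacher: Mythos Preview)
Your approach is essentially the same as the paper's: both arguments pull back the equality $\underline L = \underline L^{\perp}$ through the projection $P_C$, using $P_C^{-1}(\underline L) = L$ and $P_C^{-1}(\underline L^{\perp}) = L^{\perp}$ to conclude $L = L^{\perp}$. You are in fact more careful than the paper: the paper's proof simply asserts $p^{-1}(\underline L^{\perp}) = L^{\perp}$ and only writes out the inclusion $p^{-1}(\underline L^{\perp}) \subset L^{\perp}$, leaving the reverse inclusion (and in particular the point $L^{\perp}\subset C$, equivalently $C^{\perp\perp}=C$, that you correctly isolate in step~(5)) implicit. So your concern about the weak-symplectic subtlety is legitimate, but it is a gap shared by the paper's own proof rather than a defect specific to your argument.
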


\begin{proof}
 Let $P\colon C \to \underline{C}$ denote the projection map to the reduced space.
The idea is to proof that $L=L^{\perp}$ using the fact that $p^{-1}(\underline{L})=p^{-1}(\underline{L}^{\perp})$. First, we prove that 
$p^{-1}(\underline{L}^{\perp})=L^{\perp}$. Let $v \in p^{-1}(\underline{L}^{\perp})$. We have that, by definition,
\[\underline{\omega}(p(v),\underline{w})=0, \forall \underline{w} \in \underline{L} \]
and this implies that
\[ \omega(v,w)=0, \forall w\in V \vert \underline{w} \in \underline{L},\]
therefore,
\[\omega(v,w)=0, \forall w \in L.\]

Also by definition we have that 
\[p^{-1}(\underline{L})=L,\]
therefore, using the fact that $\underline{L}$ is Lagrangian, $\underline{L}^{\perp}=\underline{L}$, 
we obtain that $L=L^{\perp},$ as we wanted.
\end{proof}
\begin{definition}\emph{ Let $V$ be a symplectic vector space. Its \emph{conjugated} space, denoted by $\overline V$ is the same vector space $V$ but now equipped with the symplectic form $-\omega$.
}
\end{definition}

\begin{proposition} \label{Coiso-Lagrangian}
\emph{Let $L$ be a Lagrangian subspace of $V \oplus \overline{V}$, such that $L \subset C\oplus C$, for some $C \subset V$. Then $C$ is 
coisotropic in $V$.}
\end{proposition}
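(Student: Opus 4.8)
The plan is to prove directly that $C^{\perp}\subseteq C$, using only that $L$ is coisotropic (it is Lagrangian, hence $L^{\perp}=L$) and sits inside $C\oplus C$; no dimension count will be needed, so the argument works verbatim in the weak symplectic setting. Write the symplectic form on $V\oplus\overline V$ as $\Omega\bigl((v_1,w_1),(v_2,w_2)\bigr)=\omega(v_1,v_2)-\omega(w_1,w_2)$, and let $(-)^{\perp}$ denote the symplectic orthogonal taken in $V\oplus\overline V$ (for subspaces of $V$ it is the orthogonal in $V$, which is insensitive to the sign of $\omega$, so it is also the orthogonal computed inside $\overline V$).

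The first step is to compute $(C\oplus C)^{\perp}$. Since $C\oplus C=(C\oplus 0)+(0\oplus C)$, the rules for symplectic orthogonals (Proposition~\ref{properties}, item 2) give $(C\oplus C)^{\perp}=(C\oplus 0)^{\perp}\cap(0\oplus C)^{\perp}$. A one-line check shows $(C\oplus 0)^{\perp}=C^{\perp}\oplus V$ and $(0\oplus C)^{\perp}=V\oplus C^{\perp}$: indeed $(v,w)$ is $\Omega$-orthogonal to $(c,0)$ for all $c\in C$ iff $\omega(v,c)=0$ for all $c\in C$, i.e. $v\in C^{\perp}$, with no condition on $w$, and symmetrically for $(0\oplus C)^{\perp}$. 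Intersecting, one obtains
\[(C\oplus C)^{\perp}=C^{\perp}\oplus C^{\perp}.\]

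The second step combines this with the hypotheses. From $L\subseteq C\oplus C$ and monotonicity of $(-)^{\perp}$ (Proposition~\ref{properties}, item 1) we get $(C\oplus C)^{\perp}\subseteq L^{\perp}$, and since $L$ is Lagrangian, $L^{\perp}=L\subseteq C\oplus C$. Chaining these with the computation above yields $C^{\perp}\oplus C^{\perp}\subseteq C\oplus C$. Feeding in elements of the form $(c,0)$ with $c\in C^{\perp}$ (equivalently, projecting onto the first summand) forces $C^{\perp}\subseteq C$, which is exactly coisotropy of $C$ in $V$.

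There is essentially no obstacle here; the proof is a short manipulation in the lattice of subspaces under $(-)^{\perp}$. The only two places asking for a little care are the sign bookkeeping on the conjugated factor $\overline V$ when identifying $(C\oplus C)^{\perp}=C^{\perp}\oplus C^{\perp}$, and the elementary observation that an inclusion $A\oplus A'\subseteq B\oplus B'$ of direct sums of subspaces forces $A\subseteq B$ (and $A'\subseteq B'$). Both are routine, and neither uses finite-dimensionality, so the statement indeed holds for weak symplectic $V$ as claimed.
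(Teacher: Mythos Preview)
Your proof is correct and follows essentially the same approach as the paper: take orthogonals of the inclusion $L\subset C\oplus C$, use $L^{\perp}=L$, and then read off $C^{\perp}\subset C$ from $(C\oplus C)^{\perp}=C^{\perp}\oplus C^{\perp}$. You simply supply more detail in the computation of $(C\oplus C)^{\perp}$ than the paper does.
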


\begin{proof} Since \[L \subset C\oplus C,\] we obtain that 
\[(C\oplus C)^{\perp}\subset L^{\perp}=L \subset {C \oplus C} \subset V \oplus \overline{V},\]
therefore $C\oplus C$ is coisotropic. Since $(C\oplus C)^{\perp}=C^{\perp} \oplus C^{\perp}$, we conclude that $C^{\perp} \subset C$, 
as we wanted.
\end{proof}

\subsection{Canonical relations}
Lagrangian subspaces play an important role in symplectic geometry and this section is devoted to study some of their properties. 
A relation $R$ between two sets $M$ and $N$ is a subset of the cartesian product $M\times N$ and we will use the notation
$R \colon M \nrightarrow N$. If $S\colon N \nrightarrow P$ is another relation, its composition is given by
\[S\circ R:= \{(m,p)\in M \times P \vert \exists n \in N, \, (m,n)\in R,\, (n,p)\in S \}\colon M \nrightarrow P.\]
\begin{remark}
\emph{In the case where the sets $M$ and $N$ are vector spaces, a relation $R$ is called \emph{linear} if it corresponds to a 
linear subspace of $M \oplus N$.}
\end{remark}
\begin{definition}
\emph{Let $(M,\omega_M)$ and $(N,\omega_N)$ be symplectic spaces. A linear relation $L\colon M \nrightarrow N$ is called \emph{canonical}, 
if $R$ is a Lagrangian subspace of $\overline{M}\oplus N$, where $\overline{M}$ denotes the vector space $M$ with the negative 
symplectic form $-\omega_M$.} 
\end{definition}
\begin{remark}
\emph{If $L\colon V \nrightarrow W$ is a canonical relation, then $L$ is a Lagrangian subspace of $\overline W \oplus V$, then we have the 
canonical relation
\[L^{\dagger}\colon W \nrightarrow V\]
called the \emph{transpose} of $L$}. 
 
\end{remark}

\begin{remark}
\emph{A Lagrangian submanifold $L$ of $V$ is a canonical relation $L\colon \{0\} \nrightarrow V$ with transpose 
$L^{\dagger}\colon V \nrightarrow \{0\}$ .} 
\end{remark}
\begin{proposition}\label{Composition} \emph{In the finite dimensional case, the composition of canonical relations is a canonical relation.}
\end{proposition}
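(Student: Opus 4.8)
The plan is to realize the composite $L_2 \circ L_1$ as a linear symplectic reduction of the product $L_1 \times L_2$ and then to quote Corollary \ref{Lag projection}. Write $L_1\colon M \nrightarrow N$ and $L_2\colon N\nrightarrow P$ for the two canonical relations, so $L_1$ is Lagrangian in $\overline M \oplus N$ (with form $(-\omega_M)\oplus\omega_N$) and $L_2$ is Lagrangian in $\overline N \oplus P$ (with form $(-\omega_N)\oplus\omega_P$). First I would form the symplectic vector space
\[ W := \overline M \oplus N \oplus \overline N \oplus P, \qquad \omega_W := (-\omega_M)\oplus\omega_N\oplus(-\omega_N)\oplus\omega_P, \]
which is the direct sum $(\overline M\oplus N)\oplus(\overline N\oplus P)$ of symplectic vector spaces. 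Since a direct sum of Lagrangian subspaces is Lagrangian, $\Lambda := L_1\times L_2$ is a Lagrangian subspace of $W$.

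Next I would introduce the subspace
\[ C := \{(m,n_1,n_2,p)\in W : n_1 = n_2\}. \]
A direct computation of the symplectic orthogonal gives $C^{\perp} = \{(0,n,n,0): n\in N\}$: indeed $v=(a,b_1,b_2,c)\in C^{\perp}$ forces, by taking the $M$- and $P$-components of the test vectors arbitrary and the $N$-component zero, $a=0$ and $c=0$, and then $\omega_N(b_1-b_2,n)=0$ for all $n$ gives $b_1=b_2$. Thus $C^{\perp}\subset C$, so $C$ is coisotropic, and the reduction $\underline C = C/C^{\perp}$ is canonically identified, as a symplectic vector space, with $\overline M \oplus P$ carrying $(-\omega_M)\oplus\omega_P$: on $C$ one has $\omega_W\big((m,n,n,p),(m',n',n',p')\big) = -\omega_M(m,m')+\omega_P(p,p')$, so the quotient projection $P_C\colon C\to \underline C$ is $(m,n,n,p)\mapsto (m,p)$.

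Then I would compute
\[ \Lambda\cap C = \{(m,n,n,p) : (m,n)\in L_1,\ (n,p)\in L_2\}, \]
so that $P_C(\Lambda\cap C) = \{(m,p) : \exists\, n\in N,\ (m,n)\in L_1,\ (n,p)\in L_2\}$, which under the identification $\underline C \cong \overline M\oplus P$ is exactly the relation $L_2\circ L_1$. Since we are in the finite dimensional case, $\Lambda$ is Lagrangian in $W$ and $C$ is coisotropic, Corollary \ref{Lag projection} applies and shows that $\Lambda_C := P_C(\Lambda\cap C)$ is Lagrangian in $\underline C$. Hence $L_2\circ L_1$ is a Lagrangian subspace of $\overline M\oplus P$, i.e.\ a canonical relation $M\nrightarrow P$, as claimed.

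The only genuinely routine parts are the orthogonal-complement computation for $C$ and the bookkeeping of the symplectic signs, and I do not expect a real obstacle beyond that. It is worth noting where finiteness enters: the isotropy of $\Lambda_C$ holds in general by Proposition \ref{Isotropic}, and the upgrade to \emph{Lagrangian} is precisely the dimension-count step of Corollary \ref{Lag projection}, which is exactly the point that fails in the infinite dimensional setting and motivates the hypothesis of the proposition.
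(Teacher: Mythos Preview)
Your proof is correct and takes essentially the same approach as the paper: form the fourfold direct sum $\overline M\oplus N\oplus\overline N\oplus P$, take the diagonal coisotropic $C$ in the middle two factors, and apply Corollary~\ref{Lag projection} to the product Lagrangian $L_1\times L_2$. The paper's argument is identical up to notation (it writes $D$ for your $C$ and identifies $D^\perp$ with the middle diagonal), and your remark on where finite-dimensionality enters matches the paper's subsequent observation that in infinite dimensions the composition is only isotropic in general.
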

\begin{proof}
Let $L_1\colon U \nrightarrow V$ and $L_2\colon V \nrightarrow W $ be canonical relations.
Defining 
\[\triangle_V \colon= \{(v,v),\, v \in V\} \subset \overline V \oplus V ,\]
it is easy to check that $\triangle\colon V \nrightarrow V$ is a canonical relation. In addition, if we define
\[ D:= \overline U \oplus \triangle_V \oplus Z \subset \overline U \oplus V \oplus \overline V \oplus W,\]
since $D^{\perp}= \{0\} \oplus \triangle_V \oplus \{0\},$ then $D$ is a coisotropic subspace and $\underline D= \overline U \oplus W.$\\
Since $L_1\subset \overline U \oplus V $ and $L_2\subset \overline V \oplus W $ are Lagrangian subspaces,
\[L_1 \oplus L_2 \subset \overline U \oplus V \oplus \overline V \oplus W \] is Lagrangian and then, after projecting onto the quotient,
$P_D(L_1 \oplus L_2)$ is Lagrangian in $\underline D$.
Since \[L_1 \oplus L_2 \cap D =\{(u,v,v,w)\vert (u,v) \in L_1,\, (v,w) \in L_2\},\]
then 
\[P_D(L_1 \oplus L_2)=P_D(L_1 \oplus L_2 \cap D)= L_2 \circ L_1\]
as we wanted.
\end{proof}
\begin{remark} In the infinite dimensional case, the compositition of two canonical relations is not in general a canonical relation, as we will see in Example \ref{Compo}. It can be easily checked that the composition of canonical relations is in general isotropic.
\end{remark}
 
Now, following \cite{Alan}, consider a coisotropic subspace $W \subset V$. It follows that $W \oplus V$ is a coisotropic subspace of 
$\overline V \oplus V$ (since $(W \oplus V)^{\perp}= W^{\perp} \oplus \{0\}$). 
Since $\triangle_V \subset \overline V \oplus V$ is a Lagrangian subspace, by Corollary 
\ref{Lag projection}, $P_{W \oplus V}(\triangle_V)$ is a Lagrangian subspace of
 $\underline{\overline W \oplus V}= \overline{\underline{W}}\oplus V$, when $V$ is finite dimensional. This projection will be denoted by $I$ and is a canonically defined canonical relation $I\colon \underline W \nrightarrow V$. 
In fact, this also holds in the infinite dimensional setting due to the following
\begin{proposition} \label{projection}
\emph{For any (possibly infinite dimensional) symplectic space $V$, $I$ is a canonical relation.}
\end{proposition}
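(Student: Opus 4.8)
The plan is to make $I$ completely explicit as a subspace of the reduced space $\overline{\underline{W}}\oplus V$ and then verify the two inclusions $I\subseteq I^{\perp}$ and $I^{\perp}\subseteq I$ by hand, so that no dimension count is needed. As recalled just above the proposition, $(W\oplus V)^{\perp}=W^{\perp}\oplus\{0\}$ in $\overline V\oplus V$ (this is where weak nondegeneracy of $\omega$ enters), and coisotropy of $W$ gives $W^{\perp}\subseteq W$; hence $(W\oplus V)\cap(W\oplus V)^{\perp}=W^{\perp}\oplus\{0\}$ and the reduced space is $\underline{W\oplus V}=(W/W^{\perp})\oplus V=\overline{\underline{W}}\oplus V$ with reduced form $(-\underline{\omega})\oplus\omega$. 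Since $\triangle_V\cap(W\oplus V)=\{(w,w):w\in W\}$, the image $I:=P_{W\oplus V}\bigl(\triangle_V\cap(W\oplus V)\bigr)$ is
\[ I=\{([w],w):w\in W\}\subseteq\overline{\underline{W}}\oplus V . \]

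Isotropy of $I$ is then immediate (it is also the special case $L=\triangle_V$, $C=W\oplus V$ of Proposition \ref{Isotropic}): for $w,w'\in W$ the reduced pairing of $([w],w)$ with $([w'],w')$ is $-\underline{\omega}([w],[w'])+\omega(w,w')=-\omega(w,w')+\omega(w,w')=0$, so $I\subseteq I^{\perp}$. The reverse inclusion is the real content, and it is the only place coisotropy of $W$ is used. Take $([a],b)\in I^{\perp}$ with $a\in W$, $b\in V$; the defining condition is $-\omega(a,w)+\omega(b,w)=0$ for all $w\in W$, i.e.\ $b-a\in W^{\perp}$. Because $W$ is coisotropic, $W^{\perp}\subseteq W$, so $b=a+(b-a)\in W$, and $[b]=[a]$ in $\underline{W}$ since $b-a\in W^{\perp}$; therefore $([a],b)=([b],b)$ with $b\in W$, which is exactly a point of $I$. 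Hence $I^{\perp}\subseteq I$, and combined with the previous step $I=I^{\perp}$, so $I$ is Lagrangian in $\overline{\underline{W}}\oplus V$, i.e.\ a canonical relation $\underline{W}\nrightarrow V$.

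The only genuine subtlety — and the reason this statement is not already covered by Corollary \ref{Lag projection} — is that in the infinite-dimensional setting maximality of the isotropic subspace $I$ cannot be extracted from an equality of dimensions; the explicit description of $I$ above is precisely what replaces that shortcut. Apart from that, the argument uses weak nondegeneracy of $\omega$ only to pin down $(W\oplus V)^{\perp}$ and coisotropy of $W$ only to move $b$ back into $W$, so it goes through verbatim in any (possibly infinite-dimensional) weak symplectic space.
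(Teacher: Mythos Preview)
Your proof is correct and follows essentially the same approach as the paper: write $I$ explicitly as $\{([w],w):w\in W\}$, compute $I^{\perp}$ directly to get the condition $b-a\in W^{\perp}$, and use coisotropy of $W$ to conclude $b\in W$ and $[a]=[b]$. The paper omits the separate isotropy verification (folding it into the direct computation of $I^{\perp}$), but the substance is identical.
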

\begin{proof}
Explicitely we have that
\[I=\{([w],w) \in (W \diagup W^{\perp})\times V \vert w \in W \},\]
therefore
\[I^{\perp}=\{([v],z)\vert [v] \in W \diagup W^{\perp}, z\in V, \underline{\omega}([v],[w])-\omega(z,w)=0, \forall w \in W\}.\] 
By linearity  and the construction of the reduction this is equivalent to
\begin{eqnarray*}
I^{\perp}&=&\{([v],z)\vert [v] \in W \diagup W^{\perp}, z\in V, \omega(v-z,w)=0, \forall w \in W, v \in [v]\}\\
&=& \{([v],z)\vert [v] \in W \diagup W^{\perp}, \, z \in V, v-z \in W^{\perp}, \forall v \in [v]\}.
\end{eqnarray*}
This implies in particular that $z$ and $v$ belong to the same equivalence class and since $v \in C$ and $C$ is 
coisotropic it follows that $z \in C$ and therefore $[v]=[z]$, hence $I^{\perp}=I$, as we wanted.
\end{proof}

 We denote by $P:= I^{\dagger}: 
V \nrightarrow \underline{W}$, the transpose of $I$. We can prove then the following
\begin{proposition}
 \emph{The following relations hold 
\begin{enumerate}
 \item $P\circ I\colon \underline{W}\nrightarrow \underline W = \mbox{Graph(Id)}.$
 \item $I\circ P\colon V \nrightarrow V= \{(w,w^{'})\in V \times V \vert P_W(w)=P_W(w^{'})\}.$ Furthermore, $I\circ P \subset W \times W$ is
 an equivalence relation on $W$ and \[W \diagup I\circ P= \underline W.\]
\end{enumerate}
}
\end{proposition}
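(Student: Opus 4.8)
The plan is to prove both identities by directly unwinding the definition of composition of linear relations; no symplectic input beyond the explicit description of $I$ from Proposition \ref{projection} is needed, and the only subtlety is keeping track of which of $I,P$ plays the role of the ``first'' relation in a composite $S\circ R$.

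I would begin by recording the two relations explicitly. As a subspace of $\overline{\underline W}\oplus V$ one has $I=\{([w],w)\mid w\in W\}$, and hence its transpose is $P=I^{\dagger}=\{(w,[w])\mid w\in W\}\subset \overline V\oplus\underline W$. The key observation, which makes both composites collapse, is that a point of either relation carries the built-in constraint that its $V$-component lies in $W$; so any ``intermediate'' element appearing in a composition is automatically forced into $W$.

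For part (1): by definition $P\circ I=\{([w_1],[w_2])\mid \exists\, v\in V,\ ([w_1],v)\in I,\ (v,[w_2])\in P\}$. The condition $([w_1],v)\in I$ forces $v\in W$ and $v\in[w_1]$ (i.e. $v-w_1\in W^{\perp}$), while $(v,[w_2])\in P$ forces $v\in W$ and $[w_2]=[v]$. Thus $([w_1],[w_2])\in P\circ I$ iff there is $v\in W$ with $[v]=[w_1]=[w_2]$, which (taking $v=w_1$) holds precisely when $[w_1]=[w_2]$; hence $P\circ I=\mathrm{Graph}(\Id)$. For part (2): similarly $I\circ P=\{(v_1,v_2)\mid \exists\,[w]\in\underline W,\ (v_1,[w])\in P,\ ([w],v_2)\in I\}$, where $(v_1,[w])\in P$ forces $v_1\in W$, $[w]=[v_1]$, and $([w],v_2)\in I$ forces $v_2\in W$, $[v_2]=[w]$. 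So $(v_1,v_2)\in I\circ P$ iff $v_1,v_2\in W$ and $P_W(v_1)=P_W(v_2)$, which is the asserted set, visibly contained in $W\times W$. Reflexivity on $W$, symmetry and transitivity are then immediate, since $I\circ P$ is exactly the pullback of equality on $\underline W$ along $P_W$; and because $P_W\colon W\to\underline W$ is surjective, the induced map $W/(I\circ P)\to\underline W$ is a bijection, giving $W/(I\circ P)=\underline W$.

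I do not expect a genuine obstacle here: the whole content is the bookkeeping of composition order together with the remark that the existentially quantified element is forced to lie in $W$ (so that, e.g., in (2) the quantifier over $[w]\in\underline W$ is really over a representative in $W$). Once that is noted, each assertion reduces to a one-line computation with equivalence classes.
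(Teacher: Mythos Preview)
Your proof is correct and is exactly the direct computation the paper has in mind; the paper's own proof consists of the two words ``Direct computation.'' Your write-up simply spells out that computation in full, with the correct bookkeeping of composition order and the observation that the $V$-component of any element of $I$ or $P$ is forced to lie in $W$.
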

\begin{proof}
Direct computation. 
\end{proof}
As an application of this lemma we have 
\begin{definition}
 \emph{Let $\underline L\colon \underline W \nrightarrow \underline W$ be a canonical relation. By the proposition \ref{Coisotropic},
\[l(\underline L):= I \circ \underline L \circ P\colon V \nrightarrow V\] 
is a canonical relation and will be called the \emph{canonical lift} of $\underline L$.
}
\end{definition}
\begin{definition}\label{canonicalproj}
\emph{Let $L\colon V \nrightarrow V$ be a canonical relation. Then 
\[p(L):= P \circ L \circ I\colon \underline W \nrightarrow \underline W\]
is also a canonical relation called the \emph{canonical projection} of $L$}.
\end{definition}
These two particular relations have interesting properties. Observe first that the composition $p\circ l$ is the identity for Lagrangian subspaces of 
the symplectic reduction $\underline W$, however, the composition $l \circ p$ is not the identity. An example of the application of these construction is the following
\begin{example} (Canonical lift of symplectic flows). 
\emph{ We consider a symplectic space $V$, a vector field $X \in \mathfrak X(V)$ and a smooth map
\[\Phi\colon V \times \mathbb R \to V \]
such that
\begin{enumerate}
 \item $\frac{d}{dt}\Phi(x,t)=X(x)$ 
 \item $\Phi(x,0)=x$
 \item For every time $t$
\[\phi_t:=\Phi(\cdot, t): V \to V\]
is a symplectomorphism.
\end{enumerate}
Such $\phi_t$ is called a \emph{symplectic flow}.
Defining $\underline {L_t}:= \mbox{Graph }(\phi_t)$, they satisfy that
\begin{enumerate}
 \item $\underline{L_t}\circ \underline{L_s}= \underline{L_{t+s}}.$
\item $\underline{L_0}= \mbox{Graph (Id)}.$
\end{enumerate}
After the canonical lift for these relations we get that, being $L_t:= l(\underline{L_t})$, then
\begin{enumerate}
 \item 
\begin{eqnarray*}
L_t \circ L_s&=& I \circ \underline{L_t}\circ P \circ I \circ \underline {L_s}\circ P\\
&=& I \circ \underline{L_t}\circ \underline {L_s} \circ P\\
&=& I \circ \underline{L_{t+s}}\circ P\\
&=& L_{t+s}.
\end{eqnarray*}
\item \[L_0= l(\mbox{Id})= I \circ P \neq \mbox{Graph(Id)}.\]
\end{enumerate}
Therefore we conclude that the canonical lift of a symplectic flow is not a flow strictly speaking. 
However this lift is a symplectic flow \emph{up to an equivalence relation} and it corresponds to a good definition for symplectic flows in terms of canonical relations.
}

\end{example}
\section{Poisson algebras and Poisson manifolds}

Poisson algebras appear closely related to the study of symplectic spaces and they are relevant in the study of classical dynamics of physical systems. Poisson structures are a generalized version (admitting degeneracies) of the symplectic ones. Poisson manifolds, which are smooth manifolds equipped with such structure will appear very often in this work and this section is devoted to introduce the main concepts and to fix notation.

\begin{definition} \emph{A Poisson algebra is an associative algebra $P$ equipped with a bracket $\{\, ,\, \}$ satisfying the following properties
\begin{enumerate}
\item (Lie bracket). $\{\, ,\, \}$ is skew symmetric and it satisfies the Jacobi identity
$$\{\,f , \{\,g ,h\, \}\, \}+\{\,g , \{\,h ,f\, \}\, \}+ \{\,h , \{\,f ,g\, \}\, \}=0$$
\item (Leibniz property). $\{\,,\, \}$ acts as derivation on the product of $P$, i.e.
$$\{\,f,gh\}=\{f,g \}h + g\{f,h\}$$
\end{enumerate}}
\end{definition}
\begin{definition} \emph{A Poisson manifold is a pair $(M, \Pi)$, where $M$ is a smooth manifold and $\Pi \in \Gamma(TM \wedge TM)$ such that $(\mathcal C^{\infty}(M), \{,\}_{\Pi})$ is a Poisson algebra, where
$$\{f,g\}_{\Pi}:= \Pi(df,dg), \, \forall f,g \in \mathcal C^{\infty}(M).$$}
\end{definition}
In local coordinates, $\Pi$ is a bivector written in the form
$$\Pi(x)= \sum_{i<j} \Pi^{ij}(x)\frac{\partial}{\partial x_i}\wedge \frac{\partial}{\partial x_j}$$ 
the condition of $\Pi$ to be Poisson reads in coordinates as follows
\begin{equation}\label{SN}
\sum_{r}\Pi^{sr}(x)(\partial_r) \Pi^{lk}(x)+\Pi^{kr}(x)(\partial_r) \Pi^{sl}(x)+\Pi^{lr}(x)(\partial_r) \Pi^{ks}(x)=0,
\end{equation} that is equivalent to the vanishing condition for the Schouten-Nijenhuis bracket of $\Pi.$ 
Natural examples of Poisson manifolds are:
\begin{example}\emph{(Zero Poisson structure) Any manifold $M$ has a trivial Poisson structure, setting $\Pi(x)\equiv 0$.}
\end{example}
\begin{example} \emph{(Constant Poisson structure). Similar to the previous example, 
we consider an open subset of $\mathbb R^n$ and 
setting $$\Pi^{ij}(x)\equiv c_{ij},$$ for some real constants $c_{ij}$, we obtain a bivector satisfying Equation \ref{SN}.}\end{example}
\begin{example}\emph{(Linear Poisson structure). Let $\mathfrak g$ be a finite dimensional Lie algebra with structure constants $\{c_{ij}^k\}$ with respect to the basis $e_1, e_2, \cdots e_n$. Consider its dual space $\mathfrak g^*$; then, if $x_l$ denotes the linear functions on $g^*$ which corresponds by duality to $e_l$, the bivector $\Pi$ written down in coordinates as
$$\Pi(x)= \sum_{\stackrel{i<j}{k}} c_{ij}^k x_k\frac{\partial}{\partial x_i}\wedge \frac{\partial}{\partial x_j}$$ 
is Poisson.}
\end{example}
\begin{example} \emph{(Symplectic manifolds). If $(M, \omega)$ is a finite dimensional symplectic manifold, then with the bracket 
$$\{f,g\}:= \omega (X_f,X_g),$$
where $X_f$ and $X_g$ are the Hamiltonian vector fields associated to $f$ and $g$ respectively, 
$(\mathcal C^{\infty}(M), \{,\})$ is a Poisson algebra.}
\end{example}
\begin{definition} \emph{(Poisson morphisms). Let $(M, \{ , \}_M)$ and $(N, \{ , \}_N)$ be two Poisson manifolds. A map $\phi\colon M \to N$ is called a Poisson map or a Poisson morphism if the pull-back map $$\phi^*\colon \mathcal C ^{\infty}(N) \to \mathcal 
C ^{\infty}(M)$$ is a Lie algebra homomorphism with respect to the corresponding Poisson brackets.}\end{definition}
\begin{definition}\emph{(Coisotropic submanifolds). Let $(M, \Pi)$ be a Poisson manifold and $C$ be a submanifold of $M$. $C$ is called coisotropic if $$\Pi^{\#}(N^*C) \subset TC,$$ where $N^*C$ is the conormal bundle of $C$, defined by
$$N^*_xC:= \{\alpha \in T^*_xM \vert \langle \alpha, v \rangle=0, \forall v \in T_xC\}$$
where $\langle, \rangle$ denotes the natural pairing between $T^*_xM$ and $T_xM$ and
\begin{eqnarray}
\Pi^{\#}\colon T_xM &\to& T^*_xM\\
\alpha &\mapsto& \Pi(x)(\alpha, \cdot).
\end{eqnarray}}
\end{definition}
\section{The extended symplectic category}
Now the idea is to pass from the linear setting to the world of smooth manifolds, where some of the features previously discussed hold, some others fail.
From now on, a smooth manifold $M$ is called symplectic if is is equipped with a 2-form $\omega \in \Omega^2(M)$ that is closed and every tangent space
$(T_xM,\omega_x)$ is (weak) symplectic (Definition \ref{weak}).\\
We define the linear symplectic category, denoted by \textbf{LinSymp}, where the objects are symplectic spaces and the 
morphisms are symplectomorphisms (Definition \ref{symplectomorphism}). In order to allow more generality, one should want to add canonical relations as morphisms. However, as we have seen, some complications appear.
The failure to do this extensions are based on the following
\begin{enumerate}
 \item The composition of canonical relations is not canonical in general for the infinite dimensional case. 
\item The composition of canonical relations is not continuous in general (the set of Lagrangian subspaces is a 
homogeneous space and therefore it is naturally equipped with an induced topology).\\
\end{enumerate}
It is possible to define the category $\mbox{\textbf{FLinSymp}}^{Ext}$, where the objects are finite dimensional symplectic spaces and the 
morphisms are canonical relations. In the smooth case, we proceed in a similar way. We denote by \textbf{SympMan} the category which objects are symplectic 
manifolds and morphisms are symplectomorphisms. In a similar way  $\mbox{\textbf{SympMan}}^{Ext}$ would have (smooth) canonical relations as morphisms, but some problems appear here, even though we restrict ourselves to the finite dimensional setting. In general the (set theoretical)
composition of canonical relations is not a smooth manifold and therefore a well defined morphism. In order to guarantee smoothness of the composition, some transversality conditions should be imposed, namely
\begin{definition}
\emph{Two smooth relations $R\colon M\nrightarrow N$ and $S\colon N\nrightarrow P$ are called \emph{transversal} if the submanifolds $R \times S$ and $M\times \triangle_N\times P$ of $M\times N \times N \times P$ intersect transversally.}
\end{definition}

\begin{definition}
\emph{Two transversal relations $R\colon M\nrightarrow N$ and $S\colon N\nrightarrow P$ are \emph{strongly transversal} if the space \[
(R\times S)\cap (M\times \triangle_N \times P)\]
projects to an embedded submanifold of $M\times P$.} 
\end{definition}
It can be easily checked \cite{Alan} that
\begin{proposition} 
\emph{The composition of two smooth strongly transversal relations is a smooth relation}. 
\end{proposition}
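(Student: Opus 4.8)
The plan is to verify that the set-theoretic composition $S \circ R$, which under the strong transversality hypothesis is by assumption an embedded submanifold of $M \times P$, is in fact a \emph{smooth relation} in the required sense — i.e. a closed (or locally closed) embedded submanifold obtained as a genuine manifold, with the composition map being a submersion onto it. The key device is the standard ``clean/transversal intersection'' argument: one first shows that $(R \times S) \cap (M \times \triangle_N \times P)$ is itself a smooth embedded submanifold of $M \times N \times N \times P$, and then that its image under the projection $\pi \colon M \times N \times N \times P \to M \times P$ is exactly $S \circ R$.

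First I would recall that $R \subset M \times N$ and $S \subset N \times P$ are smooth submanifolds, hence $R \times S \subset M \times N \times N \times P$ is a smooth submanifold, and $M \times \triangle_N \times P$ is a smooth submanifold as well (since $\triangle_N \subset N \times N$ is an embedded copy of $N$). Transversality of these two submanifolds, which is precisely the hypothesis, guarantees by the transversal intersection theorem that $Z := (R \times S) \cap (M \times \triangle_N \times P)$ is a smooth embedded submanifold, with $\dim Z = \dim R + \dim S + \dim N + \dim P - \dim(M \times N \times N \times P) = \dim R + \dim S - \dim N$ in the finite-dimensional case. Next I would identify $Z$ set-theoretically: a point of $Z$ is a tuple $(m,n,n',p)$ with $(m,n) \in R$, $(n',p) \in S$, and $n = n'$; thus $Z \cong \{(m,n,p) : (m,n)\in R,\ (n,p)\in S\}$, and $\pi(Z) = \{(m,p) : \exists n,\ (m,n)\in R,\ (n,p)\in S\} = S \circ R$ by the definition of composition of relations given above.

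Finally, the strong transversality hypothesis says exactly that $\pi(Z) = S \circ R$ is an embedded submanifold of $M \times P$. It remains to observe that the restricted projection $\pi|_Z \colon Z \to S \circ R$ is smooth (restriction of a smooth map) and surjective, which is enough to conclude that $S \circ R$ is a smooth relation $M \nrightarrow P$. One may optionally note that $\pi|_Z$ is in fact a submersion, or at worst a smooth surjection with constant rank, so no pathology in the smooth structure arises; this is the content of the reference \cite{Alan}. The main obstacle — and the reason the two separate transversality conditions were introduced — is precisely the passage from ``$Z$ is a smooth submanifold'' to ``$\pi(Z)$ is a smooth submanifold'': the image of an embedded submanifold under a smooth map need not be embedded (it can fail to be a manifold, or be only immersed, or fail to be closed), and this is exactly what the ``strongly transversal'' clause is designed to rule out by fiat. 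So the proof is essentially a bookkeeping exercise once both transversality conditions are in hand; the real work was done in isolating the correct hypotheses, and the verification here is the routine check that those hypotheses deliver the claimed smoothness of $S \circ R$ together with smoothness of the composition map $\pi|_Z$.
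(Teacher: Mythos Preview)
The paper does not actually supply a proof of this proposition; it merely states it with a reference to \cite{Alan} (Bates--Weinstein). Your argument is the standard one and is correct in outline: transversality makes $Z=(R\times S)\cap(M\times\triangle_N\times P)$ a smooth submanifold, the set-theoretic identification gives $\pi(Z)=S\circ R$, and strong transversality is precisely the hypothesis that this image is embedded.

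Two minor remarks. First, your dimension formula has a slip: the correct transversal-intersection count is $\dim Z=\dim(R\times S)+\dim(M\times\triangle_N\times P)-\dim(M\times N\times N\times P)$, and $\dim(M\times\triangle_N\times P)=\dim M+\dim N+\dim P$, not $\dim N+\dim P$; the final answer $\dim R+\dim S-\dim N$ is nonetheless right. Second, your optional claim that $\pi|_Z$ is a submersion (or of constant rank) is neither part of the hypothesis nor generally true, and you do not need it: strong transversality already \emph{assumes} that the image is an embedded submanifold, so once you have identified $\pi(Z)$ with $S\circ R$ you are done. The sentence about ``no pathology in the smooth structure'' is therefore superfluous and slightly misleading.
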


Since Lagrangianity of smooth relations is a linear problem, it is automatic that the composition of smooth strongly transversal canonical relations is a smooth 
canonical relation.\\
Unfortunately, not every pair of composable smooth canonical relations are strongly transversal, therefore $\mbox{\textbf{SympMan}}^{Ext}$ is not technically a category, some people would call it 
a \emph{categroid} \cite{Kashaev}.
Another disadvantage is that the Lagrangianity condition does not behave well in the infinite dimensional framework, i.e. the composition of (linear) 
canonical relations is not in general a canonical relation as we can see in the following
\begin{example}\label{Compo}
\emph{Consider the space $V:= \mathcal C^{0}([-1/2, 1/2])\oplus \mathcal C^{0}([-1/2,1/2])$ and the bilinear form
$\omega(f\oplus g, k\oplus l)= \int_{-1/2}^{1/2} (fl-kg) dx.$ It can be checked that $\omega$ is a weak symplectic form on $V$ and that 
the space 
\[L:=\{(f\oplus g) \in V \vert \int_{-1/2}^{1/2}f dx =0,\, g \mbox{ constant} \}\]
is a Lagrangian subspace of $V$.Also, it can be proven that the space
\[C:=\{(f\oplus g) \in V \vert g(0)=0\}\] is both a coisotropic and a symplectic subspace of $V$(since $C^{\perp}=\{0\oplus 0\}).$ 
This implies that $\underline C=C$. It follows that the projection $L_C$ of $L$ in $C$ is 
given by 
$$L_C= L\cap C=\{(f\oplus g) \in V \vert \int_{-1/2}^{1/2}f dx =0,\, g\equiv 0  .$$
$L_C$ is isotropic since 
$$(L\cap C)\subset L=L^{\perp}\subset L^{\perp}+C^{\perp}=(L\cap C)^{\perp},$$
but it is not Lagrangian since the vector $(k\oplus 1)$, with $\int_{-1/2}^{1/2}k dx =0$ satisfies that
$$\omega(f\oplus g, k\oplus 1)=0, \forall (f\oplus g) \in L\cap C,$$
but  $(k\oplus 1)\not\in L\cap C.$
Thus, from the following diagram
\[
\xymatrixrowsep{2pc} \xymatrixcolsep{2pc} \xymatrix{\emptyset \ar@{->}[r]^{L} |-{/}\ar@{->}[dr]_{L_C}&V\ar[d]|-{-}^{P}\\ &C} 
\]
we get two canonical relations whose composition is not a canonical relation.}
\end{example}
\subsection{Possible alternatives}
In order to overcome these difficulties and to make precise sense of the extended symplectic category, different approaches can be taken.
One natural approach in the infinite dimensional linear case is to consider the category $\mbox{ \textbf{LinSymp}}^{Iso}$, where the objects are symplectic spaces and the morphisms are 
isotropic subspaces of the direct sum of two symplectic spaces. In this category the composition of morphisms is now well defined. However, the transversality condition in the smooth case cannot be skipped if we consider isotropic submanifolds instead of canonical relations. Here 
we present briefly in a disgression two possible solutions in the literature \cite{Wei}, \cite{Benoit, Benoit2, Benoit3}, \cite{Wehrheim}, that will not be used in the sequel . Then we present the model of the extended symplectic category that we will deal throughout this thesis, namely, defining only partial composition of morphisms and allowing also infinite dimensional symplectic manifolds.\\
\subsubsection{The Wehrheim-Woodward category}The construction given by Wehrheim and Woodward consists roughly on the minimal category that contains canonical relations as generators of morphisms.
More precisely, in their perspective, the objects of the symplectic category are symplectic manifolds and the morphisms are generated by composable canonical relations, i.e. they are subject to the relation that to morphisms $f$ and $g$ are composable in the category whenever 
the strong transversality condition is satisfied. First, they define sequences of composable canonical relations $(f_1,\, f_2 \cdots f_n)$ and they associate the empty sequence to every object.
Then they define the equivalence relation that is minimal under inclusion, that makes the sequence $(f,g)$ equivalent to $f\circ g$ when $f$ and $g$ are strongly transversal. Under this equivalence, the identity morphisms correspond to the diagonals of every object.  These morphisms are usually called \emph{generalized Lagrangian correspondences} \cite{Wehrheim, Wei}.
\subsubsection{Symplectic microgeometry} 
In this perspective, the motivation comes from the ``cotangent functor'' 
$T^*\colon \mbox{{\textbf{Man}}}\to \mbox{\textbf{SympMan}}^{Ext}$, a contravariant functor that associates to a smooth manifold $M$ 
its cotangent bundle $T^*M$ and to a map $\phi\colon X\to Y$ it associates the canonical relation denoted by $T^*\phi$ called the \emph{cotangent lift of $\phi$}:
\[T^*\phi:=\{((x,(T\phi)^*(\eta)), (\phi(x), \eta)) \vert x \in X \mbox{and }\eta \in T^*_{\phi(x)}Y \}.\]
A relevant fact to observe is that compositions of cotangent lifts satisfy the strong transversality condition, hence, we can understand the image of the 
category \textbf{Man} under $T$ as a subcategory of $\mbox{\textbf{Symp}}^{Ext}$. In order to generalize this lift for symplectic manifolds that are not 
cotangent bundles, a localization procedure is performed, defining what is called \emph{symplectic microfolds} and \emph{symplectic micromorphisms}; 
for references see, for example, \cite{Benoit, Benoit2, Benoit3}.\\ \\
Although these two different approaches are interesting per se in order to describe rigorously the symplectic category, we will consider the extended symplectic category as a categroid. If 
compositions of immersed canonical relations are involved, it would be proven that such compositions are well defined morphisms. More precisely we will have the following definition of the ``extended symplectic category'' as a categroid (from now on we will abuse the language and call it a category). 
\subsubsection{The categroid $\mbox{\bf{Symp}}^{Ext}$ } 
\begin{definition}\label{Extended}\emph{
$\mbox{\textbf{Symp}}^{Ext}$ is a categroid\footnote{This is not an honest category since the composition of immersed canonical relations is not in general a smooth manifold. }  in which the objects are (weak) symplectic manifolds. 
A morphism between two symplectic manifolds $(M,\omega_M)$ and $(N,\omega_N)$ is a pair $(L, \phi)$, where 
\begin{enumerate}
 \item $L$ is a smooth manifold (in the infinite dimensional setting we will restrict ourselves to Banach manifolds).
\item $\phi\colon L \to M \times N$ is an immersion.
\footnote{Observe here that usually one considers embedded Lagrangian submanifolds, but we consider immersed ones.}
\item $T\phi_x$ applied to $T_xL$ is a Lagrangian subspace of $T_{\phi(x)}(\overline M\times N),\, \forall x \in L$.
\end{enumerate}
We will call these morphisms \textit{immersed canonical relations} and denote them by $L\colon M \nrightarrow N$. In the sequel, for simplicity of the convention, we will denote by $(L,\phi)$ the pair defining an immersed canonical relation, where $\phi$ is a representative of the class $[\phi]$.
The partial composition of morphisms is given by composition of relations as sets.}
\end{definition}

\begin{remark}
\emph{
Observe that $\mbox{\textbf{Symp}}^{Ext}$ carries an involution $\dagger\colon(\mbox{\textbf{Symp}}^{Ext})^{op} \to  \mbox{\textbf{Symp}}^{Ext} $ that is the identity in objects and is the relational 
converse in morphisms, i.e. for $f\colon A \nrightarrow B$, $f^{\dagger}:= \{(b,a) \in B\times A \vert (a,b) \in f,\, a \in A,\, b\in B\}$.}
\end{remark}

\begin{remark}
\emph{This categroid extends the usual symplectic category in the sense that the symplectomorphisms can be thought in terms of immersed canonical relations, namely, if $\phi\colon (M,\omega_M) \to (N, \omega_N))$ is a symplectomorphism between two finite dimensional symplectic manifolds, then $(\mbox{graph}(\phi), \iota)$, where $\iota$ is the inclusion of $\mbox{graph}(\phi)$ in $M \times N$, is a morphism in $\mbox{\textbf{Symp}}^{Ext}$.} 
\end{remark}

\section{Lie groupoids and symplectic structures }
Categorically, we understand a groupoid as a small category where all the morphisms are invertible. 
Writing down explicitely  we get the following

\begin{definition}\label{groupoid} \emph{A groupoid in a small category $\mathcal {C}$ which has fiber products, corresponds to two objects $\mathcal{G},\, M$ and a set of morphisms 
in $\mathcal{C}$ described in the following diagram
\\
\xymatrixrowsep{4pc} \xymatrixcolsep{3pc} \xymatrix{
    &\,\,\,\,\;\;\,\,\;\,G\times_{(s,t)}   G  \ar[r]^{\,\,\,\,\,\;\;\;\;\;\;\;\;\mu}  & G \ar[r]^{i}   &G \ar@/_/[r]_t  \ar@/^/[r]^s & M \ar[l]_{\varepsilon}  & 
    }
\\
where $G\times_{(s,t)}   G$ is the fiber product for the maps $s,t: G \to M$, such that the following axioms hold (denoting $G_{(x,y)}:= s^{-1}(x) \cap t^{-1}(y)$):
\\
\newline
    \underline{\textbf{A.1}} $s\circ \varepsilon= t\circ \varepsilon =id_M$ \\
\newline
    \underline{\textbf{A.2 }} If $g \in G_{(x,y)}$ and $h \in G_{(y,z)}$ then $\mu (g,h)\in G_{(x,z)}$\\
\newline
    \underline{\textbf{A.3 }} $\mu(\varepsilon\circ s \times id_{G})=\mu(id_{G}\times \varepsilon\circ t)=id_{G}  $ \\
\newline
    \underline{\textbf{A.4 }} $\mu(id_{G} \times i)=\varepsilon \circ t$\\
\newline
    \underline{\textbf{A.5 }}$\mu(i \times id_{G})=\varepsilon \circ s $\\
\newline
    \underline{\textbf{A.6}} $\mu(\mu \times id_{G})=\mu (id_{G} \times \mu)$.\\}
\end{definition}
When the category $\mathcal{C}$ corresponds to one whose objects are smooth manifolds and morphisms are smooth maps, 
we call such groupoid a \textit{Lie groupoid}. \footnote{ In terms of the defining axioms, a Lie groupoid is defined whenever the source/ target are surjective submersions 
and both $M$ and $G$ are smooth manifolds, see \cite{Mor}.}

\subsection{Examples of Lie groupoids}
Any group can be understood as a groupoid with one single object. But more interesting, 
it is possible to associate groupoid structure to more general set of objects with different interpretations.
\begin{example} \emph{(Pair groupoid). In this case $G= M \times M$, the source and target corresponds to first and 
second projection respectively. The multiplication is given by \[\mu((x,y),(y,z))=(x,z),\] the inverse is transposition and the unit map is 
the diagonal map.}
\end{example}
 \begin{example} \emph{(Action groupoid). Given a Lie group $G$  and a smooth left $G-$ action on a smooth manifold $M$, 
the action groupoid, denoted by $G \ltimes M$, over $M$, has as space of morphisms the product $G \times M$, the source is the projection onto the first component and the target is the action map.The multiplication is given by
\[\mu((g,x),(g^{'},x^{'}))=(g\cdot g^{'}, x).\]}
\end{example}
\begin{example}\emph{(Bundle groupoid). In this example, we start with a vector bundle $E \to M$. The bundle groupoid has as objects the points of the manifold $M$ and the morphisms are the vectors of the 
fibers of $E$. The groupoid multiplication is fiber multiplication (addition). The source and target coincide and correspond to the bundle projection, the inverse is the fiber inverse and the unit is the zero section.}
 \end{example}
\begin{example}\emph{(Fundamental groupoid). This groupoid is denoted by $\Pi(M)$ and the space of morphisms is the space of homotopy classes of paths, leaving invariant the initial and final point.
The multiplication is induced by the concatenation of paths.}
\end{example}

\begin{definition} \emph{ A morphism $F\colon G \to H$ between two Lie groupoids $G$ and $H$ is a functor of categories in addition that it should respect the smooth structure, i.e. is a smooth map for the objects and morphisms}. 
\end{definition}
\begin{remark}\emph{
This allows us to define the category \textbf{Lie Grpd} with objects Lie groupoids and morphisms Lie groupoid morphisms. However, there is an extended version of this category where the objects are the same but a morphism between two groupoids $G$ and $H$ corresponds to an immersed subgroupoid of the groupoid product $G \times H$. This categroid will be denoted by $\textbf{Lie Grpd}^{Ext}$}.
\end{remark}
\begin{definition}
\emph{
A Lie groupoid such that the space of morphisms $G$ is equipped with a nondegenerate closed 2-form $\omega \in \Omega^2(G)$  
satisfying the following condition
\[\mu^{*}(\omega)= Pr_1^{*}(\omega)+Pr_2^{*}(\omega),\]
where $Pr_1$ and $Pr_2$ denote the first and second projections of $G\times G$ onto G, is called a \textbf{symplectic groupoid} and denoted by 
$(G,\omega)\rightrightarrows M$. In this case we say that the symplectic form $\omega$ is \emph{multiplicative}.}

\end{definition}
The multiplicativity of a 2-form $\omega$ is equivalent to the following condition on the multiplication map $\mu$, for finite dimensional symplectic groupoids:
\begin{lemma} \emph{\cite{Cost}.
$\omega$ is multiplicative if and only if the graph of the multiplication map is a Lagrangian submanifold of 
$G\times G \times \bar{G}$, where $\bar{G}$ denotes the manifold $G$ equipped with the opposite symplectic structure, $-\omega$. 
}
\end{lemma}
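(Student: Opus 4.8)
The plan is to show the equivalence between multiplicativity of $\omega$ and the Lagrangian condition on $\mathrm{graph}(\mu) \subset G \times G \times \bar{G}$ by a direct dimension count together with an isotropy argument, exploiting the nondegeneracy of $\omega$. First I would recall that for a symplectic groupoid $(G,\omega) \rightrightarrows M$, the dimension of $G$ is $2\dim M$ (this follows from the fact that $M$ embeds as a Lagrangian submanifold via $\varepsilon$, which in turn is a standard consequence of multiplicativity, or can be taken as part of the structure); consequently $\dim(G \times G \times \bar G) = 6\dim M$, while $\mathrm{graph}(\mu) = \{(g,h,\mu(g,h)) : (g,h) \in G \times_{(s,t)} G\}$ has dimension $\dim(G \times_{(s,t)} G) = 4\dim M - \dim M = 3\dim M$, which is exactly half. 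So once isotropy is established, Lagrangianity follows for free from the finite-dimensionality hypothesis (as in Corollary \ref{Lag projection}).

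Next I would carry out the isotropy computation. Let $\Gamma = \mathrm{graph}(\mu)$ and let $\iota \colon G \times_{(s,t)} G \hookrightarrow G \times G \times \bar G$ be the map $(g,h) \mapsto (g,h,\mu(g,h))$. Pulling back the symplectic form $\Omega := Pr_1^*\omega + Pr_2^*\omega - Pr_3^*\omega$ on $G \times G \times \bar G$ along $\iota$ gives
\[
\iota^*\Omega = pr_1^*\omega + pr_2^*\omega - \mu^*\omega,
\]
where now $pr_1, pr_2$ denote the two projections $G \times_{(s,t)} G \to G$. The multiplicativity condition $\mu^*\omega = pr_1^*\omega + pr_2^*\omega$ (which is exactly $\mu^*\omega = Pr_1^*\omega + Pr_2^*\omega$ restricted to the fiber product, since the fiber product sits inside $G \times G$) says precisely that $\iota^*\Omega = 0$, i.e. the image of $\iota$ is isotropic. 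This gives one direction immediately, and combined with the dimension count above, multiplicativity $\Rightarrow$ $\mathrm{graph}(\mu)$ Lagrangian.

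For the converse, suppose $\mathrm{graph}(\mu)$ is Lagrangian, hence in particular isotropic, so $\iota^*\Omega = 0$, which reads $pr_1^*\omega + pr_2^*\omega - \mu^*\omega = 0$ on $G \times_{(s,t)} G$. This is the multiplicativity identity, but stated only on the fiber product rather than on all of $G \times G$; however, multiplicativity is by definition a statement about $\mu$, whose domain is exactly $G \times_{(s,t)} G$, so there is nothing further to check. The main subtlety — the step I expect to require the most care — is the dimension count: one must be sure that $\dim G = 2 \dim M$ and that the fiber product $G \times_{(s,t)} G$ is a smooth manifold of the expected dimension $3\dim M$, which uses that $s$ (equivalently $t$) is a submersion. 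For symplectic groupoids this is automatic, since the nondegeneracy of $\omega$ forces $\ker T_g s$ and $\ker T_g t$ to be symplectic-orthogonal complements of the appropriate subspaces, pinning down all the dimensions; I would cite \cite{Cost} for these structural facts rather than re-derive them. With the dimensions in hand, isotropic of half-dimension equals Lagrangian, and the equivalence is complete.
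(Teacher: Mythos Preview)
The paper does not actually prove this lemma: it is stated with a citation to \cite{Cost} and no proof follows. Your argument is the standard one and is correct. The isotropy computation $\iota^*\Omega = pr_1^*\omega + pr_2^*\omega - \mu^*\omega$ is exactly the equivalence between multiplicativity and isotropy of the graph, and the dimension count $\dim(G\times_{(s,t)}G)=3\dim M=\tfrac12\dim(G^3)$ upgrades isotropy to Lagrangianity in the forward direction.

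One small point of logical hygiene: you invoke $\dim G=2\dim M$ via the fact that $\varepsilon(M)$ is Lagrangian, but in the paper this is listed \emph{after} the lemma as a consequence of multiplicativity (in the Proposition that follows). There is no genuine circularity, since both facts are independently derived in \cite{Cost} from multiplicativity, and you are only using the dimension count in the direction where multiplicativity is assumed; but it would be cleaner either to derive $\dim G=2\dim M$ directly (e.g.\ by noting that $T_{\varepsilon(x)}G\cong \ker ds_{\varepsilon(x)}\oplus T_xM$ via the groupoid axioms, and that multiplicativity forces $\ker ds$ and $\ker dt$ to be $\omega$-orthogonal of complementary dimension), or simply to flag explicitly that you are importing this auxiliary fact from \cite{Cost} alongside the lemma itself. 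Your remark that the submersivity of $s$ ensures the fiber product is smooth of the expected dimension is also worth keeping, since that is a Lie-groupoid axiom rather than a symplectic one.
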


The multiplicativity condition imposes compatibility between $\omega$ and the other structure maps on the Lie groupoid. 
More specifically: 
\begin{proposition}\emph{Let $(G,\omega)\rightrightarrows M $ be a finite dimensional symplectic groupoid. Then, the following holds
\begin{enumerate}
 \item The image of the unit map, $\varepsilon(M)$ is a Lagrangian submanifold of $G$.
 \item The inverse map $i$ is an antisymplectomorphism.
 \item There is a unique Poisson structure on $M$ such that $s$ is a Poisson map (or equivalently, such that $t$ is an anti-Poisson map).
\end{enumerate}}
\end{proposition}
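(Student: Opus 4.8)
The plan is to establish the three statements in order, each using the previous ones, and to isolate a single geometric fact --- that the distributions $\ker Ts$ and $\ker Tt$ are mutually symplectic-orthogonal --- as the technical core of the last part. Throughout I use that for a Lie groupoid $s$ and $t$ are surjective submersions, so all source and target fibres are submanifolds, and the multiplicativity lemma above, which says that $\Lambda := \{(g,h,\mu(g,h))\}$ is a Lagrangian submanifold of $G\times G\times\overline G$. A first consequence is a dimension count: $\dim\Lambda = \dim(G\times_{(s,t)}G)=2\dim G-\dim M$ must equal $\tfrac12\dim(G\times G\times\overline G)=\tfrac32\dim G$, so $\dim G=2\dim M$.

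For (1), the map $x\mapsto(\varepsilon(x),\varepsilon(x),\varepsilon(x))$ embeds $M$ into $\Lambda$ since $\mu(\varepsilon(x),\varepsilon(x))=\varepsilon(x)$ by A.3; evaluating the isotropy of $\Lambda$ at a unit $u=\varepsilon(x)$ on tangent vectors of the form $(v,v,v),(w,w,w)$ gives $\omega_u(v,w)+\omega_u(v,w)-\omega_u(v,w)=0$, so $\varepsilon(M)$ is isotropic; being of dimension $\dim M=\tfrac12\dim G$, it is Lagrangian. For (2), since $i$ interchanges source and target and $\mu(g,i(g))\in\varepsilon(M)$ by A.4--A.5, the map $g\mapsto(g,i(g),\mu(g,i(g)))$ embeds $G$ into $\Lambda$; the differential of its last component is tangent to $\varepsilon(M)$, which is isotropic by (1), so in the isotropy identity for $\Lambda$ at such a point the $\overline G$-term drops and one is left with $\omega_g(v,w)+(i^*\omega)_g(v,w)=0$ for all $v,w\in T_gG$. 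Hence $i^*\omega=-\omega$, and since $i\circ i=\Id$ the inverse map is a diffeomorphism, hence an antisymplectomorphism.

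The crux is the orthogonality lemma: $(\ker T_g s)^{\perp}=\ker T_g t$ for every $g\in G$. By the dimension count both sides have dimension $\dim M$, so it is enough to show $\omega_g(w,u)=0$ whenever $w\in\ker T_g t$ and $u\in\ker T_g s$. I would prove this by working at the point $p=(g,\varepsilon(t(g)),g)\in\Lambda$ and producing two families of tangent vectors there: moving the first factor inside $t^{-1}(t(g))$ while keeping the second factor fixed yields $(w,0,w)\in T_p\Lambda$ with $w$ arbitrary in $\ker T_g t$ (the product does not change, $\varepsilon(t(g))$ being a right unit along that fibre); moving the second factor inside $s^{-1}(t(g))$ near $\varepsilon(t(g))$ while keeping the first factor fixed yields $(0,u_0,u)\in T_p\Lambda$ with $u=T_{\varepsilon(t(g))}L_g(u_0)$, where $L_g\colon s^{-1}(t(g))\to s^{-1}(s(g))$, $h\mapsto gh$, is left translation --- a diffeomorphism sending $\varepsilon(t(g))$ to $g$ --- so that $u$ sweeps out all of $\ker T_g s$. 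Pairing the two families with $\omega\oplus\omega\oplus(-\omega)$ and using isotropy of $\Lambda$ gives $-\omega_g(w,u)=0$, hence the claim, and taking orthogonals also yields $(\ker T_g t)^{\perp}=\ker T_g s$. I expect this to be the only real difficulty: one must correctly identify these tangent subspaces of the graph of $\mu$ and, in particular, recognize left translation as a diffeomorphism of source fibres so that the vectors $u$ exhaust $\ker T_g s$; the rest is bookkeeping with the groupoid axioms and the elementary properties of symplectic orthogonals.

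Granting the lemma, (3) is routine. For $f\in\mathcal C^\infty(M)$ the Hamiltonian vector field $X_{s^*f}$ satisfies $\iota_{X_{s^*f}}\omega=d(s^*f)$, which annihilates $\ker Ts$; hence $X_{s^*f}\in(\ker Ts)^{\perp}=\ker Tt$, i.e. $X_{s^*f}$ is tangent to the $t$-fibres. Since these form an integrable distribution, $X_{\{s^*f,s^*h\}}=\pm[X_{s^*f},X_{s^*h}]$ is again tangent to it, so $d\{s^*f,s^*h\}$ annihilates $(\ker Tt)^{\perp}=\ker Ts$ and $\{s^*f,s^*h\}$ is locally constant along the $s$-fibres. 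Assuming the $s$-fibres connected (otherwise one restricts to the source-connected component), and since $s$ is a surjective submersion, $\{s^*f,s^*h\}=s^*\{f,h\}_M$ for a unique $\{f,h\}_M\in\mathcal C^\infty(M)$; bilinearity, skew-symmetry, the Leibniz rule and the Jacobi identity for $\{\cdot,\cdot\}_M$ follow from those of the Poisson bracket on $G$ together with injectivity of $s^*$, so $(M,\{\cdot,\cdot\}_M)$ is a Poisson manifold and $s$ is a Poisson map by construction; uniqueness is immediate from injectivity of $s^*$. Finally, because $t=s\circ i$ with $i^*\omega=-\omega$, one computes $\{t^*f,t^*h\}_\omega=-i^*\{s^*f,s^*h\}_\omega=-t^*\{f,h\}_M$, so $t$ is anti-Poisson for the very same structure, which is the asserted equivalence.
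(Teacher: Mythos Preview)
Your argument is correct. The paper itself states this proposition without proof, treating it as a classical result from \cite{Cost}; the only part it revisits is (3), restated later as Theorem~\ref{Poisson base} and proved by invoking Liberman's lemma (Lemma~\ref{Liberman}): a surjective submersion $\phi\colon G\to M$ with connected fibres admits a unique Poisson structure on the base making $\phi$ Poisson iff the fibre foliation is symplectically complete, i.e.\ $(\ker T\phi)^\perp$ is integrable. The paper then simply applies this with $\phi=s$, leaving the verification of symplectic completeness implicit.

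Your proof is a concrete unpacking of exactly that verification. Your orthogonality lemma $(\ker T_gs)^\perp=\ker T_gt$, obtained by producing the tangent vectors $(w,0,w)$ and $(0,u_0,T L_g u_0)$ inside the Lagrangian graph $\Lambda$, is precisely the statement that the $s$-foliation is symplectically complete (its symplectic orthogonal being the integrable $t$-foliation). The remaining steps --- $X_{s^*f}\in\ker Tt$, closure under bracket, descent along $s$ --- coincide with the sketch of Liberman's lemma given in the paper. So for (3) the two approaches are the same in substance, yours being more explicit about the geometric input. For (1) and (2) you supply the standard direct arguments that the paper omits entirely. The connectedness caveat on $s$-fibres that you flag is also present in the paper's later treatment.
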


\begin{example}
\emph{The pair groupoid $M\times M \rightrightarrows M$, with $(M, \omega)$ a symplectic manifold, is a trivial example of a 
symplectic groupoid, where the symplectic structure on $M\times M$ is $\omega \oplus -\omega$.}
\end{example}

The next example will be fundamental in the construction of the symplectic groupoid that integrates Poisson structures:
\begin{example}\emph{\textbf{The cotangent bundle $T^*M$ of a manifold.} As a bundle groupoid, $T^*M \rightrightarrows M$ is a symplectic groupoid, 
where the symplectic structure on $T^*M$ is the canonical one (i.e. the exterior differential of the Liouville form).}
 
\end{example}
\begin{example}
\emph{
Given a Lie group $G$, we consider the action groupoid $G\times \mathfrak g^* \rightrightarrows \mathfrak g ^*$, with respect to the 
coadjoint action of $G$. By the triviality of the cotangent bundle of $G$, we can identify $G\times \mathfrak g^*$ with $T^* G$ and hence we can endow the action groupoid with the canonical symplectic 
structure of $T^*G$. 
}
\end{example}

\begin{definition} \emph{A \textbf{morphism between symplectic groupoids} $(G,\omega_G)$ and $(H,\omega_H)$ 
is a functor $\phi$ between $G$ and $H$ that is a bijective symplectomorphism on the space of morphisms, that means
\begin{enumerate}
 \item $\phi$ is bijective and $\phi^*(\omega_H)=\omega_G$ (symplectomorphism).
 \item $\phi(s(g))=s(\phi(g)),\, \phi(t(g))=t(\phi(g)),\forall g \in G$ (compatibility with source and target).
\item $\phi(\varepsilon(m))=\varepsilon(\phi(m)),\forall m \in M$  (compatibility with units).
\item $\phi(\mu(g_1,g_2))=\mu(\phi(g_1),\phi(g_2)), \forall g_1,\,g_2 \in G$ (compatibility with multiplication).
\end{enumerate}
Equivalently, a morphism of symplectic groupoids is a functor of Lie groupoids that respects the symplectic structure.}
\end{definition}
\begin{remark}\emph{This makes possible to define the category \mbox{\textbf{SympGrpd}} with objects symplectic groupoids and morphisms the above defined. However, there is an extended version of this category (as a categroid), denoted by  $\mbox{\textbf{SympGrpd}}^{Ext}$ where  morphisms are immersed Lagrangian subgroupoids of the groupoid product. This notion will be explored in detail later, after introducing relational symplectic groupoids}.
\end{remark}
\section {Lie algebroids}
Lie algebroids correspond to infinitesimal versions of Lie groupoids and a natural generalization for Lie algebras.
A pair $(A, \rho)$, where $A$ is a vector bundle over $M$ and $\rho$ (called the anchor map) is a vector bundle morphism from $A$ to $TM$ is called a 
\textit{Lie algebroid} if
\begin{enumerate}
 \item There is Lie bracket $[,]_{A}$ on $\Gamma(A)$ such that the induced map $\rho_{*}\colon \Gamma (A) \to \mathfrak{X}(M)$ is a Lie algebra homomorphism.
\item \textit{Leibniz identity:} \[[X,fY]_{A}=f[X,Y] + \rho_{*}(X)(f)Y, \forall\, X,Y \in \Gamma (A), f \in \mathcal{C}^{\infty}(M).\]
\end{enumerate}
\subsection{Examples}
The following are natural examples of Lie algebroids, and they appear naturally as infinitesimal versions of the Lie groupoids previously discussed.
\begin{example}\emph{(Lie algebras). Any Lie algebra $\mathfrak g$ is a Lie algebroid over a point. 
The anchor map in this case is the projection to the point.}\end{example}
\begin{example}\emph{(Lie algebra bundles). Consider a vector bundle $p: E\to M$ such that each fiber is 
a Lie algebra and in addition, for $x$ in $M$, there is an open set $U$ containing $x$, a Lie algebra $L$ and a homeomorphism
$    \phi\colon U\times L\to p^{-1}(U)$ such that
$\phi_x\colon x\times L \rightarrow p^{-1}(x)$
is a Lie algebra isomorphism.
The Lie bracket on sections for this bundle is given by the fiberwise well defined Lie bracket and the anchor map is $dp$.}
\end{example}
\begin{example}\label{Tangent}\emph{(Tangent bundles). The tangent bundle $TM$ of a manifold $M$ is naturally a Lie algebroid, where the Lie bracket in sections is the usual Lie bracket for vector fields 
and the anchor is the identity map.}
\end{example}
\begin{example}\label{cotangent}\emph{(Cotangent bundle of a Poisson manifold). If $M$ is a Poisson manifold, then $T^*M$ is a Lie algebroid, where $[,]_{T^*M}$ is the Koszul bracket for 1-forms, that is defined 
for exact forms by
$$[df,dg]:=d\{f,g\}, \forall f,g \in \mathcal C ^{\infty}(M),$$
whereas for general 1-forms it is recovered by Leibniz and the anchor map given by
$\Pi^{\#}\colon T^*M \to TM$.}  
\end{example}

To define a morphism of Lie algebroids we consider the complex $\Lambda^{\bullet}A^{*}$, where $A^*$ is the dual bundle and a differential $\delta_A$ is defined by the rules
\begin{enumerate}
 \item \[\delta_A f:= \rho^* df,\,\forall f \in \mathcal{C}^{\infty}(M).\] 
 \item 
\begin{eqnarray*}\langle \delta_A \alpha, X \wedge Y \rangle&:=& -\langle \alpha,\, [X,Y]_A  \rangle + \langle \delta \langle \alpha, X \rangle, Y \rangle\\ &-&\langle \delta \langle \alpha, Y \rangle, X \rangle, \, \forall X,Y \in \Gamma(A), \alpha \in \Gamma (A^*),
\end{eqnarray*}
where $\langle, \rangle$ is the natural pairing between $\Gamma(A)$ and $\Gamma(A^*)$.

\end{enumerate}
\begin{definition}\label{morphism} \emph{
A vector bundle morphism $\varphi\colon A \to B$ is a Lie algebroid morphism if 
\[\delta_A \varphi^*= \varphi^* \delta_B.\]
}
\end{definition}
It is important to observe that any Lie algebroid $(A, \rho)$ determines a (possibly singular) foliation on the manifold $M$ given by the involutive distribution $\mathfrak Im (\rho) \subset TM$. The orbit through $x\in M$ in such foliation will be denoted by $\mathcal O_x$.
\subsection{Lie algebroid of a Lie groupoid}
In the classical Lie theory, it is well known the connection between Lie algebras and Lie groups, i.e. the first ones are an infinitesimal version of the second ones. This can be made more precise by stating the following 
\begin{proposition}
\emph{Let $G$ be a Lie group. Then there is a natural Lie bracket on $T_eG$, where $e$ is the unit of $G$, determined uniquely by the left invariant vector fields on $G$.}
\end{proposition}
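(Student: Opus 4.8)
The plan is to construct the bracket on $T_eG$ by identifying this vector space with the space of left-invariant vector fields on $G$, transporting the Lie bracket of vector fields through this identification, and then checking that the result is independent of the choices made. The main point is that left-invariant vector fields are closed under the Lie bracket, so the bracket restricts to this subspace, and evaluation at $e$ gives the desired linear isomorphism.

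First I would set up the notation: for $g\in G$ let $L_g\colon G\to G$ denote left translation $h\mapsto gh$, which is a diffeomorphism since $G$ is a group in the smooth category. Call a vector field $X\in\mathfrak X(G)$ \emph{left-invariant} if $(L_g)_*X = X$ for every $g\in G$, i.e.\ $(T_hL_g)(X_h) = X_{gh}$ for all $g,h$. The key observation is that for any $v\in T_eG$ there is a unique left-invariant vector field $X^v$ with $X^v_e = v$, defined by $X^v_g := (T_eL_g)(v)$; smoothness of $X^v$ follows from smoothness of the multiplication map $G\times G\to G$. This sets up a linear bijection $T_eG \to \{\text{left-invariant vector fields}\}$, $v\mapsto X^v$, with inverse $X\mapsto X_e$.

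Next I would show that the Lie bracket of vector fields preserves left-invariance: if $X$ and $Y$ are left-invariant, then $(L_g)_*[X,Y] = [(L_g)_*X,(L_g)_*Y] = [X,Y]$, using the general fact that pushforward along a diffeomorphism is a Lie algebra homomorphism on vector fields (cf.\ Example \ref{Tangent}, where $\mathfrak X(M)$ carries the usual Lie bracket). Hence $[X^u, X^v]$ is again left-invariant, and I define
\[
[u,v]_{T_eG} := [X^u, X^v]_e .
\]
Bilinearity, skew-symmetry and the Jacobi identity for $[\,\cdot\,,\,\cdot\,]_{T_eG}$ are inherited from the corresponding properties of the Lie bracket of vector fields via the linear isomorphism $v\mapsto X^v$. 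Uniqueness is immediate: any bracket on $T_eG$ for which $v\mapsto X^v$ is a Lie algebra morphism into $(\mathfrak X(G),[\,\cdot\,,\,\cdot\,])$ must agree with this one, and the left-invariant vector fields are exactly the image of this map, so the phrase ``determined uniquely by the left-invariant vector fields'' is justified.

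The step I expect to require the most care is verifying that $v\mapsto X^v$ genuinely lands in smooth vector fields and that the bracket of two left-invariant fields is computed pointwise at $e$ in a well-defined way — but both reduce to smoothness of group multiplication and the naturality of the Lie bracket under diffeomorphisms, so there is no real obstacle; the argument is essentially a bookkeeping exercise once the identification $T_eG \cong \{\text{left-invariant vector fields}\}$ is in place.
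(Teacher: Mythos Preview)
Your argument is correct and is the standard textbook proof: establish the linear isomorphism $v\mapsto X^v$ between $T_eG$ and left-invariant vector fields, check closure under the Lie bracket via naturality of pushforward, and transport the bracket back to $T_eG$.

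Note, however, that the paper does not actually prove this proposition. It is stated without proof as a classical fact (with a reference to \cite{Duistermaat} earlier in the introduction for the Lie third theorem), serving only as motivation for the subsequent generalization to Lie groupoids. So there is no ``paper's own proof'' to compare against; your write-up supplies exactly the argument the paper takes for granted.
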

This sometimes is called \emph{differentiation} of Lie groups and it has a natural generalization for Lie groupoids, as follows.\\
Given a Lie groupoid $G \rightrightarrows M$, with source and target denoted by $s$ and $t$, respectively, we call a vector field $X$ on $G$ left invariant if it satisfies the following properties:
\begin{enumerate}
\item $X$ is tangent to the $t$-fibers (these are defined as $t^{-1}(x), \forall x \in M$).
\item For all $g \in G$, $X$ is invariant with respect to the left action of $G$ on itself, i.e. with respect to the (right) left translation
$$Lg\colon h\mapsto \mu(g,h) $$
that maps $t^{-1}(s(g))$ to $t^{-1}t(g), \, \forall g \in G$.
\end{enumerate}
The definition for right invariant vector field is analogous. 
Denoting by $\mathfrak X_L(G)$ the space of smooth left invariant vector fields on $G$, it is easy to check that $\mathfrak X_L(G)$ is a Lie subalgebra of $\mathfrak X(G)$.\\
Now, let us denote by $\mbox{Lie} (G)$ the vector bundle over $M$ with fibers corresponding to the tangent spaces to the $t-$ fibers,
$$\mbox{Lie} (G)_x= T_x (t^{-1}(x)),\, \forall x \in M.$$
By using left translation, the space of sections of $\mbox{Lie} (G)$ can be uniquely identified with the space of left invariant vector fields of $G$, therefore, it can be equipped with a Lie algebra structure. In addition, we have the map $ds\colon \mbox{Lie}(G) \to TM$, the differential of the source map and it can be checked that $\mbox{Lie} (G)$ with the previously defined Lie bracket on sections and $ds$ as anchor map is a Lie algebroid over $M$.\\
With this definition, it is possible to observe that, for example:
\begin{example}
\emph{ $\mbox{Lie} (G)=\mathfrak g$, where $G$ is a Lie group and $\mathfrak g$ is its associated Lie algebra}
\end{example}
\begin{example}
\emph{ $\mbox{Lie} (M \times M \rightrightarrows M)= TM$, where $M \times M \rightrightarrows M$ is the pair groupoid and $TM$ is the tangent bundle regarded as a Lie algebroid (Example \ref{Tangent}).}
\end{example}
\begin{example}
\emph{$\mbox{Lie}((G, \omega)\rightrightarrows M)= T^{*}(M)$, where $(G, \omega)\rightrightarrows M$ is a symplectic groupoid and $T^*(M)$ is the cotangent bundle of the Poisson manifold $(M, \Pi)$, regarded as a Lie algebroid.}
\end{example}

\section{Integrability of Lie algebroids}
Considering the differentiation procedure described above for Lie groupoids as a functor  (denoted by Diff), from the category $\mbox {\textbf{Lie Grpd}}$ to $\mbox{\textbf {LieAlgbd}}$ (this is in fact possible since in fact the differentiation procedure is functorial), a natural question is whether there exists an \emph{integration} functor from $\mbox{\textbf {LieAlgbd}}$ to $\mbox {\textbf{Lie Grpd}}$. Rephrasing this we can ask the following question
\begin{itemize}
\item Given a Lie algebroid $A$ over $M$, is there a Lie groupoid $G\rightrightarrows M$ such that Diff($G$)=$A$?
\end{itemize}
The idea of the integrability for Lie algebroid is to recover the groupoid $G$ out of the algebroid $A$ (for further details, see \cite{Crai, CraiNotes}.) 
First let start with a groupoid $G \rightrightarrows M$ that is connected and has simply connected s-fibers and $A= \mbox{Diff}(G)$. 
We will recover $G$ from the algebroid $A$. For this, we introduce the notions of $G$-paths and $A$-paths.
\begin{definition} \emph{ A  \emph{G-path} is  map $g\colon I=[0,1] \to G$ of type $\mathcal C^2$,  such that the following conditions are satisfied:
\begin{itemize}
\item $g(0)= \varepsilon(x)$, 
for some $x \in M.$
\item $s(g(\tau))=x, \forall \tau \in I.$
\end{itemize}
}
\end{definition}
The infinitesimal version of a \emph{G}-path is what we call an $A$-path, defined in general as follows

\begin{definition}\emph{
Let $\mathcal{(A, \rho, []_A)}$ be an algebroid, with projection $\pi\colon A \longrightarrow M$ and anchor $\rho$. A $\mathcal C ^1$-path $a\colon I \longrightarrow 
A$ is an $A$-path if
\begin{equation}
\rho(a(t))=\frac{d}{dt}(\pi(a(t)),\forall t\in I.
\end{equation}
This is equivalent to saying that $a\, dt\colon TI \longrightarrow A$ is a Lie algebroid morphism covering 
$\gamma:=\pi \circ a\colon I \longrightarrow M$. An $A$- path is called trivial if $a(t)=p, \forall t\in I$, for some $p \in M$.}
\end{definition}
More precisely, the $A$-path associated to a $G$-path $g$ is given by
\begin{equation}
a(\tau)= g(\tau)^{-1} \frac{dg}{d\tau}(\tau). 
\end{equation}
Now, given two $G$-paths $g$ and $g^{'}$ such that $g(0)=g^{'}(0)$, there exists a homotopy $g_{\rho}$ of $G$-paths with fixed end points 
on the same $s$-fiber \footnote{Here we use the fact that the $s$- fibers are simply connected} with $g_0=g$ and $g_1= g^{'}.$\\

The infinitesimal version of such homotopy gives rise to a family $a_{\rho}$ of $A$-paths. We can define an equivalence $\sim$
of $A$-paths by declaring that $a$ is equivalent to $a^{'}$ if the corresponding $G$-paths have the same initial and terminal point. Then we get \cite{Crai, CraiNotes} that
\begin{equation}
G=\mbox{\emph{A}-paths}/ \sim.
\end{equation}

Now, the plan is to give a description of such equivalence in terms of only the Lie algebroid $A$. 
 In order to do this, we consider a family $a_{\rho}$, of class $\mathcal C^2$ 
with respect to the parameter $\rho$, consisting of $A$-paths. Now, we denote by $\gamma_{\rho}:= \pi \circ a_{\rho}$ as the family 
of paths on the base manifold $M$. In the case where $a_{\rho}$ is the differentiation of a $G$-homotopy $g_{\rho}$, then 
\begin{equation}
\gamma_{\rho}(\tau)= s(g_{\rho}(\tau)). 
\end{equation}
Let us assume also that $\gamma_{\rho}(0)= \gamma_0(0)$ and that $\gamma_{\rho}(1)= \gamma_0(1), \forall \gamma \in I$.
We consider s family $\xi_{\rho}$ of $\mathcal C^2$- time dependent sections of $A$ satisfying
\begin{equation}
\xi_{\rho}(\tau, \gamma_{\rho}(\tau))= a_{\rho}(\tau) 
\end{equation}
and we define
\begin{equation}
b(\rho, \tau):=\int_0^{\tau} \phi_{\xi_{\rho}}^{\tau, r} \frac{d\xi_{\rho}}{d\rho}(r, \gamma_{\rho}(r))dr, 
\end{equation}
where here $\phi_{\xi_{\rho}}^{\tau, r}$ denotes the flow in $A$ generated by the section $\xi_{\rho}$.
It can be proven \cite{Crai} that $b$ does not depend on the choice of the time dependent sections $\xi_{\rho}$, 
it only depends on $a_{\rho}$.
Explicitely, we get that
\begin{equation}
b(\rho, \tau)= g_{\rho}(\tau)^{-1} \frac{dg_{\rho}}{d\rho}. 
\end{equation}
Moreover if the family of $G$-paths $g_{\rho}$ is a $G$-homotopy, then the following equation holds 
\begin{equation}\label{Apathcondition}
b(\rho,1)=0, \forall \rho \in I. 
\end{equation}

This suggests the following two equivalent definitions for $A$-homotopy
\begin{definition} \label{Ahomotopy1}
 \emph{\cite{Crai}. Two $A$- paths $a$ and $a^{'}$  are $A$-homotopic if $\pi\circ a(0)= \pi\circ a^{'}(0),\, \pi\circ a(1)= \pi\circ a^{'}(1) $ and they are connected by a $\mathcal C^2$-family of $A$-paths $a_{\rho}$ such that $a_0=a, a_1= a^{'}$  and Equation \ref{Apathcondition} holds.}
\end{definition}

\begin{definition}\label{Ahomotopy2} \emph{\cite{Crai}. Two $A$- paths $a$ and $a^{'}$ are called \emph{$A$-homotopic}  if there exists a Lie algebroid morphism 
$g\colon T\square \longrightarrow \mathcal{A}$ (where $\square$ is a square with four distinguished boundary components $I_j, \, 0\leq j \leq 3$ such that $I_0$ and $I_2$ are horizontal) in such way that,
restricting to $\partial \square$ we have $g\vert_{TI_0}=a_0,\, \pi\circ(g \vert_{TI_1})\equiv a(0),\,
\pi\circ(g \vert_{TI_3})\equiv a(1)$ and $g\vert_{TI_2}=a_1$.
}
\end{definition} 

Following these definitions we define 
\begin{equation}
G=\mbox{\emph{A}-paths}/\mbox{\emph{A}-homotopy}.
\end{equation}
\begin{proposition}
\emph{$G$ is in general a topological groupoid and in the integrable case, is the s-fiber simply connected Lie groupoid integrating the Lie algebroid $A$.}
\end{proposition}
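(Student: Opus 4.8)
The plan is to first check that $G=\{A\text{-paths}\}/A\text{-homotopy}$ is a topological groupoid over $M$ for an arbitrary Lie algebroid $A$, and then, in the integrable case, to upgrade the construction to a Lie groupoid and identify it with the source-simply-connected integration of $A$. I would work on the space $P(A)$ of $A$-paths of class $\mathcal C^1$, a Banach manifold (a submanifold of the $\mathcal C^1$-path space of $A$), and define there all the structure maps: $\underline s(a):=\pi(a(0))$ and $\underline t(a):=\pi(a(1))$ for $\pi\colon A\to M$; the unit $\varepsilon(x)$ the trivial $A$-path at $x$; the inverse $\bar a(t):=-a(1-t)$; and the partial multiplication by concatenation of $a$ and $a'$, defined when $\underline t(a)=\underline s(a')$. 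Since the naive concatenation of $\mathcal C^1$-paths need not be $\mathcal C^1$, I would first replace $a$ by the $A$-homotopic representative $a^{\tau}(t):=\dot\tau(t)\,a(\tau(t))$ with $\tau$ smooth, non-decreasing and flat at $0$ and $1$ — that $a^{\tau}\sim a$ is a lemma of \cite{Crai} — so that concatenation of flat representatives is again a $\mathcal C^1$ $A$-path. The first point to verify is that $\underline s,\underline t,\varepsilon$, inversion and multiplication descend to $G=P(A)/\!\!\sim$: $\underline s$ and $\underline t$ are constant on $A$-homotopy classes because an $A$-homotopy fixes base endpoints (Definition \ref{Ahomotopy1}), and reversal and concatenation carry $A$-homotopies to $A$-homotopies, which one sees either by reversing, respectively concatenating, the connecting families $a_{\rho}$ while checking that Equation \ref{Apathcondition} is preserved, or conceptually by gluing the morphisms $T\square\to A$ of Definition \ref{Ahomotopy2}.

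Next I would verify axioms A.1--A.6 of Definition \ref{groupoid} at the level of $A$-homotopy classes. A.1 and A.2 are immediate. The unit laws (A.3), inverse laws (A.4--A.5) and associativity (A.6) hold only up to reparametrization, so for each I would exhibit the explicit reparametrization $A$-homotopy (as a family $a_{\rho}$, or as a morphism $T\square\to A$) realizing $\varepsilon_{\underline s(a)}\ast a\sim a\sim a\ast\varepsilon_{\underline t(a)}$, $(a\ast a')\ast a''\sim a\ast(a'\ast a'')$ and $a\ast\bar a\sim\varepsilon_{\underline s(a)}$. For the topology I would give $P(A)$ the $\mathcal C^1$-topology and $G$ the quotient topology; then $\underline s$ and $\underline t$ are continuous and open, $\varepsilon$ and inversion are continuous, and concatenation is continuous on $P(A)\times_{M}P(A)$, hence on $G\times_{M}G$, so that $G\rightrightarrows M$ is a topological groupoid.

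In the integrable case I would invoke the Crainic--Fernandes criterion \cite{Crai}: $A$ is integrable precisely when its monodromy groups are uniformly discrete, and this is exactly the condition that makes the quotient map $P(A)\to G$ a smooth submersion onto a Hausdorff smooth manifold with $\underline s$ a surjective submersion, so $G\rightrightarrows M$ is then a Lie groupoid. To identify $\mathrm{Lie}(G)$ with $A$ I would use the isomorphism $A_x\cong T_{\varepsilon(x)}\underline s^{-1}(x)$ obtained by differentiating a family of $A$-paths through the trivial one, and check that it intertwines the source differential with the anchor $\rho$ and the bracket of left-invariant vector fields on $G$ with $[\cdot,\cdot]_A$, the relevant computation being precisely the one encoded in the integral formula for $b(\rho,\tau)$ recalled above. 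Simple connectedness of the $\underline s$-fibers follows by comparison: if $H\rightrightarrows M$ is any Lie groupoid integrating $A$, then $A$-paths over $x$ correspond to $\mathcal C^1$-paths in the $s$-fiber of $H$ over $x$ issuing from the unit and $A$-homotopy to homotopy rel endpoints of such paths, so $\underline s^{-1}(x)$ is the universal cover of that $s$-fiber — in particular connected and simply connected; by uniqueness of the source-simply-connected integration of a Lie algebroid, $G$ is then the one integrating $A$. The genuinely delicate (though routine) work is the bookkeeping of the second and third paragraphs — compatibility of concatenation and reversal with $A$-homotopy and the explicit homotopies for the unit, inverse and associativity axioms, notably the contraction $a\ast\bar a\sim\varepsilon$, which reappears in the simple-connectedness argument — while the one substantial external input, the smoothness of $G$ when $A$ is integrable, is borrowed from \cite{Crai}.
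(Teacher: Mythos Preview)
Your proposal is correct and follows essentially the same approach as the paper: define the structure maps $s,t,\varepsilon,\mu,\iota$ on $A$-paths, check that concatenation of $\mathcal C^1$ $A$-paths is well-defined up to $A$-homotopy via a reparametrization trick, and defer the smoothness in the integrable case to \cite{Crai}. The paper's own proof is in fact far terser than yours---it merely writes down the structure maps and cites \cite{Crai} and \cite{Cat} for the well-definedness of multiplication, without spelling out the axiom verifications, the topology, or the integrable case at all---so your more detailed account is a faithful expansion of what the paper leaves implicit.
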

\begin{proof}
The topological \footnote{Here we consider the uniformly convergence topology for the space of $A$-paths.} groupoid structure $G \rightrightarrows M$ is given by 
\begin{eqnarray*}
s([a])&=&a(0)\\
t([a])&=& a(1)\\
\varepsilon(x)&=& [a \vert \pi\circ a\equiv x ]\\
\mu([a_1],[a_2])&=&[a_1* a_2], \, ([a_1],[a_2]) \in G\times_{(s,t)} G\\
\iota ([a])&=&[a^{-1}].
\end{eqnarray*}
One should prove that the multiplication (where here * denotes path concatenation) is well defined with respect to $A$- homotopy. Independently in \cite{Crai} and \cite{Cat} \footnote{Here it is proven for the case where $A= T^*M$ but the argument can be adapted for general algebroids} it is proven that the concatenation of two $A$-paths of class $\mathcal C^1$ is homotopic to a $\mathcal C^1$-class $A$-path.
\end{proof}
This is sometimes called in the literature the \emph{Weinstein groupoid} of $A$ \cite{Crai}.

\begin{figure}
\psfrag{Rq}{$\mathbb{R}^q$}
\centering%
\center{\includegraphics[scale=0.55]{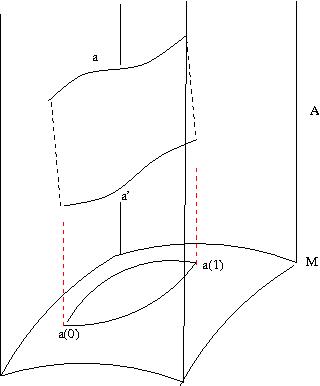}}
\caption{A-homotopy of A-paths.}
\label{fig:FigureExample}
\end{figure}

\subsection{Obstruction for integrability}Now we describe the integrability obstructions for Lie algebroids, following the work of Crainic and Loja Fernandes \cite{Crai}.

Let us consider  $\mathfrak{g}_x:= \ker\rho$, 
called the \emph{isotropy Lie algebra} 
of $A$ at the point $x \in M$ and let $G(\mathfrak{g}_x)$ be the simply connected Lie group integrating $\mathfrak{g}_x$, defined as

\begin{equation}
G(\mathfrak{g}_x)=\mathfrak{g}_x\mbox{-paths}/\mathfrak{g}_x\mbox{-homotopy}.
\end{equation}
Since the space of $\mathfrak g_x$-paths is naturally included in the space of $A$-paths (as $A$- paths with constant base path), there is a natural group homomorphism 
\begin{equation}
\alpha_x\colon G(\mathfrak{g}_x)\to G(A).
\end{equation}
Let $\tilde{N}_x(A):= \ker{\alpha_x}$ be the subgroup of $G(\mathfrak{g}_x)$ consisting of the $\mathfrak g_x$-paths that are $A$-homotopy equivalent.
The smoothness of the groupoid $G(A)$ implies that $\tilde{N}_x(A)$ is a discrete subgroup, that is equivalent to the fact that the quotient $G(\mathfrak g_x)/ \tilde{N}_x(A)$ is a Lie group with Lie algebra $\mathfrak g_x$.\\
There is an alternative way to present the group $\tilde{N}_x(A)$, as the image of the second monodromy group of the leaf $\mathcal O_x$ through $x$. First, we consider an element $[\gamma] \in \pi^2(\mathcal O_x, x)$ such that $\partial \gamma$ is the constant path at $x$ and a Lie algebroid morphism 
\begin{eqnarray}
ad\tau + b d\rho\colon TI \times TI &\to& A\\
u(\rho, \tau) \frac{\partial}{\partial \tau}+v(\rho, \tau) \frac{\partial}{\partial \rho} &\mapsto& u(\rho, \tau)a(\rho, \tau)+ v(\rho, \tau)b(\rho, \tau)
\end{eqnarray}
such that it lifts 
\begin{equation}
d\gamma\colon TI \times TI \to T\mathcal O_x
\end{equation}
via $\rho$ and in addition $a(0,\rho)=b(\rho,0)=b(\rho,1)\equiv 0$. The fact that $\partial\gamma$ is constant implies that the path $a_1(\tau):=a(1,\tau)$ is a $\mathfrak g_x$-path. Let $\partial (\gamma)$ be  the equivalence class under $A$-homotopy of $a_1$ in $G(\mathfrak g_x)$. The following Lemma ensures that $\partial$ is a well defined map
\begin{lemma} \emph{\cite{Crai}
The map $\partial$ defined by 
\begin{equation}
\partial\colon \gamma \to \partial (\gamma)
\end{equation}
depends only on the class $[\gamma] \in \pi_2(\mathcal O_x, x)$ and defines a group homomorphism $\partial\colon \pi^2(\mathcal O_x, x)\to G(\mathfrak g_x)$ such that $\mathfrak{Im}(\partial)= \tilde{N}_x(A).$}
\end{lemma}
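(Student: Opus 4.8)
The plan is to follow the strategy of Crainic--Fernandes \cite{Crai} and prove the statement in two parts: first, that $\partial(\gamma)$ depends only on the homotopy class $[\gamma]\in\pi_2(\mathcal O_x,x)$; second, that $\partial$ is a group homomorphism whose image is exactly $\tilde N_x(A)$.

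For well-definedness, suppose $\gamma_0\simeq\gamma_1$ as elements of $\pi_2(\mathcal O_x,x)$, witnessed by a homotopy $\Gamma\colon I\times I\times I\to\mathcal O_x$ with the appropriate boundary behaviour. I would lift $d\Gamma$ to a Lie algebroid morphism $T I^3\to A$ of the form $a\,d\tau+b\,d\rho+c\,ds$ covering $\Gamma$ via the third parameter, arranging the normalization conditions $a(0,\rho,s)=b(\rho,0,s)=b(\rho,1,s)=c(0,\rho,s)\equiv 0$ on the relevant faces; such a lift exists because $I^3$ is contractible and $A$-morphisms out of tangent bundles of contractible manifolds with prescribed boundary data can be constructed (this is the standard lifting lemma for $A$-paths/$A$-homotopies in \cite{Crai}). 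Restricting to $s=0$ and $s=1$ recovers the data defining $\partial(\gamma_0)$ and $\partial(\gamma_1)$, while the restriction to the edge where the two ``vertical'' parameters are extremal produces a $\mathfrak g_x$-homotopy between the corresponding $\mathfrak g_x$-paths $a_1(\cdot)$; hence $\partial(\gamma_0)=\partial(\gamma_1)$ in $G(\mathfrak g_x)$. The same lifting argument, applied to the juxtaposition of two representatives, shows that $\partial([\gamma]\cdot[\gamma'])=\partial([\gamma])\partial([\gamma'])$, using that concatenation of $\mathfrak g_x$-paths descends to multiplication in $G(\mathfrak g_x)$.

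For the identification $\mathfrak{Im}(\partial)=\tilde N_x(A)$, recall $\tilde N_x(A)=\ker\big(\alpha_x\colon G(\mathfrak g_x)\to G(A)\big)$, i.e.\ the classes of $\mathfrak g_x$-paths that become trivial as $A$-homotopy classes. For the inclusion $\mathfrak{Im}(\partial)\subseteq\tilde N_x(A)$: given $[\gamma]\in\pi_2(\mathcal O_x,x)$ with lift $a\,d\tau+b\,d\rho$, the family $\{a(\rho,\cdot)\}_{\rho\in I}$ is by construction an $A$-homotopy (the condition $b(\rho,0)=b(\rho,1)\equiv 0$ is exactly Equation \ref{Apathcondition}) connecting the $\mathfrak g_x$-path $a_1=a(1,\cdot)$ to the trivial $A$-path $a(0,\cdot)\equiv 0$; hence $\alpha_x(\partial[\gamma])=0$. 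Conversely, if a $\mathfrak g_x$-path $a_1$ represents an element of $\ker\alpha_x$, there is an $A$-homotopy $a_\rho$ from $a_1$ to the trivial path with fixed (constant) endpoints; feeding $\gamma_\rho:=\pi\circ a_\rho$ into the picture, one checks $\gamma_\rho$ sweeps out a sphere in $\mathcal O_x$ based at $x$, and unwinding the construction of $\partial$ shows $\partial[\gamma_\bullet]=[a_1]$, so $[a_1]\in\mathfrak{Im}(\partial)$.

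The main obstacle I expect is the lifting lemma used throughout: constructing Lie algebroid morphisms $TI^k\to A$ with prescribed restrictions to parts of the boundary, and verifying that the construction of $b$ (via the flow integral, Equation for $b(\rho,\tau)$) is independent of the auxiliary choice of time-dependent sections $\xi_\rho$. This independence — already invoked in the excerpt — is what makes $\partial$ insensitive to the many choices (the lift, the sections, the representative of $[\gamma]$), and carrying it out carefully across the three-parameter homotopy is the technical heart; everything else is bookkeeping with concatenations and the normalization conditions on boundary faces. I would lean on \cite{Crai} for the precise form of these lemmas rather than reproving them.
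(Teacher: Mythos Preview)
The paper does not give its own proof of this lemma: it is stated with the citation \cite{Crai} and immediately followed by the definition of $N_x(A)$, so there is nothing in the paper to compare your argument against. Your sketch is a faithful outline of the Crainic--Fernandes argument (lifting homotopies of spheres to $A$-morphisms $TI^k\to A$ with controlled boundary data, then reading off the $\mathfrak g_x$-homotopy from the appropriate face), and the two inclusions for $\mathfrak{Im}(\partial)=\tilde N_x(A)$ are set up correctly; the only substantive content you defer---existence of the lifts and independence of $b$ from the choice of $\xi_\rho$---is exactly what the cited reference supplies.
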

We denote by $N_x(A)$ the subset of elements in $\mathfrak g_x$ defined by
\begin{equation}
N_x(A):= \{v \in \mathfrak g_x  \vert v \sim x\},
\end{equation}

where here $v$ is regarded as a constant $A$-path, $x$ is the trivial $A$-path over $x$ and $\sim$ denotes $A$- homotopy equivalence.
It is proven that
\begin{lemma}\emph{\cite{Crai}.
There is a group isomorphism between $N_x(A)$ and $\tilde{N}_x(A) \cap Z^0(G(\mathfrak g_x))$, where $Z^0(G(\mathfrak g_x))$ denotes the connected component of the identity in the center $Z(G(\mathfrak g_x))$ of $G(\mathfrak g_x)$.}
\end{lemma}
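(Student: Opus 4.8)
The plan is to reduce the statement to understanding the exponential map of $G(\mathfrak{g}_x)$ on the center of the isotropy Lie algebra. First I would identify, under the description $G(\mathfrak{g}_x)=\mathfrak{g}_x\mbox{-paths}/\mathfrak{g}_x\mbox{-homotopy}$, the class of the constant $\mathfrak{g}_x$-path $c_v\colon t\mapsto v$ with $\exp_{G(\mathfrak{g}_x)}(v)$: integrating $\dot g=(dL_g)(v)$, $g(0)=e$, gives $g(1)=\exp_{G(\mathfrak{g}_x)}(v)$. Since $\alpha_x\colon G(\mathfrak{g}_x)\to G(A)$ is induced by the inclusion of $\mathfrak{g}_x$-paths into $A$-paths (as $A$-paths with constant base), it sends $[c_v]=\exp_{G(\mathfrak{g}_x)}(v)$ to the $A$-homotopy class of $c_v$. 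Hence $v\in N_x(A)$, i.e.\ $c_v$ is $A$-homotopic to the trivial $A$-path over $x$, if and only if $\exp_{G(\mathfrak{g}_x)}(v)\in\ker\alpha_x=\tilde{N}_x(A)$; that is,
\[
N_x(A)=\exp_{G(\mathfrak{g}_x)}^{-1}\!\big(\tilde{N}_x(A)\big),
\]
and the candidate isomorphism is $v\mapsto\exp_{G(\mathfrak{g}_x)}(v)=[c_v]$.

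Second I would record the structural fact that tames this exponential. Since $G(\mathfrak{g}_x)$ is connected, the Lie algebra of $Z(G(\mathfrak{g}_x))$ is the center $Z(\mathfrak{g}_x)$ of $\mathfrak{g}_x$ and $Z^0(G(\mathfrak{g}_x))=\exp_{G(\mathfrak{g}_x)}(Z(\mathfrak{g}_x))$. Moreover $G(\mathfrak{g}_x)$ is simply connected, so applying the long exact homotopy sequence to $Z^0(G(\mathfrak{g}_x))\hookrightarrow G(\mathfrak{g}_x)\to G(\mathfrak{g}_x)/Z^0(G(\mathfrak{g}_x))$ and using $\pi_1=\pi_2=0$ for Lie groups yields $\pi_1(Z^0(G(\mathfrak{g}_x)))=0$. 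Thus $Z^0(G(\mathfrak{g}_x))$ is a connected, simply connected, abelian Lie group, so $\exp_{G(\mathfrak{g}_x)}$ restricts to a Lie group isomorphism $Z(\mathfrak{g}_x)\xrightarrow{\ \sim\ }Z^0(G(\mathfrak{g}_x))$; in particular it is injective on $Z(\mathfrak{g}_x)$.

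The one genuinely nontrivial input — and the step I expect to be the main obstacle — is the inclusion $N_x(A)\subseteq Z(\mathfrak{g}_x)$, equivalently the fact that the monodromy map $\partial$ of the previous lemma takes values in $\exp_{G(\mathfrak{g}_x)}(Z(\mathfrak{g}_x))$. I would argue as in \cite{Crai}: given $v\in N_x(A)$ and $w\in\mathfrak{g}_x$, conjugate a fixed $A$-homotopy from $c_v$ to the trivial $A$-path by the $A$-path $c_{sw}$ — using that $A$-homotopy classes of $A$-paths based at $x$ form the isotropy group of the Weinstein groupoid — to obtain, for every $s$, an $A$-homotopy $c_{\mathrm{Ad}_{\exp(sw)}v}\sim c_v$; hence the whole orbit $s\mapsto\mathrm{Ad}_{\exp(sw)}v$ lies in $N_x(A)$. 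Combining this with the displayed identity $N_x(A)=\exp_{G(\mathfrak{g}_x)}^{-1}(\tilde{N}_x(A))$ and the explicit description of $A$-homotopies by Lie algebroid morphisms $T\square\to A$ (Definition \ref{Ahomotopy2}) — in particular that $\tilde{N}_x(A)=\mathrm{Im}(\partial)$ is small enough that such an orbit must reduce to a single point — one concludes $[v,w]=0$ for all $w$, i.e.\ $v\in Z(\mathfrak{g}_x)$. I would state and cite this as a separate lemma.

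Granting $N_x(A)\subseteq Z(\mathfrak{g}_x)$, the conclusion is immediate: from the displayed identity, and because $\exp_{G(\mathfrak{g}_x)}(Z(\mathfrak{g}_x))=Z^0(G(\mathfrak{g}_x))$, we get
\[
N_x(A)=\big(\exp_{G(\mathfrak{g}_x)}|_{Z(\mathfrak{g}_x)}\big)^{-1}\!\big(\tilde{N}_x(A)\cap Z^0(G(\mathfrak{g}_x))\big).
\]
Since $\exp_{G(\mathfrak{g}_x)}|_{Z(\mathfrak{g}_x)}$ is a group isomorphism onto $Z^0(G(\mathfrak{g}_x))$, it carries this preimage isomorphically onto $\tilde{N}_x(A)\cap Z^0(G(\mathfrak{g}_x))$; in particular $N_x(A)$ is an additive subgroup of $\mathfrak{g}_x$ and $v\mapsto[c_v]$ is the asserted isomorphism, which via the previous lemma also identifies $N_x(A)$ with $\mathrm{Im}(\partial)\cap Z^0(G(\mathfrak{g}_x))$.
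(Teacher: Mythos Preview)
The paper does not give its own proof of this lemma; it merely states it with a citation to \cite{Crai}. So there is nothing to compare against, and your write-up already goes well beyond what the paper does.

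Your overall architecture is correct and is essentially the argument of Crainic--Fernandes: identify the class of the constant path $c_v$ with $\exp_{G(\mathfrak g_x)}(v)$, obtain $N_x(A)=\exp^{-1}(\tilde N_x(A))$, show that $\exp$ restricts to a group isomorphism $Z(\mathfrak g_x)\to Z^0(G(\mathfrak g_x))$ (your use of $\pi_2=0$ for Lie groups and the long exact sequence is the clean way to see simple connectedness of $Z^0$), and then conclude. All of that is fine.

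The one place where your sketch does not close is the centrality step $N_x(A)\subseteq Z(\mathfrak g_x)$. Your conjugation argument correctly shows that for $v\in N_x(A)$ the entire adjoint orbit $\{\mathrm{Ad}_{\exp(sw)}v\}$ lies in $N_x(A)$; but the sentence ``$\tilde N_x(A)=\mathrm{Im}(\partial)$ is small enough that such an orbit must reduce to a single point'' is not an argument --- a priori $N_x(A)$ could be a positive-dimensional subset of $\mathfrak g_x$, and discreteness is precisely one of the integrability obstructions you are not allowed to assume here. In \cite{Crai} centrality is obtained differently: one shows directly that $\mathrm{Im}(\partial)\subseteq Z(G(\mathfrak g_x))$ via a homotopy argument with the map $\partial\colon\pi_2(\mathcal O_x,x)\to G(\mathfrak g_x)$ (roughly, an element coming from a $2$-sphere can be slid past any $\mathfrak g_x$-loop, an Eckmann--Hilton type statement made precise using the Lie algebroid morphism description of $A$-homotopies). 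Since you already plan to isolate this as a cited lemma, that is acceptable --- just be aware that your conjugation sketch, as written, does not supply the missing implication.
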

Now, we are able to describe the integrability conditions for Lie algebroids. For that we need the notion of \emph{locally uniform discreteness} of the groups $N_x(A)$. We fix a metric $d$ on $A$ that is continous along the $A$-fibers and we define the function $r(x)$ by 
\begin{equation}
r(x) = \left\{ \begin{array}{cc}
\infty &\mbox{ if $N_x(A)=0$} \\
 d(0, N_x(A)- \{0\}) &\mbox{ otherwise}
       \end{array} \right.
\end{equation}
\begin{theorem}\label{Integrability}\emph{(Crainic- Fernandes \cite{Crai}). A Lie algebroid $A$ is integrable if and only if the two following conditions hold
\begin{enumerate}
\item The group $N_x(A)$ is discrete
\item $\lim \inf_{y \to x} r(y)>0$ \emph{(uniform discreteness)}.
\end{enumerate}}
\end{theorem}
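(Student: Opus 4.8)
The plan is to realize the Weinstein groupoid $G(A) = P(A)/{\sim}$ as the leaf space of a foliation on a Banach manifold and to read off smoothness from the local structure of that foliation near the unit section. First I would set up $P(A)$, the space of $A$-paths of class $C^1$ (or a suitable Sobolev completion), as a Banach manifold: it is cut out inside the Banach manifold of all $C^1$ paths $I\to A$ by the first-order equation $\rho(a(t))=\frac{d}{dt}\pi(a(t))$, which one checks is a submersion onto its image. On $P(A)$ there is a right action of the gauge group of time-dependent sections (equivalently, paths in the group of local bisections) whose orbits are precisely the $A$-homotopy classes; the transgression formula for $b(\rho,\tau)$ recalled above shows that the tangent distribution to the orbits is exactly $\{$variations with $b(\cdot,1)\equiv 0\}$, an integrable distribution $\mathcal{F}_A$ of finite codimension $\dim M + \operatorname{rank}A$. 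Thus $G(A)=P(A)/\mathcal{F}_A$ as a set, with the topological groupoid structure already exhibited in the excerpt.

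Second, I would compute the holonomy of $\mathcal{F}_A$ along the leaf of the trivial $A$-path over a point $x$. The key input is the monodromy homomorphism $\partial\colon \pi_2(\mathcal{O}_x,x)\to G(\mathfrak g_x)$ with image $\widetilde N_x(A)$, together with its infinitesimal version identifying $N_x(A)\simeq \widetilde N_x(A)\cap Z^0(G(\mathfrak g_x))$. A transversal slice to $\mathcal{F}_A$ at the trivial path, quotiented by the leaf-wise identifications, is modelled on a neighbourhood of $0$ in $\mathbb{R}^{\dim M + \operatorname{rank}A}$ in which the vectors of $N_x(A)$ (sitting inside the isotropy direction $\mathfrak g_x$) get collapsed. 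Hence the leaf space is locally a manifold near $[\varepsilon(x)]$ precisely when $N_x(A)$ is discrete, and the whole foliation is simple near the unit section only if in addition the discreteness is locally uniform, i.e. $\liminf_{y\to x} r(y)>0$; away from the unit section no new obstruction appears because the isotropy Lie algebra is locally constant along a leaf.

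Third, I would prove the two implications. For "$G(A)$ integrable $\Rightarrow$ the two conditions": if $G(A)$ is a Lie groupoid then $\mathcal{F}_A$ is a simple foliation, so its holonomy groups vanish, forcing each $N_x(A)$ discrete; and if $r(y)$ failed to stay bounded away from $0$ as $y\to x$ one produces transversal slices of non-uniform size whose quotients fail to stabilize to a finite-dimensional manifold (equivalently, a sequence in $G(A)$ with non-Hausdorff limiting behaviour), a contradiction. For the converse, assuming locally uniform discreteness I would build finite-dimensional charts for $G(A)$: the quantitative discreteness provides transversal slices of uniformly bounded size on which $\mathcal{F}_A$ induces a trivial equivalence relation, these glue to a smooth atlas, and one checks directly that $s,t,\mu,\iota,\varepsilon$ are smooth and $\operatorname{Lie}(G(A))=A$. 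A cleaner route for the converse is to first integrate each leaf $\mathcal{O}_x$ (a transitive algebroid, integrable exactly when its monodromy is discrete) and then use uniform discreteness to patch these integrations into a single smooth groupoid over all of $M$.

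The main obstacle is the converse (hard) direction: it is genuine infinite-dimensional analysis to show that $\mathcal{F}_A$ really is a foliation with Banach-submanifold leaves, to manufacture transversals of uniform size out of the quantitative hypothesis $\liminf r(y)>0$, and to verify that the resulting leaf space is a finite-dimensional smooth manifold on which the groupoid operations are smooth. The computation of the holonomy in terms of $N_x(A)$ in Step 2 is the conceptual heart of the argument; everything else is delicate but routine bookkeeping with the transgression map and Banach-manifold transversality.
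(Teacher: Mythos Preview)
The paper does not prove this theorem. It is stated as a quotation of the main result of Crainic and Fernandes \cite{Crai} and is immediately followed by further statements (the monodromy formula) also attributed to that reference, with no argument supplied. So there is no ``paper's own proof'' to compare your proposal against.

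That said, your outline is essentially the strategy of the original Crainic--Fernandes paper: realize $P(A)$ as a Banach manifold, exhibit $A$-homotopy as a finite-codimension foliation $\mathcal{F}_A$, identify the monodromy groups $N_x(A)$ with the obstruction to the leaf space being a manifold near the units, and then build charts under the uniform discreteness hypothesis. The sketch is accurate in spirit. A couple of points where you would want to be more careful if you actually wrote this out: the holonomy of $\mathcal{F}_A$ is not literally $N_x(A)$ but rather controls whether a transversal slice injects into the leaf space, and the ``cleaner route'' you mention at the end (integrate leafwise and patch) is not how Crainic--Fernandes proceed and would itself require substantial work to make rigorous, since the leafwise integrations need not vary smoothly without the uniform bound. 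But since the thesis treats this as background and offers no proof, there is nothing further to compare.
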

Moreover, Crainic and Fernandes give a formula that computes explicitly for many cases the monodromy groups and is possible to check the integrability conditions. More concretely, given the surjective map $\rho \mid_{\mathcal O}: A_{\mathcal O}\to T \mathcal O$ and a splitting $\sigma\colon T \mathcal O \to A_{\mathcal O}$, we define the curvature of $\sigma$ by
\begin{equation}
\Omega_{\sigma}(X,Y)= [\sigma(X), \sigma(Y)]- \sigma([X,Y])
\end{equation}
\begin{lemma}\emph{\cite{Crai}. If the image of $\Omega_{\sigma}$lies in $Z(\mathfrak g_{\mathcal O})$, then
\begin{equation}
N_x(A)= \{ \int_{\gamma} \Omega_{\gamma} \mid [\gamma] \in \pi_2(\mathcal O_x,x)\}\subset Z(\mathfrak g_x).
\end{equation}
}
\end{lemma}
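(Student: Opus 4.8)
The plan is to reduce the statement, via the two immediately preceding lemmas, to an explicit evaluation of the monodromy homomorphism $\partial\colon\pi_2(\mathcal O_x,x)\to G(\mathfrak g_x)$. Those lemmas give $\mathrm{Im}(\partial)=\tilde N_x(A)$ and an identification of $N_x(A)$ with $\tilde N_x(A)\cap Z^0(G(\mathfrak g_x))$ obtained by viewing a constant (necessarily central) $A$-path as its class in $G(\mathfrak g_x)$. So it suffices to show that for every $[\gamma]\in\pi_2(\mathcal O_x,x)$ one has $\partial([\gamma])=\int_\gamma\Omega_\sigma$, with the right-hand side an element of $Z(\mathfrak g_x)$; this simultaneously exhibits $\mathrm{Im}(\partial)$ inside $Z^0(G(\mathfrak g_x))$, collapsing the intersection and yielding the displayed formula.

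First I would set up the lift of $d\gamma$ using $\sigma$. Write $A_{\mathcal O}=\sigma(T\mathcal O)\oplus\mathfrak g_{\mathcal O}$ and let $\nabla^\sigma$ be the connection $\nabla^\sigma_X\xi:=[\sigma(X),\xi]$ on $\mathfrak g_{\mathcal O}$; its curvature is $[\Omega_\sigma(X,Y),\,\cdot\,]$, which vanishes by the centrality hypothesis, so $\nabla^\sigma$ is flat. Pulling everything back along $\gamma\colon I^2\to\mathcal O_x$ (which is contractible, and collapses $\partial I^2$ to $x$), the bundle $\gamma^*\mathfrak g_{\mathcal O}$ acquires a canonical trivialization by $\nabla^\sigma$-parallel sections, identifying it with $I^2\times\mathfrak g_x$, under which $\gamma^*\Omega_\sigma$ becomes an ordinary $\mathfrak z$-valued $2$-form $\Omega_\sigma(\partial_\rho\gamma,\partial_\tau\gamma)\,d\rho\wedge d\tau$, where $\mathfrak z:=Z(\mathfrak g_x)$. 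Now build the Lie algebroid morphism $a\,d\tau+b\,d\rho\colon T(I^2)\to A$ covering $d\gamma$ by choosing its horizontal part via $\sigma$: put $a:=\sigma(\partial_\tau\gamma)$ (so the $\mathfrak g$-part of $a$ is $0$) and $b:=\sigma(\partial_\rho\gamma)+\eta$ with $\eta$ a section of $\gamma^*\mathfrak g_{\mathcal O}$. Expanding the Lie algebroid morphism equation and using $[\sigma X,\sigma Y]=\sigma[X,Y]+\Omega_\sigma(X,Y)$ together with centrality (so that every bracket of $\mathfrak g$-valued quantities, as well as the $\nabla^\sigma$-curvature corrections, drops out), the equation collapses, in the flat trivialization, to the linear relation $\partial_\rho a-\partial_\tau b=\Omega_\sigma(\partial_\rho\gamma,\partial_\tau\gamma)$ — equivalently a linear first-order ODE for $\eta$ with inhomogeneous term $\Omega_\sigma(\partial_\rho\gamma,\partial_\tau\gamma)$ — and one checks the prescribed boundary conditions $a(0,\cdot)=b(\cdot,0)=b(\cdot,1)\equiv0$ are met (the last two by integrating the ODE from $\tau=0$ and by the collapse of $\partial I^2$ to $x$).

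The last step is a Green's-theorem computation. By construction $\partial([\gamma])=[a_1]\in G(\mathfrak g_x)$ with $a_1(\tau):=a(1,\tau)$, and $a_1$ is a $\mathfrak g_x$-path; in the flat trivialization its values lie in the abelian algebra $\mathfrak z$, so the group element it represents is simply $\int_0^1 a_1(\tau)\,d\tau$ (concatenation becomes addition in the central direction). Using $a(0,\cdot)\equiv0$ and the structure equation,
\[
\int_0^1 a_1(\tau)\,d\tau=\int_0^1\!\!\int_0^1\partial_\rho a(\rho,\tau)\,d\rho\,d\tau=\int_0^1\!\!\int_0^1\big(\partial_\tau b+\Omega_\sigma(\partial_\rho\gamma,\partial_\tau\gamma)\big)\,d\rho\,d\tau,
\]
and the $\partial_\tau b$ term integrates to $\int_0^1\big(b(\rho,1)-b(\rho,0)\big)\,d\rho=0$ by the boundary conditions, leaving $\partial([\gamma])=\int_{I^2}\Omega_\sigma(\partial_\rho\gamma,\partial_\tau\gamma)\,d\rho\,d\tau=\int_\gamma\Omega_\sigma\in\mathfrak z$. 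Letting $[\gamma]$ range over $\pi_2(\mathcal O_x,x)$ gives $\tilde N_x(A)=\{\int_\gamma\Omega_\sigma\}$, and since this sits inside $Z^0(G(\mathfrak g_x))$ the identification $N_x(A)\cong\tilde N_x(A)\cap Z^0(G(\mathfrak g_x))$ from the preceding lemma becomes exactly the claimed equality.

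The step I expect to be the real obstacle is the linearization in the second paragraph: one must verify carefully that, under the single hypothesis $\mathrm{Im}(\Omega_\sigma)\subset Z(\mathfrak g_{\mathcal O})$, every nonlinear term and every connection/curvature correction coming from the non-triviality of the isotropy bundle $\mathfrak g_{\mathcal O}$ along $\gamma$ genuinely cancels — in particular that passing to $\nabla^\sigma$-parallel frames is legitimate here — and that the sign conventions for $\Omega_\sigma$, for the decomposition $a\,d\tau+b\,d\rho$, and for the orientation of $I^2$ are coherent so that the answer is $+\int_\gamma\Omega_\sigma$ rather than its negative. Independence of $\gamma$ within its class in $\pi_2(\mathcal O_x,x)$ needs no separate argument, being already encoded in the well-definedness of $\partial$ established in the preceding lemma.
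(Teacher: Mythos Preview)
The paper does not actually prove this lemma: it is stated with an explicit citation to \cite{Crai} and no proof is given, as with the other lemmas in that subsection on obstructions to integrability. So there is no ``paper's own proof'' to compare against.

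That said, your argument is the standard one and is essentially the proof in Crainic--Fernandes. The key points --- using the splitting $\sigma$ to build the lift $a\,d\tau+b\,d\rho$, observing that centrality of $\Omega_\sigma$ makes $\nabla^\sigma$ flat and collapses the Lie algebroid morphism equation to a linear relation, and then applying Fubini/Green to turn $\int_0^1 a_1(\tau)\,d\tau$ into $\int_\gamma\Omega_\sigma$ --- are exactly right. Your caveat about checking signs and the vanishing of the nonlinear corrections is well placed but not a genuine obstruction: the centrality hypothesis kills all bracket terms involving $\mathfrak g$-components, and the flatness of $\nabla^\sigma$ legitimizes the parallel trivialization over the contractible square. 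One minor point: you should also note that the resulting integral depends only on the cohomology class of $\Omega_\sigma$ (which is independent of the choice of $\sigma$), though as you observe the well-definedness of $\partial$ already guarantees homotopy invariance of the left-hand side.
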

By this formula it is possible to compute the monodromy groups for non integrable Lie algebroids and check that  the integrability conditions are violated (see Example \ref{nonintegrable}).
\subsection{The Poisson case}
When we refer to integration of Poisson manifolds (or Poisson brackets), we mean the integration of the algebroid $T^*M$ over $M$ as in Example \ref{cotangent}. 
More specifically, it can be checked that the Lie algebroids $T^*M$ for $M$ a zero or linear Poisson manifold are integrable, and also any 2-dimensional Poisson manifold \cite{Crai, Cat}. We will discuss some of these cases and also a non integrable one (Example \ref{nonintegrable}).
In the next chapter we will discuss the construction of the integration of Poisson manifolds through PSM and we will come back to the integrability conditions for Lie algebroids.

\chapter{Poisson sigma models and the main construction}\label{PSMmain}
In the first part of this chapter we introduce the Poisson sigma model
associated to a Poisson manifold
(the classical version of the model through the Hamiltonian
formalism). After the construction of the reduced phase space of PSM
associated to integrable Poisson manifolds we generalize
the construction to the non reduced version by defining the
\emph{relational symplectic groupoid}.
We give examples and we concentrate our attention on the examples
coming from Poisson geometry.

\begin{definition} 
\emph{
A Poisson sigma model (PSM) corresponds to the following data:
\begin{enumerate}
\item A compact surface $\Sigma$, possibly with boundary, called the \textit{source}.
\item A finite dimensional Poisson manifold $(M,\Pi)$, called the \textit{target}.
\end{enumerate}
}



\end{definition}
The space of fields for this theory is denoted with $\Phi$ and corresponds to the space of vector bundle morphisms of class $\mathcal C^{k+1}$ between $T\Sigma$ and $T^*M$.
This space can be parametrized by a pair $(X, \eta)$, where $X\in \mathcal C^{k+1}(\Sigma, M)$  and $\eta \in 
\Gamma^k(\Sigma, T^*\Sigma \otimes X^*T^*M),$ and $k \in \{0,\,1,\, \cdots \, , \infty\}$ denotes the regularity type of the map that we choose to work with.\\
On $\Phi$, the following first order action  is defined:
\[S(X,\eta):= \int_{\Sigma} \langle \eta,\, dX\rangle+  \frac 1 2 \langle \eta, \, (\Pi^{\#}\circ X) \eta \rangle,\]
where,

 \begin{eqnarray}
 \Pi^{\#}\colon T^*M&\to& TM\\
  \psi &\mapsto& \Pi(\psi, \cdot).
 \end{eqnarray}
 

Here,  $dX$ and $\eta$ are regarded as elements in $\Omega^1(\Sigma, X^*(TM))$, $\Omega^1(\Sigma, X^*(T^*M))$, respectively and  $\langle \,,\, \rangle $ is the pairing between $\Omega^1(\Sigma,  X^*(TM))$ and $\Omega^1(\Sigma,  X^*(T^*M))$ induced by the 
natural pairing between $T_xM$ and $T_x^*M$, for all $x \in M$.\\
\begin{remark}\emph{
This model has significant importance for deformation quantization. Namely, the perturbative expansion through Feynman path integral 
for PSM, in the case that $\Sigma$ is a disc, gives rise to Kontsevich's star product formula 
\cite{Kontsevich1, Cat, CattaneoDeformation}, i.e. the semiclassical expansion of the path integral
\begin{equation}
\int_{X(r)=x}f(X(p)) g (X(q))\exp{(\frac{i}{\hbar} S(X,\eta)) dX d\eta}
\end{equation}
around the critical point $X(u)\equiv x, \eta \equiv 0$, where $p,q$ and $r$ 
are three distinct points of $\partial \Sigma$, corresponds to the star product $f\star g (x)$. More details on the star product for Poisson manifolds are given on Chapter \ref{Frobenius}.}
\end{remark}

\section{PSM and its phase space}
For this model, we consider the constraint equations and the space of gauge symmetries. These will allow us to understand the geometry of the phase space and its reduction. First, we define
\[\mbox{EL}_{\Sigma}=\{\mbox{Solutions of the Euler-Lagrange equations}\}\subset \Phi,\]
where, using integration by parts \[\delta S= \int_{\Sigma} \frac{\delta \mathcal{L}}{\delta X} \delta X+ \frac{\delta \mathcal{L}}{\delta \eta} \delta \eta + \mbox{boundary terms}.\]The partial variations correspond to:
\begin{eqnarray}
\frac{\delta \mathcal{L}}{\delta X}&=& dX+ \Pi^{\#}(X)\eta=0\\
\frac{\delta \mathcal{L}}{\delta \eta}&=& d\eta+ \frac 1 2 \partial \Pi^{\#}(X)\eta\wedge\eta=0.
\end{eqnarray}

Now, if we restrict to the boundary, the general space of boundary fields corresponds to

\[\Phi_{\partial}:=\{\mbox{vector bundle morphisms between  } T (\partial \Sigma) \mbox{  and  } T^*M\}.\]
 
Following \cite{Cat, AlbertoPavel1}, $\Phi_{\partial}$ is endowed with a weak symplectic form and a surjective submersion $p\colon  \Phi \to \Phi_{\partial}$. 
Explicitely we have the following description of the space of boundary fields:
 \begin{remark}\emph{$\Phi_{\partial}$ can be identified with the Banach manifold $$P(T^*M):= \mathcal C^{k+1}(I, T^*M),$$ therefore it is a weak symplectic Banach manifold locally modeled by $\mathcal C^{k+1}(I, \mathbb R ^{2n})$.  
 } 
 \end{remark}
In order to see this, we understand $\Phi_{\partial}$ as a \emph{fiber bundle} over the path space $PM$, that is naturally equipped with the topology of uniform convergence. The fibers of the bundle are isomorphic to the Banach space of class $\mathcal C^{k}$
\begin{equation}
T^*_X(PM):= \Omega^1(I, X^*(T^*M))
\end{equation}
Therefore, as a set, $\Phi_{\partial}$ corresponds to
\begin{equation}
\Phi_{\partial}= \bigcup_{(X \in PM)} T^*_X(PM).
\end{equation}
The identification with $P(T^*M)$ is explicitly given by
\begin{eqnarray}
\psi\colon  T^*(PM) &\to& P(T^*M)\\
     (X,\eta) &\mapsto& (\gamma\colon  t \mapsto (X(t),\eta(t)))
\end{eqnarray}
and this allows to define a 2 form $\omega$ in $\phi_{\partial}$ in the following way. 
Identifying the tangent space $T_{\gamma} (P(T^*(M)))$ with the space of vector fields along the curve $\gamma$
\begin{equation}
T_{\gamma} (P(T^*(M)))= \{\delta\gamma: I \to TT^*M \mid \delta \gamma(t) \in T_{\gamma(t)}T^*M \}
\end{equation}
The two form $\omega$ in $\phi_{\partial}$ is given by
\begin{equation}
\omega_{\gamma}(\delta_1\gamma, \delta_2\gamma)= \int_0^1 \omega^{Liouv}(\delta_1\gamma(t),\delta_2 \gamma(t)) dt,
\end{equation}
where $\omega^{Liouv}= d \alpha^{Liouv}$ is the canonical symplectic form on $T^*M$. In local coordinates, where we consider paths in an open neighborhood of a point $X(0)=p \in M$,  if  $\gamma$ is described by the functions $X^1(t), X^2(t), \cdots X^n(t) \in \mathcal C^{k+1}(I)$ and $\eta_1,\eta_2,\cdots \eta_n \in \Omega^1(I)$ of class $\mathcal C^{k}$, then
\begin{equation}\label{symplectic}
\omega_{\gamma}(\delta_1\gamma, \delta_2\gamma)= \int_0^1(\delta_1X^i(t)\delta_2\eta_i(t)-\delta_2X^i(t)\delta_1\eta_i(t))dt.
\end{equation}
This form is clearly closed since
\begin{equation}
d\omega_{\gamma}(\delta_1\gamma, \delta_2\gamma,\delta_3\gamma)= \int_0^1 d\omega^{Liouv}(\delta_1\gamma(t),\delta_2 \gamma(t), \delta_3\gamma(t)) dt=0,
\end{equation}
and it is weak symplectic since we have for the map
\begin{equation}
\omega^{\sharp}(\delta \gamma)= \omega_\gamma(\delta \gamma, \cdot),
\end{equation}
if $\omega^{\sharp}(\delta_1 \gamma)=\omega^{\sharp}(\delta_1^{'} \gamma)$, then, setting $\delta_1\eta_i\equiv 0, \forall 1 \leq i \leq n$
\begin{equation}
\int_0^1 (\delta_1X^i(t)-\delta_1(X^i)^{'}(t))\delta_2\eta_i(t)=0, \forall \delta_2\eta_i(t),
\end{equation}
which implies that $\delta_1X^i(t)=\delta_1(X^i)^{'}(t)$ and setting $$\delta_2\eta_i\equiv 0, \forall 1 \leq i \leq n$$ we conclued in a similar way that $\delta_1\eta_i(t)=\delta_1(\eta_i)^{'}(t).$

Now, we define, following \cite{AlbertoPavel1}
$$L_{\Sigma}:= p(EL),$$
where $p: \phi \to C_{\partial \Sigma}$ is a surjective submersion and $ C_{\partial \Sigma}$ denotes the space of Cauchy data of the PSM restricted to the boundary $\partial \Sigma$.

Finally, we define $C_{\Pi}$ as the set of fields in $\Phi_{\partial}$ which  can be completed to a field in  $L_{\Sigma^{'}}$, with $\Sigma^{'}:= \partial \Sigma \times [0, \varepsilon]$, for some $\varepsilon$.  \\
 It can be proven that 
 \begin{proposition}\label{Coiso} \emph{\cite{Cat}.
 \begin{enumerate}
 \item The space $C_{\Pi}$ is described by
 \begin{equation}\label{Cois}C_{\Pi}= \{(X,\eta)\vert dX= \pi^{\#}(X)\eta,\, X\colon  \partial \Sigma \to M, \, \eta \in \Gamma (T^*I \otimes X^*(T^*M))\}.\end{equation}
\item The space $C_{\Pi}$ is a coisotropic Banach submanifold of $\Phi_{\partial}$ and its associated characteristic foliation has codimension $2n$, where $n = \emph{dim} (M)$.
\end{enumerate}
}
\end{proposition}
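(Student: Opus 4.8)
The plan is to prove the two assertions in turn: first pin down the set-theoretic description \ref{Cois}, then read off the coisotropic structure from it.

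For the description, I would work on the collar $\Sigma' = \partial\Sigma \times [0,\varepsilon]$, using a coordinate $u$ along $\partial\Sigma$ and the collar coordinate $t\in[0,\varepsilon]$, and decompose a field $(\tilde X,\tilde\eta)$ on $\Sigma'$ as $\tilde\eta = \eta_u\,du + \eta_t\,dt$, so that its boundary restriction, viewed in $\Phi_\partial$, is the pair $(\tilde X|_{t=0},\,\eta_u|_{t=0}\,du)$. Writing the Euler--Lagrange equations $dX+\Pi^{\#}(X)\eta=0$ and $d\eta+\tfrac12\partial\Pi^{\#}(X)\eta\wedge\eta=0$ component-wise on $\Sigma'$ produces three equations: $\partial_u\tilde X+\Pi^{\#}(\tilde X)\eta_u=0$, $\partial_t\tilde X+\Pi^{\#}(\tilde X)\eta_t=0$, and the $du\wedge dt$-component $\partial_t\eta_u-\partial_u\eta_t+(\partial\Pi^{\#})(\tilde X)(\eta_u,\eta_t)=0$. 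For one inclusion: if $(X,\eta)\in C_\Pi$ it is the boundary restriction of such a solution, and restricting the first equation to $t=0$ gives exactly the defining relation of the set on the right of \ref{Cois}. For the reverse inclusion: given $(X,\eta)$ with $dX=\Pi^{\#}(X)\eta$ along $\partial\Sigma$, extend it to $\Sigma'$ by taking $\tilde X$ and $\eta_u$ to be $t$-independent and equal to the given data and setting $\eta_t=0$; the three component equations then reduce to the hypothesis, to $0=0$, and to $0=0$ respectively, so this is an honest solution on $\Sigma'$ restricting to $(X,\eta)$, whence $(X,\eta)\in C_\Pi$.

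For the second part, the key is to recognise $C_\Pi$, under $\Phi_\partial\cong P(T^*M)$, as the space of $T^*M$-paths of the Lie algebroid $(T^*M,\Pi^{\#})$: writing a boundary field as a path $t\mapsto(X(t),\eta(t))$ with $\eta(t)\in T^*_{X(t)}M$, the condition $dX=\Pi^{\#}(X)\eta$ is precisely the $A$-path equation $\Pi^{\#}(\eta(t))=\dot X(t)$. To see $C_\Pi$ is a Banach submanifold I would present it as the zero set of the constraint map $G\colon P(T^*M)\to\bigcup_X\Omega^1(I,X^*TM)$, $G(X,\eta)=dX-\Pi^{\#}(X)\eta$, and check that $G$ is a submersion with split kernel: its linearisation at $(X,\eta)$ in the direction $(\delta X,\delta\eta)$ is $\tfrac{d}{dt}\delta X-(\partial_{\delta X}\Pi^{\#})(X)\eta-\Pi^{\#}(X)\delta\eta$, which for a prescribed target is a linear first-order ODE in $\delta X$ that is solvable (taking $\delta\eta=0$), so $dG$ is onto, and a complement is obtained by fixing $\delta X(0)=0$. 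This is the point at which the Banach-manifold implicit function and Frobenius theorems in the form of \cite{Lang, Lang1} are used. In particular $T_{(X,\eta)}C_\Pi=\ker dG$ is parametrised freely by $\delta\eta$ together with the point $\delta X(0)\in T_{X(0)}M$.

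It then remains to compute the symplectic orthogonal $C_\Pi^{\perp}$ with respect to the form of \ref{symplectic} and to check $C_\Pi^{\perp}\subseteq T C_\Pi$. Using the pairing $\omega_\gamma((\delta_1 X,\delta_1\eta),(\delta_2 X,\delta_2\eta))=\int_0^1(\langle\delta_1 X,\delta_2\eta\rangle-\langle\delta_2 X,\delta_1\eta\rangle)\,dt$ and the description of $T_{(X,\eta)}C_\Pi$, I would substitute the linearised constraint for $\delta_1\dot X$, integrate by parts, and conclude that $(\delta_2 X,\delta_2\eta)\in C_\Pi^{\perp}$ forces $\delta_2 X$ to vanish at $t=0,1$ and forces $\delta_2\eta$ to be determined by $\delta_2 X$ through the covariant derivative attached to $(X,\eta)$; that is, $C_\Pi^{\perp}$ is exactly the space of infinitesimal gauge vectors of the PSM with gauge parameter $\beta$ vanishing at the endpoints, equivalently the Hamiltonian vector fields of the constraints $(X,\eta)\mapsto\int_0^1\langle\beta,\,dX-\Pi^{\#}(X)\eta\rangle$. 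A direct check (or the remark that these are Hamiltonian for functions vanishing on $C_\Pi$) shows each such vector is tangent to $C_\Pi$, giving $C_\Pi^{\perp}\subseteq T C_\Pi$, i.e.\ coisotropy. Since gauge parameters with $\beta(0)=\beta(1)=0$ exhaust the $\delta\eta$-directions modulo a $2n$-dimensional remainder carried by the endpoint data, $T_{(X,\eta)}C_\Pi/C_\Pi^{\perp}$ is $2n$-dimensional, so the characteristic foliation has codimension $2n=2\dim M$. The hard part will be this last identification of $C_\Pi^{\perp}$ — establishing it is exactly, not merely contains, the endpoint-vanishing gauge vectors — and carrying out the codimension count carefully in the Banach setting while keeping track of which boundary terms survive the integration by parts; the submersion and splitting verifications, though needed for genuine smoothness, are comparatively routine given the cited Banach-calculus tools (see also \cite{Cat}).
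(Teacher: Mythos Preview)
The paper does not supply its own proof of this proposition: it is stated as a citation to \cite{Cat}, with no argument given at that point in the text. Your sketch is essentially the Cattaneo--Felder argument from that reference --- the trivial collar extension with $\eta_t=0$ for part~(1), and for part~(2) the submersion check on the constraint map $G(X,\eta)=dX-\Pi^\#(X)\eta$ together with the identification of $C_\Pi^\perp$ with the Hamiltonian vector fields of $H_\beta(X,\eta)=\int_0^1\langle\beta,\,dX-\Pi^\#(X)\eta\rangle$ for $\beta$ vanishing at the endpoints. The thesis does return to the codimension-$2n$ computation later (in the subsection on the Hamiltonian foliation, within the Fr\'echet extension section), where the $2n$ independent linear conditions cutting out the characteristic distribution are written explicitly as $\dot X(0)=0$ and $\int_I V(u)\dot\eta(u)\,du=0$ for an auxiliary solution $V$ of $d_uV=VA$; this agrees with your endpoint count ($n$ from $\delta X(0)$ and $n$ from the requirement $\beta(1)=0$). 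Your outline is correct and aligned with the cited source.
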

In fact, the converse of the second property also holds in the following sense. If we define $S(X,\eta)$ and $C_{\Pi}$ 
in the same way as before, without assuming that $\Pi$ satisfies Equation
(\ref{SN}) it can be proven that

\begin{proposition}\label{Coi}\emph{\cite{Coiso}.
If $C_{\Pi}$ is a coisotropic submanifold of $\Phi_{\partial}$, then $\Pi$ is a Poisson bivector field.}
\end{proposition}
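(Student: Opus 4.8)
The plan is to unwind the coisotropy condition on $C_{\Pi}$ explicitly in local coordinates and to read off the Jacobi identity for $\Pi$ from it. First I would fix a point $(X,\eta)\in C_{\Pi}$ lying over a coordinate chart $U\subset M$, so that the constraint $dX=\Pi^{\#}(X)\eta$ makes sense componentwise as $dX^i=\Pi^{ij}(X)\eta_j$. The tangent space $T_{(X,\eta)}C_{\Pi}$ is the space of pairs $(\delta X,\delta\eta)$ obtained by linearising this constraint, i.e. $d(\delta X^i)=\partial_k\Pi^{ij}(X)\,\delta X^k\,\eta_j+\Pi^{ij}(X)\,\delta\eta_j$; this is a codimension-$n$ (as a set of equations along $I$) linear subspace of $T_{(X,\eta)}\Phi_\partial$, matching the codimension count of Proposition \ref{Coiso}. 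The symplectic orthogonal $T_{(X,\eta)}C_{\Pi}^{\perp}$ with respect to the weak symplectic form \eqref{symplectic} then has to be computed: a tangent vector $(\beta,\xi)$ lies in it precisely when $\int_0^1(\beta^i\,\delta\eta_i-\delta X^i\,\xi_i)\,dt=0$ for every $(\delta X,\delta\eta)$ solving the linearised constraint.

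The key step is to identify $T_{(X,\eta)}C_{\Pi}^{\perp}$ concretely. Using the linearised constraint to eliminate $\delta\eta$ (or, more symmetrically, integrating by parts in the pairing against $\delta X^i$ after substituting), one finds — this is the Cattaneo--Felder computation for the characteristic distribution of $C_{\Pi}$ — that the characteristic vectors are exactly those of Hamiltonian type: $(\beta^i,\xi_i)=(\Pi^{ij}(X)\lambda_j,\ d\lambda_i+\partial_i\Pi^{jk}(X)\eta_j\lambda_k)$ for an arbitrary section $\lambda\in\Gamma(X^*T^*M)$ vanishing at the endpoints. Granting the coisotropy hypothesis $C_{\Pi}^{\perp}\subset TC_{\Pi}$, each such Hamiltonian vector $(\beta,\xi)$ must itself satisfy the linearised constraint $d\beta^i=\partial_k\Pi^{ij}\,\beta^k\eta_j+\Pi^{ij}\xi_j$. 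Substituting the explicit form of $(\beta,\xi)$ into this identity and expanding $d(\Pi^{ij}(X)\lambda_j)$ by the chain rule, the terms $d\lambda$ and those linear in $dX^k=\Pi^{kl}\eta_l$ have to cancel; what survives is an algebraic identity of the form $\big(\Pi^{kl}\partial_k\Pi^{ij}+\Pi^{il}\partial_k\Pi^{jk}\ \text{cyclic-ish}\big)\eta_l\lambda_j=0$.

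Finally I would argue that this identity, holding for \emph{all} admissible $\eta$ and all endpoint-vanishing $\lambda$ (and at all points of $U$, since $(X,\eta)$ was arbitrary and paths sweep out every point with every value of $\eta_l$ and every $\lambda_j$), forces the coefficient tensor to vanish identically, which is precisely the coordinate form \eqref{SN} of the Jacobi identity $[\Pi,\Pi]=0$; hence $\Pi$ is a Poisson bivector. The main obstacle I anticipate is the bookkeeping in the middle step: getting the characteristic distribution of $C_{\Pi}$ in closed form and then correctly collecting the surviving algebraic terms after the substitution, making sure that the boundary contributions from integration by parts genuinely drop out (they do, because $\lambda$ is taken to vanish at $0$ and $1$) and that no contraction with $\eta$ is inadvertently symmetrised away. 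Once the surviving tensor is isolated, the passage to \eqref{SN} is just antisymmetrisation in the appropriate indices, and the arbitrariness of $\eta$ and $\lambda$ does the rest.
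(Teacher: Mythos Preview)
The paper does not actually prove this proposition; it is stated with a bare citation to \cite{Coiso} and no argument is given. So there is no ``paper's proof'' to compare against here, and your proposal has to be judged on its own merits.

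Your strategy is the standard one and is essentially what is carried out in the cited reference: identify $T_{(X,\eta)}C_\Pi^\perp$ as the span of the Hamiltonian vector fields $\xi_\lambda=(\Pi^{ij}\lambda_j,\ d\lambda_i+\partial_i\Pi^{jk}\eta_j\lambda_k)$ of the constraint functions $H_\lambda$, and then observe that coisotropy $TC_\Pi^\perp\subset TC_\Pi$ is exactly the statement that each $\xi_\lambda$ satisfies the linearised constraint. Plugging in and using $dX^k=\Pi^{kl}\eta_l$ leaves precisely the Schouten--Nijenhuis expression \eqref{SN} contracted with $\eta$ and $\lambda$, and freedom in $(X,\eta,\lambda)$ forces it to vanish pointwise. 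This is correct.

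One point to tighten: when you invoke ``the Cattaneo--Felder computation for the characteristic distribution'', be explicit that the identification of $TC_\Pi^\perp$ with the span of the $\xi_\lambda$ is a linear-algebraic fact about zero sets of the functionals $H_\lambda$ and the symplectic form \eqref{symplectic}, and does \emph{not} presuppose that $\Pi$ is Poisson. (In \cite{Cat} the computation is presented with $\Pi$ already Poisson, so quoting it wholesale risks circularity.) Concretely, you should derive $\xi_\lambda$ directly as the symplectic gradient of $H_\lambda$; this uses only integration by parts and the boundary condition $\lambda(0)=\lambda(1)=0$, never Jacobi. Once that is nailed down, the rest of your outline goes through without difficulty.
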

The following geometric interpretation of $\mathcal C_{\Pi}$   will lead us to the connection between Lie algebroids and Lie groupoids in Poisson geometry with PSM. Following Definition \ref{morphism} for a morphism of Lie algebroids, in local coordinates, the condition for a vector bundle morphism to preserve the Lie algebroid structure gives rise to some PDE's that the anchor maps and the structure functions for $\Gamma(A)$ and  $\Gamma(B)$ should satisfy. For the case of PSM, regarding $T^*M$ as a Lie algebroid, we can prove that 

\[C_{\Pi}=\{\mbox{Lie algebroid morphisms between  } T (\partial \Sigma) \mbox{  and  } T^*M\},\]
where the Lie algebroid structure on the left is given by the Lie bracket of vector fields on $T (\partial \Sigma)$ with identity anchor map.

\section{Symplectic reduction}
Since $C_{\Pi}$ is a coisotropic submanifold, it is possible to perform symplectic reduction, which yields, when it is smooth, a symplectic finite dimensional manifold. In the case of $\Sigma$ being a rectangle and with vanishing boundary conditions for $\eta$ (see \cite{Cat}), following the notation in \cite{Crai} and \cite{Severa}, we could also reinterpret the reduced phase space $\underline{C_{\Pi}}$ as 

\[\underline{C_{\Pi}}=\left \{ \frac{\mbox{$T^*M$-paths}}{ T^*M \mbox{-homotopy}} \right \}.\]
In the integrable case, it was proven in \cite{Cat} that

\begin{theorem} The following data
\begin{eqnarray*}
G_0&=& M \\
G_1&=& \underline{C_{\Pi}}\\
G_2&=&\{ [X_1, \eta_1], [X_2, \eta_2] \vert X_1(1)=X_2(0)\}
\\
m&\colon & G_2 \to G:= ([X_1, \eta_1], [X_2, \eta_2]) \mapsto [(X_1* X_2, \eta_1* \eta_2)] \\
\varepsilon&:& G_0 \to G_1:= x\mapsto [X\equiv x, \eta\equiv 0] \\
s&\colon &G_1 \to G_0:= [X, \eta]\mapsto X(0) \\
t&\colon &G_1 \to G_0:= [X, \eta]\mapsto X(1) \\
\iota&:& G_1 \to G_1:= [X, \eta] \to [i^* \circ X,i^* \circ \eta]\\
&\mbox{                   }& i\colon [0,1]\to [0, 1]:= t\to 1-t, 
\end{eqnarray*}
correspond to a symplectic groupoid that integrates the Lie algebroid $T^*M$.  \footnote{here $*$ denotes path concatenation}
\end{theorem}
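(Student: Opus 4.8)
## Proof proposal

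The plan is to verify directly that the stated data satisfies the axioms of a symplectic groupoid (Definition \ref{groupoid} together with multiplicativity of the symplectic form $\omega$ on $\underline{C_\Pi}$ induced by symplectic reduction), using the identification $\underline{C_\Pi} \simeq \{T^*M\text{-paths}\}/ T^*M\text{-homotopy}$ established in the previous section. Since $\underline{C_\Pi}$ is by construction the reduction of the coisotropic submanifold $C_\Pi \subset \Phi_\partial$ (Proposition \ref{Coiso}), it carries a canonical symplectic form; the first task is simply to observe that the topological-groupoid structure on $A$-paths modulo $A$-homotopy described earlier for $A = T^*M$ is \emph{the same} structure as the one written in the theorem, so that the general discussion of the Weinstein groupoid (and its smoothness in the integrable case) applies verbatim. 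In particular, well-definedness of the multiplication $m$ with respect to $T^*M$-homotopy, associativity (A.6), and the unit/inverse axioms (A.1--A.5) are inherited from that discussion; the only genuinely new content here is that the groupoid is \emph{symplectic}, i.e. that $\omega$ is multiplicative.

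First I would treat the structural axioms. For A.1--A.3 one checks that the constant path $X \equiv x$, $\eta \equiv 0$ is indeed a $T^*M$-path (it trivially solves $dX = \Pi^\#(X)\eta$), that $s$ and $t$ applied to it give $x$, and that concatenating a path with a constant path at either end is $T^*M$-homotopic to the original path (a reparametrization homotopy); this last point, and the fact that concatenation of $\mathcal C^1$ $A$-paths is homotopic to a $\mathcal C^1$ $A$-path, are exactly what was invoked in the proof of the Weinstein-groupoid proposition, so I would cite \cite{Crai, Cat}. For A.4--A.5 I would use that the reverse path $\iota([X,\eta]) = [i^*X, i^*\eta]$ concatenated with $[X,\eta]$ bounds an obvious $T^*M$-homotopy to the constant path (shrink the loop), giving $m([X,\eta], \iota[X,\eta]) = \varepsilon(t[X,\eta])$ and symmetrically. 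Associativity A.6 is the standard reparametrization homotopy between $(a_1 * a_2)*a_3$ and $a_1*(a_2*a_3)$, again valid modulo $T^*M$-homotopy. Throughout, $s$ and $t$ being Poisson/anti-Poisson will follow from Theorem \ref{Theo1Poisson1} (or its classical analogue), but for the bare statement "symplectic groupoid integrating $T^*M$" it suffices to have multiplicativity plus $\mathrm{Lie}(G) = T^*M$.

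The main obstacle is multiplicativity of $\omega$, i.e. showing that $\mathrm{graph}(m)$ is Lagrangian in $\underline{C_\Pi} \times \underline{C_\Pi} \times \overline{\underline{C_\Pi}}$. The cleanest route: work upstairs on $\Phi_\partial = T^*(PM)$ before reduction. Concatenation of paths induces, on the path space, a correspondence that is compatible with the "integrate the Liouville form over the path" construction of $\omega$ in Equation \ref{symplectic}; concretely, if $\gamma = \gamma_1 * \gamma_2$ then $\int_0^1 \omega^{Liouv}_\gamma = \int_0^{1/2}\omega^{Liouv}_{\gamma_1} + \int_{1/2}^1 \omega^{Liouv}_{\gamma_2}$ after rescaling, which is exactly the additivity needed for $m^*\omega = Pr_1^*\omega + Pr_2^*\omega$ at the level of the concatenation relation on $C_\Pi$. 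One then pushes this identity through the reduction: since the relevant submanifolds sit inside the coisotropic $C_\Pi$ and the reduced form $\underline\omega$ is defined by $\underline\omega([v_1],[v_2]) = \omega(v_1,v_2)$, the additivity descends, and a dimension count (in the integrable, smooth case $\dim \underline{C_\Pi} = 2n$, and $\mathrm{graph}(m)$ has dimension $2 \cdot 2n = \tfrac12(3\cdot 2n + \dim G_2 \text{ correction})$ — more precisely use that $m$ is defined on $G_2$ which is coisotropic-fibered) forces $\mathrm{graph}(m)$ to be Lagrangian. I expect the fiddly part to be making the "rescale the concatenation" step compatible with the chosen regularity class $\mathcal C^{k+1}$ and with the quotient by $T^*M$-homotopy simultaneously; the conceptual content is just additivity of an integral over a subdivided interval. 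Finally, to conclude "integrates $T^*M$", I would invoke that $\mathrm{Lie}(G)$ for this groupoid is $T^*M$ — either by the general identification $\mathrm{Lie}((G,\omega)\rightrightarrows M) = T^*M$ for symplectic groupoids, or directly by differentiating the $A$-path description, recovering the Lie algebroid structure (Koszul bracket, anchor $\Pi^\#$) of Example \ref{cotangent}.
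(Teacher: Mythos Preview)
The paper does not actually give a proof of this theorem: it is stated as a result of Cattaneo--Felder, prefaced by ``In the integrable case, it was proven in \cite{Cat} that\ldots'', and no proof environment follows. So there is no ``paper's own proof'' to compare against; the theorem is quoted from \cite{Cat}.

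That said, your outline is the right one, and it is essentially the argument in \cite{Cat}. The groupoid axioms are inherited from the Weinstein groupoid construction (which the paper recalls just before this theorem), and multiplicativity of $\omega$ comes down to the additivity of the integral $\omega_\gamma(\delta_1\gamma,\delta_2\gamma)=\int_0^1\omega^{Liouv}(\delta_1\gamma,\delta_2\gamma)\,dt$ under path concatenation. Indeed, later in the paper (the proof that $TL_3$ is Lagrangian, just after Proposition~\ref{L1Lag}) exactly this additivity is invoked to show that the reduced graph of concatenation is isotropic, followed by the dimension count $\dim\underline{L_3}=3n=\tfrac12\dim(\underline{TC}^{\,3})$ to conclude Lagrangianity. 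Your sketch anticipates that argument correctly; the one place where your writeup is loose is the dimension count for $\mathrm{graph}(m)$, which you should phrase as: $\mathrm{graph}(m)\cong G_2$ has dimension $2\cdot 2n - n = 3n$ (fiber product over $M$), which is half of $3\cdot 2n$.
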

\begin{remark}\emph{In \cite{Cat}, this construction is also expressed as the Marsden-Weinstein reduction of the Hamiltonian action  of the (infinite dimensional) Lie algebra
$P_0\Omega^1(M):= \{\beta \in  \mathcal C^{k+1}(I, \Omega^1(M)) \mid \beta(0)=\beta(1)=0\}$
with Lie bracket
\begin{equation}
[\beta,\gamma](u)=d\langle \beta(u), \Pi^{\sharp} \gamma(u) \rangle- \iota_{\Pi^{\sharp}(\beta(u))} d\gamma(u)+ \iota_{\Pi^{\sharp}(\gamma(u))} d\beta(u)
\end{equation}
on the space $T^*(PM)$, on which the moment map $ \mu\colon  T^*(PM) \to P_0\Omega^1(M)^*$  is described by the equation 
\begin{equation}
\langle \mu(X,\eta), \beta \rangle= \int_0^1 \langle dX(u)+ \Pi^{\sharp}(X(u))\eta(u), \beta(X(u),u) \rangle du.
\end{equation}}
\end{remark}


\section{Relational symplectic groupoids}
This section contains the general description of relational symplectic groupoids, defined as special objects in $\mbox{\textbf{Symp}}^{Ext}$. It is a way to model the space of boundary fields before reduction of the PSM and to define a more general version of integration of Poisson manifolds. We give the main definitions, we discuss the connection with Poisson structures and we give some natural examples. For the motivational example of $T^*PM$ we prove that in fact we obtain relational symplectic groupoids for any Poisson manifold $M$ and we explain geometrically the integrability conditions for $T^*M$ in terms of the immersed canonical relations defining the relational symplectic groupoid.
\begin{definition}\emph{
A \textbf{relational symplectic groupoid} is a triple $(\mathcal G,\, L,\, I)$ where 
\begin{enumerate}
 \item $\mathcal G$ is a weak symplectic manifold. \footnote{In the infinite dimensional setting we restrict to the case of Banach manifolds.}
\item $L$ is an immersed Lagrangian submanifold of $\mathcal G ^3.$
 \item $I$ is an antisymplectomorphism of $\mathcal G$ called the \emph{inversion},
\end{enumerate}
satisfying the following six axioms A.1-A.6:}
\end{definition}

\begin{itemize}
 \item \textbf{\underline{A.1}} $L$ is cyclically symmetric, i.e. if $(x,y,z) \in L$, then $(y,z,x) \in L$.

\item \textbf{\underline{A.2}} $I$ is an involution (i.e. $I^2=Id$).\\

\textbf{\underline{Notation}} $L$ is an immersed canonical relation $\mathcal G \times \mathcal G \nrightarrow \bar{\mathcal G}$ and will 
be denoted by $L_{rel}.$ Since the graph of $I$ is a Lagrangian submanifold of $\mathcal G \times \mathcal G$, $I$ is an immersed canonical 
relation $\bar {\mathcal G} \nrightarrow \mathcal G$ and will be denoted by $I_{rel}$.\\
$L$ and $I$ can be regarded as well as immersed canonical relations 
\[ \bar {\mathcal G} \times \bar{\mathcal G} \nrightarrow \mathcal G \mbox{ and } \mathcal G \nrightarrow \bar{\mathcal G}\]
respectively, which will be denoted by $\overline{L_{rel}}$ and $\overline{I_{rel}}.$ The transposition 
\begin{eqnarray*} 
T\colon  \mathcal G \times \mathcal G &\to& \mathcal G \times \mathcal G\\
(x,y) &\mapsto& (y,x)
\end{eqnarray*}
induces canonical relations
\[T_{rel}\colon \mathcal G \times \mathcal G \nrightarrow \mathcal G \times \mathcal G \mbox{ and } \overline{T_{rel}}: 
\bar {\mathcal G}\times \bar{\mathcal G} \nrightarrow \bar {\mathcal G}\times \bar{\mathcal G}.\]
The identity map $Id\colon  \mathcal G \to \mathcal G$ as a relation will be denoted by $\mbox{Id}_{rel}\colon  \mathcal G \nrightarrow \mathcal G$ and by $\overline{\mbox{Id}_{rel}}\colon  \overline{\mathcal G} \nrightarrow \overline{\mathcal G}. $

Since $I$ and $T$ are diffeomorphisms, it follows that $I_{rel} \circ L_{rel}$ and $\overline{L}_{rel} \circ \overline{T}_{rel}\circ (\overline{I_{rel}} \times \overline{I_{rel}})
$ are immersed submanifolds.  For a relational symplectic groupoid we want that these two compositions to be morphisms $\mathcal G \times \mathcal G \nrightarrow \mathcal G$, and moreover we want them to coincide.

\item \textbf{\underline{A.3}} 
\begin{enumerate}
\item The compositions $I_{rel} \circ L_{rel}$ and $\overline{L}_{rel} \circ \overline{T}_{rel}\circ (\overline{I_{rel}} \times \overline{I_{rel}})$ are immersed submanifolds of $\mathcal G^3$.
\item $I_{rel} \circ L_{rel}$ and $\overline{L}_{rel} \circ \overline{T}_{rel}\circ (\overline{I_{rel}} \times \overline{I_{rel}})$ are Lagrangian submanifolds of $\overline{\mathcal G^2} \times \mathcal G$.
\item 
\begin{equation}
I_{rel} \circ L_{rel}= \overline{L}_{rel} \circ \overline{T}_{rel}\circ (\overline{I_{rel}} \times \overline{I_{rel}}).
\end{equation}
\end{enumerate}


Now, define \[L_3:= I_{rel} \circ L_{rel}\colon  \mathcal G \times \mathcal G \nrightarrow \mathcal G.\] As a corollary of the previous axioms we get that
\begin{corollary}
$\overline{I_{rel}}\circ L_3 = \overline{L_3} \circ \overline{T_{rel}}\circ(\overline{I_{rel}}\times \overline{I_{rel}}).$ 
\end{corollary}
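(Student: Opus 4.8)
The plan is to derive the corollary purely formally from Axiom A.2 and part (c) of Axiom A.3, using only elementary identities for the involution $\dagger$ (relational converse) in $\mbox{\textbf{Symp}}^{Ext}$ and the fact that $I$ and $T$ are diffeomorphisms (so the relevant compositions are well-defined immersed submanifolds). First I would record the bookkeeping identities: for composable immersed canonical relations $f,g$ one has $(g\circ f)^{\dagger} = f^{\dagger}\circ g^{\dagger}$, and $\dagger$ is an involution; moreover $\overline{I_{rel}} = I_{rel}^{\dagger}$ and $\overline{T_{rel}} = T_{rel}^{\dagger}$ because $I$ is an antisymplectomorphism with $I^2=\mathrm{Id}$ (so $I_{rel}$, viewed in the appropriate conjugated spaces, is its own converse) and $T$ is an involutive symplectomorphism. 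Likewise $\overline{L_{rel}}$ is $L_{rel}$ regarded between the conjugated manifolds, and $L_3 = I_{rel}\circ L_{rel}$, $\overline{L_3} = \overline{I_{rel}}\circ\overline{L_{rel}}$ by definition.

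Next I would simply apply $\dagger$ to the identity in A.3(c). On the left, $(I_{rel}\circ L_{rel})^{\dagger} = L_{rel}^{\dagger}\circ I_{rel}^{\dagger} = \overline{L_{rel}}\circ \overline{I_{rel}}$ after matching conjugations; on the right, $\bigl(\overline{L}_{rel}\circ\overline{T}_{rel}\circ(\overline{I_{rel}}\times\overline{I_{rel}})\bigr)^{\dagger} = (\overline{I_{rel}}\times\overline{I_{rel}})^{\dagger}\circ\overline{T}_{rel}^{\dagger}\circ\overline{L}_{rel}^{\dagger} = (I_{rel}\times I_{rel})\circ T_{rel}\circ L_{rel}$ up to conjugation. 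This already produces a relation among $L_{rel}$, $I_{rel}$, $T_{rel}$; then I would left-compose both sides with $\overline{I_{rel}}$ and repackage the blocks into $L_3$ and $\overline{L_3}$. Concretely, starting from $L_3 = I_{rel}\circ L_{rel}$ one gets $\overline{I_{rel}}\circ L_3 = \overline{I_{rel}}\circ I_{rel}\circ L_{rel}$; using A.2 ($I^2=\mathrm{Id}$, so $\overline{I_{rel}}\circ I_{rel}$ collapses to the appropriate diagonal/identity relation) and then substituting the transposed form of A.3(c) yields $\overline{I_{rel}}\circ L_3 = \overline{L_3}\circ\overline{T_{rel}}\circ(\overline{I_{rel}}\times\overline{I_{rel}})$, which is exactly the claim. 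Throughout, Proposition~\ref{Composition}'s linear content guarantees that each composition we form is again Lagrangian (Lagrangianity is a pointwise linear condition), and the diffeomorphism hypotheses on $I$ and $T$ guarantee smoothness of all intermediate compositions, so no transversality issue arises.

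The main obstacle is purely notational rather than mathematical: one must keep scrupulous track of which copy of $\mathcal G$ versus $\overline{\mathcal G}$ each relation lives between, since $\overline{(-)}$ on relations and $\dagger$ interact, and the three factors of $\mathcal G^3$ get permuted by $T$ and by the cyclic symmetry A.1. I expect that writing the arrows with explicit source and target objects (e.g. $I_{rel}\colon\overline{\mathcal G}\nrightarrow\mathcal G$, $\overline{I_{rel}}\colon\mathcal G\nrightarrow\overline{\mathcal G}$, $T_{rel}\colon\mathcal G^2\nrightarrow\mathcal G^2$) at every step removes the ambiguity; once the diagram typechecks, the identity is forced. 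I would also double-check that A.1 (cyclic symmetry of $L$) is not actually needed here — it seems the corollary follows from A.2 and A.3 alone — but if a sign or conjugation does not match, the cyclic symmetry of $L$ is the natural extra input to invoke.
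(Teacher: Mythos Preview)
Your proposal is essentially correct and uses the same ingredients as the paper (A.2, A.3(c), and the definition $L_3=I_{rel}\circ L_{rel}$), but you take an unnecessary detour through applying $\dagger$ to A.3(c). The paper's argument is more direct and never transposes: it reduces \emph{both} sides to $\overline{\Id_{rel}}\circ L_{rel}$. For the left side this is exactly your ``Concretely'' computation, $\overline{I_{rel}}\circ L_3=\overline{I_{rel}}\circ I_{rel}\circ L_{rel}=\overline{\Id_{rel}}\circ L_{rel}$. For the right side the paper simply expands $\overline{L_3}=\overline{I_{rel}}\circ\overline{L_{rel}}$, so that
\[
\overline{L_3}\circ\overline{T_{rel}}\circ(\overline{I_{rel}}\times\overline{I_{rel}})
=\overline{I_{rel}}\circ\bigl(\overline{L_{rel}}\circ\overline{T_{rel}}\circ(\overline{I_{rel}}\times\overline{I_{rel}})\bigr)
=\overline{I_{rel}}\circ I_{rel}\circ L_{rel}
=\overline{\Id_{rel}}\circ L_{rel},
\]
where the middle equality is A.3(c) used \emph{verbatim}, not transposed. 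Your suspicion at the end is correct: A.1 is not needed here.
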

\begin{proof}By Axiom A.2 and by definition of $L_3$, the left hand side of the equation, as a relation from $\mathcal G \times \mathcal G$ to $\overline {\mathcal G}$ 
can be rewritten as 
\begin{equation}\label{A}
\overline{I_{rel}}\circ L_3= \overline{Id_{rel}}\circ L_{rel}.
\end{equation}
In the right hand side, by axiom A.3, we can rewrite
\begin{eqnarray}
\overline{L_3} \circ \overline{T_{rel}}\circ(\overline{I_{rel}}\times \overline{I_{rel}})&=&
\overline{I_{rel}}\circ (\overline{L_{rel}}\circ \overline{T_{rel}}\circ (\overline{I_{rel}}\times \overline{I_{rel}}))\\
&=&\overline{I_{rel}}\circ I_{rel} \circ L_{rel}\\
&=& \overline{Id_{rel}} \circ L_{rel}.\label{B} 
\end{eqnarray}
Comparing (\ref{A}) and (\ref{B}) we obtain the desired result.
\end{proof}

\item \textbf{\underline{A.4}} 
\begin{enumerate}
\item The compositions $L_3 \circ (L_3 \times \Id)$ and $L_3\circ (\Id \times L_3)$ are immersed subamifolds of $\mathcal G^4$.
\item  $L_3 \circ (L_3 \times \Id)$ and $L_3\circ (\Id \times L_3)$ are Lagrangian submanifolds of $\overline{\mathcal G^3}\times \mathcal G$.
\item \begin{equation}\label{asso}
L_3 \circ (L_3 \times Id)= L_3\circ (\Id \times L_3)\end{equation}
\end{enumerate}

\begin{remark}\emph{The part 2 of A.4. follows automatically in the finite dimensional case from the fact that, since $I$ is an antisymplectomorphism, its graph is Lagrangian, 
therefore $L_3$ is Lagrangian, and so $(Id \times L_3)$ and $(L_3 \times Id).$}
\end{remark}
The graph of the map $I$, as a relation $* \nrightarrow \mathcal G \times \mathcal G$ will be denoted by $L_I$.\\
\item \textbf{\underline{A.5}} 
\begin{enumerate}
\item The compositions
$L_3 \circ L_I$ and $L_3\circ(L_3\circ L_I\times L_3\circ L_I)$ are immersed submanifolds of $\mathcal G$. 
\item $L_3 \circ L_I$ and $L_3\circ(L_3\circ L_I\times L_3\circ L_I)$ are Lagrangian submanifolds of $\mathcal G$.
\item Denoting by $L_1$ the morphism $L_1:= L_3\circ L_I \colon * \nrightarrow \mathcal G$, then
\begin{equation} \label{unit}
 L_3\circ(L_1 \times L_1)= L_1.
\end{equation}
 \end{enumerate}

 From the definitions above we get the following
\begin{corollary}\emph{
\[\overline{I_{rel}}\circ L_1= \overline{L_1},\]
that is also equivalent to 
\[I(L_1)= \overline{L_1},\]
where $L_1$ is regarded as an immersed Lagrangian submanifold of $\mathcal G$.}
\end{corollary}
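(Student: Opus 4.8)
The plan is to unwind the definitions of $L_1$, $L_3$, and $L_I$ and then apply the cyclicity axiom A.1 together with the involution axiom A.2. Recall $L_I = \{(\ast,(x,I(x))) : x \in \mathcal G\}$, i.e. the graph of $I$ viewed as a canonical relation $\ast \nrightarrow \mathcal G \times \mathcal G$, and $L_3 = I_{rel} \circ L_{rel}$, so that $(x,y,z) \in L_3$ iff $(x,y,I(z)) \in L$ (equivalently, using A.1, iff $(x,y,z')\in L$ with $z = I(z')$). Composing, one gets the set-theoretic description
\[
L_1 = L_3 \circ L_I = \{\, z \in \mathcal G \;:\; (x, I(x), I(z)) \in L \text{ for some } x \in \mathcal G \,\}.
\]
First I would record this explicit description and check it is the honest set-theoretic composition (the relevant immersed-manifold and Lagrangian statements are already granted by A.5(1)–(2)).

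Next I would compute $\overline{I_{rel}} \circ L_1$. By definition of the transpose/relational converse, $\overline{I_{rel}}\circ L_1 = \{\, I(z) : z \in L_1 \,\}$, i.e. the image $I(L_1)$. So the two displayed equations in the corollary, $\overline{I_{rel}}\circ L_1 = \overline{L_1}$ and $I(L_1) = \overline{L_1}$, are literally the same assertion once one notes that $\overline{L_1}$ denotes $L_1$ regarded as a Lagrangian in $\overline{\mathcal G}$ — which as a \emph{subset} of $\mathcal G$ is just $L_1$ itself. Hence the real content is the set equality $I(L_1) = L_1$ as subsets of $\mathcal G$.

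To prove $I(L_1) = L_1$: take $z \in L_1$, so there is $x$ with $(x,I(x),I(z)) \in L$. Apply cyclicity A.1 twice: $(x,I(x),I(z)) \in L \Rightarrow (I(z),x,I(x)) \in L \Rightarrow (I(x),I(z),x)\in L$. Now I want to exhibit $I(z)\in L_1$, i.e. find $x'$ with $(x', I(x'), I(I(z))) \in L$; since $I^2 = \Id$ by A.2, this is $(x', I(x'), z) \in L$. Taking $x' = I(x)$ (so $I(x') = I(I(x)) = x$ by A.2), the required membership is $(I(x), x, z) \in L$, which follows from $(I(x),I(z),x)\in L$ by one more application of A.1. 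Therefore $I(z) \in L_1$, giving $I(L_1) \subseteq L_1$; applying $I$ again and using $I^2=\Id$ yields the reverse inclusion, so $I(L_1) = L_1$.

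The argument is essentially bookkeeping, so there is no serious obstacle; the one point that needs a little care is matching the set-theoretic composition with the categorical composition in $\mbox{\textbf{Symp}}^{Ext}$ — i.e. confirming that no spurious intersection or smoothness issue changes the underlying set — but this is exactly what A.5(1)–(2) and the standing conventions on partial composition in $\mbox{\textbf{Symp}}^{Ext}$ provide. I would also remark that the same cyclicity-plus-involution manipulation is the engine behind the earlier Corollary on $L_3$, so the proof can be phrased as a direct corollary of the combinatorics of A.1 and A.2 applied to the description of $L_1$ derived from A.5.
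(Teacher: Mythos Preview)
Your set-theoretic unpacking of $L_1$ is correct: $z\in L_1$ iff there exists $x\in\mathcal G$ with $(x,I(x),I(z))\in L$. The reduction of the statement to the set equality $I(L_1)=L_1$ is also fine. The gap is in the final combinatorial step. From $(x,I(x),I(z))\in L$, the three cyclic permutations you obtain via A.1 are
\[
(x,I(x),I(z)),\qquad (I(x),I(z),x),\qquad (I(z),x,I(x)),
\]
and \emph{none} of these is $(I(x),x,z)$. A further application of A.1 to $(I(x),I(z),x)$ only returns you to one of the three triples above; it does not produce a triple with $z$ in the third slot. So A.1 and A.2 alone do not yield $I(z)\in L_1$, and the sentence ``which follows from $(I(x),I(z),x)\in L$ by one more application of A.1'' is simply false.

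What is missing is axiom A.3 (the compatibility of $I$ with $L$), whose set-theoretic content is precisely
\[
(a,b,I(c))\in L \iff (I(b),I(a),c)\in L.
\]
Applying this with $a=x$, $b=I(x)$, $c=z$ and using $I^2=\Id$ gives $(x,I(x),z)\in L$, which is exactly the witness $y=x$ for $I(z)\in L_1$. This is how the paper proceeds: it invokes the earlier Corollary $\overline{I_{rel}}\circ L_3=\overline{L_3}\circ\overline{T_{rel}}\circ(\overline{I_{rel}}\times\overline{I_{rel}})$ (a reformulation of A.3), together with the easy observation that $T\circ(I\times I)$ fixes $L_I$, and then concludes $\overline{I_{rel}}\circ L_1=\overline{L_3}\circ\overline{L_I}=\overline{L_1}$. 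Your elementwise approach is perfectly viable once you replace the incorrect ``one more cyclicity'' step by an appeal to A.3.
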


\begin{proof}
We have that
\begin{eqnarray}
\overline{I_{rel}}\circ L_1&=& \overline{I_{rel}}\circ L_3 \circ L_1\\
 &\stackrel{\mbox{Cor.1}}{=}& \overline{L_3} \times \overline{T_{rel}} \circ (\overline{I} \times \overline{I})\circ L_I.
\end{eqnarray}
As sets, we have that
\begin{eqnarray}
\overline{T_{rel}}\circ (\overline{I} \times \overline{I})\circ L_I&=& T(I\times I (L_I))\\
L_I&=&\{(x,\, I(x)), \, x \in \mathcal G\}\\
I\times I (L_I)&=& \{(I(x), I^2(x)),\, x\in \mathcal G \}\\
&\stackrel{A.2}{=}&\{(I(x), x),\, x\in \mathcal G\}\\
T(I\times I (L_I))&=& \{(x, I(x)),\, x \in \mathcal G\}= L_I \label{C}. 
\end{eqnarray}
From (\ref{C}) we get
\[\overline{T_{rel}}\circ (\overline{I} \times \overline{I})\circ L_I= \overline{L_1}
\] and therefore
\[\overline{I_{rel}}\circ L_1 = \overline{L_3} \circ \overline{L_I}= \overline{L_1}.\]

\end{proof}

\item \textbf{\underline{A.6}} 

\begin{enumerate}
\item $L_3\circ (L_1 \times \Id)$ and $L_3\circ (\Id \times L_1)$ are immersed submanifolds of $\mathcal G \times \mathcal G.$
\item $L_3\circ (L_1 \times \Id)$ and $L_3\circ (\Id \times L_1)$ are Lagrangian submanifolds of $\overline{\mathcal G}\times \mathcal{G}$.
\item If we define the morphism $$L_2:=L_3\circ (L_1 \times \Id)\colon  \mathcal G \nrightarrow \mathcal G,$$
then the following equations hold
\begin{enumerate}
\item
\begin{equation}
L_2=L_3\circ (\Id \times L_1).
\end{equation}
\item $L_2$ leaves invariant $L_1$ and $L_3$, i.e.
\begin{eqnarray} 
L_2\circ L_1&=& L_1\label{invariance1}\\
L_2\circ L_3&=& L_3\circ (L_2 \times L_2)=L_3\label{invariance3}.
\end{eqnarray}
\item 
\begin{equation}
\overline{I_{rel}}\circ L_2= \overline{L_2}\circ \overline{I_{rel}}\mbox{ and } L_2^{\dagger}=L_2.\label{symmetric}
\end{equation}
\end{enumerate}
\end{enumerate}
\begin{corollary}\label{invariance2}\emph{$L_2$ is idempotent, i.e.}
\begin{equation}
L_2\circ L_2= L_2. 
\end{equation}
\end{corollary}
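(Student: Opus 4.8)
The plan is to obtain the idempotency of $L_2$ as a purely formal consequence of the axioms, specifically of the invariance relation \eqref{invariance3} in axiom A.6 together with the defining formula $L_2 = L_3 \circ (L_1 \times \Id)$. The only piece of infrastructure needed is associativity of composition of relations, which holds at the set-theoretic level and therefore does not interact with the smoothness or Lagrangianity clauses of the axioms.

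First I would substitute the definition of $L_2$ into one of the two factors of $L_2 \circ L_2$, writing $L_2 \circ L_2 = L_2 \circ \bigl(L_3 \circ (L_1 \times \Id)\bigr)$. Next I would reassociate the composition as $(L_2 \circ L_3) \circ (L_1 \times \Id)$; this is legitimate because each of $L_2$, $L_3$ and $L_1 \times \Id$ is already a bona fide morphism in $\mbox{\textbf{Symp}}^{Ext}$ by the first parts of axioms A.4--A.6, and set-theoretic composition of relations is associative. Then I would apply the first equality of \eqref{invariance3}, namely $L_2 \circ L_3 = L_3$, to obtain $(L_2 \circ L_3) \circ (L_1 \times \Id) = L_3 \circ (L_1 \times \Id)$. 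Finally, the right-hand side is exactly $L_2$ by definition, so $L_2 \circ L_2 = L_2$.

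I do not expect any hard step: the statement is formal once \eqref{invariance3} is granted. The only points demanding (routine) attention are the bookkeeping of sources and targets --- one checks that $L_2 \circ L_3 \colon \mathcal G \times \mathcal G \nrightarrow \mathcal G$, so that precomposing with $L_1 \times \Id \colon \mathcal G \nrightarrow \mathcal G \times \mathcal G$ is meaningful --- and the observation that, since the final composite coincides set-theoretically with $L_2$, one need not re-verify that $L_2 \circ L_2$ is an immersed Lagrangian submanifold: it simply equals the morphism $L_2$ already provided by axiom A.6.
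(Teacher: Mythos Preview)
Your proof is correct, but it is not quite the route the paper takes. The paper appeals only to the definition of $L_2$ together with associativity \eqref{asso} and the unit relation \eqref{unit}: expanding $L_2\circ L_2 = L_3\circ(L_1\times L_2) = L_3\circ(\Id\times L_3)\circ(L_1\times L_1\times\Id)$, applying \eqref{asso} to replace $L_3\circ(\Id\times L_3)$ by $L_3\circ(L_3\times\Id)$, and then collapsing $L_3\circ(L_1\times L_1)$ to $L_1$ via \eqref{unit}. You instead invoke the invariance relation $L_2\circ L_3=L_3$ from \eqref{invariance3}, which is part of axiom A.6 and hence already available; this makes your argument a one-line substitution. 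The paper's version has the minor advantage of showing that idempotency of $L_2$ needs only A.4 and A.5 (plus the definition of $L_2$), not the additional content of \eqref{invariance3}; your version has the advantage of brevity.
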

\begin{proof}
It follows directly from the definition of $L_2$ and Equations \ref{asso} and \ref{unit}.
\end{proof}

\begin{remark}

\begin{itemize}\emph{The following is an interpretation of the axioms of the relational symplectic groupoid:   
\item The cyclicity axiom  (A.1) encodes the cyclic behavior of the multiplication and inversion maps for groups, namely, if $a,b,c$ are elements of a group $G$ with unit $e$ such that $abc=e$, then $ab=c^{-1}, \, bc= a^{-1}, ca=b^ {-1}$.
\item (A.2) encodes the involutivity property of the inversion map of a group, i.e. $(g^{-1})^{-1}=g, \forall g \in G$.
\item (A.3) encodes the compatibility between multiplication and inversion:
$$(ab)^{-1}=b^{-1}a^{-1}, \forall a,b \in G.$$
\item (A.4) encodes the associativity of the product: $a(bc)=(ab)c, \forall a,b,c \in G$.
\item (A.5) encodes the property of the unit of a group of being idempotent: $ee=e$.
\item The axiom (A.6)  states an important difference between the construction of relational symplectic groupoids and usual groupoids. The compatibility between the multiplication and the unit is defined up to an equivalence relation, denoted by $L_2$, whereas for groupoids such compatibility is strict (see Axiom A.3 in Definition \ref{groupoid}); more precisely, for groupoids such equivalence relation is the identity. In addition, the multiplication and the unit are equivalent with respect to $L_2$.
}
\end{itemize}
\emph{This description explains why the choice of the axioms of the relational symplectic groupoid are \emph{natural}.}
\end{remark}

\begin{remark} \label{counter1}
\emph{Equations \ref{unit}, \ref{invariance1}, \ref{invariance2}, \ref{invariance3} and \ref{symmetric} have to be stated as part of the axioms and they cannot be deduced as corollaries. Here there is an example of a structure that satisfies  the axioms from A.1. to A.4 but not A.5 or A.6.
\begin{enumerate}
\item $\mathcal G = \mathbb Z$ (as a non connected zero dimensional symplectic manifold)
\item $L=\{(n,m,-n-m-1) \in \mathbb Z ^3\}$
\item $I\colon  n \mapsto -n$
\end{enumerate}
For this example, the spaces $L_i$ are given by
\begin{eqnarray}
L_1&=& \{ 1\}\\
L_2&=& \{ (m,m+2)\mid m\in \mathbb Z\}\\
L_3&=&\{(m,n,m+n+1)\mid n,m \in \mathbb Z ^2 \}
\end{eqnarray}
for which we get that
\begin{eqnarray}
L_3\circ(L_1 \times L_1)&=&\{3\} \neq L_1\\
L_2\circ L_1&=&\{3\} \neq L_1\\
L_2\circ L_2&=&\{(m,m+4)\mid m\in \mathbb Z\} \neq L_2\\
L_2\circ L_3&=&\{(m,n,m+m+3)\mid m,n \in \mathbb Z\} \neq L_3\\
\overline{I_{rel}}\circ L_2&=&(m, -m-2) \neq (m,-m+2)= \overline{L_2} \circ \overline {I_{rel}}.
\end{eqnarray}
This counterexample has also a finite set version, replacing $\mathbb Z$ by $\mathbb Z / k \mathbb Z$, with $k \geq 3$.}

\end{remark}

 
\end{itemize}


\begin{figure}
\psfrag{Rq}{$\mathbb{R}^q$}
\centering%
\center{\includegraphics[scale=0.62]{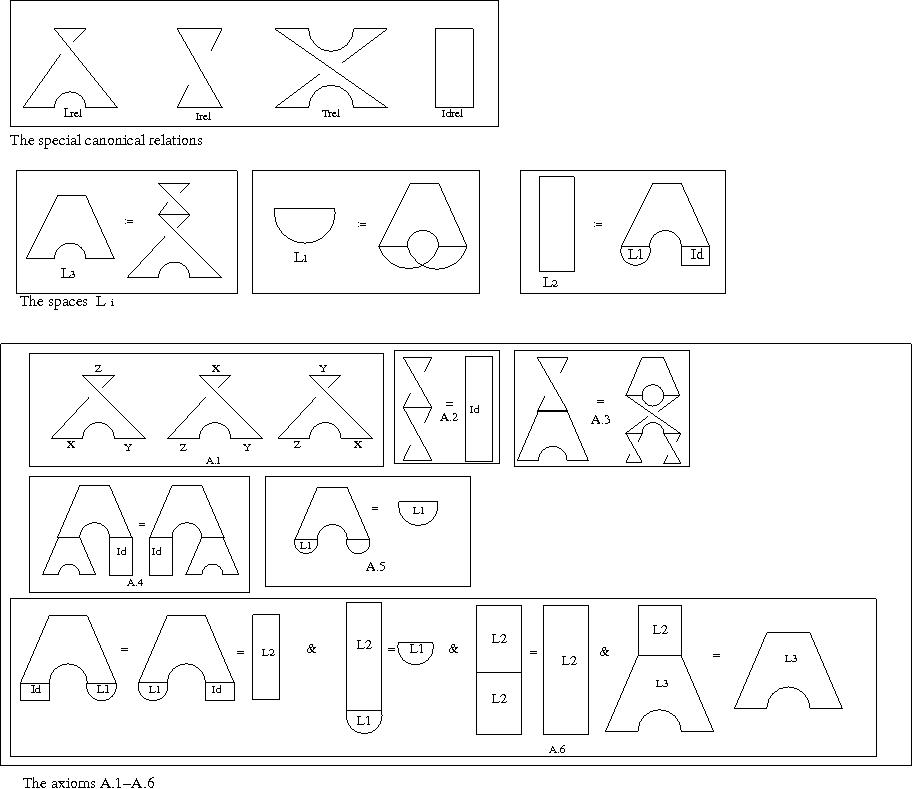}}
\caption{\underline{Relational symplectic groupoid: Diagrammatics.} The morphisms 
$I_{rel}$ and $L_{rel}$ are represented by a twisted stripe and pair of \emph{paper doll pants} respectively, 
and the induced immersed canonical relations $L_1, L_2$ and $L_3$ are constructed as compositions of $L$ and $I$. 
As it is shown in the Figure, they should satisfy  the previously defined compatibility axioms. The horizontal segments in the boundary 
of the surfaces represent the weak symplectic manifold $\mathcal G$. the non horizontal segments have no meaning.}
\label{fig:Ax}
\end{figure}

\newpage The next set of axioms defines a particular type of relational symplectic groupoids  which will allow us to relate the construction of relational symplectic groupoids for Poisson manifolds to the usual symplectic groupoids for the integrable case. Before this, we introduce the notion of immersed cosiotropic submanifolds for weak symplectic manifolds.
\begin{definition}\emph{ An immersed coisotropic submanifold of a weak symplectic manifold $M$ is a pair $(\phi, C)$ such that
\begin{enumerate}
 \item $C$ is a smooth Banach manifold.
 \item $\phi\colon C \to M$ is an immersion.
\item $T\phi_x$ applied to $T_xC$ is a coisotropic subspace of $T_{\phi(x)}M,\, \forall x \in C$.
\item  $C$ is a 
covering of $\mathfrak{Im}(\phi)$ (see Definition \ref{Extended}).
\end{enumerate}}
\end{definition}
\begin{definition}\emph{
A relational symplectic groupoid $(\mathcal G,\, L,\, I)$ is called \textbf{regular} if the following three axioms A.7, A.8 and A.9 are satisfied. Consider $\mathcal G$ 
as a relation $* \nrightarrow \mathcal G$ denoted by $\mathcal G_{rel}$.}
\end{definition}
\begin{itemize}
 \item \textbf{\underline{A.7}} 
 \begin{equation} \label{C}
 C:=L_2 \circ \mathcal G_{rel}
 \end{equation} 
 is an immersed submanifold of $\mathcal G$. 
\end{itemize}
\begin{corollary}\emph{
$C$ is an immersed coisotropic submanifold of $\mathcal G$.}
\end{corollary}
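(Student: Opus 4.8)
The plan is to deduce the coisotropy of $C$ fibrewise from the linear statement in Proposition \ref{Coiso-Lagrangian}, using only three properties of $L_2$ that are already available: $L_2$ is an immersed canonical relation $\mathcal G\nrightarrow\mathcal G$, so at each point its tangent space is Lagrangian in the corresponding copy of $\overline{T\mathcal G}\oplus T\mathcal G$ (Axiom A.6); $L_2$ is symmetric, $L_2^{\dagger}=L_2$ (Equation \ref{symmetric}); and $L_2$ is idempotent, $L_2\circ L_2=L_2$ (Corollary \ref{invariance2}). I would start from two elementary set-theoretic observations. Since $L_2^{\dagger}=L_2$, the domain and the range of $L_2$ coincide, and both equal $C=L_2\circ\mathcal G_{rel}$ (which, unwinding the composition, is exactly the range of $L_2$); hence $(x,y)\in L_2$ forces $x,y\in C$, i.e. $L_2\subseteq C\times C$. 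Moreover $L_2$ is \emph{reflexive on $C$}: for $y\in C$ choose $x$ with $(x,y)\in L_2$; by symmetry $(y,x)\in L_2$, and then $(y,y)\in L_2\circ L_2=L_2$. Thus $\triangle(C)\subseteq L_2$. (This reflexivity is precisely what turns $L_2$ into an equivalence relation on $C$; no further part of that structure is needed here.)

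Next I would apply Proposition \ref{Coiso-Lagrangian} at each point of $C$. Fix $y\in C$. By reflexivity $(y,y)\in L_2$, and since $L_2$ is an immersed Lagrangian submanifold of $\overline{\mathcal G}\times\mathcal G$ (Axiom A.6), $T_{(y,y)}L_2$ is a Lagrangian subspace of $\overline{T_y\mathcal G}\oplus T_y\mathcal G$; being Lagrangian in $\overline V\oplus V$ is the same condition as being Lagrangian in $V\oplus\overline V$, so Proposition \ref{Coiso-Lagrangian} applies in this form. Because $L_2\subseteq C\times C$ and, by Axiom A.7, $C$ is an immersed submanifold of $\mathcal G$, the immersion representing $L_2$ factors through $C\times C$, whence $T_{(y,y)}L_2\subseteq T_yC\oplus T_yC$. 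Proposition \ref{Coiso-Lagrangian}, with $V=T_y\mathcal G$, then yields that $T_yC$ is a coisotropic subspace of $T_y\mathcal G$. As $y\in C$ was arbitrary, $C$ is an immersed submanifold of $\mathcal G$ (Axiom A.7) whose tangent spaces are coisotropic; together with the covering-of-its-image property that is built into the meaning of $C=L_2\circ\mathcal G_{rel}$ as a morphism (Definition \ref{Extended}), this is exactly the assertion that $C$ is an immersed coisotropic submanifold of $\mathcal G$.

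The only genuinely delicate step is the tangent inclusion $T_{(y,y)}L_2\subseteq T_yC\oplus T_yC$: one must know that the immersion $\phi\colon\tilde L_2\to\mathcal G\times\mathcal G$ representing $L_2$, which set-theoretically takes values in $C\times C$, factors \emph{smoothly} through the immersed submanifold $C\times C$, so that differentiating the factorisation produces the inclusion of tangent spaces. In finite dimensions this is automatic; in the Banach setting it rests on the regularity hypotheses packaged into Axiom A.7 and ultimately on the Banach inverse/implicit function theorems of \cite{Lang,Lang1}. Everything else is the bookkeeping indicated above.
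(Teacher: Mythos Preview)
Your proof is correct and takes a genuinely different route from the paper's. The paper argues in one line: from $L_2\circ L_1=L_1$ (Equation~\ref{invariance1}) one has $L_1\subset L_2\circ\mathcal G_{rel}=C$, and since $L_1$ is Lagrangian, $C$ is coisotropic by the elementary linear fact that any subspace containing a Lagrangian subspace is coisotropic. Your approach instead exploits $L_2\subset C\times C$ together with the reflexivity $\triangle(C)\subset L_2$, and then invokes Proposition~\ref{Coiso-Lagrangian} fibrewise at each diagonal point $(y,y)$.

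The paper's argument is shorter, but note that the inclusion $L_1\subset C$ only directly yields $T_xL_1\subset T_xC$---and hence coisotropy of $T_xC$---at points $x\in L_1$; at points of $C\setminus L_1$ some further reasoning would still be needed. Your argument, by contrast, applies at \emph{every} $y\in C$ because $(y,y)\in L_2$ for all such $y$, so it establishes coisotropy uniformly across $C$. In this sense your route via $L_2$ is more complete, at the cost of the smooth-factoring subtlety you already flag (which the paper's one-liner would also face, for the tangent inclusion $T_xL_1\subset T_xC$).
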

\begin{proof}
By Equation \ref{invariance1} we conclude that $L_1\subset C$, hence $C$ is coisotropic.
\end{proof}
\begin{corollary} \label{equi}\emph{
$L_2$ is an equivalence relation in $C$}.
\end{corollary}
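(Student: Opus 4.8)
The plan is to verify, working at the set-theoretic level, that $L_2$ regarded as a subset of $\mathcal G \times \mathcal G$ restricts to a relation on $C$ which is reflexive on $C$, symmetric and transitive. All three properties will follow formally from the two facts already proved, namely $L_2^{\dagger}=L_2$ (Equation \ref{symmetric}) and $L_2\circ L_2 = L_2$ (Corollary \ref{invariance2}), together with the defining equation $C = L_2\circ \mathcal G_{rel}$.

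First I would unwind the composition: since $\mathcal G_{rel}$ is $\mathcal G$ itself viewed as a relation $* \nrightarrow \mathcal G$, the set $C = L_2\circ \mathcal G_{rel}$ is exactly $\{\, y\in\mathcal G \mid \exists\, x\in\mathcal G,\ (x,y)\in L_2 \,\}$, i.e. the range of the relation $L_2$. Hence if $(x,y)\in L_2$ then $y\in C$ by this description, and $(y,x)\in L_2$ by $L_2^{\dagger}=L_2$, so also $x\in C$; therefore $L_2\subseteq C\times C$ and it is meaningful to speak of $L_2$ as a relation on $C$. Symmetry of $L_2$ on $C$ is then literally the statement $L_2^{\dagger}=L_2$. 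Transitivity is literally $L_2\circ L_2=L_2$: if $(x,y)\in L_2$ and $(y,z)\in L_2$ then $(x,z)\in L_2\circ L_2 = L_2$. For reflexivity on $C$, take $y\in C$; by the description of $C$ there is $x$ with $(x,y)\in L_2$, symmetry gives $(y,x)\in L_2$, and transitivity gives $(y,y)\in L_2\circ L_2 = L_2$. This establishes the three defining properties of an equivalence relation on $C$.

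The only point requiring genuine care — and the place where a careless argument could go wrong — is the bookkeeping with the conventions for composing relations (the order in $S\circ R$ and the identification of $\mathcal G_{rel}$ with $\mathcal G$), and the remark that the relevant compositions here are the honest set-theoretic ones coming from Axioms A.5--A.7, so no transversality or smoothness subtleties intervene. I do not expect any analytic obstacle; the content of the corollary is purely the combination of symmetry and idempotency of $L_2$ with the fact that $C$ is its range.
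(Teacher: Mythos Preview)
Your proof is correct and follows essentially the same approach as the paper's: both deduce that $L_2\subseteq C\times C$, then invoke idempotency for transitivity, self-adjointness for symmetry, and combine symmetry and transitivity with the definition of $C$ to obtain reflexivity. Your derivation of $L_2\subseteq C\times C$ via ``$C$ is the range of $L_2$, plus symmetry'' is in fact slightly cleaner than the paper's inclusion $L_2=L_2\circ L_2\subset L_2\circ(\mathcal G\times\mathcal G)=C\times C$, but the content is the same.
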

\begin{proof}
By Equation \ref{invariance2}
\begin{equation}
L_2=L_2\circ L_2 \subset L_2\circ (\mathcal G\times \mathcal G)= C\times C\colon  *\nrightarrow \mathcal G \times \mathcal G,
\end{equation}
so $L_2$ is a relation on $C$. By Equation \ref{invariance2}, $L_2$ is transitive, by Equation \ref{symmetric} it is symmetric and, for any $x \in C$, by definition, there exists $y$ such that $(x,y)\in L_2$ and by symmetry and transitivity of $L_2$, we conclude that $(x,x)\in L_2$, hence, $L_2$ is an equivalence relation.
\end{proof}
The following Proposition allows us (in principle at the infinitesimal level), to regard the equivalence relation given by $L_2$ as the equivalence relation given by the characteristic 
foliation of $C$.
\begin{proposition}\emph{
Let 
$$R^C:=\{(x,y)\in C\times C \mid L_x=L_y\},$$
where $L_x$ is the leaf of the characteristic foliation through the point $x \in C$. Let $(x,y)\in R^C\cap L_2$.
Then
\begin{equation}\label{char}
T_{(x,x)}R^C=T_{(x,x)}L_2.
\end{equation}}
\end{proposition}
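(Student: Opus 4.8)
The plan is to prove the equality of tangent spaces by showing both inclusions via a direct infinitesimal analysis of the two relations $R^C$ and $L_2$ at the diagonal point $(x,x)$, reducing everything to linear symplectic statements already established in the excerpt (Propositions \ref{Isotropic}, \ref{Coisotropic}, \ref{projection} and the lemma describing $I\circ P$). First I would recall that, by Corollary \ref{equi}, $L_2$ is an equivalence relation on the immersed coisotropic submanifold $C$, and that by construction $L_2 = I\circ P$ in the notation of the linearized picture: at each point, $L_2$ is the canonical lift of the identity, so its tangent space at $(x,x)$ is $\{(v,w)\in T_xC\times T_xC \mid P_W(v)=P_W(w)\}$ where $W = T_xC$ is the coisotropic subspace $T_x\phi(T_xC)$ of the weak symplectic vector space $T_{\phi(x)}\mathcal G$ and $P_W$ is the projection to the reduction $\underline W = W/W^{\perp}$. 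In other words, $T_{(x,x)}L_2 = \{(v,w)\in W\times W \mid v-w\in W^{\perp}\}$.

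Next I would identify $T_{(x,x)}R^C$. The leaf $L_x$ of the characteristic foliation of $C$ through $x$ is the integral manifold of the distribution $x\mapsto W_x^{\perp}$, where $W_x^{\perp} = (T_x\phi(T_xC))^{\perp}$; this is exactly the kernel of the presymplectic form $\phi^*\omega$ restricted to $C$. Since $R^C$ is the relation ``same leaf'', its tangent space at a diagonal point $(x,x)$ is $\{(v,w)\in T_xC\times T_xC \mid v-w \in W^{\perp}\}$ — a pair of vectors lies in $T_{(x,x)}R^C$ precisely when they differ by a vector tangent to the leaf, i.e. by an element of the characteristic distribution. Comparing this with the formula for $T_{(x,x)}L_2$ obtained above gives the two sets are literally the same subspace of $W\times W$, hence \eqref{char} holds. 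The role of the hypothesis $(x,y)\in R^C\cap L_2$ (together with the restriction to $x=y$ in the statement of the identity) is to guarantee we are at a genuine diagonal point where both relations pass through $(x,x)$, so that the linearizations are taken at comparable points; one also uses here that $C$ is coisotropic so that $W^{\perp}\subset W$ and the expressions make sense inside $T_xC\times T_xC$.

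The main obstacle I anticipate is not the algebra but the bookkeeping of the identification $T_{(x,x)}L_2 = T_x\phi^{-1}(I\circ P)$ in the infinite-dimensional weak symplectic setting: one must check that the linearization of the immersed canonical relation $L_2 = L_3\circ(L_1\times \Id)$ at a point of $C$ really is the linear canonical relation $I\circ P$ built from the coisotropic subspace $W = T_xC$, rather than something larger, and that no extra directions appear because of the weakness of the symplectic form (the reduction $\underline W = W/W^{\perp}$ need not be identified with $W^*$). This is where I would invoke Proposition \ref{projection}, which guarantees $I$ (and hence $P = I^{\dagger}$ and $I\circ P$) is a genuine canonical relation even infinite-dimensionally, together with the regularity axioms A.7–A.9 ensuring $C$ is an honest immersed submanifold so that its characteristic distribution $W^{\perp}$ is well defined and integrable. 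Once the linearization $T_{(x,x)}L_2 = \{(v,w): v-w\in W^{\perp}\}$ is established rigorously, the identification with $T_{(x,x)}R^C$ is immediate from the definition of the characteristic foliation.
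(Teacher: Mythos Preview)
Your identification of $T_{(x,x)}R^C$ as $\{(v,w)\in W\times W \mid v-w\in W^{\perp}\}$ with $W=T_xC$ is fine. The gap is in the other half: you assert that $T_{(x,x)}L_2$ equals this same set because ``$L_2=I\circ P$ is the canonical lift of the identity''. But nothing in the axioms says this. The relation $L_2$ is defined abstractly as $L_3\circ(L_1\times\Id)$; the only information the axioms give you at $(x,x)$ is that $L_2$ is an immersed Lagrangian submanifold of $\overline{\mathcal G}\times\mathcal G$, that $L_2\subset C\times C$, and that $\triangle C\subset L_2$ (the latter two from Corollary~\ref{equi}). Proposition~\ref{projection} only shows that the abstract linear relation $I$ built from a coisotropic subspace is Lagrangian; it does not identify the tangent space of the specific $L_2$ coming from $L_3$ and $L_1$ with $I\circ P$. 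In fact the paper gives an explicit example (right after this proposition) where $L_2\neq R^C$ globally, so any ``$L_2=I\circ P$'' claim must already be the tangent-level statement you are trying to prove.

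The paper's argument avoids this circularity. It uses only the three abstract facts above: from $L_2$ Lagrangian and $L_2\subset C\times C$ one gets $T_xC^{\perp}\oplus T_xC^{\perp}\subset T_{(x,x)}L_2$; combined with $T_{(x,x)}\triangle C\subset T_{(x,x)}L_2$ this gives the inclusion $T_{(x,x)}R^C\subset T_{(x,x)}L_2$ by writing $(X,Y)=(X-Y,0)+(Y,Y)$. Then one shows $T_{(x,x)}R^C$ is itself Lagrangian (via Proposition~\ref{Coisotropic}, since its reduction is the diagonal), and equality follows because a Lagrangian cannot strictly contain another Lagrangian. What you were missing is precisely this indirect route: you need to deduce the structure of $T_{(x,x)}L_2$ from its Lagrangianity and the inclusions, not assume it.
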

\begin{proof}
First we will prove that $T_{(x,x)}R^C\subset T_{(x,x)}L_2$. For this, consider $(X,Y)\in T_{(x,x)}R^C$, since $X-Y \in T_xC^{\perp}$, we get that 
\begin{equation}\label{E1}
(X-Y,0) \in T_{(x,x)}C^{\perp}\oplus TC^{\perp}.
\end{equation}
Since $L_2\subset C\times C$ and $L_2$ is Lagrangian
\begin{equation}\label{E2}
T_{(x,x)}C^{\perp}\oplus T_{(x,x)}C^{\perp} \subset T_{(x,x)}L_2\subset T_xC \oplus T_xC. 
\end{equation} 
Combining Equations \ref{E1} and \ref{E2}, we get that
\begin{equation}\label{E3}
(X-Y,0)\in T_{(x,x)}L_2.
\end{equation}
Since $\triangle C \subset L_2$ (from Corollary \ref{equi}), then 
\begin{equation}\label{E4}
(Y,Y)\in T_{(x,x)}L_2, \forall Y \in C.
\end{equation}
From equations \ref{E1} and \ref{E4}, we conclude that 
\begin{equation}
(X,Y)=(X-Y,0)+(Y,Y)\in T_{(x,x)}L_2,
\end{equation}
as we wanted.
Now we prove that $T_{(x,x)}R^C$ is a Lagrangian subspace of $T_xC\oplus T_xC$. For this, first observe that 
\begin{equation}
T_xC^{\perp} \oplus T_x^{\perp}C \subset T_{(x,x)}R^C \subset T_xC \oplus T_xC
\end{equation}
and that the canonical projection of $T_{(x,x)}R^C$ in the symplectic reduction $\underline C \oplus \underline C$ is $\triangle C$, that is Lagrangian. Then we can apply the result in Proposition \ref{Coisotropic} and conclude that $T_{(x,x)}R^C$ is Lagrangian. Now, since $T_{(x,x)}L_2$ is also Lagrangian by the axioms above and it contains $T_{(x,x)}R^C$ as a subspace, it follows that 
\begin{equation}
T_{(x,x)}L_2=T_{(x,x)}L_2^{\perp}\subset T_{(x,x)}R^C=T_{(x,x)}R^C,
\end{equation}
hence $T_{(x,x)}L_2=T_{(x,x)}R^C$, as we wanted.
\end{proof}


\begin{itemize}
\item \textbf{\underline{A.8}} The partial reduction $\underline{L_1}= L_1 / (L_2\cap L_1\times L_1)$ is a finite dimensional smooth manifold. 
We will denote $\underline{L_1}$ by $M$.
\item \textbf{\underline{A.9}} $S:= \{(c,[l]) \in C \times M: \exists l \in [l], g \in \mathcal G \vert (l,c,g) \in L_3\}$ is an immersed submanifold of $\mathcal G \times M $ satisfying that
\begin{enumerate}
\item
\begin{equation} \label{Source}
(S\times S)\circ L_2^{rel}= \triangle_M,
\end{equation}
where $L_2^{rel}: pt \nrightarrow C\times C$ is the induced relation from $L_2$.
\item 
The induced relation
\begin{equation}\label{subm}
dS:= TS: T\mathcal G \nrightarrow TM
\end{equation}
is surjective.
\end{enumerate}
It is easy to check that the first condition implies the following 
\begin{corollary}\label{source}\emph{
\begin{enumerate}
\item The relation \[T:= \{(c,[l]) \in C \times M\colon  \exists l \in [l], g \in \mathcal G \vert (c,l,g) \in L_3\}=I\circ S\] is an immersed submanifold of $\mathcal G \times M$.
\item $S$ and $T$ regarded as relations from $C$ to $M$ are surjective submersions.
\end{enumerate}
}
\end{corollary}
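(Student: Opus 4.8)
The plan is to deduce the statement from the single identity $T=I\circ S$ announced in the lemma, together with the two clauses of A.9 and the reflexivity of $L_2$ on $C$ (Corollary~\ref{equi}): first I would show that $S$ — a priori only an immersed canonical relation — is a genuine surjective submersion $C\to M$, and then transport this to $T$ by a diffeomorphism. Unwinding $L_3=I_{rel}\circ L_{rel}$ gives $(a,b,g)\in L_3\iff(a,b,I(g))\in L$, so the defining condition of $S$ may be rewritten as $\exists\,l\in[l],\,h\in\mathcal G\colon (l,c,h)\in L$. For single-valuedness, if $(c,[l_1]),(c,[l_2])\in S$ one applies Equation~\ref{Source} to the pair $(c,c)\in L_2$ (reflexivity of $L_2$ on $C$) to get $([l_1],[l_2])\in(S\times S)\circ L_2^{rel}=\triangle_M$, hence $[l_1]=[l_2]$. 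For totality, given $c\in C=L_2\circ\mathcal G_{rel}$ one gets $g\in\mathcal G$ and $l_0\in L_1$ with $(l_0,g,I(c))\in L$, and applying the cyclicity A.1 together with the symmetry $(x,y,z)\in L\iff(I(y),I(x),I(z))\in L$ — which is A.3 rewritten by equating its two descriptions of $L_3$ — produces $(I(l_0),c,I(g))\in L$ with $I(l_0)\in L_1$, so $(c,[\,I(l_0)\,])\in S$. Thus $S$ is a well-defined map $C\to M$.

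That $S$ is a submersion is exactly the content of clause~(2) of A.9 (Equation~\ref{subm}): surjectivity of the tangent relation $dS\colon T\mathcal G\nrightarrow TM$ says that the differential of $S$ is onto $TM$ at every point, and together with single-valuedness this forces the first projection $S\to C$ to be a local diffeomorphism, so $S\colon C\to M$ is smooth and a submersion. Surjectivity onto $M$ follows from $\triangle_M=(S\times S)\circ L_2^{rel}$: for $[l]\in M$ the pair $([l],[l])$ lies in the right-hand side, so there is $(x,y)\in L_2\subset C\times C$ with $(x,[l])\in S$, and $x$ is a preimage.

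To pass to $T$, note that $I$ is a diffeomorphism of $\mathcal G$ with $I(L_1)=L_1$ (the corollary following axiom A.5) and $(x,y)\in L_2\iff(I(x),I(y))\in L_2$ (the symmetry part of A.6, Equation~\ref{symmetric}); hence $I$ descends to a smooth involution $\overline I$ of $M=L_1/L_2$. One expects $\overline I=\Id_M$, reflecting $i\circ\varepsilon=\varepsilon$ for groupoids, and this can be read off from A.5 (using $L_1=L_3\circ L_I$ one finds that both $l$ and $I(l)$ lie in the $L_3$-image of $(x,I(x))$ for a suitable $x$, which is a single $L_2$-class); but in any case a short computation with the same symmetries as above identifies $T$ with the image of $S$ under the diffeomorphism $I\times\overline I$ of $\mathcal G\times M$. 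Since $S$ is an immersed submanifold of $\mathcal G\times M$ by A.9, so is $T$, which proves part~1; and $T\colon C\to M$, being $S$ composed with the diffeomorphisms $I$ and $\overline I$, inherits from $S$ the property of being a surjective submersion, which proves part~2.

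The main obstacle is the passage from the set-theoretic definitions to the identity $T\cong I\circ S$: because $L$ is presented as only cyclically symmetric, one genuinely needs the extra symmetry $(x,y,z)\in L\iff(I(y),I(x),I(z))\in L$ extracted from A.3 in order to swap the first two slots of $L_3$, and this is precisely what makes $S$ (the ``source'') and $T$ (the ``target'') correspond under inversion. The remaining delicate, though routine, point is the Banach-manifold bookkeeping: that $I$ restricts to a smooth map of the immersed submanifold $L_1$ and descends to the finite-dimensional quotient $M$, and that the first projection $S\to C$ is a local diffeomorphism. Everything else is a direct consequence of Equation~\ref{Source} and clause~(2) of A.9.
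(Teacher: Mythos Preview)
Your argument is correct and follows the same skeleton as the paper's proof: use Equation~\ref{Source} together with reflexivity of $L_2$ on $C$ to see that $S$ is single-valued, use A.9(2) for the submersion property, and then transfer everything to $T$ via the inversion. The paper's proof is essentially a two-line sketch of exactly this.

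There is one point where your argument is actually sharper than the paper's. The paper asserts that part~(1) ``follows from the cyclicity condition in A.1,'' but cyclicity of $L$ only permutes the three slots of $L$ cyclically; it does \emph{not} swap the first two slots of $L_3$, which is what is needed to pass from the defining condition $(l,c,g)\in L_3$ of $S$ to the defining condition $(c,l,g)\in L_3$ of $T$. Your use of the A.3 symmetry $(x,y,z)\in L\iff(I(y),I(x),I(z))\in L$ is the correct ingredient here, and your identification $T=(I\times\overline I)(S)$ is the precise statement that makes part~(1) work. The paper's invocation of A.1 alone is, strictly speaking, not enough.

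Two minor remarks. First, your separate treatment of totality and surjectivity of $S$ is a genuine addition: the paper's sentence ``which implies that $S$ is a surjective map'' conflates single-valuedness with surjectivity, and your arguments fill both gaps cleanly. Second, your caution about whether $\overline I=\Id_M$ is well placed; as you note, it is not needed for (1) and (2) since a diffeomorphism suffices, and the literal equality $T=I\circ S$ in the statement should indeed be read modulo this induced involution on $M$ (or, in the intended groupoid picture, via $t=s\circ\iota$).
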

\begin{proof}
(1) follows from the cyclicity condition in A.1.  (2) follows by definition of $T$ and from the fact that, since Equation \ref{Source} holds, then if
$(x,[l])$ and $(x, [l]^{'})$ belong to $S\colon  C\to M$, then $([l], [l]^{'})\in \triangle_M$, which implies that $S$ is a surjective map and is clearly a submersion by Axiom 9, part 2.
\end{proof}
\begin{remark}\emph{
The condition  given by Equation \ref{char} is at the level of tangent spaces. If  we want that $R^C=L_2$, we should impose a connectedness condintion on the leaves of the characteristic foliation and the classes of $L_2$. The following is a modification of the example given in Remark \ref{counter1} of a structure satisfying all the axioms except the global version of Equation \ref{char}.
\begin{enumerate}
\item $\mathcal G = \mathbb Z$ 
\item $L=\{(n,m,-n-m-2k-1)\mid (m,n,k) \in \mathbb Z ^3\}$
\item $I: n \mapsto -n$
\end{enumerate}
For this example, the spaces $L_i$ and $C$ are given by}
\begin{eqnarray}
L_1&=& \{ 2\mathbb Z +1\}\\
L_2&=& \{ (m,n)\mid m-n \in 2\mathbb Z\}\\
L_3&=&\{(m,n,m+n+2k+1)\mid n,m,k  \in \mathbb Z ^3 \}\\
C&=& \mathbb Z.
\end{eqnarray}
\emph{Since $\mathbb Z$ is zero dimensional, we get that $C$ is also Lagrangian and for the symplectic reduction, $\underline{C}=*$. In the other hand,}
$$C / L_2= \mathbb Z/ 2 \mathbb Z \neq *.$$
\end{remark}
The following theorem connects the construction of relational symplectic groupoids in the regular case with the usual symplectic groupoids.
\begin{theorem}\label{TheoSym}\emph{ Let $(G,L,I)$ be a regular relational symplectic groupoid. Then $G:= C/L_2\rightrightarrows M$ is a topological groupoid over $M$. Moreover, if $G$ is a smooth manifold, then $G\rightrightarrows M$ is a symplectic groupoid over $M:= L_1/L_2$.
}
\end{theorem}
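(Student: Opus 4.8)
The plan is to construct explicitly the groupoid structure maps on $G := C/L_2$ from the data $(L_1, L_2, L_3)$ and verify the groupoid axioms A.1–A.6 of Definition \ref{groupoid}, and then, in the smooth case, identify the symplectic form via reduction. First I would observe that by Corollary \ref{equi}, $L_2$ is an equivalence relation on the immersed coisotropic $C$, so $G = C/L_2$ is a well-defined set (a topological space with the quotient topology from the uniform-convergence topology on $\mathcal G$); by Axiom A.8, $M = L_1/(L_2 \cap L_1 \times L_1)$ is a smooth manifold, and since $L_1 \subset C$ (Corollary after A.7) the inclusion descends to $\varepsilon \colon M \hookrightarrow G$. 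The source and target maps $s, t \colon G \to M$ are induced by the relations $S$ and $T = I \circ S$ of Axiom A.9 and Corollary \ref{source}: since $S$ is $L_2$-invariant by Equation \ref{Source}, it descends to a genuine map on the quotient, and Corollary \ref{source}(2) tells us these are surjective submersions. The multiplication $\mu$ on composable pairs is induced by $L_3 \colon \mathcal G \times \mathcal G \nrightarrow \mathcal G$: one checks using the $L_2$-invariance of $L_3$ (Equation \ref{invariance3}) that $L_3$ descends to a well-defined map $G \times_{(s,t)} G \to G$.

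Next I would verify the six groupoid axioms. Axiom A.1 ($s\circ\varepsilon = t\circ\varepsilon = \mathrm{id}_M$) follows because, restricted to $L_1$, the relations $S$ and $T$ both become the identity after reduction — this is where Equation \ref{Source} together with Corollary \ref{invariance2} and the cyclicity A.1 get used. Axiom A.6 (associativity of $\mu$) is precisely the content of Equation \ref{asso}, pushed through the quotient. Axiom A.3 (unit compatibility $\mu(\varepsilon\circ s \times \mathrm{id}) = \mu(\mathrm{id}\times\varepsilon\circ t) = \mathrm{id}$) comes from Equation \ref{unit} and the two expressions for $L_2$ in A.6, namely $L_2 = L_3\circ(L_1\times\mathrm{Id}) = L_3\circ(\mathrm{Id}\times L_1)$: at the level of the quotient, composing with a unit is composing with $L_2$, which is the identity on $G = C/L_2$. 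Axioms A.4 and A.5 (the inverse relations $\mu(\mathrm{id}\times i) = \varepsilon\circ t$ and $\mu(i\times\mathrm{id}) = \varepsilon\circ s$) follow from A.3 of the relational groupoid — the compatibility $(ab)^{-1} = b^{-1}a^{-1}$ encoded in $I_{rel}\circ L_{rel} = \overline{L}_{rel}\circ\overline{T}_{rel}\circ(\overline{I_{rel}}\times\overline{I_{rel}})$ — together with the Corollary $I(L_1) = \overline{L_1}$ and cyclicity; here $i\colon G \to G$ is induced by the antisymplectomorphism $I$, which preserves $C$ and descends to the quotient because of Equation \ref{symmetric}, $\overline{I_{rel}}\circ L_2 = \overline{L_2}\circ\overline{I_{rel}}$. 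Axiom A.2 (the image lands in the right source/target fibers) is bookkeeping with the defining incidence relations of $S$, $T$ and $L_3$.

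For the symplectic statement, assume $G$ is a smooth manifold. The quotient map $\pi \colon C \to G$ is then a (smooth) submersion whose fibers are the $L_2$-classes, and by the Proposition establishing Equation \ref{char} the tangent spaces to these classes coincide with the characteristic distribution $T_xC \cap (T_xC)^\perp$ of the coisotropic $C$. Hence $G$ is the symplectic reduction $\underline{C}$ of $\mathcal G$ along $C$, and carries the induced (strong) symplectic form $\omega$ characterized by $\pi^*\omega = \iota_C^*\omega_{\mathcal G}$, using Proposition \ref{Coisotropic} / Proposition \ref{Isotropic} to see that this is well-defined and nondegenerate. It remains to check multiplicativity, i.e. that the graph of $\mu$ is Lagrangian in $\overline{G}\times\overline{G}\times G$ (equivalently $\mu^*\omega = Pr_1^*\omega + Pr_2^*\omega$). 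This follows from the fact that $L_3$ is a Lagrangian submanifold of $\overline{\mathcal G^2}\times\mathcal G$ (Axiom A.3/A.4) together with the behaviour of Lagrangians under reduction: projecting $L_3$ (intersected with the appropriate coisotropic $C\times C\times C$) along the three copies of $\pi$ yields, by Corollary \ref{Lag projection} and Proposition \ref{Coisotropic}, a Lagrangian submanifold of $\overline{G}\times\overline{G}\times G$, which is exactly the graph of $\mu$. The image of the unit $\varepsilon(M) = L_1/L_2$ is Lagrangian in $G$ by the analogous argument applied to $L_1$.

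The main obstacle I expect is the smooth/topological regularity bookkeeping in the infinite-dimensional (Banach) setting: verifying that the various set-theoretic compositions descend to honest smooth maps on the quotients requires knowing that $\pi\colon C \to G$ is a submersion with the characteristic foliation as its kernel, and that the reduced form is nondegenerate (weak symplectic forms can degenerate badly under reduction); this is where the regularity axioms A.7–A.9 and the Proposition on Equation \ref{char} do the real work, and where one must be careful that "$G$ smooth" is being used as a genuine hypothesis rather than silently assumed. The purely groupoid-theoretic verifications, by contrast, are essentially formal translations of A.1–A.6 of the relational groupoid into A.1–A.6 of Definition \ref{groupoid}, once the descent to the quotient is justified.
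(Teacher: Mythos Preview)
Your proposal is correct and follows essentially the same route as the paper: define the groupoid data by pushing $L_1,L_2,L_3,I,S,T$ down to the quotient $C/L_2$, invoke Corollary~\ref{source} to get that $s$ is a surjective submersion (hence $G_2$ is well-defined), and in the smooth case identify $G$ with the symplectic reduction $\underline{C}$ so that the projection of $L_3$ (the canonical projection of Definition~\ref{canonicalproj} / Corollary~\ref{Lag projection}) is Lagrangian. The paper's own proof is in fact much terser than yours---it simply lists the structure maps and asserts ``it is easy to check that the groupoid axioms are satisfied''---so your detailed verification of A.1--A.6 via Equations~\ref{asso}, \ref{unit}, \ref{invariance3}, \ref{symmetric} is a genuine expansion rather than a different argument. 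One small quibble: your remark that the quotient topology comes from ``the uniform-convergence topology on $\mathcal G$'' is specific to the PSM example rather than the abstract regular setting, but this does not affect the argument.
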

\begin{proof}
The following are the data that define the groupoid structure, if 
\begin{eqnarray*}
p\colon  \mathcal G &\to& G\\
g&\mapsto& [g]
\end{eqnarray*}
denotes the canonical projection with respect to the symplectic reduction of $C$.
\begin{eqnarray*}
G_0&=& L_1/L_2 \\
G_1&=& C/L_2\\
G_2&=&C/L_2\times_{L_1/L_2} C/L_2
\\
m&=& p^3(L_3)\colon  G_2 \to G_1\\
\varepsilon&:& G_0 \to G_1:= \underline{\varepsilon}:L_1/L_2 \to C / L_2 \\
s&:&G_1 \to G_0:=\underline s\colon  C/L_2 \to M  \\
t&:&G_1 \to G_0:= \underline t\colon  C/L_2 \to M  \\
\iota&:& G_1 \to G_1:= \underline {I}\colon  C/L_2 \to C/L_2.
\end{eqnarray*}
Under the smoothness assumption for $\underline C$ and also assuming that the characteristic foliation of $C$ has finite codimension, $G_1$ is a finite dimensional symplectic manifold and due to Corollary \ref{source}, the map $\underline s$ is a surjective submersion, hence, the fiber product $G_2$ is a  finite dimensional (topological) manifold. It is easy to check that the groupoid axioms are satisfied.
For the symplectic structure on $G\rightrightarrows M$, note that the projection of $L_3$ in $G$ is Lagrangian (see Definition \ref{canonicalproj}) and restricted to $G_2$ is a map (due to Corollary \ref{source}). 

\end{proof}
\subsection{Poisson structure on $M$}
In this section, the objective is to relate the construction of the relational symplectic groupoids in the regular case  with 
Poisson structures in the space $M$. More precisely, we prove the existence and uniqueness of a Poisson bracket on 
$M$ compatible with a given regular relational symplectic groupoid $\mathcal G$. 
This theorem is the analog of the existence and uniqueness of a Poisson structure in the space of objects of usual 
symplectic groupoids \cite{Weinstein}. Namely,\begin{theorem}\label{Poisson base}
\emph{ \cite{Weinstein}. Let ${(G,\omega) \rightrightarrows M}$ be a symplectic groupoid over $M$ . Then there exists a unique Poisson structure $\Pi$ on $M$ such that the source map $s$ is a Poisson map (or equivalently the target map $t$ is an anti-Poisson map).}
\end{theorem}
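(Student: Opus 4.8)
The plan is to reproduce the classical Coste--Dazord--Weinstein argument, deriving everything from the Lagrangian condition on the graph of the multiplication together with the two facts already recorded above: that $\varepsilon(M)$ is a Lagrangian submanifold of $G$, and that the inversion $\iota$ is an antisymplectomorphism with $s\circ\iota=t$ and $t\circ\iota=s$. First I would set up a dimension count. Since $\varepsilon$ is an embedded section of the submersion $s$ and $\varepsilon(M)$ is Lagrangian, $\dim G=2\dim M=:2n$; hence for every $g\in G$ the subspaces $\ker d_g s$ and $\ker d_g t$ of $T_g G$ have dimension $n$, and the symplectic orthogonal $(\ker d_g s)^{\perp}$ (taken in $(T_g G,\omega_g)$) also has dimension $n$.

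The geometric core is the claim that $\ker ds$ and $\ker dt$ are mutually symplectically orthogonal distributions on $G$, i.e.\ $\omega_g(u,v)=0$ for all $u\in\ker d_g s$ and $v\in\ker d_g t$; by the dimension count this upgrades at once to $(\ker d_g s)^{\perp}=\ker d_g t$ and $(\ker d_g t)^{\perp}=\ker d_g s$. To prove the claim I would fix $g\in G$, put $x:=s(g)$, and use $g\cdot\varepsilon(x)=g$, so that $(g,\varepsilon(x),g)$ lies on the graph $\Lambda$ of the multiplication, which is Lagrangian in $\bigl(G\times G\times\overline{G},\ \omega\oplus\omega\oplus(-\omega)\bigr)$ since $\omega$ is multiplicative. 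Differentiating right multiplication by $\varepsilon(x)$ (the identity map on the $s$-fibre through $g$) shows that $(u,0,u)$ belongs to $T_{(g,\varepsilon(x),g)}\Lambda$ for every $u\in\ker d_g s$; differentiating left multiplication by $g$ (a diffeomorphism $t^{-1}(x)\to t^{-1}(t(g))$ carrying $\varepsilon(x)$ to $g$) shows that $(0,v,\ell_g v)$ belongs to $T_{(g,\varepsilon(x),g)}\Lambda$ for every $v\in\ker d_{\varepsilon(x)} t$, where $\ell_g\colon\ker d_{\varepsilon(x)}t\to\ker d_g t$ is a linear isomorphism. Inserting these two families of vectors into the isotropy condition for $\Lambda$ yields $-\omega_g(u,\ell_g v)=0$, and letting $v$ sweep out $\ker d_{\varepsilon(x)}t$ gives $\omega_g(\ker d_g s,\ker d_g t)=0$. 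This is the step I expect to be the main obstacle; it is elementary, but it is the one place where multiplicativity of $\omega$ is really used, and it requires some care in identifying which tangent vectors lie in $T\Lambda$.

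Granting the orthogonality, I would then build the Poisson bivector $\Pi$ on $M$ by setting $\Pi_x(df_x,dh_x):=\omega_{\varepsilon(x)}\bigl(X_{s^*f},X_{s^*h}\bigr)$ for $f,h\in\mathcal{C}^\infty(M)$, where $X_{s^*f}$ denotes the Hamiltonian vector field of $f\circ s$. This formula defines a skew-symmetric biderivation: $\omega_{\varepsilon(x)}(X_{s^*f},X_{s^*h})$ depends only on the vectors $X_{s^*f}|_{\varepsilon(x)}$ and $X_{s^*h}|_{\varepsilon(x)}$, and $X_{s^*f}|_{\varepsilon(x)}$ is determined by $d(s^*f)_{\varepsilon(x)}=(d_{\varepsilon(x)}s)^{*}(df_x)$, hence by $df_x$ alone. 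Next I would observe that $X_{s^*f}$ is everywhere tangent to the $t$-fibres: the $1$-form $d(s^*f)=(ds)^{*}df$ annihilates $\ker ds$, so $\omega_g(X_{s^*f},\cdot)$ annihilates $\ker d_g s$, that is, $X_{s^*f}\in(\ker d_g s)^{\perp}=\ker d_g t$. Since $\ker dt$ is the (involutive) tangent distribution of the $t$-fibration, $[X_{s^*f},X_{s^*h}]$ is again a section of $\ker dt=(\ker ds)^{\perp}$; as $X_{\{s^*f,s^*h\}_\omega}$ is, up to sign, $[X_{s^*f},X_{s^*h}]$, the differential of $\{s^*f,s^*h\}_\omega$ annihilates $\ker ds$, so $\{s^*f,s^*h\}_\omega$ is locally constant along the $s$-fibres. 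Assuming, as is standard and as elsewhere in this thesis, that the $s$-fibres are connected, $\{s^*f,s^*h\}_\omega$ then descends to a unique smooth function on $M$; evaluating it along $\varepsilon(M)$ identifies that function with $\Pi(df,dh)$, so $\{s^*f,s^*h\}_\omega=s^{*}\Pi(df,dh)$, which says exactly that $s$ is a Poisson map for $\Pi$.

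The remaining assertions are formal. Applying the pullback $s^{*}$, which is injective because $s$ is a surjective submersion, to the identically vanishing Jacobiator of $\{\cdot,\cdot\}_\omega$ shows that $\Pi$ satisfies the Jacobi identity, so $\Pi$ is genuinely a Poisson structure; the same injectivity forces uniqueness, since any Poisson structure making $s$ a Poisson map must satisfy $s^{*}\{f,h\}_M=\{s^*f,s^*h\}_\omega$. Finally, since $\iota$ is an antisymplectomorphism, hence a symplectomorphism $(G,\omega)\to(G,-\omega)$, and satisfies $s\circ\iota=t$, pulling the identity $\{s^*f,s^*h\}_\omega=s^{*}\Pi(df,dh)$ back along $\iota$ gives $\{t^*f,t^*h\}_\omega=-\,t^{*}\Pi(df,dh)$; that is, $t$ is an anti-Poisson map for $\Pi$, which is the equivalent formulation asserted in the statement.
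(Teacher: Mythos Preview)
Your proof is correct and is essentially the same argument as the paper's, only presented in unpackaged form. The paper does not carry out the direct Coste--Dazord--Weinstein construction; instead it invokes Libermann's lemma (a surjective submersion $\phi\colon (G,\omega)\to M$ with connected fibres is Poisson for a unique $\Pi$ on $M$ iff the fibre foliation is symplectically complete, i.e.\ $(\ker d\phi)^{\perp}$ is integrable) and then simply asserts that applying it to $\phi=s$ yields the theorem. What you do---prove $(\ker d_g s)^{\perp}=\ker d_g t$ from multiplicativity, observe that $X_{s^*f}\in\ker dt$, and conclude that $\{s^*f,s^*h\}_\omega$ is constant along $s$-fibres---is exactly the content of Libermann's lemma together with the verification of its hypothesis in the groupoid case. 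So you have supplied the orthogonality step that the paper leaves implicit, and your descent-of-the-bracket argument coincides with the paper's sketch of Libermann's lemma. The only packaging difference is that the paper abstracts the mechanism into a named lemma (which it later generalises to presymplectic $G$ for the regular relational case), whereas you argue ad hoc; neither buys anything extra for this particular statement.
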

One possible way to prove this theorem is by the use of what is known in the literature as \textit{Liberman's lemma}, that is stated in a slightly different formulation by Paulette Liberman in \cite{Liberman}. 
Before stating the result, we need some definition that will be used in the sequel.
\begin{definition}
\emph{Let $(G, \omega)$ be a symplectic manifold and $\mathcal F$ a foliation on $G$. $\mathcal F$ is called a \emph{symplectically complete foliation} if the symplectically orthogonal distribution $(T \mathcal F)^{\perp}$ is an integrable distribution.
}
\end{definition}
This is equivalent to say that $\mathcal F$ is symplectically complete if there exists another foliation 
$\mathcal F^{'}$ such that $$T_x(\mathcal F)= (T_x(\mathcal F^{'}))^{\perp}.$$

After this definition, Liberman's lemma reads as follows

\begin{lemma}\label{Liberman} \emph{(Liberman). 
Let $\phi\colon  (G,\omega) \to M$ be a surjective submersion from a symplectic manifold $G$ to a manifold $M$ such that the fibers are connected. Denote by $(G,\mathcal F)$ the foliation on $G$ induced by the fibers of $\phi$.   Then, there exists a unique Poisson structure $\Pi$ on $M$ such that the map $\phi\colon  G \to M$ is a Poisson map if and only if the foliation $\mathcal F$ is symplectically complete.}
\end{lemma}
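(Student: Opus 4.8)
The plan is to build the Poisson bracket on $M$ by transporting the bracket of $G$ through $\phi^{*}$, and to recognise the well-definedness of this transport as being exactly the involutivity of $(T\mathcal F)^{\perp}$. The starting point is a short dictionary. Since $\phi$ is a surjective submersion with connected fibers, $\phi^{*}\colon \mathcal C^{\infty}(M)\to\mathcal C^{\infty}(G)$ is injective and its image is precisely the \emph{basic} functions, i.e.\ those constant along the fibers. A function $h$ is basic iff $dh$ annihilates $T\mathcal F$, and since $\omega$ is nondegenerate this is equivalent to $X_{h}\in(T\mathcal F)^{\perp}$, where $X_{h}$ is the Hamiltonian vector field of $h$. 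Moreover, using $\operatorname{Ann}(T_{x}\mathcal F)=\operatorname{Im}(d\phi_{x}^{*})$, one sees that the vectors $\{X_{\phi^{*}f}(x):f\in\mathcal C^{\infty}(M)\}$ span $(T_{x}\mathcal F)^{\perp}$ at every point $x\in G$. These two observations are the whole mechanism of the proof.

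For the ``if'' direction, assume $\mathcal F$ is symplectically complete, so that $(T\mathcal F)^{\perp}$ is involutive. Given $f,g\in\mathcal C^{\infty}(M)$, the fields $X_{\phi^{*}f}$ and $X_{\phi^{*}g}$ lie in $(T\mathcal F)^{\perp}$, hence so does their bracket, which equals $\pm X_{\{\phi^{*}f,\phi^{*}g\}_{G}}$; therefore $\{\phi^{*}f,\phi^{*}g\}_{G}$ is basic, so it equals $\phi^{*}h$ for a unique $h\in\mathcal C^{\infty}(M)$. Setting $\{f,g\}_{M}:=h$ yields a skew-symmetric bracket satisfying the Leibniz rule, and the Jacobi identity passes from $G$ to $M$ because $\phi^{*}$ is an injective algebra homomorphism. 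Uniqueness is then automatic: any Poisson structure on $M$ making $\phi$ a Poisson map must satisfy $\phi^{*}\{f,g\}_{M}=\{\phi^{*}f,\phi^{*}g\}_{G}$, and $\phi^{*}$ is injective.

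For the ``only if'' direction, suppose such a $\Pi$ exists. Then for all $f,g$ the function $\phi^{*}\{f,g\}_{M}=\{\phi^{*}f,\phi^{*}g\}_{G}$ is basic, so $[X_{\phi^{*}f},X_{\phi^{*}g}]=\pm X_{\phi^{*}\{f,g\}_{M}}\in(T\mathcal F)^{\perp}$. Around any point of $G$ choose $f_{1},\dots,f_{m}$ with $m=\dim M$ whose differentials are independent; then $X_{\phi^{*}f_{1}},\dots,X_{\phi^{*}f_{m}}$ is a local frame for the constant-rank distribution $(T\mathcal F)^{\perp}$ (rank $\dim G-\dim\mathcal F=\dim M$), and all its pairwise brackets stay inside the distribution, so $(T\mathcal F)^{\perp}$ is involutive. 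By the Frobenius theorem it is integrable, i.e.\ $\mathcal F$ is symplectically complete.

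The only genuinely non-formal ingredient is the pointwise identification $(T_{x}\mathcal F)^{\perp}=\{X_{\phi^{*}f}(x):f\in\mathcal C^{\infty}(M)\}$ together with the standard fact that involutivity of a constant-rank distribution can be tested on a local frame; everything else is bookkeeping with $\phi^{*}$. (In an infinite-dimensional Banach setting one would additionally need $(T\mathcal F)^{\perp}$ to be a split, locally trivial subbundle so that a suitable Frobenius theorem applies, but for the finite-dimensional statement above this is automatic.)
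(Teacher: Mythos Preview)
Your argument is correct and follows essentially the same approach as the paper: identify $(T\mathcal F)^{\perp}$ as the distribution spanned by the Hamiltonian vector fields $X_{\phi^{*}f}$, then observe that its involutivity is equivalent, via $[X_{\phi^{*}f},X_{\phi^{*}g}]=\pm X_{\{\phi^{*}f,\phi^{*}g\}_{G}}$, to the bracket of basic functions being basic, which is exactly what is needed to push the Poisson structure down to $M$. The paper presents this as a brief sketch lumping both directions into a single chain of equivalences, whereas you have separated the ``if'' and ``only if'' directions and made the use of a local frame in the converse explicit; the content is the same.
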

\begin{proof}
Here we present a sketch of the proof. It can be checked that the distribution $(T\mathcal F)^{\perp}$ is Hamiltonian, 
i.e. is generated by the vector fields $X_{\phi^*f}$, for which $\omega(X_{\phi^*f}, \cdot)= d \phi^*f$, 
with $f \in \mathcal C^{\infty}(M)$. The fact that such distribution is integrable is equivalent, due to Frobenius Theorem, 
that $[X_{\phi^*f}, X_{\phi^*g}]$, with $f,g \in \mathcal C^{\infty}(M)$, is tangent to the distribution $(T\mathcal F)^{\perp}$. 
This is also equivalent to say that $X_{\{\phi^*f, \phi^*g\}}$ is tangent to 
$(T\mathcal F)^{\perp}$, therefore the bracket $\{\phi^*f, \phi^*g \}$ is constant along the $\phi-$ fibers and this implies that 
there exists a function $h\in\mathcal C^{\infty}(M)$ such that $\{\phi^*f, \phi^*g\}= \phi^*h$. This functions defines uniquely the Poisson bracket on 
$M$ given by 
$$\{f,g\}_{\Pi}=h.$$
\end{proof}

By applying Lemma \ref{Liberman} to the case of a symplectic groupoid $G \rightrightarrows M$ with $\mathcal F$ being the foliation described by the 
distribution $\ker(ds)$, Theorem \ref{Poisson base} holds.
The generalization of this result in the case of regular relational symplectic groupoids is the following Theorem, in which we have to introduce the language of Dirac structures 
(for more details see Appendix \ref{Dirac})
\begin{theorem}\label{Theo1Poisson1}\emph{ Let $(\mathcal G, L, I)$ be a regular relational symplectic groupoid, with $M= L_1/ L_2$.  Then, assuming that the $s$-fibers are connected, there exists a unique Poisson structure $\Pi$ on $M$ such that the map $s\colon  C\to M$ (or $t$) is a forward-Dirac map.
}
\end{theorem}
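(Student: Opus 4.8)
The plan is to adapt the proof of Theorem~\ref{Poisson base} via Liberman's Lemma~\ref{Liberman}, but carried out on the immersed coisotropic submanifold $C\subset\mathcal G$ rather than on a symplectic manifold. Equip $C$ with the closed two-form $\omega_C:=\iota_C^*\omega$; since $C$ is coisotropic, $\ker\omega_C=TC^{\perp}$ is exactly the (integrable) characteristic distribution, so $(C,\omega_C)$ is presymplectic and $D_C:=\{(v,\iota_v\omega_C)\mid v\in TC\}$ is a Dirac structure on $C$. By Corollary~\ref{source} the map $s\colon C\to M$ is a surjective submersion, and by hypothesis its fibres are connected. The assertion ``$s$ is a forward-Dirac map for a unique $\Pi$'' amounts to saying that the set-theoretic forward image $s_*D_C$ is a smooth Dirac structure of the form $\mathrm{graph}(\Pi^{\#})$ for a (necessarily unique) bivector $\Pi$; I would produce $\Pi$ by constructing $\{\,,\,\}_M$ directly.

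The heart of the matter is the presymplectic analogue of symplectic completeness of the $s$-fibre foliation $\mathcal F:=\ker ds$, namely the identity
\begin{equation}
(\ker ds_x)^{\perp_{\omega}} = \ker dt_x\qquad(x\in C),
\end{equation}
where $\perp_\omega$ may be computed with $\omega$ or with $\omega_C$ interchangeably: both sides contain $TC^{\perp}$ (indeed $TC^{\perp}\subseteq\ker ds_x\cap\ker dt_x$, since $s,t$ are constant along characteristic leaves), and since $\ker ds_x\supseteq TC^{\perp}$ coisotropy forces $(\ker ds_x)^{\perp_\omega}\subseteq(TC^{\perp})^{\perp_\omega}=T_xC$. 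To prove this equality I would pass to the symplectic vector space $\underline{T_xC}=T_xC/TC^{\perp}$, use that the linearizations of the source and target relations $S$ and $T=I\circ S$ are governed, through Axiom~A.9 and the Lagrangian Axioms~A.3--A.6 for $L_3$, by Lagrangian subspaces, and that $I$ is an antisymplectomorphism exchanging them; this exhibits the images of $\ker ds_x$ and $\ker dt_x$ in $\underline{T_xC}$ as mutual symplectic orthogonals, and Proposition~\ref{Coisotropic} (applied with $C^{\perp}\subseteq\ker ds\subseteq C$) lifts this back to the displayed identity. \textbf{This infinitesimal ``symplectic groupoid'' identity is the main obstacle}: it must be extracted from the axioms purely at the level of tangent spaces, without assuming that the global reduced space $\underline C$ is smooth.

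Granting the identity, $\ker dt=(\ker ds)^{\perp_\omega}$ is integrable (being tangent to the $t$-fibres), i.e.\ $\mathcal F$ is ``presymplectically complete''. Now run Liberman's argument on $C$: for $f\in\mathcal C^{\infty}(M)$ the covector field $d(s^*f)=s^*df$ annihilates $\ker ds\supseteq\ker\omega_C$, hence lies in the image of $\omega_C^{\sharp}$, so there is a Hamiltonian vector field $X_{s^*f}$ (unique modulo $\ker\omega_C$) with $\iota_{X_{s^*f}}\omega_C=s^*df$; as $f$ varies these span $(\ker ds)^{\perp_\omega}=\ker dt$. Setting $\{s^*f,s^*g\}:=\omega_C(X_{s^*f},X_{s^*g})=X_{s^*f}(s^*g)$, integrability of $\ker dt$ gives $X_{\{s^*f,s^*g\}}=[X_{s^*f},X_{s^*g}]\in\ker dt=(\ker ds)^{\perp_\omega}$, so $d\{s^*f,s^*g\}=\iota_{X_{\{s^*f,s^*g\}}}\omega_C$ annihilates $\ker ds$; thus $\{s^*f,s^*g\}$ is constant along the (connected) $s$-fibres and descends to $\{f,g\}_M\in\mathcal C^\infty(M)$ with $s^*\{f,g\}_M=\{s^*f,s^*g\}$. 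Since $s^*$ is injective, skew-symmetry, the Leibniz rule and the Jacobi identity for $\{\,,\,\}_M$ follow from those of $\omega_C(X_\bullet,X_\bullet)$, so $\{\,,\,\}_M$ is Poisson; denote its bivector $\Pi$. Unwinding the definitions shows $s_*D_C=\mathrm{graph}(\Pi^{\#})$, i.e.\ $s$ is forward-Dirac onto $(M,\Pi)$ (see Appendix~\ref{Dirac} for the relevant notions).

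Uniqueness is immediate: if $s$ is forward-Dirac onto $(M,\Pi')$ then necessarily $s^*\{f,g\}_{\Pi'}=\{s^*f,s^*g\}$ for all $f,g$, and surjectivity of $s$ forces $\{\,,\,\}_{\Pi'}=\{\,,\,\}_M$, hence $\Pi'=\Pi$. Finally, the statement for $t$ follows from that for $s$ together with the fact that $I$ is an antisymplectomorphism intertwining $s$ and $t$ (cf.\ Axiom~A.1 and Equation~\ref{symmetric}), which produces the opposite Poisson structure $-\Pi$; in particular $t$ is forward-Dirac onto $(M,-\Pi)$.
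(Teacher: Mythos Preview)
Your approach is essentially the same as the paper's: both prove a presymplectic version of Liberman's lemma on $(C,\omega_C)$ with the key hypothesis that the $s$- and $t$-fibres are $\omega_C$-orthogonal, and then run the standard descent argument using connectedness of the $s$-fibres. Your write-up is in fact more careful than the paper's on one point: the paper states its presymplectic Liberman lemma with ``$s^{-1}(x)$ and $t^{-1}(x)$ presymplectically orthogonal'' as a hypothesis and then applies it, but never verifies that hypothesis from the relational axioms, whereas you explicitly isolate $(\ker ds)^{\perp_\omega}=\ker dt$ as the crux and sketch how to extract it from $I$ being an antisymplectomorphism together with Proposition~\ref{Coisotropic}.
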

\begin{proof}
For this proof we present a more general version of Liberman's lemma for the case when $G$ is presymplectic. 
\begin{definition}\emph{ Let $M$ be a Banach manifold. A $2-$ form $\omega \in \Omega^2(M)$ is called \emph{presymplectic} if $d\omega=0$. In this case $M$ is called a presymplectic manifold. 
}
\end{definition}
\begin{lemma}
\emph{(Liberman's lemma for presymplectic manifolds). Let $G$ be a presymplectic manifold and 
$s, t : G \to M$ be smooth surjective submersions of $G$ onto a smooth manifold $M$ such that the fibers $s^{-1}(x)$ and $t^{-1}(x)$ are presymplectic orthogonal, for all $x \in M$. 
Then, if the $s$-fibers are connected, there exists a unique Poisson structure $\Pi$ on $M$ such that the map $s$ is a forward-Dirac map.}
\end{lemma} 
\begin{proof}
Let $\omega$ be the presymplectic structure on $G$ and $\omega^{\sharp}\colon  TG \to T^*G$ its induced bundle map. Now consider the functions $f \in \mathcal C^{\infty}(G)$ such that $df \in \Gamma(\mathfrak {Im}(\omega^{\sharp}))$. Then, for these functions, there exist uniquely determined Hamiltonian vector fields $X_f$, i.e. 
$$\omega(X_f, \cdot)= df$$
Now, let $W$ be a subspace of $\Gamma(TG)$ and we define pointwise
$$W_x^{\omega}:= \{v \in T_xG \mid \omega(v,w)=0, \forall w \in W_x \}$$
and we also define
$$W^0= \{\xi \in T^*G \mid \xi(V)=0, \forall V \in T_xG.\}$$
\end{proof}
Now, let $W= \ker(t_*) \subset T_xG$ and $Y \in W$. We have that 
$$\xi:= \omega(Y,\cdot) \in \frak{Im}(\omega^{\sharp}) \cap (\ker(t_*))^0$$
Since the $t-$ fibers are connected, we could write $\xi$ as
\begin{equation}
\xi= \sum_i \alpha_i dh_i,
\end{equation}
with $h_i \in \mathcal C^{\infty}(M)$.
Now, given two functions $f$ and $g$ in $\mathcal C^{\infty}(M)$, we construct the bracket $\{f,g\}_{M}$ as follows. Since 
$$Y\{s^*f, s^*g\}_G=0,\, \forall Y \in \Gamma(\ker (s_*)),$$
then, from the discussion above, there exists $\alpha \in \Gamma(\ker t_*)^0 \cap \frak{Im}(\omega^{\sharp})$ 
such that $Y= (\omega^{\sharp})^{-1}(\alpha)$. Since $Y$ can be written as
$$ Y= \sum_i X_{t^*h_i}, \, h_i \in \mathcal C^{\infty}(M),$$ we can define
\begin{equation}
\{ f,g \}_{M}:=h= \sum_i \alpha_i h_i.
\end{equation}
The fact that $s$ is a forward-Dirac map with respect to $\{,\}_M$ is equivalent to the following equation
\begin{equation}
\{t^*h, \{s^*f, s^*g \}\}_{G}=0
\end{equation}
that it holds since $\{,\}_G$ satisfies the Jacobi identity.
\end{proof}


\end{itemize}
Conjecturally, the proof of this theorem could be adapted in order to drop the connectedness assumption of the $s$-fiber. Then, we have the following
\begin{conjecture}\label{Theo1PoissonC}\emph{
Theorem \ref{Theo1Poisson1} holds also when the $s$-fibers are not connected.}
\end{conjecture}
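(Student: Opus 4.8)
The plan is to return to the proof of Theorem~\ref{Theo1Poisson1} and pin down the single place where connectedness of the $s$-fibers (equivalently, via the inversion $I$, of the $t$-fibers) is actually used. That place is the presymplectic version of Liberman's lemma: for $f,g\in\mathcal C^{\infty}(M)$ one shows that the function $\phi:=\{s^{*}f,s^{*}g\}_{\mathcal G}$ on $C$ is annihilated by every vector field tangent to the $s$-fibers, hence is \emph{locally} constant along them, and connectedness then promotes this to genuine $s$-basicity, so that $\{f,g\}_{M}$ is well defined. Every other ingredient --- existence of the Hamiltonian vector fields $X_{s^{*}f}$, the presymplectic orthogonality of $\ker s_{*}$ and $\ker t_{*}$, and the Jacobi-identity computation showing that $s$ is forward-Dirac --- is pointwise or local and survives unchanged. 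Thus the whole conjecture reduces to the assertion that $\phi$ is constant on each $s$-fiber even when that fiber is disconnected.

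To obtain this I would transport the value of $\phi$ between the connected components of a fixed $s$-fiber using the groupoid structure itself. Two points $c,c'\in C$ with $s(c)=s(c')$ lying in different components can, exactly as for honest groupoids, be related by acting with an element of the appropriate isotropy subgroup; in the relational setting that isotropy action is realized inside $\mathcal G$ through the unit relation $L_{1}$ and the multiplication relation $L_{3}$, and the invariance equations~\eqref{invariance1} and \eqref{invariance3} together with the idempotency of $L_{2}$ (Corollary~\ref{invariance2}) --- which are precisely the relational form of multiplicativity --- say that this action preserves the bracket of $s$-pullbacks. Hence $\phi(c)=\phi(c')$, so $\phi$ descends to a function $h$ on $M$, and the remainder of the proof of Theorem~\ref{Theo1Poisson1} applies verbatim to produce the unique $\Pi$ for which $s$ is forward-Dirac. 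A more structural variant is to form the \'etale space $\widehat M\to M$ whose fiber over $x$ is $\pi_{0}(s^{-1}(x))$: then $s$ factors through a submersion $C\to\widehat M$ with connected fibers, Theorem~\ref{Theo1Poisson1} supplies a Poisson structure $\widehat\Pi$ on $\widehat M$, one checks that $\widehat\Pi$ is invariant under the deck transformations of $\widehat M\to M$ (again by multiplicativity of $L_{3}$), and $\widehat\Pi$ descends to the desired $\Pi$; uniqueness is automatic since $\widehat M\to M$ is a local diffeomorphism and $s$ is a surjective submersion.

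The main obstacle I expect is making the transport argument rigorous when the multiplication is only an immersed canonical relation, defined up to the equivalence $L_{2}$, rather than an honest map: one must verify both that the isotropy data built from $L_{1}$ and $L_{3}$ is rich enough to connect \emph{all} components of a given $s$-fiber --- which is where the analogue of the source-connected integrations picture of \cite{Crai} enters --- and that acting by $L_{3}$ is genuinely bracket-preserving despite not being a function. In the second approach the analogous difficulty is ensuring that $\widehat M\to M$ is well enough behaved (a covering, or at least a local diffeomorphism onto a manifold) for the descent of $\widehat\Pi$ to make sense; controlling this may require strengthening the regularity axioms A.7--A.9, which is presumably why the statement is for now left as a conjecture.
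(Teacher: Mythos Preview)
The paper does \emph{not} prove this statement: it is explicitly labelled a \emph{conjecture}, and the only comment the author makes is the one-line remark immediately preceding it, namely that ``the proof of this theorem could be adapted in order to drop the connectedness assumption of the $s$-fiber.'' There is no argument in the paper to compare your proposal against.

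Your proposal is therefore not a reconstruction of a missing proof but an attempt to \emph{resolve} an open problem in the paper. As such, it is a plausible outline --- you correctly isolate the single use of connectedness (passing from $Y\phi=0$ for all $Y\in\ker s_*$ to $\phi$ being $s$-basic), and both strategies you sketch (transport via isotropy elements realized through $L_1,L_3$; or factoring through the \'etale space $\widehat M$ of $\pi_0$ of the fibers) are natural. You are also honest about the genuine obstacles: in the relational setting $L_3$ is not a map, so ``acting by an isotropy element'' and ``bracket-preserving'' must be made sense of through $L_2$-equivalence classes, and there is no guarantee that the relational isotropy data reaches every component of every $s$-fiber. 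These are exactly the points at which a rigorous proof would have to do real work, and you do not carry them out --- so what you have written is a proof \emph{strategy}, not a proof. That is consistent with the paper's own assessment that the statement is conjectural.
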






\subsection{The categroid RSGpd}
We have defined so far relational symplectic groupoids in the extended symplectic category and we have related this construction with the usual notion of symplectic groupoids. These objects have a natural notion of morphism that is also defined in the context of canonical relations. Hence, as before, the composition of morphisms is only partially defined but it allows us to describe the categroid  \textbf{RSGpd} of relational symplectic groupoids with suitable morphisms.
\begin{definition} \emph{
A morphism between two relational symplectic  groupoids $(\mathcal G,L_{\mathcal G},I_{\mathcal G})$ 
and $(\mathcal H,L_{\mathcal H},I_{\mathcal H})$ 
is a relation $F\colon  G\nrightarrow H$ satisfying the following properties:
\begin{enumerate}
 \item $F$ is an immersed Lagrangian submanifold of $\mathcal G\times \bar{\mathcal H}$.
 \item $F\circ I_{\mathcal G}=I_{\mathcal H} \circ F$. 
 \item $L_{\mathcal H}\circ (F\times F)=F\circ L_{\mathcal G}.$
\end{enumerate}
}
\end{definition}
\begin{definition}\emph{
A morphism of relational symplectic groupoids $F\colon  \mathcal G \to \mathcal H $ is called an \textbf{equivalence} 
if the transpose canonical relation $F^{\dagger}$ is also a morphism.}
\end{definition}
\begin{remark}\label{equiv}\emph{From the definition, it follows that an equivalence $F$ satisfies the following compatibility conditions with respect to $L_1$ and $L_2$:
\begin{eqnarray}
F\circ (L_1)_{\mathcal G}&=& (L_1)_{\mathcal H}\\
F\circ (L_1)_{\mathcal H}&=& (L_1)_{\mathcal G}\\
F^{\dagger}\circ F&=& (L_2)_{\mathcal G} \label{1}\\
F\circ F^{\dagger}&=& (L_2)_{\mathcal H}. \label{2}.
\end{eqnarray}
This implies that, in the case where $\mathcal G$ and $\mathcal H$ are both regular, the equivalence $F$ induces relations 
$F_M\colon M_{\mathcal G} \nrightarrow M_{\mathcal H}$ and $F_M^{\dagger}\colon M_{\mathcal H} \nrightarrow M_{\mathcal G}$ satisfying that
\begin{eqnarray*}
F_M^{\dagger}\circ F_M&=& \Id_{M_{\mathcal G}}\\ 
F_M\circ F_M^{\dagger}&=& \Id_{M_{\mathcal H}}.
\end{eqnarray*}
Therefore, the induced relation $F_M$ is the graph of a diffeomorphism between $M_{\mathcal G}$ and $M_{\mathcal{H}}$, and since the following diagram commutes:
\[
\xymatrixrowsep{2pc} \xymatrixcolsep{2pc} \xymatrix{C_{\mathcal G} \ar@{->}[r]^{F} |-{/}\ar@{->}[d]_{s}&C_{\mathcal H}\ar@{->}[d]_{s}\\M_{\mathcal G}\ar@{->}[r]^{F_M} &
M_{\mathcal H}}
\]
from Theorem \ref{Theo1Poisson1} it follows that the map $F_M$ is a Poisson diffeomorphism.
} \end{remark}

The following are some examples of equivalences.
\begin{example}\emph{Let $(\mathcal G, L, I)$ be a relational symplectic groupoid. Then $L_2$ is an equivalence between $\mathcal G$ and itself. \\
To check that $L_2$ is a morphism of relational symplectic groupoids, we observe that, by Equation \ref{symmetric} $L_2$ commutes with $I$ and by Equation \ref{invariance3} we get that 
\begin{eqnarray*}
L_{\mathcal G}\circ (L_2 \times L_2)&=& I\circ L_3\circ (L_2\times L_2)\\
                                                                &=& I \circ L_3= I \circ L_2 \circ L_3\\
                                                                &=& L_2 \circ I \circ L_3= L_2 \circ L_{\mathcal G}. 
                                                                \end{eqnarray*} 
Since $L_2$ is self transposed, it follows that $L_2$ is an equivalence.                                                                
}
\end{example}
\begin{example}\emph{
For a relational symplectic groupoid $(\mathcal G, L, I)$ the map $I$ is an equivalence from $(\mathcal G, L,I)$ and $(\overline {\mathcal G}, L^{'}, I)$, where $L^{'}= L \circ T_{rel}$}.
\end{example}
In the next section we give additional examples of equivalences, after giving some natural cases of relational symplectic groupoids.




\section{Examples of relational symplectic groupoids}

 


\subsection {Symplectic groupoids} \label{groupo}
Given a symplectic groupoid $G$ over $M$, we can endow it naturally with a relational symplectic structure:
\begin{eqnarray*}
\mathcal{G}&=&G.\\
L&=&\{(g_1,g_2,g_3) \vert (g_1,g_2) \in G\times_{(s,t)}G,\, g_3^{-1}=\mu(g_1,g_2)\}.\\
I &=& g \mapsto \iota(g),\, g \in G.
\end{eqnarray*}
In this case, it is an easy check that the immersed canonical relations $L_i$ are given by
\begin{eqnarray*}
L_1&=& \varepsilon(M)\\
L_2&=& \triangle(G)\\
L_3&=& Gr(\mu),
\end{eqnarray*} 
and also we observe that in this case $(\mathcal G, L, I)$ is regular.
According to Theorem \ref{TheoSym}, given a regular relational symplectic groupoid $(\mathcal G, L,I)$ which admits a smooth symplectic reduction, we can associate a usual symplectic groupoid $G\rightrightarrows M$. By definition of such groupoid, we obtain the following
\begin{proposition}\label{pro}\emph{
The projection $p\colon \mathcal G \to G$ is an equivalence of relational symplectic groupoids.}
\end{proposition}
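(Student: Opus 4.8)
The plan is to verify directly that $p$, regarded as the immersed relation $p=\{(c,[c])\mid c\in C\}\colon \mathcal G\nrightarrow G$, and its transpose $p^{\dagger}=\{([c],c)\mid c\in C\}\colon G\nrightarrow \mathcal G$, both satisfy the three defining conditions of a morphism of relational symplectic groupoids; here $G$ carries the canonical relational symplectic groupoid structure of Section~\ref{groupo}, for which $L_1^{G}=\varepsilon(M)$, $L_2^{G}=\triangle(G)$ and $L_3^{G}=\mathrm{Gr}(\mu)$. The key observation is that $p^{\dagger}$ is exactly the canonical injection relation attached to the coisotropic submanifold $C$ in the sense of Proposition~\ref{projection}, once one uses the identification $\underline C=C/L_2=G$ provided by Theorem~\ref{TheoSym}. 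Two standing identities will be used throughout: $I_{\mathcal G}(C)=C$, which follows from \eqref{symmetric} together with $C=L_2\circ\mathcal G_{rel}$, and $L\subseteq C^{3}$, which follows because \eqref{invariance3} forces the last component---hence, by the cyclicity axiom A.1, every component---of any element of $L$ to lie in the image $C$ of $L_2$.

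For the Lagrangian/immersion condition I would argue as follows. Since $C$ is an immersed Banach submanifold of $\mathcal G$ and, by the regularity and smoothness hypotheses of Theorem~\ref{TheoSym}, $p$ restricts to a surjective submersion $C\to G$, the graph of $p$ is an immersed submanifold of $\mathcal G\times G$; its tangent space at $(c,[c])$ is $\{(X,[X])\mid X\in T_cC\}$, which under the isomorphism $T_{[c]}G\cong\underline{T_cC}=T_cC/(T_cC)^{\perp}$ is precisely the linear canonical projection $T_c\mathcal G\nrightarrow\underline{T_cC}$, Lagrangian by Proposition~\ref{projection}. Applying the same proposition to the injection relation shows $p^{\dagger}$ is an immersed Lagrangian submanifold of $G\times\overline{\mathcal G}$.

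For compatibility with inversions, one uses $I_{\mathcal G}(C)=C$ and \eqref{symmetric} to see that $I_{\mathcal G}$ descends to $C/L_2=G$, and by the construction in Theorem~\ref{TheoSym} the descended map is the groupoid inverse $\underline I$; hence $p\circ I_{\mathcal G}=\underline I\circ p$. Transposing this and using $I_{\mathcal G}^{2}=\Id$ (A.2) and $\underline I^{2}=\Id$ yields $I_{\mathcal G}\circ p^{\dagger}=p^{\dagger}\circ\underline I$, the matching condition for $p^{\dagger}$. For compatibility with multiplication, recall that by construction $L_3^{G}=p^{3}(L_3)$ (this is the definition of the groupoid product $m$ in Theorem~\ref{TheoSym}); combined with \eqref{invariance3}---which expresses that $L_3$ is saturated for the equivalence relation $L_2$ in each of its three arguments---and with $L\subseteq C^{3}$, the identity $L_3^{G}\circ(p\times p)=p\circ L_3$ becomes a direct unwinding of definitions: an element of $L_3$ pushed forward by $p^{3}$ and read off is a pair of $L_3^{G}\circ(p\times p)$, while the $L_2$-saturation lets one lift any element of $L_3^{G}$ back to an element of $L_3$ with prescribed $C$-representatives. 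The reversed identity $L_3\circ(p^{\dagger}\times p^{\dagger})=p^{\dagger}\circ L_3^{G}$ follows by the same argument, and one notes that phrasing the axiom with $L_{rel}$ in place of $L_3$ gives an equivalent condition thanks to the compatibility with inversions just proved. Since $p$ and $p^{\dagger}$ are then both morphisms, $p$ is an equivalence; as a check, $p^{\dagger}\circ p=L_2=L_2^{\mathcal G}$ and $p\circ p^{\dagger}=\triangle(G)=L_2^{G}$, matching \eqref{1}--\eqref{2} in Remark~\ref{equiv}.

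The main obstacle is not any individual relational computation---these are routine once $I_{\mathcal G}(C)=C$ and $L\subseteq C^{3}$ are available---but the bookkeeping behind the identification $\underline C=G=C/L_2$: this rests on $R^{C}=L_2$, which \eqref{char} establishes only at the infinitesimal level and which needs the connectedness and smoothness assumptions of Theorem~\ref{TheoSym}, and one must check that all the pushforwards and pullbacks along $p$ used above remain immersed Banach submanifolds, not merely relations of sets.
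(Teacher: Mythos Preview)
Your proposal is correct and follows essentially the same route as the paper: both invoke Proposition~\ref{projection} for the Lagrangianity of $p$ (and hence of $p^{\dagger}$), appeal to the definition of the reduced groupoid structure in Theorem~\ref{TheoSym} for compatibility with $I$ and $L$, and use the identities $p^{\dagger}\circ p=(L_2)_{\mathcal G}$ and $p\circ p^{\dagger}=\Id_G$ to handle $p^{\dagger}$. Your write-up is in fact more explicit than the paper's rather terse argument---in particular, isolating the standing facts $I_{\mathcal G}(C)=C$ and $L\subseteq C^{3}$ and flagging the dependence on the identification $\underline C=C/L_2$ are points the paper leaves implicit.
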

\begin{proof}
By Proposition \ref{projection}, $p$ is a canonical immersed relation $p\colon  \mathcal G \nrightarrow G$ and by definition of the groupoid structure on $G$ (see Theorem \ref{TheoSym}), it follows that $p$ commutes with $I$ and $L$ respectively, hence, $p$ is a morphism of relational symplectic groupoids.
The fact that $p^{\dagger}$ is also a morphism follows from the following facts. By definition, $I_G$ can be written as 
\begin{equation}
I_G= p\circ I_{\mathcal G} \circ p^{\dagger}
\end{equation}
and $L_G$ can be written as
\begin{equation}
p\circ L \circ (p^{\dagger} \times p^{\dagger}).
\end{equation}
Again, by proposition \ref{projection} we have that
\begin{equation} \label{Q1}
p^{\dagger} \circ p=(L_2)_{\mathcal G}
\end{equation}
and that
\begin{equation} \label{Q2}
p\circ p^{\dagger}= Id_G.
\end{equation}
Therefore, we get the following equalities
\begin{eqnarray}
p^{\dagger}\circ I_G&=& p^{\dagger}\circ p \circ I_{\mathcal G}\circ p^{\dagger}\\
&\stackrel{\ref{Q1}}{=}& L_2
\end{eqnarray}

\end{proof}
Another finite dimensional example is the following.
\subsection{Symplectic manifolds with a Lagrangian submanifold} Let $(G,\omega)$ be a symplectic manifold, $\phi$ an antisymplectomorphism and $\mathcal L$ an immersed 
Lagrangian submanifold of $G$ such that $\phi(\mathcal L)=\mathcal L$. We define
\begin{eqnarray}
\mathcal{G}&=&G.\label{p}\\
L&=&\mathcal L \times \mathcal L\times \mathcal L.\label{q}\\
I&=& \phi\label{r}
\end{eqnarray}
It is an easy check that this construction satisfies the relational axioms and that the spaces $L_i$ are given by 

\begin{eqnarray*}
L_1&=& \mathcal L \\
L_2&=& \mathcal L \times \mathcal L\\
L_3&=&\mathcal L\times  \mathcal L\times  \mathcal L.
\end{eqnarray*}

This example is a regular relational symplectic groupoid and furthermore we can prove the following
\begin{proposition} \emph{
The previous relational symplectic groupoid is equivalent to the zero dimensional symplectic groupoid (a point with zero symplectic structure and empty relations).}
\end{proposition}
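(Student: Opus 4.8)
The plan is to write down the equivalence by hand. The zero-dimensional symplectic groupoid in question is the relational symplectic groupoid $(\mathcal H,L_{\mathcal H},I_{\mathcal H})=(pt,\ pt\times pt\times pt,\ \Id_{pt})$; equivalently it is the construction of Subsection \ref{groupo} applied to the trivial groupoid $pt\rightrightarrows pt$, or the present construction with $G=\mathcal L=pt$ and $\phi=\Id$. Implicit in $\mathcal L$ being a Lagrangian submanifold I assume $\mathcal L\neq\emptyset$ (if $\mathcal L=\emptyset$ all the $L_i$ are empty and the statement would have to be read differently). My candidate for the equivalence is $F:=\mathcal L$ itself, regarded as the canonical relation $F=\{(x,*)\mid x\in\mathcal L\}\colon\mathcal G\nrightarrow\mathcal H$, immersed in $G\times pt$ via the given immersion $\iota\colon\mathcal L\to G$; its transpose is $F^{\dagger}=\{(*,x)\mid x\in\mathcal L\}\colon\mathcal H\nrightarrow\mathcal G$. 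Since Lagrangianity of a subspace does not see the sign of the symplectic form, $d\iota_x(T_x\mathcal L)$ is Lagrangian in $T_{\iota(x)}(\mathcal G\times\overline{\mathcal H})$ for all $x$, so $F$ and $F^{\dagger}$ are morphisms of $\mbox{\textbf{Symp}}^{Ext}$, and every composition of them with the structure relations below is, set-theoretically, a product of copies of $\mathcal L$ and of $pt$, hence automatically a smooth immersed submanifold.

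First I would verify that $F$ is a morphism of relational symplectic groupoids, i.e. that $F\circ I_{\mathcal G}=I_{\mathcal H}\circ F$ and $L_{\mathcal H}\circ(F\times F)=F\circ L_{\mathcal G}$. The first identity is immediate from $\phi$ being a bijection with $\phi(\mathcal L)=\mathcal L$, so that $F\circ I_{\mathcal G}=\phi^{-1}(\mathcal L)\times\{*\}=\mathcal L\times\{*\}=F$. For the second, both sides come out to $(\mathcal L\times\mathcal L)\times\{*\}$: the left-hand side because $L_{\mathcal H}=\{((*,*),*)\}$, and the right-hand side because the system $(x_1,x_2,x_3)\in\mathcal L\times\mathcal L\times\mathcal L$, $x_3\in\mathcal L$ is solvable in $x_3$ exactly when $x_1,x_2\in\mathcal L$ — this is the only point where $\mathcal L\neq\emptyset$ is used. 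Then I would run the mirror-image computation for $F^{\dagger}$, obtaining $F^{\dagger}\circ I_{\mathcal H}=F^{\dagger}=\{*\}\times\phi(\mathcal L)=I_{\mathcal G}\circ F^{\dagger}$ and $L_{\mathcal G}\circ(F^{\dagger}\times F^{\dagger})=\{(*,*)\}\times\mathcal L=F^{\dagger}\circ L_{\mathcal H}$, again using non-emptiness of $\mathcal L$. Since both $F$ and $F^{\dagger}$ are morphisms, $F$ is an equivalence, which is the claim. As a sanity check against Remark \ref{equiv} one reads off $F^{\dagger}\circ F=\mathcal L\times\mathcal L=(L_2)_{\mathcal G}$, $F\circ F^{\dagger}=\{(*,*)\}=(L_2)_{\mathcal H}$, and $F\circ(L_1)_{\mathcal G}=\{(*,*)\}=(L_1)_{\mathcal H}$.

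There is no genuinely hard step here; the only thing that demands care is the bookkeeping in the categroid $\mbox{\textbf{Symp}}^{Ext}$, namely making sure that each relational composition occurring in the definitions of ``morphism'' and ``equivalence'' is an honest smooth (immersed) submanifold so that those definitions even apply. In the case at hand this is transparent, because every such composition is a product of copies of $\mathcal L$ and of the point. The one substantive hypothesis used, beyond $\phi(\mathcal L)=\mathcal L$, is $\mathcal L\neq\emptyset$, which is precisely what lets the existential quantifier in each composition of relations be satisfied.
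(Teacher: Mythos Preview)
Your proof is correct and identifies the same equivalence as the paper: $F=\mathcal L$, viewed as a canonical relation between $\mathcal G$ and the point. The paper's argument is shorter and more conceptual: it observes that this relational symplectic groupoid is regular with $C=L_2\circ\mathcal G_{rel}=\mathcal L$, so the symplectic reduction $C/L_2$ is a point, and then invokes Proposition \ref{pro} (the canonical projection to the reduced groupoid is an equivalence) to conclude that $\mathcal L$ is an equivalence. You instead check the morphism axioms for $F$ and $F^{\dagger}$ by hand. Both routes are fine; yours is self-contained and makes explicit where non-emptiness of $\mathcal L$ enters, while the paper's buys brevity by feeding this example into the general machinery already set up for regular relational symplectic groupoids.
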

\begin{proof}We prove that $\mathcal L$ is an equivalence from the zero dimensional manifold $p$ to $\mathcal{G}$ . This comes from the fact that, for this example, $C$ (as defined in Equation \ref{C}) is precisely $\mathcal L$, hence, its symplectic reduction is just a point. By Proposition \ref{pro} it follows that $\mathcal L$, being the canonical projection, is an equivalence.
\end{proof}
\begin{remark}\emph{More generally, following the definition of equivalence of relational symplectic groupoids and remark \ref{equiv}, we can check that $(\mathcal G,L,I)$ 
is equivalent to the zero dimensional symplectic groupoid if and only if there exists a Lagrangian submanifold $\mathcal L_{eq}$ of $\mathcal G$ satisfying the following two properties
\begin{itemize}
 \item $I\circ \mathcal L_{eq}=\mathcal L_{eq}$.
\item $\mathcal L_{eq}= L\circ (\mathcal L_{eq} \times \mathcal L_{eq})$. 
\end{itemize}
This implies that the only relational symplectic groupoids that are equivalent to the zero dimensional one are the ones described by Equations \ref{p}, \ref{q} and \ref{r}.}
 
\end{remark}

\subsection{Powers of symplectic groupoids}\label{power}
The following are two (a priori) different constructions of relational symplectic groupoids for the powers of a given symplectic groupoids. Let $G\rightrightarrows M$ be a symplectic groupoid and $(G,L,I)$ its associated relational symplectic groupoid as in Example \ref{groupo}. It is easy to check that
\begin{proposition}
\emph{
$(G^n, L^n, I^n)$ is a relational symplectic groupoid, for all $n\geq 1$.}
\end{proposition}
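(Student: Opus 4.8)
The plan is to verify the three defining axioms of a relational symplectic groupoid for the triple $(G^n, L^n, I^n)$, exploiting the fact that both the data and the axioms are ``product-compatible'': each axiom is an identity between compositions of immersed canonical relations, and composition, transposition, and Lagrangianity all interact well with taking Cartesian products. So the strategy is to reduce every statement about $(G^n, L^n, I^n)$ to the corresponding already-established statement about $(G, L, I)$, applied componentwise.

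First I would record the basic product facts. If $L_a \colon A \nrightarrow B$ and $L_b \colon C \nrightarrow D$ are immersed canonical relations, then $L_a \times L_b \colon A \times C \nrightarrow B \times D$ is again an immersed canonical relation (the product of immersions is an immersion, and a product of Lagrangian subspaces in $\overline{A}\oplus B$ and $\overline{C}\oplus D$ is Lagrangian in $\overline{A\times C}\oplus(B\times D)$ after the obvious reshuffling of factors). Moreover $(L_a\times L_b)\circ(L_c\times L_d) = (L_a\circ L_c)\times(L_b\circ L_d)$ as relations, $(L_a\times L_b)^{\dagger} = L_a^{\dagger}\times L_b^{\dagger}$, and if $I$ is an antisymplectomorphism of $G$ then $I^n := I\times\cdots\times I$ is an antisymplectomorphism of $G^n = G\times\cdots\times G$ (with product symplectic form), whose graph is the $n$-fold product of the graph of $I$, hence Lagrangian in $G^n\times G^n$. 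Also $G^n$ is a weak symplectic (Banach) manifold, and $L^n := L\times\cdots\times L$ (with coordinates permuted so that it sits inside $(G^n)^3$ rather than $(G^3)^n$) is an immersed Lagrangian submanifold of $(G^n)^3$. This is just bookkeeping with a fixed permutation $\tau$ of the $3n$ factors that intertwines $(G^3)^n$ and $(G^n)^3$; I would fix $\tau$ once and note it is a symplectomorphism, so it carries Lagrangians to Lagrangians.

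Then I would check the axioms in turn. For A.1 (cyclic symmetry): if $(x,y,z)\in L$ implies $(y,z,x)\in L$, then applying this in each of the $n$ factors and reshuffling via $\tau$ shows $(\mathbf{x},\mathbf{y},\mathbf{z})\in L^n$ implies $(\mathbf{y},\mathbf{z},\mathbf{x})\in L^n$. For A.2: $(I^n)^2 = (I^2)^{\times n} = \Id^{\times n} = \Id_{G^n}$. For A.3--A.6: each induced relation $L_i$ for the product construction is $(L_i)^{\times n}$ up to the fixed reshuffling, because the $L_i$ are built from $L$ and $I$ by compositions and products, and these operations commute with $\times^n$ by the facts above; e.g. $L_3^{G^n} = I^n_{rel}\circ L^n_{rel} = (I_{rel}\circ L_{rel})^{\times n} = (L_3)^{\times n}$, and similarly $L_1^{G^n} = (L_1)^{\times n}$, $L_2^{G^n} = (L_2)^{\times n}$. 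Each of the axiom equations (associativity \eqref{asso}, unit \eqref{unit}, the invariance and symmetry relations \eqref{invariance1}--\eqref{symmetric}, etc.) then follows by raising the corresponding equation for $(G,L,I)$ to the $n$-th Cartesian power; the immersed-submanifold and Lagrangianity clauses follow because a product of immersed submanifolds is an immersed submanifold and a product of Lagrangians is Lagrangian.

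The main obstacle, and the only place that needs genuine care rather than formal manipulation, is the bookkeeping of the permutation $\tau$ identifying $(G^3)^n$ with $(G^n)^3$ (and its analogues for the domains $(G^n)^2 \cong (G^2)^n$ appearing in A.3--A.6): one must check that $L^n$, defined as the image of $L^{\times n}$ under $\tau$, really does make the compositions $I^n_{rel}\circ L^n_{rel}$ etc.\ compute factorwise, and that no sign or ordering issue spoils the Lagrangianity after the reshuffle. Since $\tau$ is a symplectomorphism of the relevant product symplectic manifolds, this is routine but should be stated explicitly. Everything else is a direct transcription of the axioms for $(G,L,I)$, which hold by Example~\ref{groupo}. (Regularity of $(G^n,L^n,I^n)$, if desired, likewise follows factorwise: $C^{G^n} = L_2^{G^n}\circ (G^n)_{rel} = (C)^{\times n}$, its reduction is $\underline{C}^{\times n}$, and $\underline{L_1}^{G^n} = M^{\times n}$.)
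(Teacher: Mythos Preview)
Your proposal is correct and is exactly what the paper has in mind: the paper asserts this proposition with only the remark ``It is easy to check,'' and your factorwise verification of axioms A.1--A.6 (using that immersions, Lagrangianity, composition, transposition, and antisymplectomorphisms are all preserved under Cartesian products, modulo the reshuffling symplectomorphism $\tau$) is precisely that check spelled out. No additional idea is needed.
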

Now, let us denote $G_{(1)}=G$, $G_{(2)}$ the fiber product $G\times_{(s,t)}G$, $G_{(3)}=G \times_{(s,t)}(G \times_{(s,t)}G)$ and so on. We will use the following
\begin{lemma} \emph{\cite{Xu}. Let $G\rightrightarrows M$ be a symplectic groupoid.
\begin{enumerate}
 \item $G_{(n)}$ is a coisotropic submanifold of $G^n$.
\item The reduced spaces $\underline{G_{(n)}}$ are symplectomorphic to $G$. Furthermore, there exists a natural symplectic groupoid structure on $\underline{G_{(n)}}\rightrightarrows M$ coming from the symplectic quotient, isomorphic to the symplectic groupoid structure on $G\rightrightarrows M$. 
\end{enumerate}
}
\end{lemma}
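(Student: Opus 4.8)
\emph{Proof proposal.} The plan is to show that $G_{(n)}$ reduces to $G$ along the total multiplication map $\mathrm{mult}\colon G_{(n)}\to G$, $(g_1,\dots,g_n)\mapsto g_1 g_2\cdots g_n$, and to deduce everything from iterated multiplicativity of $\omega$. Write $d:=\dim M$, so $\dim G=2d$, $\dim G_{(n)}=(n+1)d$ and $\mathrm{codim}_{G^n}G_{(n)}=(n-1)d$ (the latter because $s,t$ are surjective submersions, so the $n-1$ conditions $t(g_i)=s(g_{i+1})$ cut out $G_{(n)}$ transversally).

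First I would record the identity $\mathrm{mult}^{*}\omega=\omega^{\oplus n}\big|_{G_{(n)}}$, where $\omega^{\oplus n}=\mathrm{pr}_1^{*}\omega+\dots+\mathrm{pr}_n^{*}\omega$ is the product form on $G^{n}$: for $n=2$ this is precisely the multiplicativity condition $\mu^{*}\omega=\mathrm{pr}_1^{*}\omega+\mathrm{pr}_2^{*}\omega$, and for general $n$ it follows by induction from $\mathrm{mult}_n(g_1,\dots,g_n)=\mu\bigl(g_1,\mathrm{mult}_{n-1}(g_2,\dots,g_n)\bigr)$ by pulling back. Since $\mathrm{mult}$ is a surjective submersion (it admits local smooth sections built from a section of $s$ together with the unit section), $\omega^{\oplus n}\big|_{G_{(n)}}=\mathrm{mult}^{*}\omega$ has constant rank $2d$ and kernel $\ker(d\,\mathrm{mult})$ of rank $(n-1)d$. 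But $\ker\bigl(\omega^{\oplus n}|_{G_{(n)}}\bigr)=TG_{(n)}\cap TG_{(n)}^{\perp}$, and $\dim TG_{(n)}^{\perp}=\mathrm{codim}\,G_{(n)}=(n-1)d$; comparing dimensions forces $TG_{(n)}^{\perp}\subseteq TG_{(n)}$, i.e. $G_{(n)}$ is coisotropic. (Alternatively and more explicitly: $N^{*}G_{(n)}$ is spanned by the covectors $\mathrm{pr}_i^{*}t^{*}\alpha-\mathrm{pr}_{i+1}^{*}s^{*}\alpha$, whose $\omega$-duals $\bigl(\dots,X_{t^{*}\widehat\alpha}|_{g_i},\,X_{s^{*}(-\widehat\alpha)}|_{g_{i+1}},\dots\bigr)$ are tangent to $G_{(n)}$, since $ds(X_{t^{*}f})=0=dt(X_{s^{*}f})$ because the $s$- and $t$-fibres are symplectically orthogonal, and $dt(X_{t^{*}\widehat\alpha})=-\Pi^{\#}(\alpha)=ds(X_{s^{*}(-\widehat\alpha)})$ because $s$ is Poisson and $t$ anti-Poisson.) This proves part 1.

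For part 2, the null distribution $TG_{(n)}^{\perp}$ now coincides with $\ker(d\,\mathrm{mult})$, so the leaves of the characteristic foliation are the connected components of the fibres of $\mathrm{mult}$. Assuming $G$ is source-connected (otherwise one obtains a covering), an induction on $n$ shows each fibre of $\mathrm{mult}$ is connected, so the leaf space is $\mathrm{mult}(G_{(n)})=G$; thus $\mathrm{mult}$ descends to a diffeomorphism $\overline{\mathrm{mult}}\colon\underline{G_{(n)}}\to G$, and from $p^{*}\underline{\omega}=\omega^{\oplus n}|_{G_{(n)}}=\mathrm{mult}^{*}\omega=p^{*}\overline{\mathrm{mult}}^{*}\omega$ for the submersive quotient projection $p$ we get $\underline{\omega}=\overline{\mathrm{mult}}^{*}\omega$, i.e. $\overline{\mathrm{mult}}$ is a symplectomorphism. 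Finally, $(g_1,\dots,g_n)\mapsto s(g_1)=s(\mathrm{mult})$ and $(g_1,\dots,g_n)\mapsto t(g_n)=t(\mathrm{mult})$, the inversion $(g_1,\dots,g_n)\mapsto(g_n^{-1},\dots,g_1^{-1})$, and the concatenation $G_{(n)}\times_M G_{(n)}\to G_{(2n)}$ (followed by $\underline{G_{(2n)}}\cong G\cong\underline{G_{(n)}}$) are all composites of $\mathrm{mult}$ with the structure maps of $G$, hence constant along null leaves; they descend to $\underline{G_{(n)}}$ and, under $\overline{\mathrm{mult}}$, become $s,t,\iota,\mu$ of $G$, so $\underline{G_{(n)}}\rightrightarrows M$ is isomorphic as a symplectic groupoid to $G\rightrightarrows M$.

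I expect the main obstacle to be this last point: showing that the groupoid structure read off the reduction is genuinely ``the one coming from the symplectic quotient'' — i.e. that the multiplication obtained by coisotropically reducing the concatenation relation $\{(\vec{g},\vec{g}',\vec{g}'')\colon \mathrm{mult}(\vec{g}'')=\mathrm{mult}(\vec{g})\,\mathrm{mult}(\vec{g}')\}$ (over suitable products of $G_{(n)}$) coincides with the one transported through $\overline{\mathrm{mult}}$ — together with the source-connectedness hypothesis needed to make $\underline{G_{(n)}}\cong G$ an equality rather than a covering. The coisotropicity/dimension-count step (part 1) and the symplectomorphism statement, by contrast, should be essentially formal once the identity $\mathrm{mult}^{*}\omega=\omega^{\oplus n}|_{G_{(n)}}$ is in hand.
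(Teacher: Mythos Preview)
The paper does not prove this lemma at all: it is stated with the citation \cite{Xu} and used as a black box to build the relational structure on $G^{(n)}$. So there is no ``paper's proof'' to compare against; your task was really to supply one, and the argument you give is the standard one and is essentially correct.

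A few remarks. Your dimension count for coisotropicity is clean and correct: the key identity $\mathrm{mult}^{*}\omega=\omega^{\oplus n}\big|_{G_{(n)}}$ is exactly iterated multiplicativity, and once you know $\ker\big(\omega^{\oplus n}|_{G_{(n)}}\big)=\ker(d\,\mathrm{mult})$ has dimension $(n-1)d=\dim TG_{(n)}^{\perp}$, coisotropicity is forced. The alternative conormal computation you sketch (using that $s$-fibres and $t$-fibres are symplectically orthogonal and that $s$ is Poisson, $t$ anti-Poisson) is also fine and is closer in spirit to how Xu argues. You are right to flag source-connectedness: without it the characteristic leaves are only the connected components of the $\mathrm{mult}$-fibres, and the reduction is a priori a covering of $G$ rather than $G$ itself; the lemma as quoted (and as used in the paper) tacitly assumes this, and in the paper's applications the relevant groupoids are $s$-connected anyway. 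Finally, a small slip: with the paper's convention $G\times_{(s,t)}G$ one has $s(\mathrm{mult}(g_1,\dots,g_n))=s(g_n)$ and $t(\mathrm{mult}(g_1,\dots,g_n))=t(g_1)$, not $s(g_1)$; this does not affect the argument, only the labelling of which end is which.
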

Having, this lemma at hand, and considering the canonical relations 
$$p_n: G^n \to \underline{G_{(n)}}\equiv G,$$ we define the regular relational symplectic groupoid $(G^{(n)}, L^{(n)}, I^{(n)})$, given by 
\begin{eqnarray*}
G^{(n)}&:=& G^n\\
I^{(n)}&:=& p_n^{\dagger} \circ I \circ p_n\\
L^{(n)}&:=& p^{\dagger} \circ L \circ (p \times p)
\end{eqnarray*}

It can be checked that this example satisfies the relational axioms and  that the diagonal $\triangle_{n+1}(G)$ is a morphism of relational symplectic groupoids.\\


\section{The main example: $T^*(PM)$}
The objective of this section is to prove the following Theorem
\begin{theorem}\label{Relational} Given a Poisson manifold $(M, \Pi)$ there exists a regular relational symplectic groupoid $(\mathcal G,L,I)$ that integrates it.
\end{theorem}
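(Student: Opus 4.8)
The plan is to construct the triple $(\mathcal G, L, I)$ explicitly from the infinite dimensional geometry of the Poisson sigma model and then verify the nine axioms A.1--A.9. First I would set $\mathcal G := T^*(PM)$, the cotangent bundle of the path space $PM = \mathcal C^{k+1}(I,M)$, equipped with the weak symplectic form $\omega$ described in Equation \ref{symplectic}; this is a weak symplectic Banach manifold, so item 1 of the definition holds. The inversion $I$ would be the map induced by reversing the parametrization of paths, $(X,\eta) \mapsto (i^*X, i^*\eta)$ with $i(t) = 1-t$; since pulling back $\omega$ by path-reversal flips the orientation of the integral over $I$, $I$ is an antisymplectomorphism, and it is visibly an involution, giving A.2. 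The Lagrangian $L \subset \mathcal G^3$ would be defined so that $(e_1,e_2,e_3) \in L$ precisely when the three $T^*M$-paths (suitably rescaled to subintervals) concatenate to a $T^*M$-homotopically trivial loop --- concretely, $L$ is the space of boundary restrictions of PSM fields on a disc with three marked boundary arcs, which is the non-reduced analogue of the relation used by Cattaneo--Felder. The cyclic symmetry A.1 is built in by construction, since rotating the three marked arcs on the disc is a symmetry.

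Next I would address the regularity (Banach manifold / immersed Lagrangian) claims, which is where the technical heart lies. The key input is that $C_\Pi$ (Equation \ref{Cois}) is a coisotropic Banach submanifold of $\Phi_\partial$ by Proposition \ref{Coiso}, and that the space of PSM solutions on a surface with boundary, restricted to the boundary, forms an immersed Lagrangian in the appropriate power of $\Phi_\partial$ --- this is proven using the implicit/inverse function theorem and a Banach version of the Frobenius theorem as indicated in the introduction (\cite{Lang, Lang1}). I would check that $L$ is isotropic by a Stokes-type argument (the variation of the PSM action localizes on the boundary, forcing $\omega^{\oplus 3}$ to vanish on tangent vectors to $L$) and then that it is maximal, hence Lagrangian, by identifying its tangent space with the solutions of the linearized EL equations modulo the coisotropic directions. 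With $L$ and $I$ in hand, the induced relations $L_1, L_2, L_3$ acquire their expected geometric meanings: $L_1$ is the space of $T^*M$-paths that are $T^*M$-homotopic to constant (the ``trivial'' sector), $L_2$ is the relation of $T^*M$-homotopy on $C_\Pi$, and $L_3$ is (essentially) the graph of concatenation of $T^*M$-paths up to homotopy.

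Then I would verify axioms A.3--A.6 --- compatibility of $L$ with $I$, associativity, idempotency of the unit, and invariance of $L_1, L_3$ under $L_2$. Each of these reduces to a statement about gluing discs along boundary arcs: for instance A.4 (associativity) follows because the two ways of composing $L_3$ correspond to two decompositions of a disc with four marked arcs into two discs with three marked arcs, and these give the same relation once one uses that the reduced space only sees $T^*M$-homotopy classes; A.5 and A.6 follow similarly from gluing in a trivial strip and using that the constant solution represents the identity up to homotopy. The compositions are well defined morphisms in $\mbox{\textbf{Symp}}^{Ext}$ because, even though arbitrary composition of immersed canonical relations fails, here the relevant compositions are transversal by the submersivity of the boundary-restriction map $p$ (Proposition \ref{Coiso}) --- this is the point where one uses that the model is ``nice'' at the chosen regularity $k$. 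Finally, for regularity (A.7--A.9) I would check that $C = L_2 \circ \mathcal G_{rel} = C_\Pi$ is an immersed (co)isotropic submanifold, that the partial reduction $\underline{L_1} = L_1/(L_2 \cap L_1 \times L_1)$ is precisely $M$ with a finite dimensional smooth structure (this is exactly the statement that the ``unit sector'' of the PSM phase space reduces to the target $M$), and that the source/target relations $S, T$ coming from $L_3$ are surjective submersions onto $M$; the last two are again consequences of the structure of $C_\Pi$ and its characteristic foliation of codimension $2n$. Part (b) of the integration claim --- that in the integrable case $\underline C$ is the usual symplectic groupoid --- follows by comparing with the Cattaneo--Felder construction recalled above, and is taken up separately in Theorem \ref{reduction}.

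The main obstacle I expect is the functional-analytic verification that $L$ is a genuine \emph{immersed Lagrangian Banach submanifold} of $\mathcal G^3$ and that the various compositions defining $L_1, L_2, L_3$ (and the iterated compositions in A.4--A.6) remain smooth Banach manifolds --- i.e. controlling transversality of these infinite dimensional intersections. The algebraic/combinatorial content of the axioms is essentially formal once the spaces are known to be smooth, so all the real work is in the regularity theory for the space of boundary fields and solutions of the EL equations, which is precisely why the introduction flags the restriction to Banach (rather than Fréchet) regularity and the need for Lang's versions of the inverse function and Frobenius theorems.
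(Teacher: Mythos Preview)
Your construction of $(\mathcal G,L,I)$ and the verification of the algebraic axioms A.1--A.6 match the paper's approach closely; the paper also phrases these directly in terms of $T^*M$-homotopy of paths rather than disc-gluing, but the content is the same, and your identification of $L_1,L_2,L_3$ with the trivial sector, the $T^*M$-homotopy relation, and concatenation-up-to-homotopy is exactly what the paper obtains. The regularity axioms A.7--A.9 are handled as you describe.

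The substantive divergence, and a genuine gap in your plan, is the smoothness of the $L_i$. You propose to obtain the immersed-submanifold structure via transversality of the relevant compositions, guaranteed by submersivity of the boundary-restriction map $p$. This is essentially what the paper does \emph{in the integrable case} (where the quotient map $C_\Pi \to \underline{C_\Pi}$ is a submersion onto a manifold and one can use fibered-product lemmas), but it does \emph{not} work in general: when $T^*M$ is non-integrable the leaf space is singular, and the equivalence relation $L_2\subset C_\Pi\times C_\Pi$ is typically not a submanifold, so there is no transversality argument available. The paper's key idea here---which you are missing---is to invoke the Brown--Mucuk theory of locally Lie groupoids and the holonomy groupoid of the characteristic foliation of $C_\Pi$: one shows $(L_2, W(\Lambda))$ is a locally Lie groupoid and then uses the globalisation theorem to produce a genuine smooth Banach manifold $\mathrm{Hol}(L_2,W(\Lambda))$ that immerses onto $L_2$. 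The smooth structures on $L_1$ and $L_3$ are then pulled back from this holonomy groupoid. This foliation-theoretic resolution is precisely what lets the construction go through uniformly in the non-integrable case and is the technical heart of the proof.

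A secondary difference: your Stokes-type argument will give isotropy of $TL_i$, but in the weak-symplectic Banach setting maximality does not follow automatically. The paper instead uses the ``sandwich'' criterion of Proposition~\ref{Coisotropic}: one checks $TC_\Pi^\perp \subset TL_i \subset TC_\Pi$ and that the finite-dimensional reduction $\underline{TL_i}$ is Lagrangian in $\underline{C_\Pi}$ (by a dimension count), which then forces $TL_i$ to be Lagrangian upstairs. You should replace the direct maximality argument with this reduction step.
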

As we mentioned in the introduction, integration in this setting means the following 
\begin{enumerate}
\item Such relational symplectic groupoid satisfies that $L_1/ L_2=M$ and the symplectic structure on $\mathcal G$ is compatible with the Poisson structure on $M$ according to Theorem \ref{Theo1Poisson1}
\item In the case that the Lia algebroid $T^*M$ is integrable, such relational symplectic groupoid is equivalent to a symplectic groupoid integrating it.
\end{enumerate}
The structure of the proof of this Theorem is as follows. First, we describe the defining data for the relational symplectic groupoid in terms of the PSM and $A-$ homotopy for Lie algebroids specialized in the Poisson case. Then we verify that such data in fact satisfy the relational axioms. In order to do this, we need to prove the smoothness and Lagrangianity of the canonical relations $L_i$, which deserves special attention since we are dealing with infinite dimensional spaces.\\

\textbf{Proof of Theorem \ref{Relational}}
We will prove that the relational symplectic groupoid $(\mathcal G, L, I)$ associated to $(M, \Pi)$ is given by 
\begin{enumerate}
\item $\mathcal G := T^*(PM)$, the cotangent bundle of the path space of $M$.
\item $L=\{(\gamma_1, \gamma_2, \gamma_3) \in (T^*(PM))\}$ is such that
\begin{itemize}
\item $\gamma_i$, with $1 \leq i \leq 3$ are $T^*M$-paths.
\item The concatentation $\gamma_1 * \gamma_2$ is $T^*M$- homotopic to the inverse path $\gamma_3^{-1}$, or equivalently, $\gamma_1*\gamma_2*\gamma_3$ is $T^*M$-homotopic to a constant path.
\end{itemize}
\item \begin{eqnarray*}
I\colon  T^* PM &\to& T^* PM\\
\gamma &\mapsto& \gamma^{-1}.
\end{eqnarray*}

\end{enumerate}

First, we describe the defining spaces $L_i$ of the relational symplectic groupoid 
set theorically, proving that they satisfy the algebraic relational axioms and then we prove that they are in fact immersed canonical relations.\\

\textbf{\underline{A.1.}} To prove the cyclicity property, we use the following remark, that is easy to check.
\begin{remark}
\emph{ Let $\gamma_1, \, \gamma_2, \, \gamma_1^{'}$ and $\gamma_2^{'}$ be $T^*M$- paths such that $\gamma_1 \sim \gamma_1^{'}$ and $\gamma_2 \sim \gamma_2^{'}$, where $\sim$ denotes the equivalence by $T^*M$- homotopy. Then 
$$\gamma_1 * \gamma_2 \sim \gamma_1^{'} * \gamma_2^{'}.$$ 
}
\end{remark}
Now, consider $(x,y,z) \in L$. Since $x * y \sim z^{-1}$, we get that
\begin{equation*}
x * y \sim z^{-1}\Leftrightarrow (x*y)*y^{-1} \sim z^{-1} * y^{-1} \Leftrightarrow z*(x*y)*y^{-1} \sim z*z^{-1} * y^{-1}\Leftrightarrow z*x\sim y^{-1},
\end{equation*}
hence, $(z,x,y)$ (and similarly $(y,z,x)$) belongs to $L$.
\qed
\\

\textbf{\underline{A.2.}} 
If we define
\begin{eqnarray}\label{phi}
\phi\colon  [0,1] &\to& [0,1]\\
t&\mapsto& 1-t
\end{eqnarray}
Then we get that
\begin{eqnarray*}
I\colon  T^*(PM) &\to& T^*(PM)\\
\gamma &\mapsto& \phi^*\circ \gamma,
\end{eqnarray*}
hence,
$$I_{*}\delta \gamma =I_{*}(\delta X, \delta \eta)=\delta X(\phi(t)), -\delta \eta(t))$$
and therefore, using Equation \ref{symplectic},
\begin{equation*}
I^*\omega_{\gamma}(\delta_1\gamma, \delta_2\gamma)= -\int_0^1\delta_1X^i(t)\delta_2\eta_i(t)-\delta_2X^i(t)\delta_1\eta_i(t)dt=-\omega_{\gamma}(\delta_1\gamma, \delta_2\gamma)
\end{equation*}
and this proves that $I$ is an anti-symplectomorphism.
\qed
\\

\textbf{\underline{A.3.}} First, we observe that, from the definition,
\begin{equation}\label{ELE3}L_3=\{(\gamma_1, \gamma_2, \gamma_3)\in T^*(PM)^3 \mid \gamma_1* \gamma_2 \sim \gamma_3 \}.
\end{equation}
In Subsection \ref{smoothness} we will prove that $L_3$ is an immersed canonical relation.
\\

\textbf{\underline{A.4.}} We have that 
\begin{eqnarray*}
L_3\circ (L_3\times Id)&=& \{(\gamma_1, \gamma_2,\gamma_3, \gamma_4) \in (T^*(PM))^3 \mid \exists (\gamma_5, \gamma_6)\in T^*(PM)^2\\
 &\mid& (\gamma_1, \gamma_2, \gamma_5)\in L_3, (\gamma_3, \gamma_6) \in Id, (\gamma_5, \gamma_6, \gamma_4) \in L_3.\}
\end{eqnarray*}
Given the restrictions
\begin{eqnarray*}
\gamma_3&=& \gamma_6\\
\gamma_5&\sim& \gamma_1 *\gamma_2\\
\gamma_5* \gamma_3 &\sim& \gamma_4,  
\end{eqnarray*}
which implies that
$$L_3\circ (L_3\times Id)=\{(\gamma_1, \gamma_2, \gamma_3) \mid (\gamma_1*\gamma_2)*\gamma_3 \sim \gamma4\}$$
and since $(\gamma_1*\gamma_2)*\gamma_3 \sim \gamma_1* (\gamma_2* \gamma_3)$ we get that $L_3\circ (L_3\times Id)=L_3\circ (Id\times L_3)$, as we wanted.
\qed
\\

\textbf{\underline{A.5.}} From the definition, we get that
\begin{eqnarray}\label{l1}
L_1&=& \{\gamma \in T^*(PM) \mid \exists \alpha \in T^*PM , \gamma \sim \alpha*\alpha^{-1} \sim \alpha^{-1}*\alpha \}\\
&=&\{ \gamma \in T^*(PM)\mid \gamma \sim (X\equiv x_0, \eta \equiv  0)\}.
\end{eqnarray}
\\

\textbf{\underline{A.6.}}
For the case of $L_2$ it follows from the definition, that
\begin{equation}
L_2=\{T^*M\mbox{-paths } (\gamma_1, \gamma_2) \in T^*(PM)^2 \mid \gamma_1 \sim \gamma_2 \}.
\end{equation}
The smoothness for $L_1$ and $L_2$ will be proved in Section \ref{smoothness}.
$\qed$\\

Assuming Theorem \ref{Relational}, it is possible to prove the following

\begin{proposition}
\emph{The relational symplectic groupoid $(G,L,I)$ is regular.}
\end{proposition}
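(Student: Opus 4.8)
The plan is to verify the three additional axioms A.7, A.8 and A.9 for $\mathcal G = T^*(PM)$, $L$ and $I$, using the set-theoretic descriptions of $L_1$, $L_2$, $L_3$ obtained above together with Proposition~\ref{Coiso} and the smoothness results of Section~\ref{smoothness}. For A.7 I would unwind $C = L_2 \circ \mathcal G_{rel}$: a point $\gamma \in T^*(PM)$ lies in $C$ iff there is a $\gamma'$ with $\gamma' \sim \gamma$, and since $L_2$ only relates $T^*M$-paths this forces $C$ to be exactly the space of $T^*M$-paths, i.e. $C = C_\Pi$ in the notation of Proposition~\ref{Coiso}. By that proposition $C_\Pi$ is a coisotropic Banach submanifold of $\Phi_\partial = T^*(PM)$, so $C$ is an immersed (indeed embedded) submanifold; this is A.7, and its coisotropicity is the automatic consequence recorded there (equivalently, it follows from $L_1 \subset C$ via \eqref{l1}).

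For A.8, recall from \eqref{l1} that $L_1$ is the set of $T^*M$-paths that are $T^*M$-homotopic to a constant path. Since a $T^*M$-homotopy fixes the endpoints of the underlying base path, every $(X,\eta) \in L_1$ satisfies $X(0) = X(1)$, so the endpoint map $\pi\colon L_1 \to M$, $(X,\eta) \mapsto X(0)$, is well defined; it is onto (constant paths lie in $L_1$), and two elements of $L_1$ have equal image under $\pi$ precisely when they are $T^*M$-homotopic, by transitivity of $\sim$. Hence $\pi$ descends to a bijection $\underline{L_1} = L_1/(L_2 \cap L_1\times L_1) \to M$, through which $\underline{L_1}$ inherits the finite-dimensional smooth structure of $M$; this gives A.8 and identifies $\underline{L_1}$ with the original Poisson manifold.

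For A.9, I would note via \eqref{ELE3} that a pair $(c,[l]) \in C \times M$ lies in $S$ iff some representative $l$ of $[l]$ can be concatenated with $c$, which requires $l(1) = c(0)$ on the base, i.e. $[l] = s(c)$ with $s\colon C \to M$, $(X,\eta)\mapsto X(0)$ the restriction to $C$ of evaluation at $0$; conversely, when $[l] = s(c)$ the path $l$ being homotopic to a constant loop gives $l*c \sim c$, so $g := l*c$ (or a smooth representative of its class) works. Thus $S$ is the graph of $s$, which is readily seen to be a smooth surjective submersion, so $S$ is an embedded submanifold of $\mathcal G \times M$. Then \eqref{Source} holds because $(S\times S)\circ L_2^{rel}$ consists of the pairs $(s(c_1), s(c_2))$ with $c_1 \sim c_2$, and $\sim$ fixes base endpoints, so $s(c_1) = s(c_2)$ and the set is $\triangle_M$ (equality follows by taking $c_1 = c_2$ constant); and \eqref{subm} holds because $dS = TS$ is then the tangent relation of the graph of the submersion $s$, hence surjective onto $TM$.

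The only genuinely nontrivial ingredient is the infinite-dimensional differential topology --- that $C_\Pi$ is a Banach submanifold and that $L_1, L_2, L_3$ are immersed canonical relations --- and this is exactly what Proposition~\ref{Coiso} and Section~\ref{smoothness} supply, so the verification of regularity itself is essentially bookkeeping. The one point requiring care is the identification of $S$ with the graph of $s$: one must check that the existential quantifiers over the representative $l$ and over $g$ in the definition of $S$ do not enlarge the relation, which rests precisely on the fact that a $T^*M$-path homotopic to a constant path has equal base endpoints and hence behaves like a homotopically trivial based loop.
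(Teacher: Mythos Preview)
Your proof is correct and follows essentially the same route as the paper: identify $C$ with $C_\Pi$ via the description of $L_2$, invoke Proposition~\ref{Coiso} for A.7; identify $\underline{L_1}$ with $M$ via the base-point map for A.8; and recognize $S$ as the graph of the evaluation $s\colon (X,\eta)\mapsto X(0)$ for A.9. The paper spells out two small lemmas you pass over---that the evaluation map $ev_t\colon PX\to X$ is continuous (hence $s$ is smooth in the Banach setting) and that $\delta s\colon TC\to TM$ is surjective---but your ``readily seen to be a smooth surjective submersion'' is accurate, and your careful unpacking of the existential quantifiers in the definition of $S$ is if anything more explicit than the paper's.
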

\begin{proof}
It is easy to observe for $C= C_{\Pi}$, the space of $T^*M$-paths, that by Proposition \ref{Coi}, $C$ is a Banach submanifold of finite codimension, therefore, axiom \textbf{\underline{A.7.}} holds.
To check \textbf{\underline{A.8.}}, observe that
\begin{equation*}
\underline {L_1}= L_1/ L_2= \{(X,\eta)\in T^*(PM)\mid \exists x_0\in M: (X\equiv x_0, \eta\equiv 0) \}\cong M.
\end{equation*}
We can define the map
\begin{eqnarray*}
s\colon  C &\to& M\\
\gamma=(X,\eta)&\mapsto& X(0) 
\end{eqnarray*}
It follows that $S$, defined in \textbf{\underline{A.9}} corresponds to Graph($s$). The following Lemmata ensure the fact that $dS$ is surjective.
\begin{lemma} \label{mani}\emph{
Let $X$ be a metric space and $PX$ the space of continuous maps from $I$ to $X$. We define the evaluation map
\begin{eqnarray*}
ev_t\colon  PX &\to& X\\
\gamma&\mapsto& \gamma(t).
\end{eqnarray*} 
Then $ev_t$ is a continuous map, provided that $PX$ is equipped with the uniform convergence topology.
}
\end{lemma}
\begin{proof} We fix a path $\gamma \in PX$, a time $t \in T$ and $\varepsilon \in \mathbb R^{>0}$. Consider an open ball $\mathcal U_{\varepsilon}(ev_t(\gamma))$, centered at  $ev_t(\gamma)$ with radius $\varepsilon$. Let $\mathcal V(\gamma):= ev_t^{-1}(U_{\varepsilon}(ev_t(\gamma)))$ and let $\tilde{\gamma} \in \mathcal V(\gamma)$.  The open neighborhood of $\tilde{\gamma}$ defined by 
$$ \mathcal V_{\varepsilon / 2}(\tilde{\gamma}):= \{\xi \in PX \mid d(\tilde{\gamma}, \xi)< \varepsilon/ 2 \}$$
is contained in $V(\gamma)$, therefore 
$$V(\gamma)= \bigcup_{\tilde{\gamma}\in V(\gamma)}V_{\varepsilon / 2}(\tilde{\gamma}), $$
hence, $\mathcal V(\gamma)$ is an open in $PX$, which implies that $ev_t$ is continuous.
\end{proof}
Setting $X=M$, where $M$ is our given smooth manifold, this Lemma proves that the map $s: C\to M$ is continuous, where $C$ is equipped with the subspace topology. This implies that 
Graph$(s)$ is a submanifold of $C\times M$.
To check that it corresponds to a submersion, we will prove the following
\begin{lemma}\emph{ The differential $\delta s$ of the map $s\colon C\to M$ is a well defined surjective map from $TC$ to $TM$}
\end{lemma}
\begin{proof}
Let $\gamma =(X,\eta)\in C$. A vector $\delta \gamma \in T_{\gamma}C$ is described by
$$\delta \gamma = \{ (\delta X, \delta \eta) \mid \delta X \in \Gamma (X^* TM), \, \delta \eta \in \Gamma (X^*T^*M)\}.$$
The map $\delta s$ corresponds to 
\begin{eqnarray*}
\delta s\colon  TC &\to& TM\\
\delta \gamma &\mapsto& \delta X (0),
\end{eqnarray*}
that is the evaluation of $\delta X$ at 0, which is a well defined surjective map, as we wanted.
\end{proof}
\end{proof}
The rest of the section is devoted to prove the smoothness and Lagrangianity of the spaces $L_i$ defining the relational symplectic groupoid. 
 
\subsection{Smoothness of $L_i$}\label{smoothness}
In this subsection, we develop the notion of \emph{path holonomy} for the foliated manifold $(T^*PM, \mathcal F)$, where $\mathcal F$ is the characteristic foliation associated to the submanifold $C_{\Pi}$, which has codimension $n$, where $n= \dim (M)$. Following the construction in the case of finite dimensional foliations \cite{Mor,Brown}, it is possible to give a smooth manifold structure to the holonomy and monodromy groupoids associated to $(T^*PM, \mathcal F)$. These constructions will allow us to give smoothness conditions to the defining relations $L_i$. First, we recall some basic definitions we will use throughout the proofs.\\

\subsubsection{Foliations for Banach manifolds }
\begin{definition}\label{fol}
\emph{Let $M$ be a connected Banach manifold. Let $$\mathcal F= \{ \mathcal L_{\alpha} \mid \alpha \in A\}$$ be a family of path connected subsets of $M$. Then $(M, \mathcal F)$ is a foliation of codimension $p$ if the following conditions hold:
\begin{enumerate}
\item $\mathcal L_{\alpha} \cap \mathcal L_{\beta}= \emptyset,$ for $\alpha, \beta \in A, \alpha \neq \beta.$
\item $\bigcup_{\alpha\in A} \mathcal L_{\alpha}=M.$
\item For every $x \in M$, there exists a coordinate chart $(\mathcal U_{\lambda}, \phi_{\lambda})$ for $M$ around $x$ such that for $\alpha \in A$ with $\mathcal U_{\lambda}\cap \mathcal L_{\alpha}\neq \emptyset$, each path connected component of $\phi_{\lambda}(\mathcal U_{\lambda} \cap \mathcal L_{\alpha}) \subset B \times \mathbb R ^p$, where $B$ is a Banach space,  has the form
$$(B \times \{ c \} )\cap \phi (\mathcal U_{\lambda}),$$
where $c \in \mathbb R ^p$ is determined by the path connected component $\mathcal L_{\alpha}$, called a \emph{leaf} of the foliation. If $U$ is a subset of $M$, a path component of the intersection of $U$ with a leaf is called a \emph{plaque} of $U$.
\end{enumerate}}
\end{definition}
Besides the usual finite dimensional examples of foliations, the following proposition gives us characteristic distributions as examples of foliations at the infinite dimensional level.
\begin{proposition} \emph{Let $(M, \omega)$ be a weak symplectic Banach manifold and let $C$ be a coisotropic submanifold such that $TC^ {\perp}$ has finite codimension. Then $TC^{\perp}$ induces a foliation of finite codimension of $C$.}
\end{proposition}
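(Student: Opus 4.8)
The plan is to recognize $D:=TC^{\perp}$ as an \emph{involutive, split} subbundle of $TC$ and then to invoke the Frobenius theorem for Banach manifolds. Writing $\iota\colon C\hookrightarrow M$ for the inclusion and $\omega_C:=\iota^{*}\omega$, coisotropy of $C$ means exactly that $D_x=\{v\in T_xC\mid\omega(v,w)=0\ \forall\,w\in T_xC\}=\ker(\omega_C)_x$ for all $x\in C$, so $D\subseteq TC$. Each fiber $D_x$ is the intersection of the closed hyperplanes $\ker\bigl(\omega_x(\cdot,w)\bigr)$, $w\in T_xC$, hence is a closed subspace of $T_xC$; by the finite-codimension hypothesis it is a closed subbundle of constant codimension $p$. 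Since every closed finite-codimensional subspace of a Banach space admits a complement, $D$ is a \emph{split} subbundle of $TC$, with a complementary bundle of finite rank $p$. This splitting is the crucial structural point, and it is exactly what the finite-codimension hypothesis buys us.

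Next I would verify involutivity, which is the standard $d\omega=0$ computation done pointwise; it is insensitive to the weakness of $\omega$ because only the pairing of $\omega$ with vectors is ever used, never the (possibly nonexistent) inverse of $\omega^{\#}$. Let $X,Y$ be local sections of $D$ and $Z$ a local section of $TC$. Since $X,Y,Z$ are all tangent to $C$, so are $[X,Y]$, $[X,Z]$, $[Y,Z]$, and $\omega_C$ agrees with $\omega$ on such vectors. From $0=d\omega_C=\iota^{*}d\omega$ and the Koszul formula
\[
d\omega_C(X,Y,Z)=X\,\omega(Y,Z)-Y\,\omega(X,Z)+Z\,\omega(X,Y)-\omega([X,Y],Z)+\omega([X,Z],Y)-\omega([Y,Z],X),
\]
the first three terms vanish because $\omega(Y,Z)=\omega(X,Z)=\omega(X,Y)=0$, and $\omega([X,Z],Y)=\omega([Y,Z],X)=0$ because $X,Y\in D$ while $[X,Z],[Y,Z]\in TC$. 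Hence $\omega([X,Y],Z)=0$ for every $Z\in TC$, i.e. $[X,Y]$ is a section of $D$, so $D$ is involutive.

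Finally I would apply the Frobenius theorem for split subbundles of Banach manifolds (Lang \cite{Lang, Lang1}) to the involutive split subbundle $D\subseteq TC$: around every $x\in C$ there is a chart $(\mathcal U,\phi)$, $\phi\colon\mathcal U\to B\times\mathbb R^{p}$ (with $B$ the model fiber of $D$), in which the connected pieces of the integral manifolds of $D$ are precisely the slices $(B\times\{c\})\cap\phi(\mathcal U)$. Declaring two points of $C$ equivalent when they can be joined by a path that is piecewise contained in such slices produces the leaves $\{\mathcal L_\alpha\}$, and the charts above are exactly the foliation charts demanded by Definition \ref{fol}. Thus $(C,\{\mathcal L_\alpha\})$ is a foliation of codimension $p$ whose tangent distribution is $D=TC^{\perp}$, namely the characteristic foliation of $C$.

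The main obstacle is the Banach-Frobenius step: one must be sure that $D$ is genuinely a closed \emph{split} subbundle so that Lang's version of Frobenius applies, and that the local integral manifolds it produces glue into a global foliation in the sense of Definition \ref{fol}. The finite-codimension hypothesis resolves the first point — a closed finite-codimensional subspace of a Banach space is always complemented, and the complement has finite rank $p$ — so no transversality or completeness assumption beyond it is needed; the involutivity input is the routine $d\omega=0$ argument, unchanged from the finite-dimensional case.
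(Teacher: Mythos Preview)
Your proof is correct and follows essentially the same route as the paper: verify involutivity of $TC^{\perp}$ via the Koszul formula for $d\omega=0$, then invoke the Frobenius theorem for Banach manifolds from Lang. Your version is in fact more careful than the paper's, since you explicitly check that $D$ is a closed \emph{split} subbundle (using that finite-codimensional closed subspaces are complemented), which is the hypothesis Lang's Frobenius theorem actually requires and which the paper leaves implicit.
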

\begin{proof}
We will check first that the distribution $TC^ {\perp}$ is involutive, that is,
$$\omega([X,Y], Z)=0, \forall X,Y \in TC^{\perp}, Z \in TC.$$ 
We know that
\begin{eqnarray*}
d\omega (X,Y,Z)&=& \omega(X, [Y,Z])-\omega(Y, [X,Z])+ \omega (Z, [X,Y])\\
&+& X\omega(Y,Z)-Y\omega(X,Z)+ Z\omega(X,Y)\\
&=&-\omega([X,Y], Z)=0.
\end{eqnarray*}
By the use of Frobenius Theorem for Banach manifolds (for references see \cite{Lang}), this distribution is integrable and it induces a foliation on $C$ of finite codimension.
\end{proof}
In our case of interest the Banach manifold is $\mathcal G= T^*(PM)$ and $C= C_{\Pi}$. In \cite{Cat} it is proven that $C^{\perp}$ has finite codimension. 
Now, we describe the monodromy and holonomy groupoids for foliations.

 \subsubsection{Monodromy groupoid over a foliated manifold}
 
  Let $(M, \mathcal{F})$ be a foliation. The monodromy groupoid, denoted 
by Mon$(M, \mathcal F)$, has as space of objects the manifold $M$ and the space of morphisms is defined as follows:
\begin{itemize}
 \item If $x,y \in M$ belong to the same leaf in the foliation, the morphisms between $x$ and $y$ are homotopy classes, relative to the 
end points, of paths between $x$ and $y$ along the same leaf.
\item If $x$ and $y$ are not in the same leaf, there are no morphims between them.
\end{itemize}
\subsubsection{Holonomy groupoid over a foliated manifold}

We introduce the notion of holonomy for a foliation, that will be useful for our purposes. From now on, $\mathcal L_p$ will denote the leaf on $\mathcal F$ through the point $p$; in this case $p$ should not be confused with the index $\alpha$ in Definition  \ref{fol}, we introduce this new notation for simplicity. 


Given $p \in \mathcal{L}_p$, with $\mathcal{L}_p$ a leaf on $\mathcal{F}$, we consider a path $\alpha_0$ in $\mathcal{L}_p$ such that
$\alpha_0([0,1])\subset U_0$, with $U_0$ given by the foliation chart $(U_0, \phi_0)$. 
Consider $q_0 \in \mathcal{L}_p$ such that $\phi_0(p)$ and $\phi_0(q_0)$ 
lie on the same plaque (i.e in the same leaf with respect to the chart $(U_0, \phi_0)$) and let $T_{p}$ and $T_{q_0}$ be transversals to 
$\mathcal{F}$ through $p$ and $q_0$ respectively. A \emph{local holonomy} from $p$ to $q_0$, denoted by $Hol^{T_p,T_{q_0}}(\alpha_0)$
is defined as a germ of a diffeomorphism $f\colon  T_p \to T_{q_0}$, in such a way that there exists an open neighborhood $A$ in $T_p$ where $f$ is a 
leaf preserving diffeomorphism (i.e $a$ and $f(a)$ belong to the same leaf, for $a \in A$). 

Given a foliation and a transversal $T$ through $x$, using the fact that
\[\mbox{Diff}_{x}(T) \cong \mbox{Diff}_0(\mathbb{R}^q) \] 
where $\mbox{Diff}_0$ denotes the group of the germs of diffeomorphisms at 0, $q$ being the codimension of $\mathcal{F}$ and that the holonomy is independent of the homotopy class of the path (up to conjugation with an element in $\mbox{Diff}_0(\mathbb{R}^q$), we can see the holonomy as a 
group homomorphism

\[\mbox{hol:} \pi_1(L,x) \to \mbox{Diff}_0(\mathbb{R}^q),\]

The image of this map is denoted by $\mbox{Hol}(L,x)$.\\
Based on this notion, we define the holonomy groupoid of $\mathcal{F}$ in the natural way: the space of objects is the foliated manifold and the 
space of morphisms is the classes of holonomy of paths along the leaves of $\mathcal{F}$. Observe that the isotropy groups of this groupoid are precisely the holonomy 
groups $\mbox{Hol}(L,x)$.\\

\subsubsection{Smoothness of $L_2$}
It can be checked (see \cite{Brown}) that, given a foliated manifold $(M,\mathcal F)$, the equivalence relation $R: M \nrightarrow M$ 
of being in the same leaf, is not necessarily  a smooth submanifold of the cartesian product of the 
foliated manifold with itself. 

Fortunately, there is a way to ``resolve" the singularities, by using the holonomy groupoid associated to what are called \emph{locally Lie groupoids}. Following \cite{Brown, Brown2} we construct the holonomy groupoid associated to the equivalence relation $L_2$, denoted by $\mbox{Hol}(L_2, W)$, where the pair $(L_2,W)$ is the locally Lie groupoid associated to $L_2$ \cite{Brown}.
First, some definitions.
\begin{definition}\emph{
Let $G\rightrightarrows M$ be a groupoid. The \emph{difference} map $\delta: G\times_{(s,s)}G\to G$ is given by $\delta(g,h)=\mu(g, \iota(h))$.}
\end{definition}
\begin{definition}\emph{Let $G\rightrightarrows M$ be a (topological) groupoid. An admissible local section of $G$ is a map $\gamma: U \to G$ from an open set $U$ of $M$ satisfying the following properties:
\begin{enumerate}
\item $(s\circ \gamma) (x)=x, \forall x\in M$.
\item $(t\circ \gamma)(U)$ is an open in $M$.
\item $(t\circ \gamma): U \to \mathfrak(t\circ \gamma)$ is a homeomorphism.
\end{enumerate}}
\end{definition}
Now, consider  a subspace $M\subset W \subset G$. The triple $(s,t, W)$ is said to have \emph{enough  smooth admissible local sections} \cite{Brown}, if for each $w \in W$ there is an admissible local section $\gamma$ of $G$ satisfying that:
\begin{itemize}
\item $(\gamma \circ s)(w)=w$.
\item $\mathfrak{Im}(\gamma) \subset W$.
\item $\gamma$ is smooth.
\end{itemize}
Now we are able to introduce the notion of locally Lie groupoid:
\begin{definition}\label{locally}\emph{\cite{Brown}. A \emph{locally Lie groupoid} is a pair $(G,W)$, where $G\rightrightarrows M$ is a groupoid and a manifold $W$ such that:
\begin{enumerate}
\item $M\subset W \subset G$.
\item $W= \iota(W)$.
\item The set $$W_{\delta}:= (W\times_{(s,s)}W)\cap \delta^{-1}(W)$$
is open in $W\times_{(s,s)}W$ and $\delta$ restricted to $W_{\delta}$ is smooth.
\item $s$ and $t$ restricted to $W$ are smooth and $(s,t, W)$ has enough admissible local sections.
\item $W$ generates $G$ as a groupoid.
\end{enumerate}
}
\end{definition}
We will show how $L_2$ can be regarded as a locally Lie groupoid and its associated holonomy groupoid will be the covering manifold which allows us to regard $L_2$ as a morphism in $\mbox{\textbf{Symp}}^{Ext}$.

First, consider the foliated manifold $(M, \mathcal F)$ and a subset $U$ of $M$. We denote $L_2(U)$ the equivalence relation on $U$ defined by 
\[x \sim y \Longleftrightarrow  x \mbox{ and } y \mbox{ are in the same plaque}.\] 
Now, we consider $\Lambda=\{ (\mathcal U_{\lambda}, \phi_{\lambda})\}$ a foliation atlas for $(M, \mathcal F)$ and we define
$$W(\Lambda):= \bigcup _{\mathcal U_{\lambda}} L_2(\mathcal U_{\lambda}),$$
for all domains $\mathcal U_{\lambda}$ of the atlas $\Lambda$.

We prove the following
\begin{proposition} \emph{\cite{Brown}. $W(\Lambda)$, endowed, with the subspace topology with respect to $L_2$ (and hence regarded as a topological subspace of $M \times M$), has the structure of a smooth manifold, coming from the foliated atlas $\Lambda$.}
\end{proposition}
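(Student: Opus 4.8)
The plan is to build an explicit smooth atlas on $W(\Lambda)$ indexed by the charts of $\Lambda$ and to check that it is compatible with the topology inherited from $M\times M$. Fix a foliation chart $(\mathcal U_\lambda,\phi_\lambda)$ as in Definition \ref{fol}, with $\phi_\lambda\colon\mathcal U_\lambda\to B_\lambda\times\mathbb R^p$ an open embedding ($B_\lambda$ open in a Banach space); as is standard for foliation atlases we may assume each transverse slice $(B_\lambda\times\{c\})\cap\phi_\lambda(\mathcal U_\lambda)$ is connected, so that the plaques of $\mathcal U_\lambda$ are exactly these slices. Define
\[
\psi_\lambda\colon L_2(\mathcal U_\lambda)\longrightarrow B_\lambda\times B_\lambda\times\mathbb R^p,\qquad
\psi_\lambda(x,y)=\bigl(\mathrm{pr}_1\phi_\lambda(x),\ \mathrm{pr}_1\phi_\lambda(y),\ \mathrm{pr}_2\phi_\lambda(x)\bigr),
\]
where $\mathrm{pr}_1,\mathrm{pr}_2$ are the projections of $B_\lambda\times\mathbb R^p$. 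This is well defined since $x,y$ in the same plaque forces $\mathrm{pr}_2\phi_\lambda(x)=\mathrm{pr}_2\phi_\lambda(y)$, and it is a bijection onto the open set $\{(b,b',c)\mid(b,c),(b',c)\in\phi_\lambda(\mathcal U_\lambda)\}$.

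Next I would check that each $\psi_\lambda$ is a homeomorphism onto its image for the subspace topology of $W(\Lambda)\subset M\times M$. Since $\phi_\lambda\times\phi_\lambda$ is a homeomorphism of $\mathcal U_\lambda\times\mathcal U_\lambda$ onto an open subset of $(B_\lambda\times\mathbb R^p)^2$, its restriction to $L_2(\mathcal U_\lambda)$ is a homeomorphism onto the corresponding ``slice diagonal'' $\{((b,c),(b',c))\}$, and $\psi_\lambda$ is this map followed by the obvious homeomorphism that deletes the repeated transverse coordinate. One must also check that the sets $L_2(\mathcal U_\lambda)$ are open in $W(\Lambda)$ and cover it; covering is immediate from the definition of $W(\Lambda)$, while openness is where a sufficiently fine (``good'') choice of $\Lambda$ enters — one needs that any two leaf-equivalent points of $\mathcal U_\lambda$ that are close enough already lie in a common plaque of $\mathcal U_\lambda$.

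For the transition functions, the defining property of a foliation chart forces $\phi_\mu\circ\phi_\lambda^{-1}$ to carry plaques to plaques, hence to be triangular, of the form $(b,c)\mapsto(f_{\mu\lambda}(b,c),h_{\mu\lambda}(c))$ with $f_{\mu\lambda},h_{\mu\lambda}$ smooth. Therefore
\[
\psi_\mu\circ\psi_\lambda^{-1}(b,b',c)=\bigl(f_{\mu\lambda}(b,c),\ f_{\mu\lambda}(b',c),\ h_{\mu\lambda}(c)\bigr),
\]
which is smooth, and likewise for its inverse; so $\{(L_2(\mathcal U_\lambda),\psi_\lambda)\}$ is a smooth atlas modelled on $B_\lambda\oplus B_\lambda\oplus\mathbb R^p$, and $W(\Lambda)$, being a subspace of the Hausdorff space $M\times M$, is a Hausdorff smooth (Banach) manifold coming from $\Lambda$.

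I expect the main obstacle to be the bookkeeping in the second step: $W(\Lambda)$ is \emph{not} a submanifold of $M\times M$ in general (the relation $L_2$ of ``being in the same leaf'' need not be embedded), so the argument hinges on choosing the foliation atlas carefully and verifying that on each chart domain the subspace topology coincides with the chart topology — precisely the content that makes $(L_2,W(\Lambda))$ a locally Lie groupoid in the sense of Definition \ref{locally}.
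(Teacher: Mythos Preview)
Your proposal is correct and follows essentially the same approach as the paper: use the foliation charts $\phi_\lambda$ to induce coordinate charts on $W(\Lambda)$ via the plaque equivalence relation on each $\mathcal U_\lambda$. The paper's own proof is in fact only a two-sentence sketch that refers to \cite{Brown} and notes the argument carries over to Banach manifolds foliated with finite codimension; your version spells out explicitly the chart maps $\psi_\lambda(x,y)=(\mathrm{pr}_1\phi_\lambda(x),\mathrm{pr}_1\phi_\lambda(y),\mathrm{pr}_2\phi_\lambda(x))$, the triangular form of the transitions, and the subtlety about openness of $L_2(\mathcal U_\lambda)$ --- all of which the paper leaves implicit in its citation.
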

\begin{proof}
The same argument explained in \cite{Brown} works in the case of a foliation on Banach manifold
with finite codimension. There is an induced equivalence relation on $\phi_{\lambda}(\mathcal U_{\lambda}),$ that is determined by the connected components of $\phi_{\lambda}(\mathcal U_{\lambda}) \cap B \times \{ c\} \subset B  \times \mathbb R ^q$
and by using the coordinate function $\phi_{\lambda}$ we induce coordinate charts for $W(\Lambda)$.
\end{proof}
Moreover, it is proven (Theorem 1.3 in \cite{Brown}) that
\begin{theorem}\label{Locally}
Let $(M, \mathcal F)$ be a foliated manifold. Then an atlas $\Lambda$ can be chosen such that $(L_2, W(\Lambda))$ is a locally Lie groupoid. 
\end{theorem}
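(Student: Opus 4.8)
The plan is to verify directly the five conditions of Definition \ref{locally} for the pair $(L_2, W(\Lambda))$, where $L_2$ is viewed as the wide subgroupoid of the pair groupoid $M\times M$ consisting of pairs of points on a common leaf of $\mathcal F$, and $W(\Lambda)\subset L_2$ is the manifold built in the preceding Proposition out of the plaque--equivalence relations $L_2(\mathcal U_\lambda)$. Conditions (1) and (2) are immediate: every diagonal pair $(x,x)$ lies in $L_2(\mathcal U_\lambda)$ whenever $x\in\mathcal U_\lambda$, so $M\subset W(\Lambda)$; a pair in the same plaque is in particular in the same leaf, so $W(\Lambda)\subset L_2$; and ``same plaque'' is symmetric, so $\iota(W(\Lambda))=W(\Lambda)$. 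Condition (5) holds because two points of a common leaf are joined by a path, which being compact meets only finitely many domains $\mathcal U_\lambda$ of $\Lambda$; the corresponding element of $L_2$ is then a finite product of elements of $W(\Lambda)$, so $W(\Lambda)$ generates $L_2$.

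For condition (4), smoothness of $s$ and $t$ on $W(\Lambda)$ is read off from the charts of the preceding Proposition, in which these maps become the coordinate projections associated to a pair of plaques; here the finite codimension of $\mathcal F$ is exactly what makes those charts modeled on a Banach space times $\mathbb R^q$. To exhibit enough smooth admissible local sections through a point $w=(x,y)\in L_2(\mathcal U_\lambda)$, I would use the local product structure of $(\mathcal U_\lambda,\phi_\lambda)$: translating the plaque through $y$ onto the plaque through $x$ defines, on a transversal neighbourhood $U$ of $y$, a smooth map $\gamma\colon U\to W(\Lambda)$ with $s\circ\gamma=\mathrm{id}_U$, with $t\circ\gamma$ a homeomorphism onto an open set, and with image contained in $W(\Lambda)$.

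The heart of the matter is condition (3): the set $W_\delta=(W(\Lambda)\times_{(s,s)}W(\Lambda))\cap\delta^{-1}(W(\Lambda))$ must be open in the fibre product and $\delta$ must be smooth on it, where $\delta$ sends a pair of elements of $W(\Lambda)$ with a common foot point, say $((x,y),(x',y))$, to the pair $(x,x')$ of their other end points. The difficulty is that $x$ and $x'$ need only lie on a common \emph{leaf}, not on a common plaque, so $(x,x')$ may fail to lie in $W(\Lambda)$ and, even when it does, its chart may be unrelated to those of the two factors; this is precisely why the statement only claims that \emph{some} atlas works. Following \cite{Brown}, I would first replace $\Lambda$ by a \emph{regular} foliation atlas --- one whose chart domains are relatively compact, whose closures still sit inside foliation charts, and which is fine enough that a plaque of one chart meeting two others is contained in a single chart of $\Lambda$. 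For such an atlas one shows that the pairs $((x,y),(x',y))$ for which $x$ and $x'$ actually share a plaque form an open subset of the fibre product, and on that subset $\delta$ is, in the chart coordinates, a projection--type map and hence smooth; the openness is the delicate step and relies on the local triviality of $\mathcal F$ together with the relative compactness built into a regular atlas. I expect the choice of this regular atlas, and the proof that it makes $W_\delta$ open, to be the main obstacle; the remaining items reduce to routine computations in foliation charts.
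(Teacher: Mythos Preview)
Your proposal is correct and follows the same strategy as the paper, which in fact does not give its own proof but simply cites this as Theorem~1.3 in \cite{Brown} and then remarks on the Banach extension. Your identification of condition~(3) as the delicate step, requiring a suitably refined (regular) atlas, matches exactly what the paper singles out in its remark following the theorem. One minor sharpening: for condition~(5) your phrasing ``the path, being compact, meets only finitely many domains $\mathcal U_\lambda$'' is not quite the argument---the path could meet infinitely many charts of an infinite atlas. What one uses is compactness to extract a finite subcover and then the Lebesgue Covering Lemma to subdivide $[0,1]$ so that each piece lies in a single chart domain; this is precisely the point the paper's remark makes for the Banach case.
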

\begin{remark}\emph{
In \cite{Brown} the construction of the locally Lie groupoid structure on $(L_2, W(\Lambda))$ is done for finite dimensional foliations but it can be naturally extended to the case where the leaf is a Banach manifold and $\mathcal F$ has finite codimension. The only non-trivial step is to check that the property (3) in Definition \ref{locally} is satisfied. For this case, thanks to The Lebesgue Covering Lemma, that can be applied in the Banach case,  there is always a decomposition of a path $a$ from $x$ to $y$ on a leaf $\mathcal L$ in smaller paths $a_i$ such that $a_i$ is a path from $x_i$ to $x_{i+1}$, with $x_0=x, x_{n+1}=y$,  with the property that $(x_i,x_{i+1}) \in W(\Lambda)$.}
\end{remark}

In \cite{Brown2}, the holonomy groupoid for a locally topological groupoid is constructed through a universal property, namely:
\begin{theorem}\label{holo}\emph{(Globalisation Theorem)}\cite{Brown2}. Let $(G,W)$ be a locally topological groupoid. Then there is a topological groupoid $H\rightrightarrows N$, a morphism $\phi: H \to G$ of groupoids, and an embedding $i: W \to H$ of $W$ to an open neighborhood pf $N$ satisfying the following:
\begin{enumerate}
\item $\phi$ is the identity on objects, $\phi\circ i(w)=w, \forall w \in W$, $\phi^{-1}(W)$ is open in $H$ and $\phi \mid_{W}: \phi^{-1}(W) \to W$ is continuous.

\item (Universal property). If $A$ is a topological groupoid and $\xi: A \to G$ is a morphism of groupoids satisfying:
\begin{itemize}
\item $\xi$ is the identity on objects.
\item $\xi\mid_{W}: \xi(W) \to W$ is continuous and $\xi^{-1}(W)$ is an open in $A$ and generates $A$.
\item The triple $(s_A,t_A,A)$ has enough continuous admissible local sections,
\end{itemize}
then there is a unique morphism $\xi^{'}: A \to H$ of topological groupoids such that $\phi \xi^{'}=\xi$ and $\xi^{'}a=i\xi a, \, \forall a \in \xi^{-1}(W).$

\end{enumerate}
\end{theorem}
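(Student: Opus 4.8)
The plan is to realize $H$ as a groupoid of germs of admissible local sections of $G$, following the Aof--Brown construction. Write $O_G = M$. Let $\Gamma(G,W)$ denote the set of continuous admissible local sections $s \colon U \to G$, with $U$ open in $M$, having $\operatorname{Im}(s) \subseteq W$; the hypothesis that $(\mathfrak{s},\mathfrak{t},W)$ has enough continuous admissible local sections guarantees that through every point of $W$ there is such an $s$. The set of \emph{all} admissible local sections of $G$ forms an inverse monoid under the partial composition $s * t$ (defined, where it makes sense, by $(s*t)(x) = s\big((\mathfrak{t}\circ t)(x)\big)\,t(x)$) and the inversion $s \mapsto s^{-1}$; let $J = J(G,W)$ be the sub-object generated by the restrictions of elements of $\Gamma(G,W)$, so $J$ is closed under restriction, composition and inversion. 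Define
\[
H \;=\; \{\, \langle s\rangle_x \;:\; s \in J,\ x \in \operatorname{dom}(s)\,\},
\]
the set of germs at points of $M$ of sections in $J$, a groupoid over $N := M$ with $\mathfrak{s}\langle s\rangle_x = x$, $\mathfrak{t}\langle s\rangle_x = (\mathfrak{t}\circ s)(x)$, multiplication induced by $*$, units given by germs of unit sections, and inversion induced by $s \mapsto s^{-1}$. Topologize $H$ by declaring the sets $\{\langle s\rangle_x : x \in \operatorname{dom}(s)\}$, for $s \in J$, a basis of opens; then $N$ embeds in $H$ as the germs of unit sections.

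Next, define $\phi \colon H \to G$ by $\phi\langle s\rangle_x = s(x)$ and $i \colon W \to H$ by choosing, for $w \in W$, a section $s \in \Gamma(G,W)$ with $s(\mathfrak{s}w) = w$ and setting $i(w) = \langle s\rangle_{\mathfrak{s}w}$. One checks that $\phi$ is a well-defined morphism of groupoids which is the identity on objects, that $i$ is well defined (two such germs agree because their difference is a germ of a unit section whose image lies in $W$), that $\phi \circ i = \operatorname{id}_W$, that $\phi^{-1}(W)$ is the union of the basic opens coming from restrictions of elements of $\Gamma(G,W)$ and is therefore open, that $\phi$ restricted there is continuous, and that $i$ is an embedding of $W$ onto an open neighbourhood of $N$ in $H$. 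These are routine verifications with germs.

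One then verifies that $H$ is a \emph{topological} groupoid, i.e.\ that multiplication $H \times_{(\mathfrak{s},\mathfrak{t})} H \to H$ and inversion $H \to H$ are continuous for the germ topology. This reduces to: for $s,t \in J$ the sections $s*t$ (on its domain) and $s^{-1}$ again lie in $J$, and a basic neighbourhood of $\langle s*t\rangle_x$ is controlled by basic neighbourhoods of $\langle s\rangle_{(\mathfrak t\circ t)(x)}$ and $\langle t\rangle_x$ — which is precisely why $J$ was defined to be closed under these operations. Finally, for the universal property, given a topological groupoid $A$ and a morphism $\xi \colon A \to G$ as in the statement, define $\xi' \colon A \to H$ thus: for $a \in A$ with $\mathfrak{s}_A a = x$, use that $(\mathfrak{s}_A,\mathfrak{t}_A,A)$ has enough continuous admissible local sections to pick a continuous admissible local section $\sigma$ of $A$ with $\sigma(x) = a$; since $\xi^{-1}(W)$ is open and generates $A$, after shrinking the domain one may write $\sigma = \sigma_1 * \cdots * \sigma_k$ with each $\sigma_j$ a continuous admissible local section with $\operatorname{Im}(\sigma_j) \subseteq \xi^{-1}(W)$, so $\xi\sigma = (\xi\sigma_1) * \cdots * (\xi\sigma_k)$ has every factor in $\Gamma(G,W)$ and hence $\xi\sigma \in J$; set $\xi'(a) = \langle \xi\sigma\rangle_x$. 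One shows this is independent of the choices of $\sigma$ and of the decomposition, is a morphism of groupoids, satisfies $\phi\xi' = \xi$ and $\xi'a = i\xi a$ on $\xi^{-1}(W)$, is continuous (the preimage of a basic open of $H$ is a union of $\operatorname{Im}(\sigma)$-type opens of $A$), and is unique because $i(W)$ is an open neighbourhood of $N$ and hence generates $H$, forcing any such $\xi'$ on $i(W)$.

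\textbf{Main obstacle.} The delicate steps are (i) showing that the germ topology really turns $H$ into a topological groupoid — continuity of the structure maps — which is where Pradines' original sketch was incomplete and which forces the careful closure properties of $J$; and (ii) showing that the map $\xi'$ in the universal property is well defined, independent of the factorization $\sigma = \sigma_1 * \cdots * \sigma_k$ and of $\sigma$ itself, and continuous. Both rest on systematically exploiting the hypothesis that the relevant triples have \emph{enough continuous admissible local sections}, which is exactly what allows one to pass freely between elements of a groupoid and germs of sections through them.
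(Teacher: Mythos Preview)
The paper does not give its own proof of this theorem: it is quoted verbatim as the Globalisation Theorem of Aof--Brown \cite{Brown2} and used as a black box to conclude that $\mathrm{Hol}(L_2,W(\Lambda))$ is a Lie groupoid. There is therefore nothing in the paper to compare your argument against.

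That said, your sketch is faithful to the Aof--Brown construction in \cite{Brown2}: $H$ is built as the groupoid of germs of admissible local sections generated by $\Gamma(G,W)$, with the sheaf-style germ topology, and the universal property is obtained by factoring an admissible section of $A$ through $\xi^{-1}(W)$. The points you flag as delicate --- continuity of the groupoid operations on germs, and well-definedness of $\xi'$ independent of the chosen decomposition --- are indeed the substantive steps in \cite{Brown2}, and your outline handles them in the standard way. One small imprecision: the well-definedness of $i$ does not follow merely from ``the difference being a germ of a unit section''; rather, two sections $s,s'\in\Gamma(G,W)$ with $s(x)=s'(x)=w$ have the same germ at $x$ because $s^{-1}*s'$ has image in $W$ near $x$ and takes $x$ to a unit, and one uses that $W$-valued sections through units are locally unit sections (this uses the local structure axiom on $W_\delta$). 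With that caveat, your proposal is a correct summary of the cited proof.
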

The groupoid $H$ is called the holonomy groupoid of the locally topological groupoid $(G,W)$ and is denoted by $Hol(G,W)$. In the smooth setting, due to Theorem \ref{Locally} , we can prove that
\begin{proposition}\emph{$Hol(L_2, W(\Lambda))$ is a Lie groupoid.}
\end{proposition}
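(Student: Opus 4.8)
The plan is to invoke the Globalisation Theorem (Theorem~\ref{holo}) applied to the locally Lie groupoid $(L_2, W(\Lambda))$ constructed in Theorem~\ref{Locally}, and then promote the resulting topological groupoid $H = \mathrm{Hol}(L_2, W(\Lambda))$ to a Lie groupoid by exhibiting a compatible smooth atlas. The point is that the general construction in \cite{Brown2} is purely topological, but when the input $(G,W)$ has smooth structure maps and $W$ is a smooth manifold (which is exactly the content of Theorem~\ref{Locally} in our setting, with $W(\Lambda)$ a Banach manifold of the appropriate type and $s,t$ smooth on $W$), the globalisation inherits that smoothness.

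First I would recall the explicit model of $H$ from the proof of the Globalisation Theorem: morphisms of $H$ are equivalence classes of $W$-paths, i.e. finite sequences $(w_1, \dots, w_n)$ of composable elements of $W$, modulo the equivalence generated by the ``thin'' deformations (replacing a consecutive pair $(w_i, w_{i+1})$ whose product lies in $W$ by that product, via admissible local sections). I would then define charts on $H$ by transporting the smooth charts of $W$ along these $W$-paths: given $h = [(w_1,\dots,w_n)] \in H$, an admissible local section through each $w_i$ gives a local diffeomorphism, and their composition provides a chart near $h$ modelled on a neighbourhood in $W$. The key verifications are: (i) the transition maps between two such charts are smooth — this follows because, along the overlap, two $W$-paths representing nearby morphisms differ by thin deformations, each of which is implemented by a smooth admissible local section (here property (4) of Definition~\ref{locally}, ``enough smooth admissible local sections,'' is what is used); (ii) the structure maps $s_H, t_H$, the multiplication and the inversion of $H$ are smooth in these charts — again immediate from smoothness of composition and inversion of admissible local sections and from the fact that $\delta$ restricted to $W_\delta$ is smooth (property (3)); and (iii) $s_H$ (equivalently $t_H$) is a submersion, which holds because on the image of the embedding $i\colon W \hookrightarrow H$ the source map restricts to the smooth submersion $s|_W$, and every point of $H$ is reachable by left translation by an element of (a chart coming from) $W$, so submersivity propagates.

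I would organise the argument as: (a) invoke Theorem~\ref{Locally} to get that $(L_2, W(\Lambda))$ is a locally Lie groupoid, with $W(\Lambda)$ a smooth (Banach, finite-codimension-leaf) manifold and $s,t$ smooth on it; (b) invoke Theorem~\ref{holo} to get the topological groupoid $H = \mathrm{Hol}(L_2, W(\Lambda))$ together with the embedding $i\colon W(\Lambda)\hookrightarrow H$ onto an open neighbourhood of the objects and the morphism $\phi\colon H \to L_2$; (c) build the smooth atlas on $H$ as above and check smoothness of transitions and of the structure maps; (d) conclude that $H \rightrightarrows M$ is a Lie groupoid in the Banach sense, i.e. $s_H, t_H$ are smooth surjective submersions and multiplication/inversion are smooth. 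Throughout, one notes that everything in \cite{Brown, Brown2} was written for finite-dimensional foliations, but the only substantive point requiring the infinite-dimensional adaptation — the verification that $W_\delta$ is open and $\delta|_{W_\delta}$ smooth, i.e. property (3) — was already handled in the remark following Theorem~\ref{Locally} via the Lebesgue Covering Lemma, which is valid for Banach manifolds.

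The main obstacle I anticipate is precisely the bookkeeping in step (c): making the smooth chart construction on the space of $W$-path classes genuinely well defined and verifying that transition functions are smooth, since one must control how thin deformations act on charts and ensure the resulting manifold structure is Hausdorff (or at least the appropriate non-Hausdorff-but-locally-Euclidean notion, as is standard for monodromy/holonomy groupoids) and second countable enough to be a Banach manifold in the sense used in this thesis. In the finite-dimensional case this is exactly the content of the standard construction of the smooth structure on holonomy groupoids (going back to Winkelnkemper, Phillips, and worked out via locally Lie groupoids in \cite{Brown, Brown2}); the adaptation is routine once property (3) is in hand, so in the write-up I would state that ``the same argument as in \cite{Brown, Brown2} applies verbatim, the only modification being the use of the Banach-manifold Frobenius theorem and the Lebesgue Covering Lemma established above,'' and refer the reader there for the detailed chart computations rather than reproducing them.
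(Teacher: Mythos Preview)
Your proposal is correct and follows the same route the paper takes: invoke Theorem~\ref{Locally} to get the locally Lie groupoid $(L_2, W(\Lambda))$, then apply the smooth version of the Globalisation Theorem from \cite{Brown2}. In fact the paper gives no proof at all for this proposition---it simply asserts that ``in the smooth setting, due to Theorem~\ref{Locally}, we can prove that'' $Hol(L_2, W(\Lambda))$ is a Lie groupoid---so your sketch of the chart construction via $W$-paths and smooth admissible local sections is considerably more detailed than what the paper offers, and is exactly the content one finds in \cite{Brown, Brown2} when passing from the topological to the smooth category.
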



Thus, the immersed canonical relation associated to the equivalence relation $L_2$ is the triple
$(L_2, Hol(L_2, W(\Lambda)), \phi)$, where $\phi$ is the natural projection from the holonomy groupoid to $L_2$. In fact, we get that $\phi^{-1}(x,y)= Hol(x,\gamma, y)$, that is, the holonomies of paths $\gamma$ between $x$ and $y$.\\
The next step is to adapt the argument to show that $L_1$ and $L_3$ induce immersed canonical relations. 

\subsubsection{Smoothness of $L_1$}
First of all,  we can see $L_1$ 
as a subspace of the characteristic foliation associated to $\mathcal{C}_{\Pi}$. Namely, we can think the elements of $L_1$ as the 
Lie algebroid morphisms connected to the trivial algebroid morphisms by a path along the distribution. More precisely, if we denote 
by $C \subset \mathcal{C}_{\Pi}$ the submanifold corresponding to the trivial Lie algebroid morphisms ($X$ is constant and $\eta$ is 0), then
\begin{equation}
L_1= \{ \sqcup_{\mathcal{L}\in \mathcal{F}}\mathcal{L} \vert \mathcal{L} \cap C \neq \emptyset  \}. 
\end{equation}
The characteristic foliation can be understood as the space of orbits of a gauge group $H$ acting on $\mathcal{C}_{\Pi}$, 
where $H$ corresponds to the group of local diffeomorphisms generated by the flows of the Hamiltonian vector fields associated to the 
Hamiltonian functions:
\[H_{\beta}(X,\eta)=\int_I \langle dX(u)+ \pi^{\#}(X(u))\eta(u), \beta(X(u),u) \rangle , \]
where $\beta\colon  I \to \Omega^1(M)$ and $\beta(0)=\beta(1)=0.$
This action can be written in local coordinates as follows:
  

\begin{eqnarray}
\delta_{\beta} X^i(u) &=& -\pi^{ij}(X(u))\beta_j(X(u),u)    \\
 \delta_{\beta}\eta_i(u) &=& d_u\beta_i(X(u),u)+\partial_i \pi^{jk}(X(u))\eta_j(u)\beta_k(X(u),u). \nonumber \\
\end{eqnarray}\label{for}

With this prescription, it is easy to check that the submanifold \[C:=\{(X,\eta)\vert X= X_0,\, \eta=0  \},\] which is an $n$-dimensional submanifold of $\mathcal C_{\Pi}$, where $n= \dim M$,  intersects  the foliation neatly, i.e. 
\[ T_x C \cap T \mathcal L_x = \{ 0\}, \forall x \in C\cap L_x.
\]This comes from the fact, that, after the prescribed gauge transformation,
the points of $C$ are trivially stabilized: the gauge transformation preserves fixed the intial and final points of the path, 
and the fact that the space $\mathcal{C}_\Pi$ is invariant under this gauge transformation implies that there is a unique point 
for each leaf and that the tangent to the orbit (that is given precisely by the gauge) and the tangent to $C$ are independent. 
Choosing a transversal $C \subset T$ to the foliation $\mathcal F$, the restriction of the holonomy of $\mathcal F$ to $C$, induces the covering 
\[p\colon  Hol(L_2, W(\Lambda))\mid_{L_1} \to L_1,\]
with fibers the holonomy of paths along the fibers over $C$. Thus, the induced immersed canonical relation for $L_1$ is given by $(L_1, Hol(L_2, W(\Lambda))\mid_{L_1}, p)$.

\subsubsection{Smoothness of $L_3$.}
Here, we describe $L_3$ in a suitable way so we find a smooth covering for it.
The idea of the proof is to use the holonomy groupoid for an equivalence relation, understanding the space $L_3$ in terms of an equivalence homotopy relation. 
First of all, a remark:
\begin{remark}\emph{ The $s$ and $t$ fibers are saturated by the leaves of $\mathcal{F}$ restricted to $\mathcal{C}_{\Pi}$.}
\end{remark}

In other words, given the fact that the characteristic foliation can be understood as the space of orbits of gauge transformations, leaving invariant the initial 
and final points of the paths, the equivalence relation determined by $\mathcal{F}$ is finer than the one determined by $s$ or $t$.\\

In a similar way:
\begin{remark}\emph{
The fibers of the the fibered product of maps:
\[(s\times t)\colon \mathcal{C}_{\Pi} \times \mathcal{C}_{\Pi} \to M \times M \] 
are saturated by the leaves of the product foliation $\mathcal{F} \times \mathcal{F}.$
}\end{remark}
In this way, $\mathcal{F} \times \mathcal{F}$ restricts to a foliation $\mathcal{F}_{(s,t)}$ in 
\[\mathcal{C}_{\Pi}\times_{(s,t)}\mathcal{C}_{\Pi}\subset \mathcal{C}_{\Pi}\times \mathcal{C}_{\Pi}:= (s\times t)^{-1}\Delta\]
This restricted foliation has finite codimension, more precisely
\[\mbox{codim}_{\mathcal{C}_{\Pi}\times_{(s,t)}\mathcal{C}_{\Pi}}\mathcal{F}_{(s,t)}=\mbox{codim}_{\mathcal{C}_{\Pi}\times \mathcal{C}_{\Pi}}\mathcal{F}\times\mathcal{F}-\mbox{codim}_{\mathcal{C}_{\Pi}\times \mathcal{C}_{\Pi}}\mathcal{C}_{\Pi}\times_{(s,t)}\mathcal{C}_{\Pi}=2n. \]
In this way, for a triple $(a,b,c) \in L_3$, the pair $(a,b)$ is an element in $(\mathcal{C}_{\Pi}\times \mathcal{C}_{\Pi}, \mathcal{F}_{(s,t)})$. $c$ can be identified with an 
element in $\mathcal{C}_{\Pi}$ via the smooth map 
\begin{eqnarray}
\tilde{\beta}\colon  (\mathcal{C}_{\Pi}\times_{(s,t)}\mathcal{C}_{\Pi}) &\to& (\mathcal{C}_{\Pi}, \mathcal{F})\nonumber \\
(a,b) &\to& a\star b
\end{eqnarray}
where
\[ a\star b(t)= \left\{ \begin{array}{rl}
 a(\beta(2t))&, t\in [0,\frac 1 2] \\
 b(\beta(2t-1))&, t \in [\frac 1 2, 1]
       \end{array} \right.\]
and $\beta$ denotes a bump function $\beta: [0,1] \to [0,1]$.
Therefore, it is possible to characterize the space $L_3$ in the following way:
\[L_3=\{(a,b,c)\in (\mathcal{C}_{\Pi}\times_{(s,t)} \mathcal{C}_{\Pi})\times \mathcal{C}_{\Pi} \vert \mathcal{L}_{(\tilde{\beta}(a,b))}=\mathcal{L}_c\}\]
where $\mathcal{L}$ denotes (as before), the orbits of the $T^*M$-homotopy.
Hence, the induced immersed canonical relation for $L_3$ is $(L_3, \mathcal{C}_{\Pi}\times_{(s,t)} \mathcal{C}_{\Pi}, Hol(L_2, W(\Lambda))).$ 

\subsubsection{Embedding conditions and integrability of $T^*M$}
So far, the smooth immersed structure for the spaces $L_i$ has been given. In this section, the integrability conditions for the 
Lie algebroid given by the Poisson structure on $M$ are introduced, following the work of M. Crainic and R.Loja Fernandes \cite{Crai}. 
There, the integrability conditions are described in terms of the \textit{monodromy groups} associated to the characteristic folitation.
\begin{theorem} A Lie algebroid $\mathcal{A}$ is integrable if and only if the following conditions hold:

\begin{enumerate}
\item The associated monodromy subgroups $N_x(\mathcal{A})$ are discrete ($r(x)=0$).

\item $N_x(\mathcal{A})$ are locally uniform discrete, that is: \[\liminf_{y \to x} r(y) > 0.\]

\end{enumerate}

\end{theorem}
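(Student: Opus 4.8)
The statement to prove is the Crainic--Fernandes integrability criterion, which the excerpt states as a known theorem attributed to \cite{Crai}. Since this is a restatement of an established result, the proof plan should follow the strategy of Crainic--Fernandes, relating integrability of the Weinstein groupoid $G(A) = A\text{-paths}/A\text{-homotopy}$ to discreteness and local uniform discreteness of the monodromy groups $N_x(A)$.

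\textbf{Overall approach.} The plan is to analyze when the topological groupoid $G(A)$ — constructed as $A$-paths modulo $A$-homotopy — carries a smooth structure compatible with its groupoid operations, i.e. makes the source map a submersion. The key observation is that $G(A)$ is always a quotient of an (infinite-dimensional) Banach manifold of $A$-paths by a foliation coming from the $A$-homotopy equivalence, so the question reduces to whether this quotient is smooth. One first shows that smoothness is a \emph{local} question near the identity section, and that the only obstruction to local smoothness is concentrated in the isotropy directions, i.e. in the groups $\tilde N_x(A) = \ker(\alpha_x: G(\mathfrak g_x) \to G(A))$ of Lemma following the isotropy discussion. By the lemma relating $N_x(A)$ to $\tilde N_x(A) \cap Z^0(G(\mathfrak g_x))$, this obstruction is governed by $N_x(A) \subset \mathfrak g_x$.

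\textbf{Key steps.} First I would show the ``only if'' direction: if $A$ is integrable by a Lie groupoid $G \rightrightarrows M$, then $G$ must be (up to covering) the Weinstein groupoid $G(A)$, so $\tilde N_x(A)$ is the kernel of a map of Lie groups $G(\mathfrak g_x) \to G$ restricted to a source fiber, hence discrete; discreteness of $\tilde N_x(A)$ forces discreteness of $N_x(A)$, and a continuity/properness argument on the function $r(y) = d(0, N_y(A)\setminus\{0\})$ along $A$ gives $\liminf_{y\to x} r(y) > 0$. For the ``if'' direction: assuming both conditions, one builds local smooth charts for $G(A)$ near the identities. The construction uses the exponential-type parametrization of $A$-paths near trivial ones together with the formula for $b(\rho,\tau)$ from the excerpt; the discreteness of $N_x(A)$ ensures the fibers of the projection to $G(A)$ are discrete in the transverse (isotropy) directions, and local uniform discreteness ensures this transverse discreteness is uniform in a neighborhood, so one can glue local charts into a manifold structure on $G(A)$ for which $s$ is a submersion. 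Finally one checks the groupoid structure maps are smooth, so $G(A)$ integrates $A$.

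\textbf{Main obstacle.} The hard part is the ``if'' direction: producing the smooth atlas on $G(A)$ in a neighborhood of the identity section. This requires carefully controlling the $A$-homotopy equivalence relation on the Banach manifold of $A$-paths — showing it is (locally) given by a foliation whose leaf space is a manifold precisely when the monodromy is uniformly discrete. One must verify that the exponential flow construction behind $b(\rho,\tau)$ depends smoothly on all data and that the residual identifications are exactly translations by $N_x(A)$; local uniform discreteness is exactly what prevents these lattices from degenerating as $x$ varies. Since the full argument is carried out in \cite{Crai}, the thesis only needs to invoke it; I would present the above as the conceptual skeleton and cite \cite{Crai, CraiNotes} for the technical core, noting in particular that the explicit monodromy formula $N_x(A) = \{\int_\gamma \Omega_\sigma \mid [\gamma]\in\pi_2(\mathcal O_x,x)\}$ (valid when $\mathrm{Im}(\Omega_\sigma) \subset Z(\mathfrak g_{\mathcal O})$) is what makes the criterion checkable in examples such as Example \ref{nonintegrable}.
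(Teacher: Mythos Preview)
Your assessment is correct: the paper does not prove this theorem at all---it is stated twice (as Theorem~\ref{Integrability} and again in the section on embedding conditions) purely as a citation of Crainic--Fernandes \cite{Crai}, with no argument given. Your proposal goes further than the paper by sketching the actual Crainic--Fernandes strategy (analyzing smoothness of the Weinstein groupoid $G(A)$ as a leaf space, reducing to the isotropy directions via $\tilde N_x(A)$, and using uniform discreteness to glue local charts), which is accurate and appropriate as background, but strictly speaking the thesis requires nothing more than the citation you already identify.
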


The objective of this section is to give a different interpretation of the integrability 
conditions in terms of the previously defined canonical relations $L_i$. The monodromy groups are defined as follows:

\begin{definition}

For any $x \in M $the subset $N_x(\mathcal{A})$ is the subset of the center of 
$\mathfrak{g}_x$ formed by elements  $v \in Z(\mathfrak{g}_x)$  such that the constant $\mathcal{A}-$path $v$ is $\mathcal{A}-$ homotopic equivalent to the trivial $\mathcal{A}-$ path.
 \end{definition}
It can be proven that the subspaces $N_x(A)$ are in fact, subgroups of $Z(\mathfrak{g}_{x})$. Considering a metric on the bundle $A$ and its associated distance $d$, we define the \textit{size} of the monodromy group as:
$$
r(x) = \left\{ \begin{array}{rl}
 d(0_x, N_x(A)-{0_x}) &\mbox{if } N_x(A)-{0_x}\neq \emptyset  \\
  +\infty &\mbox{if } N_x(A)-{0_x}=\emptyset 
       \end{array} \right.
$$
We restrict ourselves to the case when $A=T^*M$.

Consider the infinite dimensional bundle $\pi\colon  T^*PM\to PM$ and $\sigma\colon  PM\to T^*PM$ its zero section. The monodromy group $N_{x_0}(T^*M)$ 
corresponds now to
\[N_{x_0}(T^*M)= T^*_{(\overline{X,\eta})}PM \cap L_1 \]
where $(\overline{X,\eta})$corresponds to $X=x_0$ and $ \eta \in Ker \pi^{\sharp}$. This characterization guarantees that the $T^*M-$homotopies preserve the base path and it transforms elements in the 
kernel of $\pi^{\sharp}\subset T^*_{x_0}M$. Let $\overline{L_1}:= \cup_{x_0 \in M} T^*_{(\overline{X,\eta})}PM \cap L_1$.
In this way, the \textit{size} of the monodromy group seems natural. Giving a metric $g$ on $T^*PM$, induced by a metric on $T^*M$ and its corresponding norm we get
\[r(x)=\liminf \vert v \vert, v \in T^*_{(\overline{X,\eta})}PM .\]
Now, the integrability conditions in Theorem 1 together imply the following\\
\textit{(Locally uniform discreteness):} $\forall x_0 \in M,\exists \,\varepsilon > 0$ and an open neighborhood $U$ containing $x_0$ and contained in
$\overline{L_1}$ such that $\forall v\in T_{U}PM \setminus \sigma (U)$, we have that $\vert v \vert >0.$ This is precisely the property of $\overline{L_1}$  being an embedding.
Therefore, we have proven the following 
\begin{theorem}
If the Poisson manifold $M$ is integrable, then, there exists a tubular neigborhood of the zero section of $T^*PM$, denoted by $N(\Gamma_0(T^*PM))$ such 
that $\overline{L_1}\cap N(\Gamma_0(T^*PM))$ is an embedded submanifold of $T^*PM$.
\end{theorem}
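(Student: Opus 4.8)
The plan is to read the statement as a translation of the Crainic--Fernandes criterion: integrability of $T^{*}M$ is exactly uniform discreteness of the monodromy groups $N_{x_0}(T^{*}M)$, and uniform discreteness is precisely what prevents the immersed submanifold $\overline{L_1}$ from self-approaching near the zero section $\Gamma_0(T^{*}PM)$. First I would recall that, by the holonomy-groupoid argument of Section~\ref{smoothness}, $\overline{L_1}$ is an immersed Banach submanifold of $T^{*}PM$ containing $\Gamma_0$ (the trivial $T^{*}M$-paths), and that a point of $\overline{L_1}$ lying over a constant base path $X\equiv x_0$ is a $\mathfrak g_{x_0}$-path $\eta\in\Omega^1(I,\ker\Pi^{\#}_{x_0})$ which is $T^{*}M$-homotopic to the trivial path; the ``sheet'' of $\overline{L_1}$ through such $\eta$ is then governed by the class $\mathrm{hol}(\eta)\in\widetilde N_{x_0}(T^{*}M)=\ker\alpha_{x_0}$ of its time-one flow in $G(\mathfrak g_{x_0})$. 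Since $\Gamma_0\cap\overline{L_1}$ is the locus of constant base paths with $\eta\equiv0$, i.e.\ a copy of $M$ embedded in $PM\hookrightarrow T^{*}PM$, the entire content is to show that a thin enough tube around $\Gamma_0$ meets no sheet other than the ``trivial-class'' one.

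Next I would fix a fibre metric $g$ on $T^{*}M$, which induces a fibre metric on $T^{*}PM$ and the size function $r(x)=d\!\left(0,N_x(T^{*}M)\setminus\{0\}\right)$. Applying Theorem~\ref{Integrability}, integrability of $T^{*}M$ gives: each $N_x(T^{*}M)$ is discrete and, for every $x_0$, there are $\varepsilon>0$ and an open $V\ni x_0$ in $M$ with $r(y)\ge\varepsilon$ for all $y\in V$. Using the explicit gauge-action formulae of Section~\ref{smoothness} together with local boundedness of $\Pi$ and its first derivatives over a compact neighbourhood of $x_0$, one gets the estimate $\mathrm{hol}(\eta)=e+O(\lVert\eta\rVert_\infty)$ uniformly for $y\in V$; hence there is $\delta>0$ so that $\lVert\eta\rVert_\infty<\delta$ forces $\mathrm{hol}(\eta)$ into the $\varepsilon$-ball about $e$, which by uniform discreteness meets $\widetilde N_y(T^{*}M)$ only in $\{e\}$. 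I would then define $N(\Gamma_0(T^{*}PM))$ to be a tubular neighbourhood of the zero section whose fibrewise radius is $<\delta$ over base paths sufficiently close to the constant loops in $V$, and patch over $M$ by a partition of unity on $PM$ (legitimate, since embeddedness is a local property around $\Gamma_0$). Inside this tube every point of $\overline{L_1}$ therefore lies on the sheet $\{(X\equiv x_0,\eta):\mathrm{hol}(\eta)=e\}$, so that $\overline{L_1}\cap N(\Gamma_0(T^{*}PM))$ is the thin slice of the fibre bundle $\{\mathrm{hol}=e\}\to M$; since $\mathrm{hol}$ is a surjective submersion with smoothly varying zero-holonomy fibre, this slice is a genuine embedded Banach submanifold of $T^{*}PM$, which is the claim.

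The main obstacle, and the step I would spend real effort on, is the passage in the second paragraph between the ``geometry in $T^{*}PM$'' and the ``Crainic--Fernandes $r(x)$'': one must make rigorous (i) the uniform bound $\mathrm{hol}(\eta)=e+O(\lVert\eta\rVert_\infty)$ --- i.e.\ that a $C^{0}$-small homotopic-to-trivial $\mathfrak g_{x_0}$-path has holonomy close to the identity, controlled locally uniformly in $x_0$, which is where the estimates on the Hamiltonian flows of Section~\ref{smoothness} are really needed --- and (ii) the identification of $r(x_0)$ with the fibrewise ``gap'' of $\overline{L_1}$, namely that the shortest non-trivial monodromy sheet sits at $g$-distance $\ge r(x_0)$ from the trivial-class sheet; this forces one to reconcile the length of a path representative with the distance of the class it represents from $e$, using the isomorphism $N_x(T^{*}M)\cong\widetilde N_x(T^{*}M)\cap Z^{0}(G(\mathfrak g_x))$ realised by constant paths. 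A secondary technical point, which I would dispatch either by restricting to the regular part of the symplectic foliation or by working leafwise, is the possible failure of $\ker\Pi^{\#}$ to have locally constant rank, which affects smoothness of the bundle $\{\mathrm{hol}=e\}\to M$. The remaining ingredients --- continuity of $\mathrm{hol}$ in the uniform topology, independence of the tube from the auxiliary choices, and the partition-of-unity patching --- are routine.
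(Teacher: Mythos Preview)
Your overall strategy is correct and matches the paper's: integrability, via Crainic--Fernandes, is uniform discreteness of the monodromy groups, and uniform discreteness is exactly the statement that the non-trivial sheets of $\overline{L_1}$ stay a uniform distance away from the zero section, which is embeddedness in a tube.

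The paper, however, takes a much shorter route that bypasses both of your self-identified obstacles. The key point you are missing is that the Crainic--Fernandes monodromy group $N_{x_0}(T^{*}M)$ is \emph{defined} via constant $A$-paths: $N_{x_0}$ consists of those $v\in\ker\Pi^{\#}_{x_0}$ such that the constant path $(X\equiv x_0,\ \eta\equiv v)$ is $T^{*}M$-homotopic to the trivial path. In the paper's notation this is literally the identification
\[
N_{x_0}(T^{*}M)=T^{*}_{(\overline{X,\eta})}PM\cap L_1,
\]
so $\overline{L_1}$ is, by definition, the union over $x_0\in M$ of these fibres. Consequently the size function $r(x_0)=d(0,N_{x_0}\setminus\{0\})$ \emph{is} the fibrewise gap between the zero section and the nearest non-trivial point of $\overline{L_1}$; there is nothing to reconcile. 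Locally uniform discreteness then reads verbatim as: for each $x_0$ there exist $\varepsilon>0$ and a neighbourhood $U$ with $|v|>\varepsilon$ for every $v\in\overline{L_1}\cap T^{*}_{U}PM$ off the zero section. That is the embedding condition.

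Your detour through the holonomy map $\mathrm{hol}(\eta)$ and the estimate $\mathrm{hol}(\eta)=e+O(\|\eta\|_\infty)$ is therefore unnecessary: by working with constant-path representatives from the outset (which is how $N_x(A)$ is set up), obstacle (i) never arises, and obstacle (ii) dissolves into a tautology. Your secondary worry about the rank of $\ker\Pi^{\#}$ is legitimate in principle, but neither you nor the paper needs a bundle structure on $\{\mathrm{hol}=e\}$; the embeddedness is obtained pointwise from the uniform gap, not from transversality of a submersion.
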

\subsubsection{Proofs of smoothness of $L_i$ in the integrable case.}
It is possible to check that, in the integrable case, $L_1$ is an immeresd submanifold of $\mathcal{G}$, in an easier way than in the general case. This comes from the following lemma:

\begin{lemma} \cite{Nash}\emph{
Let $X$ and $Y$ be Banach manifolds and let 
$r:X\to Y$ be a smooth submersion. Let $P$ be an embedded submanifold of $Y$. Then $r^{-1}(P)$ is an embedded submanifold of $X$.}
\end{lemma}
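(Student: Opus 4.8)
The plan is to reduce the statement to the local normal form for submersions between Banach manifolds; no genuinely hard analysis is needed, only a careful bookkeeping of charts. First I would fix an arbitrary point $x_0\in r^{-1}(P)$ and set $y_0:=r(x_0)\in P$. Since $P$ is an embedded submanifold of $Y$, there is a submanifold chart $(V,\psi)$ of $Y$ around $y_0$: a chart $\psi\colon V\to F_1\times F_2$ onto an open subset, with $\psi(y_0)=0$ and $\psi(V\cap P)=\psi(V)\cap(F_1\times\{0\})$, where $F_1,F_2$ are Banach spaces and $F_1\times F_2$ is a model for $Y$ near $y_0$ (here $F_1$ carries the ``tangential'' and $F_2$ the ``normal'' directions of $P$, both being closed, complemented subspaces of $F_1\times F_2$).

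Next, since $r$ is a smooth submersion, $Dr_{x_0}\colon T_{x_0}X\to T_{y_0}Y$ is surjective with splitting kernel, so the local submersion theorem (a consequence of the inverse function theorem for Banach manifolds, see \cite{Lang,Lang1}) provides, after shrinking, a chart $(U,\phi)$ of $X$ around $x_0$ with $U\subseteq r^{-1}(V)$, $\phi(x_0)=0$, and $\phi\colon U\to E\times F_1\times F_2$ onto an open subset, such that $\psi\circ r\circ\phi^{-1}$ is the restriction to $\phi(U)$ of the canonical projection $\mathrm{pr}\colon E\times F_1\times F_2\to F_1\times F_2$ (where $E$ is a complement of $\ker Dr_{x_0}$... rather $E\cong\ker Dr_{x_0}$, so that $T_{x_0}X\cong E\times F_1\times F_2$).

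Then the only computation is the following: for $u\in U$, writing $\phi(u)=(e,f_1,f_2)$, one has $r(u)\in P$ if and only if $\psi(r(u))\in F_1\times\{0\}$, if and only if $\mathrm{pr}(\phi(u))\in F_1\times\{0\}$, if and only if $f_2=0$. Hence $\phi\bigl(U\cap r^{-1}(P)\bigr)=\phi(U)\cap\bigl(E\times F_1\times\{0\}\bigr)$. Since $E\times F_1$ is a closed complemented subspace of $E\times F_1\times F_2$ (complement $\{0\}\times\{0\}\times F_2$), the chart $(U,\phi)$ is a submanifold chart for $r^{-1}(P)$ near $x_0$. As $x_0$ was arbitrary, $r^{-1}(P)$ is an embedded submanifold of $X$, with $T_x\bigl(r^{-1}(P)\bigr)=(Dr_x)^{-1}\bigl(T_{r(x)}P\bigr)$ for each $x\in r^{-1}(P)$.

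I do not expect a serious obstacle here. The one point deserving care is that ``submersion'' in the Banach category must be understood to include the splitting of $\ker Dr_{x_0}$ — precisely what makes the local submersion theorem applicable — and that the complementedness of $E\times F_1$ is what upgrades the conclusion from ``immersed subset'' to a bona fide embedded (split) submanifold; once these are in place the argument is purely formal.
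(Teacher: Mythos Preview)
Your argument is correct and is the standard one: reduce to the local normal form for a submersion between Banach manifolds (using that $\ker Dr_{x_0}$ splits), compose with a submanifold chart for $P$, and read off a submanifold chart for $r^{-1}(P)$. The care you take with complementedness is exactly what is needed in the Banach setting.

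There is, however, nothing to compare against: the paper does not give its own proof of this lemma. It simply states the lemma with a citation to \cite{Nash} (Hamilton's Nash--Moser article) and then applies it with $X=C_\Pi$, $Y=\underline{C_\Pi}$, $r$ the quotient map, and $P=M$. Your write-up therefore supplies a proof that the paper leaves to the literature, and the route you take is the natural (and essentially unique) one.
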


Appling the lemma for $X=C_{\Pi}$, $Y=, \underline{C_{\Pi}}$, $r$ the quotient map and $P=M$, where $M$ is identified with the space of units of the symplectic groupoid integrating $M$, we obtained the desired result.\\
In order to prove that $L_2$ is a submanifold in the case where $T^*M$ is integrable, let us use the following result:

\begin{lemma} \cite{Nash}\label{pull}\emph{
Let $M_1,M_2$ and $M$ be Banach manifolds and let 
\[P_1\colon M_1 \to M,\,\, P_2\colon M_2 \to M \]
be smooth submersions. Then $M_1 \times_{P_1,P_2} M_2$ is a closed embedded submanifold of $M_1\times M_2$.}
\end{lemma}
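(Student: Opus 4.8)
The plan is to realize the fiber product as the preimage of a diagonal under a submersion, and then invoke the preimage lemma stated just above. Concretely, I would work with the product manifold $M_1\times M_2$ together with the smooth map $P_1\times P_2\colon M_1\times M_2\to M\times M$. Since $P_1$ and $P_2$ are submersions, at each point $(m_1,m_2)$ the differential $d(P_1\times P_2)=dP_1\oplus dP_2$ is surjective onto $T_{P_1(m_1)}M\oplus T_{P_2(m_2)}M$, and its kernel $\ker dP_1\oplus\ker dP_2$ is a split (complemented) subspace; hence $P_1\times P_2$ is again a smooth submersion of Banach manifolds.

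Next I would observe that the diagonal $\triangle(M)\subset M\times M$ is a closed embedded submanifold: in a chart $\phi\colon U\to E$ around a point $m$, the product chart $\phi\times\phi$ carries $\triangle(M)\cap(U\times U)$ to the diagonal of $E\times E$, which is a closed split linear subspace admitting the complement $\{0\}\oplus E$; global closedness follows from the Hausdorff property of $M$.

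Then, applying the preceding lemma (preimage of an embedded submanifold under a smooth submersion is an embedded submanifold) to $r=P_1\times P_2$ and $P=\triangle(M)$, one obtains that
\[
M_1\times_{P_1,P_2}M_2=(P_1\times P_2)^{-1}(\triangle(M))
\]
is an embedded submanifold of $M_1\times M_2$. Finally, since $\triangle(M)$ is closed and $P_1\times P_2$ is continuous, the preimage is closed in $M_1\times M_2$, which yields the claim.

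The only point that requires genuine care — and the main obstacle in the Banach setting — is the bookkeeping of the linear-algebraic hypotheses: one must check that surjectivity together with splitness of kernels is preserved under forming products and under passage to product charts, so that the words \emph{submersion} and \emph{split (embedded) submanifold} are used in exactly the sense demanded by the preimage lemma. Once this verification is in place, the argument is purely formal.
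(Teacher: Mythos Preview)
Your argument is correct and shares the same starting observation as the paper, namely that $M_1\times_{P_1,P_2}M_2=(P_1\times P_2)^{-1}(\triangle_M)$, with closedness following by continuity. The execution differs: the paper does not invoke the preceding preimage lemma but instead builds submanifold charts directly, using the local normal form for submersions to write $\phi_{M_i}\colon U_i\times V_i\to M_i$ over $\phi_i\colon V_i\to M$, then intersecting the base factors to $V=V_1\cap V_2$ and restricting to the diagonal $\triangle_{V\times V}$ to obtain charts for the fiber product. Your route is more modular and arguably cleaner, since you simply verify that $P_1\times P_2$ is again a submersion and that $\triangle(M)$ is a closed split embedded submanifold, then apply the lemma just proved; the paper's route has the minor advantage of making the submanifold charts completely explicit without any appeal to a black-box result. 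Both approaches ultimately rest on the same analytic input (splitting of kernels for Banach submersions), which you correctly flag as the only point requiring care.
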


\begin{proof} The idea is to construct explicit charts for $M_1 \times_{P_1,P_2} M_2$. We denote $\Delta _M$ the diagonal in $M\times M$. By definition, $M_1 \times_{P_1,P_2} M_2:= (P_1 \times P_2)^{-1} 
\Delta_M $ and by continuity reasons the space is closed. Now, observe that, because $P_1$ and $P_2$ are submersions, 
there exists coordinate charts $\phi_{M_1}\colon  U_1 \times V_1 \to M_1, \phi_{M_2}\colon  U_2 \times V_2 \to M_2$ and $\phi_1\colon  V_1 \to M,\, 
\phi_2: V_2 \to M$ in such a way that the following diagram commute

\[\begin{array}[c]{ccc}
U_i\times V_i&\stackrel{\phi_{M_i}}{\rightarrow}&M_i\\
\downarrow\scriptstyle{\pi_2}&&\downarrow\scriptstyle{P_i}\\
V_i&\stackrel{\phi_i}{\rightarrow}&M
\end{array}\]
for $i\in \{1,2\}$ and $\pi_2$ denotes the projection in the second component. Denoting $V:= V_1 \cap V_2$, restricting the previous charts to 
$V$ we obtain coordinate charts $$\phi_{M_1}\vert_{V} \times \phi_{M_2}\vert_{V}\colon  U_1 \times V \times U_2 \times V \to M_1 \times M_2 $$
and restricting to the diagonal of $V\times V$ we obtain 
\[\phi_{M_1}\vert_{V} \times \phi_{M_2}\vert_{V}(U_1 \times U_2 \times \Delta _{V\times V})= M_1 \times_{P_1,P_2} M_2 \]
as we wanted. 
\end{proof}

With this lemma in mind, we observe that $L_2$ can be seen as a fibered product in the following way:

\[\begin{array}[c]{ccc}
L_2 &\stackrel{\pi_1}{\rightarrow}&X\\
\downarrow\scriptstyle{\pi_2}&&\downarrow\scriptstyle{p}\\
X &\stackrel{p}{\rightarrow}&X / \mathcal{F}
\end{array}\]
where $L_2$ is precisely $X\times_{\mathcal{F}}X$ and $p$ corresponds to the quotient map.\\

To prove that $L_3$ is an immersed submanifold, the main observation is that $L_3$ is given by a fiber product satisfying the conditions 
of the lemma \ref{pull} and using the same argument as in the proof for $L_2$ the result holds.

Therefore, to summarize,  we have just proved the following fact:
\begin{theorem}\label{Embeddability} If $T^*M$ is integrable, then $L_1\cap N(\Gamma_0(T^*PM))$ is an embeded submanifold of $T^*(PM)$ and the spaces $L_1, L_2$ and $L_3$ are immersed submanifolds of $T^*(PM), T^*(PM)^2$ and $T^*(PM)^3$, respectively.
\end{theorem}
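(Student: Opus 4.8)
The plan is to reduce the whole statement to preimages and fibered products of smooth submersions of Banach manifolds, exploiting the extra structure that becomes available when $T^*M$ is integrable (so the argument here is considerably shorter than the general smoothness discussion that underlies Theorem \ref{Relational}). By Proposition \ref{Coi} the space $C_{\Pi}$ is a Banach submanifold of $T^*(PM)$ of finite codimension, and in the integrable case the leaf space of the characteristic foliation $\mathcal F$ on $C_{\Pi}$ is a smooth Banach manifold --- namely the source-simply-connected symplectic groupoid $G=\underline{C_{\Pi}}$ integrating $T^*M$ --- with the quotient map $p\colon C_{\Pi}\to\underline{C_{\Pi}}$ a smooth surjective submersion. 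Granting this, the three assertions about $L_1,L_2,L_3$ follow from the two lemmas of \cite{Nash} recorded above: the preimage of an embedded submanifold under a submersion is an embedded submanifold, and the fibered product of two submersions is a closed embedded submanifold (Lemma \ref{pull}).

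Concretely, I would first identify $L_1=p^{-1}(\varepsilon(M))$, where $\varepsilon(M)\subset\underline{C_{\Pi}}$ is the embedded space of units of $G$; the preimage lemma then gives that $L_1$ is an embedded, in particular immersed, submanifold of $C_{\Pi}$ and hence of $T^*(PM)$. Next I would realize $L_2$ as the fibered product $C_{\Pi}\times_{p,p}C_{\Pi}$ over $\underline{C_{\Pi}}$ and apply Lemma \ref{pull}. For $L_3$ I would use the description $L_3=\{(a,b,c)\in(C_{\Pi}\times_{(s,t)}C_{\Pi})\times C_{\Pi}\mid \mathcal L_{\tilde\beta(a,b)}=\mathcal L_c\}$ from the previous subsection: first note that $C_{\Pi}\times_{(s,t)}C_{\Pi}$ is a submanifold by Lemma \ref{pull} (as $s$ and $t$ are submersions), then that the smooth concatenation-and-reparametrization map $\tilde\beta$ followed by $p$ yields a submersion $C_{\Pi}\times_{(s,t)}C_{\Pi}\to\underline{C_{\Pi}}$, so that $L_3=(C_{\Pi}\times_{(s,t)}C_{\Pi})\times_{p\circ\tilde\beta,\,p}C_{\Pi}$ is again a fibered product of submersions and Lemma \ref{pull} applies. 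Finally, the embeddability of $L_1\cap N(\Gamma_0(T^*PM))$ near the zero section is exactly the preceding theorem: applying the Crainic--Fernandes criterion (Theorem \ref{Integrability}) to $A=T^*M$ gives locally uniform discreteness of the monodromy groups $N_{x}(T^*M)=T^*_{(\overline{X,\eta})}PM\cap L_1$, which translates into $\overline{L_1}$ meeting a tubular neighborhood $N(\Gamma_0(T^*PM))$ of the zero section in an embedded submanifold; since $N(\Gamma_0(T^*PM))$ is open, the intersection with $L_1$ inherits the embedded structure.

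The main obstacle is the input step rather than the bookkeeping: one must justify that, under integrability, $p\colon C_{\Pi}\to\underline{C_{\Pi}}$ is genuinely a smooth submersion of Banach manifolds, i.e. that the holonomy of $\mathcal F$ is trivial along the relevant transversals so that $\mathrm{Hol}(L_2,W(\Lambda))$ collapses to an honest manifold quotient --- and this is precisely where uniform discreteness of the monodromy groups enters. A secondary technical point is checking that $\tilde\beta$ is smooth and maps $\mathcal F_{(s,t)}$-leaves into $\mathcal F$-leaves, so that $p\circ\tilde\beta$ descends as claimed, together with the minor bookkeeping distinguishing $\overline{L_1}$ (paths over a constant base point with $\eta\in\ker\Pi^{\#}$) from $L_1$ in a neighborhood of the zero section.
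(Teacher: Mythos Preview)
Your proposal is correct and follows essentially the same route as the paper: the preimage lemma for $L_1=p^{-1}(\varepsilon(M))$, Lemma~\ref{pull} for $L_2=C_{\Pi}\times_{p,p}C_{\Pi}$ and for $L_3$ as an iterated fibered product, and the preceding embeddability statement for the zero-section neighborhood. If anything you are more explicit than the paper about the intermediate steps for $L_3$ (the role of $\tilde\beta$) and about the genuine input needed, namely that integrability makes $p\colon C_{\Pi}\to\underline{C_{\Pi}}$ a smooth submersion.
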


The next step is to connect the construction of the relational symplectic groupoid for $T^*PM$, which is infinite dimensional, with the s-fiber simply connected symplectic Lie groupoid integrating a Poisson manifold.  The connection is given by the following
\begin{theorem} \label{reduction}Let $(M,\Pi)$ be an integrable Poisson manifold. Let $\mathcal{G}$ be the relational symplectic groupoid associated to $T^*PM$ described above and let $G= \underline{C_{\Pi}}$ be the symplectic Lie groupoid associated to the characteristic foliation on $C_{\Pi}$. Then $\mathcal{G}$ and $G$ are equivalent as relational groupoids.
\end{theorem}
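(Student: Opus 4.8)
The plan is to exhibit an explicit equivalence $F\colon \mathcal{G} \nrightarrow G$ in the sense of the definition of equivalence of relational symplectic groupoids, namely an immersed Lagrangian submanifold of $\mathcal{G} \times \overline{G}$ that intertwines the inversions and the multiplication relations and whose transpose is again a morphism. The natural candidate is the canonical projection $p\colon \mathcal{G} = T^*(PM) \nrightarrow \underline{C_\Pi} = G$ associated to the coisotropic submanifold $C = C_\Pi$, exactly as in Proposition \ref{pro}, which already asserts that for a regular relational symplectic groupoid admitting a smooth symplectic reduction, the projection $p\colon \mathcal{G} \to G$ is an equivalence of relational symplectic groupoids. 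So the first step is to observe that by Theorem \ref{Relational} (and the subsequent proposition establishing regularity) the relational symplectic groupoid $(\mathcal{G}, L, I)$ associated to $T^*PM$ is regular, with $C = C_\Pi$ an immersed coisotropic Banach submanifold of finite codimension; and since $(M,\Pi)$ is integrable, Theorem \ref{Embeddability} guarantees that the reduced space $\underline{C_\Pi}$ is a smooth (finite-dimensional) manifold, so Theorem \ref{TheoSym} applies and $G = C_\Pi/L_2 \rightrightarrows M$ is an honest symplectic groupoid.

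Second, I would verify that the symplectic groupoid produced abstractly by Theorem \ref{TheoSym} from $(\mathcal{G}, L, I)$ coincides with the symplectic Lie groupoid $G = \underline{C_\Pi}$ associated to the characteristic foliation of $C_\Pi$ — i.e.\ the Weinstein/Cattaneo--Felder groupoid integrating $T^*M$. This is a matter of unwinding definitions: the structure maps of the abstract groupoid are $p^3(L_3)$ for the multiplication, $\underline{s}, \underline{t}$ for source and target, $\underline{\varepsilon}$ for the unit, and $\underline{I}$ for the inversion; under the identifications $L_1/L_2 \cong M$, $C_\Pi/L_2 = \underline{C_\Pi}$ established in the proof that $(\mathcal{G},L,I)$ is regular, these are precisely path concatenation modulo $T^*M$-homotopy, evaluation at the endpoints, the constant $T^*M$-path, and path reversal, which is the groupoid structure of $\underline{C_\Pi}$ described in Section \ref{PSMmain}. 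The symplectic form matches because both are obtained by symplectic reduction of $\omega$ on $T^*(PM)$ along $C_\Pi$, so the canonical projection of $L_3$ is Lagrangian and restricts to the graph of the multiplication, as noted in the proof of Theorem \ref{TheoSym}.

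Third, once $G$ is identified with $\underline{C_\Pi}$, Proposition \ref{pro} gives directly that $p\colon \mathcal{G} \to G$ is an equivalence of relational symplectic groupoids: by Proposition \ref{projection} $p$ is an immersed canonical relation $\mathcal{G} \nrightarrow G$, it commutes with $I$ and $L$ by construction of the groupoid structure on $G$, and Equations \eqref{Q1} and \eqref{Q2} ($p^\dagger \circ p = (L_2)_{\mathcal{G}}$ and $p \circ p^\dagger = \mathrm{Id}_G$) show that $p^\dagger$ is also a morphism. Hence $\mathcal{G}$ and $G$ are equivalent as relational symplectic groupoids, which is the claim.

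The main obstacle is the second step — checking that the abstract reduced groupoid genuinely agrees, as a symplectic groupoid, with the geometrically defined $\underline{C_\Pi}$. This requires that the equivalence relation $L_2$ on $C_\Pi$ really is the relation of lying in the same leaf of the characteristic foliation (not merely tangentially, as in the Proposition preceding Axiom A.8, but globally), which in the integrable case is where one must invoke simple connectedness of the $s$-fibers and the well-definedness of concatenation of $C^1$ $A$-paths up to $A$-homotopy (proved in \cite{Crai, Cat}); and one must confirm that the reduced symplectic form on $\underline{C_\Pi}$ coincides with the multiplicative form on the Cattaneo--Felder groupoid, which follows from the construction of the latter as a Marsden--Weinstein reduction of $T^*(PM)$. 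Everything else is an application of results already established in the excerpt.
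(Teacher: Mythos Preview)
Your proposal is correct and follows exactly the paper's approach: the paper's proof is the single sentence ``This is a direct consequence of Proposition \ref{pro}, since the previously described relational symplectic groupoid is regular,'' and your three steps simply unpack the hypotheses needed to invoke that proposition (regularity from Theorem \ref{Relational}, smoothness of the reduction in the integrable case, and the identification of the abstract reduced groupoid of Theorem \ref{TheoSym} with $\underline{C_\Pi}$). The ``main obstacle'' you flag is real but is treated in the paper as implicit in the construction; your proof is a faithful, more detailed version of the paper's.
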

\begin{proof}
This is a direct consequence of Proposition \ref{pro}, since the previously described relational symplectic groupoid is regular.
\end{proof}
Another fact that results useful with the introduction of relational symplectic groupoids is the comparison of different integrations of Poisson manifolds, i.e. we do not restrict only to the case where the symplectic groupoid is $s$-fiber simply connected. The following Proposition (for more details see \cite{Mor} for the more general case of Lie algebroids) relates different symplectic groupoids integrating a given Poisson manifold $(M, \Pi)$.
\begin{proposition}\emph{
Let $G_{ssc} \rightrightarrows M$ be the s-fiber simply connected symplectic groupoid integrating $(M, \Pi)$ and let $G^{'}\rightrightarrows M$ be another s-fiber connected symplectic groupoid integrating $(M, \Pi)$. Then there exists a discrete group $H$ acting on $G_{ssc}$ such that $G= G_{ssc}/ H$ and the quotient map $p\colon  G_{ssc}\to G$ is the unique groupoid morphism that integrates the identity map 
$id: T^*M \to T^*M$.}
\end{proposition}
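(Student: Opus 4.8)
The plan is to obtain $p$ from Lie's second theorem for Lie algebroids applied to $\mathrm{id}_{T^*M}$, to recognise it as an étale map whose kernel is a bundle of discrete groups, and finally to check that it intertwines the multiplicative symplectic forms. Concretely, both $G_{ssc}$ and $G'$ integrate $A:=T^*M$, so $\mathrm{Lie}(G_{ssc})\cong A\cong\mathrm{Lie}(G')$; since $G_{ssc}$ is $s$-fiber simply connected, the integration of the identity morphism $\mathrm{id}_A\colon A\to A$ produces a morphism of Lie groupoids $p\colon G_{ssc}\to G'$ over $\mathrm{id}_M$ with $\mathrm{Lie}(p)=\mathrm{id}_A$ (see \cite{Mor}). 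A groupoid morphism out of an $s$-fiber simply connected groupoid is determined by its differential, so $p$ is the unique groupoid morphism integrating $\mathrm{id}\colon T^*M\to T^*M$; this is the last clause of the statement.

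Next I would show that $p$ is étale and, on each source fiber, a covering. Since $\mathrm{Lie}(p)=\mathrm{id}_A$ is a vector bundle isomorphism that is the identity on $M$, the differential $dp$ is an isomorphism at every point of $G_{ssc}$, so $p$ is a local diffeomorphism. Restricted to an $s$-fiber, $p\colon s^{-1}(x)\to (s')^{-1}(x)$ is then a local diffeomorphism between connected manifolds, and using the path lifting built into the $A$-path / $A$-homotopy model $G_{ssc}=\{A\text{-paths}\}/\{A\text{-homotopy}\}$ recalled above (whose $s$-fibers are simply connected), it is surjective and is in fact the universal cover of $(s')^{-1}(x)$. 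In particular $p$ is a surjective submersion.

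Then I would identify the deck data and realise $G'$ as a quotient. Put $H:=\ker p=\{g\in G_{ssc}\mid p(g)\in\varepsilon(M)\}$, a wide normal subgroupoid of $G_{ssc}$. As $p$ is étale, $H$ is totally disconnected, i.e.\ a bundle of discrete groups over $M$, each fiber $H_x$ being a discrete normal --- hence, being discrete normal in a connected group, central --- subgroup of the isotropy group of $G_{ssc}$ at $x$ (this is the monodromy picture of \cite{Crai}); discreteness is precisely condition (1) of Theorem \ref{Integrability}, which holds since $T^*M$ is integrable. Right translation by the elements of $H$ is a free $H$-action on $G_{ssc}$ whose orbits are exactly the fibers of $p$; since $p$ is a surjective submersion with these orbits as fibers, it descends to an isomorphism $G_{ssc}/H\cong G'$ of Lie groupoids, so $G'=G_{ssc}/H$. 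This is the specialisation to $A=T^*M$ of the corresponding statement for Lie algebroids in \cite{Mor}.

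Finally I would match the symplectic structures, which is the step I expect to be the main obstacle. The forms $\omega_{ssc}$ and $\omega_{G'}$ are multiplicative closed $2$-forms, and by the uniqueness clause of Theorem \ref{Poisson base} each induces on $M$ the given Poisson structure $\Pi$, so their infinitesimally multiplicative data coincide under $\mathrm{Lie}(p)=\mathrm{id}_A$. Since a multiplicative closed $2$-form on an $s$-fiber simply connected groupoid is uniquely determined by this infinitesimal data, $p^*\omega_{G'}=\omega_{ssc}$; hence every translation by an element of $H$ preserves $\omega_{ssc}$, the form descends, and $(G_{ssc}/H,\underline{\omega})\cong(G',\omega_{G'})$ as symplectic groupoids with $p$ the quotient map. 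The two delicate points are the uniqueness statement for multiplicative forms used to get $p^*\omega_{G'}=\omega_{ssc}$, and checking that the $H$-action is proper enough to recover the given smooth symplectic groupoid $G'$ --- which is automatic here, since $G'$ is assumed smooth and $p$ is a submersion, so the quotient manifold structure is inherited rather than constructed (it is for the independent \emph{existence} of such quotients that uniform discreteness, condition (2) of Theorem \ref{Integrability}, would be needed).
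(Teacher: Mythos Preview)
Your argument is correct and follows the standard route---Lie's second theorem for algebroids to produce $p$, then the observation that $\mathrm{Lie}(p)=\mathrm{id}_A$ forces $p$ to be a local diffeomorphism, hence a covering on each $s$-fiber, so that $G'\cong G_{ssc}/\ker p$ with $\ker p$ a bundle of discrete groups. This is exactly the content of the reference \cite{Mor} that the paper invokes; the paper itself gives no proof of the proposition, it simply cites Moerdijk--Mr\v{c}un and moves on. So there is nothing to compare against beyond noting that your proof is the one the paper points to.

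Two minor remarks. First, the proposition speaks of a ``discrete group $H$'', but as you correctly write, what one actually gets is a totally disconnected normal subgroupoid (a bundle of discrete groups over $M$); your formulation is the accurate one. Second, your final paragraph on matching the symplectic forms is not part of the proposition as stated: the paper treats that separately in the sentence immediately following the proposition, observing only that $p$ is a local diffeomorphism and hence ``naturally compatible with the symplectic structures''. Your argument via uniqueness of multiplicative forms with prescribed IM data is more careful than what the paper says there, and is the right way to justify $p^*\omega_{G'}=\omega_{ssc}$, but strictly speaking it goes beyond what the proposition claims.
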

With this Proposition in mind, we observe that the projection map $p$, being a local diffeomorphism, is naturally compatible with the symplectic structures of $G_{ssc}$ and $G$, therefore, it corresponds to a morphism of symplectic groupoids and by definition, it corresponds to a morphism of relational symplectic groupoids. Moreover, since locally $p^{-1}$ is also a diffeomorphism the adjoint relation $p^{\dagger}$ is also a morphism. Therefore we have the following
\begin{proposition} \label{Integrations}\emph{Let $G\rightrightarrows M$ and $G^{'}\rightrightarrows M$ be two s-fiber connected symplectic groupoid integrating the same Poisson manifold $(M, \Pi)$. Then $(G,L,I)$ and $(G^{'}, L^{'}, I^{'})$ are equivalent as relational symplectic groupoids.}
\end{proposition}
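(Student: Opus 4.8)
The plan is to reduce the statement to the previous Proposition about $s$-fiber connected integrations together with Proposition~\ref{pro} on equivalences induced by quotient maps. First I would invoke the Proposition just stated: given two $s$-fiber connected symplectic groupoids $G\rightrightarrows M$ and $G'\rightrightarrows M$ integrating the same Poisson manifold $(M,\Pi)$, both are quotients of the $s$-fiber simply connected symplectic groupoid $G_{ssc}\rightrightarrows M$ by the action of discrete groups; that is, there are discrete groups $H$ and $H'$ with $G\cong G_{ssc}/H$ and $G'\cong G_{ssc}/H'$, and the quotient maps $p\colon G_{ssc}\to G$ and $p'\colon G_{ssc}\to G'$ are groupoid morphisms integrating $\Id_{T^*M}$. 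Since $p$ and $p'$ are local diffeomorphisms, they are compatible with the multiplicative symplectic forms, hence are morphisms of symplectic groupoids, and therefore morphisms of the associated relational symplectic groupoids in the sense of the earlier definition.

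Next I would observe that each of $p$ and $p'$ is in fact an \emph{equivalence} of relational symplectic groupoids: because $p$ is a covering map, the transpose relation $p^{\dagger}$ is also a smooth immersed canonical relation, and one checks directly from the definitions (as in the proof of Proposition~\ref{pro}) that $p^{\dagger}$ commutes with the inversions and intertwines the $L$-relations, so $p^{\dagger}$ is a morphism. Thus $(G_{ssc},L_{ssc},I_{ssc})$ is equivalent to $(G,L,I)$ via $p$, and equivalent to $(G',L',I')$ via $p'$.

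Finally I would compose the two equivalences. Equivalence of relational symplectic groupoids is manifestly a symmetric relation (if $F$ is an equivalence, so is $F^{\dagger}$), and it is transitive whenever the relevant compositions of canonical relations are well defined — which they are here, since $p$ and $p'$ are (local) diffeomorphisms, so $p'\circ p^{\dagger}\colon G\nrightarrow G'$ is a smooth immersed canonical relation. Concretely, the composite $F:=p'\circ p^{\dagger}$ satisfies $F\circ I_G = I_{G'}\circ F$ and $L_{G'}\circ(F\times F)=F\circ L_G$ (these follow by stacking the corresponding identities for $p$ and $p'$ and using $p^{\dagger}\circ p = (L_2)_{ssc}$, $p'\circ (p')^{\dagger}=\Id_{G'}$, together with the invariance of $L_G$ under $(L_2)_G$), and its transpose $F^{\dagger}=p\circ (p')^{\dagger}$ is a morphism by the same argument with the roles of $G$ and $G'$ exchanged. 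Hence $F$ is an equivalence, which is exactly the claim.

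The only real point requiring care — the main obstacle — is the verification that composition of the two equivalence relations is legitimate in $\mbox{\textbf{Symp}}^{Ext}$, i.e.\ that $p'\circ p^{\dagger}$ is again an immersed canonical relation and not merely an isotropic immersed submanifold; this is where one genuinely uses that $p$ and $p'$ are local diffeomorphisms (so the relevant transversality is automatic), and where the argument would break down for more general morphisms. Everything else is a routine diagram chase with the relations $L_1,L_2,L_3$ using the axioms A.1--A.6 and Proposition~\ref{pro}.
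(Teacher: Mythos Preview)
Your argument is correct and follows essentially the same route as the paper: both $G$ and $G'$ are exhibited as quotients of $G_{ssc}$ via covering maps $p,p'$, these are shown to be equivalences because they are local symplectomorphisms (so that $p^{\dagger}$ and $(p')^{\dagger}$ are again morphisms), and then one composes. You are in fact more careful than the paper in flagging the transversality issue for the composition $p'\circ p^{\dagger}$ in $\mbox{\textbf{Symp}}^{Ext}$; the paper leaves this implicit.
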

As a result of this proposition we obtain the following
\begin{corollary} \emph{
If $M$ is an integrable Poisson manifold, then the relational symplectic groupoid on $T^*PM$ is equivalent to every s-fiber connected symplectic groupoid integrating $M$.}
\end{corollary}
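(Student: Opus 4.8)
The plan is to deduce the statement by transitivity from the two equivalences already at hand, after checking that equivalences of relational symplectic groupoids compose. First I would recall that, by the Proposition following Theorem \ref{Relational}, the relational symplectic groupoid $\mathcal G$ attached to $T^*PM$ is regular, and in the integrable case its symplectic reduction $\underline{C_\Pi}$ is a smooth symplectic groupoid; by Theorem \ref{reduction} (which is itself a consequence of Proposition \ref{pro}), the canonical projection $p\colon \mathcal G \nrightarrow G_{ssc}$ onto the $s$-fiber simply connected integration $G_{ssc}:=\underline{C_\Pi}$, viewed as a relational symplectic groupoid as in Subsection \ref{groupo}, is an equivalence.

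Next, given an arbitrary $s$-fiber connected symplectic groupoid $G'\rightrightarrows M$ integrating $(M,\Pi)$, Proposition \ref{Integrations} (via the \'etale covering groupoid morphism $G_{ssc}\to G'$ and the fact that its transpose is again a morphism) provides an equivalence $q\colon G_{ssc}\nrightarrow G'$. It then remains to verify that the composite $q\circ p\colon \mathcal G\nrightarrow G'$ is again an equivalence of relational symplectic groupoids. The morphism conditions $(q\circ p)\circ I_{\mathcal G}=I_{G'}\circ(q\circ p)$ and $L_{G'}\circ((q\circ p)\times(q\circ p))=(q\circ p)\circ L_{\mathcal G}$ follow formally, composing the corresponding identities for $p$ and for $q$; and $(q\circ p)^\dagger=p^\dagger\circ q^\dagger$ is a morphism because $p^\dagger$ and $q^\dagger$ are. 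Hence $q\circ p$ is an equivalence, so $\mathcal G$ is equivalent to $G'$; since $G'$ was arbitrary among $s$-fiber connected integrations, the corollary follows.

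The only genuinely non-formal point — and the one I expect to be the main obstacle — is that the partial composition $q\circ p$ is well defined and smooth in the categroid $\mbox{\textbf{Symp}}^{Ext}$, since composites of immersed canonical relations need not be smooth in general. Here this is unproblematic: $p$ is the canonical projection associated with the coisotropic $C_\Pi$ (Proposition \ref{projection}) and $q$ is, up to the fibrewise identification $\underline{C_\Pi}\cong G_{ssc}$, the graph of a local diffeomorphism, so $q\circ p$ is controlled and the required transversality holds automatically. One also checks the equivalence identities of Remark \ref{equiv}, namely $(q\circ p)^\dagger\circ(q\circ p)=(L_2)_{\mathcal G}$ and $(q\circ p)\circ(q\circ p)^\dagger=(L_2)_{G'}=\triangle(G')$: these reduce by associativity of relational composition to $p^\dagger\circ p=(L_2)_{\mathcal G}$ and $p\circ p^\dagger=\Id_{G_{ssc}}$ (Proposition \ref{projection}, Equations \eqref{Q1} and \eqref{Q2}), together with $q^\dagger\circ q=\triangle(G_{ssc})$ and $q\circ q^\dagger=\triangle(G')$. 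Alternatively, one could skip transitivity and exhibit the equivalence $\mathcal G\nrightarrow G'$ directly as the canonical projection of $C_\Pi$ followed by the covering map, but the two-step route above is the most economical given the tools already available.
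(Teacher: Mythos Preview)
Your approach is correct and is exactly the one the paper has in mind: the corollary is stated immediately after Proposition \ref{Integrations} with the words ``As a result of this proposition we obtain the following'', so the intended argument is precisely the transitivity step $\mathcal G \sim G_{ssc} \sim G'$ combining Theorem \ref{reduction} with Proposition \ref{Integrations}, together with the observation that composites of equivalences are equivalences. You have supplied the details the paper leaves implicit, including the smoothness of the composite relation.

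One minor remark: the identities of Remark \ref{equiv} are \emph{consequences} of the definition of equivalence, not additional hypotheses, so you do not need to verify them separately once you have shown that $q\circ p$ and $(q\circ p)^\dagger$ are morphisms. In particular your assertion $q^\dagger\circ q=\triangle(G_{ssc})$ is suspect when the covering $G_{ssc}\to G'$ is nontrivial (set-theoretically $q^\dagger\circ q$ is the orbit relation of the deck group), so it is better not to rely on that computation; your main argument via the definition---$q\circ p$ is a morphism and $p^\dagger\circ q^\dagger$ is a morphism, hence $q\circ p$ is an equivalence---already suffices.
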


\subsection{Lagrangianity of $L_i$}
First we prove the following 

\begin{proposition}\emph{
The tangent space $TL_2$ is a Lagrangian subspace of $T(T^*(PM)) \oplus \overline{T(T^*(PM))}.$
}
\end{proposition}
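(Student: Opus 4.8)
The plan is to compute the tangent space of $L_2$ at an arbitrary point, reduce the general case to a point of the diagonal by a leaf-preserving symplectomorphism, and then invoke Proposition~\ref{Coisotropic}. Recall that, as a set, $L_2=\{(\gamma_1,\gamma_2)\in T^*(PM)^2\mid\gamma_1\sim\gamma_2\}$ with $\sim$ denoting $T^*M$-homotopy; by the discussion above this is the holonomy cover $\mathrm{Hol}(L_2,W(\Lambda))$ of the ``same leaf'' relation of the characteristic foliation $\mathcal F$ of $C_\Pi$, immersed in $T^*(PM)^2$. Since that immersion is a local diffeomorphism onto its image, it suffices to identify the tangent spaces of the image.

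First I would treat a point $(\gamma,\gamma)$ on the diagonal. Set $W:=T_\gamma C_\Pi$, which by Proposition~\ref{Coiso} is coisotropic with $W^\perp=T_\gamma\mathcal F$ of finite codimension in $W$. Near the class of the identity holonomy, $L_2$ is just the ``same plaque'' relation, so linearizing the foliation chart gives, directly from the definition,
\[
T_{(\gamma,\gamma)}L_2=\{(X,Y)\in W\oplus W\mid X-Y\in W^\perp\}
\]
(the same subspace that appears in the proposition relating $L_2$ to the characteristic foliation, but now obtained without invoking any axiom). Now I apply Proposition~\ref{Coisotropic} inside $\overline{T_\gamma(T^*(PM))}\oplus T_\gamma(T^*(PM))$ with coisotropic subspace $C:=W\oplus W$, whose symplectic orthogonal is $C^\perp=W^\perp\oplus W^\perp\subseteq C$, and with $L:=T_{(\gamma,\gamma)}L_2$. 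One checks $C^\perp\subseteq L\subseteq C$, and that the image of $L$ in the reduction $\underline C=\underline W\oplus\underline W$ is exactly the diagonal $\triangle_{\underline W}$, which is Lagrangian in $\overline{\underline W}\oplus\underline W$ (it is the graph of the identity, and $\underline W$ is finite dimensional). Proposition~\ref{Coisotropic} then yields that $L$ is Lagrangian. Equivalently, $L$ is the canonical relation $l(\mathrm{Id})=I\circ P$ attached to the coisotropic $W$, which was shown to be canonical after Proposition~\ref{projection}.

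For a general point $(\gamma_1,\gamma_2)\in L_2$, the paths $\gamma_1$ and $\gamma_2$ lie on a common leaf of $\mathcal F$; since the leaves of $\mathcal F$ are the orbits of the gauge group generated by the Hamiltonian flows of the functions $H_\beta$, there is a composition $\Psi$ of such flows, a local symplectomorphism of $T^*(PM)$ defined near $\gamma_1$, with $\Psi(\gamma_1)=\gamma_2$, preserving $C_\Pi$ and carrying every leaf of $\mathcal F$ to itself. Hence, as germs of immersed submanifolds, $L_2$ near $(\gamma_1,\gamma_2)$ is the image under $\mathrm{id}\times\Psi$ of $L_2$ near $(\gamma_1,\gamma_1)$; since $\mathrm{id}\times\Psi$ is a local symplectomorphism of $\overline{T^*(PM)}\oplus T^*(PM)$, it carries the Lagrangian $T_{(\gamma_1,\gamma_1)}L_2$ onto $T_{(\gamma_1,\gamma_2)}L_2$, which is therefore Lagrangian as well.

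The diagonal computation and the appeal to Proposition~\ref{Coisotropic} are routine; the main obstacle is the bookkeeping at the non-diagonal points. One must verify that the immersed-manifold structure of $L_2$ (whose domain is $\mathrm{Hol}(L_2,W(\Lambda))$) is genuinely the pushforward along $\mathrm{id}\times\Psi$ of its structure near the diagonal --- i.e.\ that varying the holonomy class along the leaf contributes precisely the directions in $(T_{\gamma_2}C_\Pi)^\perp$ and nothing more --- and that a leaf-preserving local symplectomorphism $\Psi$ with the required domain exists, which amounts to chaining finitely many Hamiltonian flows along the leaf, using the Lebesgue lemma exactly as in the construction of $W(\Lambda)$.
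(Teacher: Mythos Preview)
Your proposal is correct and follows essentially the same route as the paper: sandwich $T_{(\gamma,\gamma)}L_2$ between $W^\perp\oplus W^\perp$ and $W\oplus W$ with $W=T_\gamma C_\Pi$, observe that its image in the reduction $\underline W\oplus\underline W$ is the diagonal, and invoke Proposition~\ref{Coisotropic}. The only substantive difference is in bookkeeping at non-diagonal points: the paper simply asserts the global identification $TL_2=R^C$ (same-leaf relation) by quoting that the leaves of the characteristic foliation coincide with the gauge orbits, and then applies Proposition~\ref{Coisotropic} uniformly, whereas you reduce an arbitrary $(\gamma_1,\gamma_2)$ to the diagonal by transporting along a leaf-preserving Hamiltonian symplectomorphism $\mathrm{id}\times\Psi$. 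Your extra step is a genuine clarification---it makes explicit why the tangent space at a non-diagonal point has the required form---but it does not change the underlying argument.
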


\begin{proof}
First we prove the following 
\begin{lemma}
$TC_{\Pi}^{\perp} \oplus TC_{\Pi}^{\perp} \subset TL_2$.
\end{lemma}
\begin{proof}
To prove this lemma, we observe first that, according to \cite{Cat}, the leaves of the characteristic foliation of $C_{\Pi}$ are precisely the orbits of the gauge equivalence relation given by $L_2$ in $C_{\Pi}$. Therefore we get that 
\[TL_2= R^{C} \]
as in Equation \ref{char} and therefore we get that
\[ TC_{\Pi}^{\perp} \oplus TC_{\Pi}^{\perp} \subset TL_2 \subset TC_{\Pi} \oplus TC_{\Pi}\]
Observe now that the projection of $TL_2$ with respect to the coisotropic reduction of $C_{\Pi}$ is precisely the diagonal of $C_{\Pi}$ that is a Lagrangian subspace of $TC_{\Pi} \oplus \overline{TC_{\Pi}}$.
Now, the space $TL_2$ satisfies the conditions of Proposition \ref{Coisotropic} and therefore $TL_2$ is Lagrangian, as we wanted.
\end{proof}
\end{proof}


\begin{proposition} \label{L1Lag}\emph{The tangent space $TL_1$ is a Lagrangian subspace of  $T(T^*(PM))$.}
\end{proposition}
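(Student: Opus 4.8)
The plan is to reduce the statement, pointwise, to the finite-dimensional symplectic reduction of $C_\Pi$, where Lagrangianity can be decided by a dimension count, and then to transport the conclusion back to the (weak symplectic, infinite-dimensional) ambient manifold $T^*(PM)$ via Proposition \ref{Coisotropic}. Recall from the analysis of smoothness of $L_1$ that $L_1\subset C_\Pi$ is the saturation by the characteristic foliation $\mathcal F$ of the $n$-dimensional submanifold $C_{\mathrm{triv}}:=\{(X,\eta)\in C_\Pi : X \text{ constant},\ \eta\equiv 0\}$ of trivial $T^*M$-paths, and that $C_{\mathrm{triv}}$ meets $\mathcal F$ neatly: $T_{\gamma_0}C_{\mathrm{triv}}\cap T_{\gamma_0}\mathcal L_{\gamma_0}=\{0\}$ for $\gamma_0\in C_{\mathrm{triv}}$. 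Since the leaf through $\gamma_0$ is the integral submanifold of the characteristic (null) distribution, $T_{\gamma_0}\mathcal L_{\gamma_0}=T_{\gamma_0}C_\Pi^{\perp}$, and hence for such $\gamma_0$
\[
T_{\gamma_0}L_1 \;=\; T_{\gamma_0}C_{\mathrm{triv}}\,\oplus\, T_{\gamma_0}C_\Pi^{\perp}.
\]
In particular $T_{\gamma_0}C_\Pi^{\perp}\subset T_{\gamma_0}L_1\subset T_{\gamma_0}C_\Pi$, so $L:=T_{\gamma_0}L_1$, $C:=T_{\gamma_0}C_\Pi$, $V:=T_{\gamma_0}(T^*(PM))$ are in the situation of Proposition \ref{Coisotropic}.

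Next I would compute the reduction. From the displayed decomposition, $\underline{T_{\gamma_0}L_1}:=T_{\gamma_0}L_1/T_{\gamma_0}C_\Pi^{\perp}\cong T_{\gamma_0}C_{\mathrm{triv}}$, which is $n$-dimensional, whereas $\underline{T_{\gamma_0}C_\Pi}$ is $2n$-dimensional, since the characteristic foliation of $C_\Pi$ has codimension $2n$ by Proposition \ref{Coiso} (equivalently $\dim\underline{C_\Pi}=2\dim M$). It then remains to check that $\underline{T_{\gamma_0}L_1}$ is isotropic in $\underline{T_{\gamma_0}C_\Pi}$; as $\underline\omega([v_1],[v_2])=\omega_{\gamma_0}(v_1,v_2)$, it suffices to see that $\omega_{\gamma_0}$ vanishes on $T_{\gamma_0}L_1$, and for this one only needs that it vanishes on $T_{\gamma_0}C_{\mathrm{triv}}$, the extra summand $T_{\gamma_0}C_\Pi^{\perp}$ pairing trivially with all of $T_{\gamma_0}C_\Pi\supset T_{\gamma_0}L_1$ by coisotropicity. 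But a tangent vector to $C_{\mathrm{triv}}$ has the form $\delta\gamma=(\delta X,0)$ with $\delta X$ constant in the path parameter, so by the explicit formula~\eqref{symplectic}, $\omega_{\gamma_0}(\delta_1\gamma,\delta_2\gamma)=\int_0^1(\delta_1X^i\,\delta_2\eta_i-\delta_2X^i\,\delta_1\eta_i)\,d\tau=0$. Thus $\underline{T_{\gamma_0}L_1}$ is an $n$-dimensional isotropic subspace of the $2n$-dimensional symplectic space $\underline{T_{\gamma_0}C_\Pi}$, hence Lagrangian by a dimension count, and Proposition \ref{Coisotropic} yields that $T_{\gamma_0}L_1$ is Lagrangian in $T_{\gamma_0}(T^*(PM))$.

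Finally, I would extend this from base points $\gamma_0\in C_{\mathrm{triv}}$ to arbitrary $\gamma\in L_1$. Every leaf of $\mathcal F$ contained in $L_1$ meets $C_{\mathrm{triv}}$ (in a single trivial path, since $T^*M$-homotopy preserves endpoints), so $\gamma$ lies on such a leaf through some $\gamma_0\in C_{\mathrm{triv}}$. Because $\mathcal F$ is the null foliation of $\omega|_{C_\Pi}$, the flow of any vector field $X$ tangent to $\mathcal F$ preserves $\omega|_{C_\Pi}$ (Cartan's formula, using $\iota_X(\omega|_{C_\Pi})=0$ and $d\omega=0$; equivalently the holonomy of $\mathcal F$ acts by symplectomorphisms on transversals), so it carries the coisotropic datum $T_{\gamma_0}C_\Pi$ to $T_\gamma C_\Pi$ and, $L_1$ being $\mathcal F$-saturated, $T_{\gamma_0}L_1$ to $T_\gamma L_1$. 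Consequently $\underline{T_\gamma L_1}$ is again Lagrangian in $\underline{T_\gamma C_\Pi}$, and Proposition \ref{Coisotropic} applies verbatim. I expect the main obstacle to be making this last step rigorous in the Banach setting — controlling the holonomy of the characteristic foliation along the leaves of $L_1$ so that the identification $T_\gamma L_1=T_\gamma C_\Pi^{\perp}\oplus(\text{transversal complement})$ holds near every point — which is precisely what the path-holonomy and monodromy constructions of the preceding subsections were designed to supply.
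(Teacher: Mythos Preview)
Your approach is essentially the same as the paper's: show that $T_\gamma L_1$ sits between $T_\gamma C_\Pi^{\perp}$ and $T_\gamma C_\Pi$, check that its reduction is isotropic of dimension $n$ in the $2n$-dimensional reduced space, and then invoke Proposition~\ref{Coisotropic}. The one difference is that you only establish the decomposition $T_{\gamma_0}L_1 = T_{\gamma_0}C_{\mathrm{triv}} \oplus T_{\gamma_0}C_\Pi^{\perp}$ at trivial base points $\gamma_0\in C_{\mathrm{triv}}$ and then propose to transport it along the leaves via the characteristic flow, whereas the paper simply writes down the same description
\[
T_\gamma L_1=\{(\delta X(t)+v,\ \delta\eta(t)) : (\delta X,\delta\eta)\in TC_\Pi^{\perp},\ v\in T_{\gamma(0)}M\}
\]
at \emph{every} $\gamma\in L_1$ and verifies isotropy there directly (the cross terms $\int v^i\,\delta\eta_i$ vanish by the boundary conditions on the gauge parameter). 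This removes your ``main obstacle'' entirely: once $L_1$ is known to be the saturation of the neat transversal $C_{\mathrm{triv}}$ by the characteristic foliation, the tangent-space description holds at all points by construction, so no separate holonomy transport is needed. Your flow argument is correct in spirit---the characteristic vector fields do preserve $\omega|_{C_\Pi}$ and hence the reduced Lagrangian condition---but it is an avoidable detour.
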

\begin{proof}
First, we prove the following 
\begin{lemma} \emph{
$TL_1$ is an isotropic subspace.}
\end{lemma}
\begin{proof}
The direct computation of the tangent space $TL_1$
 yields 
$$T_{\gamma}L_1= (\delta X(t)+ v, \delta \eta(t)) \mid (\delta X(t), \delta \eta(t) \in TC^{\perp}, v \in T_{\gamma(0)}M).$$
Now, considering two vectors in $T_{\gamma}L_1$ denoted by $(\delta_1 X(t)+ v_1, \delta_1 \eta(t))$  and $(\delta_2 X(t)+ v_2, \delta_2 \eta(t))$ we compute in local coordinates
\begin{eqnarray*}
&\omega&((\delta_1 X^i(t)+ v^i_1, \delta_1 \eta_i(t)), (\delta_2 X^i(t)+ v^i_2, \delta_2 \eta_i(t)))\\ &=&\int_0^1(\delta_1X^i(t)+v^i_1)\delta_2\eta_i(t)-(\delta_2X^i(t)+v^i_2)\delta_1\eta_i(t))dt\\
&=&\int_0^1((\delta_1X^i(t)\delta_2\eta_i(t)-\delta_2X^i(t)\delta_1\eta_i(t))dt+\int_0^1v^i_1\delta_2 \eta_i(t)dt-\int_0^1v^i_2\delta_1 \eta_i(t)dt.
\end{eqnarray*}
The first integral vanishes since $C$ is coisotropic. The second and third integrals vanish since
\begin{equation*}
\int_0^1v^i_1\delta_2 \eta_i(t)dt=\int_0^1v^i_2\delta_1 \eta_i(t)dt= \eta_1(1)-\eta_1(0)= \eta_2(1)-\eta_2(0)=0.
\end{equation*}
\end{proof}
Now, since $TL_1$ is isotropic, after reduction we get that
\begin{eqnarray*}
 &\underline{\omega}&([(\delta_1 X^i(t)+ v^i_1, \delta_1 \eta_i(t)],[\delta_2 X^i(t)+ v^i_1, \delta_2 \eta_i(t)])\\
&=&\omega((\delta_1 X^i(t)+ v^i_1, \delta_1 \eta_i(t)), (\delta_2 X^i(t)+ v^i_2, \delta_2 \eta_i(t)))=0.
\end{eqnarray*}
Therefore $\underline{TL_1}$ is isotropic. Now, since 
\[\underline{T_{\gamma}L_1}= \{ v \in T_{\gamma_{0}}M \} \sim T_{\gamma_0}M,\]
we get that
\[\dim \underline{TL_1}= \dim T_xM= n= 1/2 \dim \underline{C_{\Pi}}.\]
This implies that $\underline{TL_1}$ is Lagrangian and then, by applying Proposition \ref{Coisotropic}, we conclude that $TL_1$ is Lagrangian, as we wanted.
\end{proof}
Now, we prove that
\begin{proposition}\emph{
The space $TL_3$ is a Lagrangian subspace of  $$T(T^*(PM)) \oplus T(T^*(PM)) \oplus \overline{T(T^*(PM))}.$$
}
\end{proposition}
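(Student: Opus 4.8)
The plan is to mimic the strategy that worked for $L_1$ and $L_2$: first exhibit $TL_3$ as sandwiched between the symplectic orthogonal of a suitable coisotropic subspace and that coisotropic subspace itself, then identify its image in the symplectic reduction with a diagonal-type (hence Lagrangian) subspace, and finally invoke Proposition \ref{Coisotropic} to conclude Lagrangianity upstairs. Concretely, recall from the smoothness discussion that $L_3$ is modelled on the fibred product $\mathcal C_\Pi \times_{(s,t)} \mathcal C_\Pi$ sitting inside $\mathcal C_\Pi \times \mathcal C_\Pi$, together with the concatenation map $\tilde\beta$ to $\mathcal C_\Pi$, and the $T^*M$-homotopy equivalence relation. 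So the natural ambient coisotropic subspace to use is $D := TC_\Pi \oplus TC_\Pi \oplus \overline{TC_\Pi}$ inside $T(T^*PM)^{\oplus 3}$ (with the third factor conjugated), whose symplectic orthogonal is $D^\perp = TC_\Pi^\perp \oplus TC_\Pi^\perp \oplus \overline{TC_\Pi^\perp}$; that $C_\Pi$ is coisotropic of finite codimension is Proposition \ref{Coiso}.

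First I would compute $TL_3$ explicitly in local coordinates, paralleling the $A$-path linearization: a tangent vector to $L_3$ at $(\gamma_1,\gamma_2,\gamma_3)$ with $\gamma_1*\gamma_2 \sim \gamma_3$ consists of triples $(\delta\gamma_1,\delta\gamma_2,\delta\gamma_3)$ where each $\delta\gamma_i$ is tangent to $C_\Pi$ and the linearized homotopy condition relates $\delta(\gamma_1*\gamma_2)$ to $\delta\gamma_3$ modulo the characteristic directions $TC_\Pi^\perp$. The key structural facts to extract are: (i) $D^\perp \subset TL_3$, which should follow because deforming any of the three paths along its characteristic leaf does not change the $T^*M$-homotopy class, exactly as in the $L_2$ case where $TC_\Pi^\perp \oplus TC_\Pi^\perp \subset TL_2$; and (ii) $TL_3 \subset D$, which is immediate since $L_3 \subset (\mathcal C_\Pi)^3$. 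Then the canonical projection of $TL_3$ to the reduction $\underline D = \underline{TC_\Pi} \oplus \underline{TC_\Pi} \oplus \overline{\underline{TC_\Pi}}$ is precisely the graph of the concatenation/multiplication on the reduced space $G = \underline{C_\Pi}$, i.e. $\{([a],[b],[a*b])\}$, which is Lagrangian by the symplectic-groupoid lemma of Coste-Dazord-Weinstein quoted earlier (multiplicativity $\Leftrightarrow$ graph of $\mu$ Lagrangian). Having a Lagrangian projection in $\underline D$ plus the inclusion $D^\perp \subset TL_3 \subset D$, Proposition \ref{Coisotropic} (applied with $C := D$, $L := TL_3$) yields that $TL_3$ is Lagrangian in $T(T^*PM)^{\oplus 3}$.

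The main obstacle I anticipate is step (i), verifying $D^\perp \subset TL_3$, together with the matching dimension/Lagrangianity bookkeeping in the reduction, because these are infinite-dimensional weak-symplectic statements where one cannot simply count dimensions; one must argue directly that the linearized homotopy relation is insensitive to characteristic deformations and that nothing beyond the diagonal-graph survives in $\underline D$. This is the analogue of the sentence in the $L_2$ proof invoking \cite{Cat} that the characteristic leaves of $C_\Pi$ are exactly the $L_2$-orbits; here one needs the corresponding statement that the reduced concatenation is well defined and its graph exhausts the projection of $TL_3$. I would handle this by reducing to the already-established facts: $L_2$ generates the characteristic foliation (so deformations along $TC_\Pi^\perp$ are $L_2$-trivial, hence $L_3$-compatible by Axiom A.6, Equation \ref{invariance3}), and on the reduced side $\underline{C_\Pi}$ is the genuine symplectic groupoid of Theorem \ref{TheoSym}, whose multiplication graph $L_3/(L_2\times L_2\times L_2) = \mathrm{Gr}(\mu)$ is Lagrangian. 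Once those two inputs are in place, the conclusion is a formal application of Proposition \ref{Coisotropic}, exactly as in the proofs for $L_1$ and $L_2$.
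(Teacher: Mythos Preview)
Your overall architecture matches the paper's exactly: establish $D^\perp \subset TL_3 \subset D$ with $D = TC_\Pi \oplus TC_\Pi \oplus \overline{TC_\Pi}$, show the image in $\underline D$ is Lagrangian, and then invoke Proposition \ref{Coisotropic}. The paper even proves the inclusion $D^\perp \subset TL_3$ via a short lemma saying that the concatenation of two characteristic vectors is again characteristic (by additivity of $\omega$ under concatenation), which is more direct than your route through Axiom A.6 but amounts to the same thing.

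There is, however, a genuine gap in how you justify Lagrangianity of the reduced image. You appeal to Theorem \ref{TheoSym} and to the Coste--Dazord--Weinstein lemma, i.e.\ you use that $\underline{C_\Pi}$ is a symplectic groupoid and that the graph of its multiplication is Lagrangian. But this proposition is part of the proof of Theorem \ref{Relational}, which is meant to hold for \emph{every} Poisson manifold, integrable or not; in the non-integrable case $\underline{C_\Pi}$ is not a smooth manifold and there is no symplectic groupoid to invoke. The paper sidesteps this entirely by working pointwise at the level of the linear symplectic reduction $\underline{TC_\Pi}$, which is always a $2n$-dimensional symplectic vector space regardless of integrability. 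It shows directly that $\underline{TL_3}$ is isotropic using the additivity of $\omega$ under concatenation,
\[
\underline\omega([\delta_1\gamma_1],[\delta_2\gamma_1])+\underline\omega([\delta_1\gamma_2],[\delta_2\gamma_2])-\underline\omega([\delta_1\gamma_1*\delta_1\gamma_2],[\delta_2\gamma_1*\delta_2\gamma_2])=0,
\]
and then does a finite-dimensional dimension count ($6n - 3n = \tfrac12 \cdot 6n$, the $3n$ coming from the matching conditions at the three endpoints) to upgrade isotropic to Lagrangian. This is exactly the kind of direct argument you worried you could not do in infinite dimensions; the point is that the reduction \emph{is} finite-dimensional, so ordinary dimension counting applies there, and Proposition \ref{Coisotropic} then lifts the conclusion back upstairs. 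Replace your appeal to the global groupoid structure with this local linear argument and your proof goes through for arbitrary $(M,\Pi)$.
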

\begin{proof}
In order to prove this Proposition, we first prove the following
\begin{lemma}\emph{ Let $\delta \gamma_1$ and $\delta \gamma_2$ be two vectors in $TC_{\gamma_1}^{\perp}$ and $TC_{\gamma_2}^{\perp}$ that are composable. Then $\delta \gamma_1*\delta \gamma_2 \in TC_{\gamma_1*\gamma_2}^{\perp}. $
}
\end{lemma}
\begin{proof}
This follows immediately from the additive property of $\omega$ with respect to concatenation, namely, if $\delta \gamma$ is a vector in $T_{\gamma_1*\gamma_2}C$, then
\begin{eqnarray*}
\omega(\delta \gamma_1*\delta \gamma_2, \delta \gamma)= \alpha_1 \omega(\delta \gamma_1, \delta_\gamma)+ \alpha_2 \omega(\delta \gamma_2, \delta_\gamma)=0,
\end{eqnarray*}
where $\alpha_i$ are factors due to reparametrizations for $\gamma_i$.
\end{proof}
With this Lemma at hand, we can conclude, from Equation \ref{ELE3} that
\[TC^{\perp} \oplus TC^{\perp} \oplus TC^{\perp} \subset TL_3 \subset TC\oplus TC \oplus TC.\]
Now, after reduction we get that
\begin{eqnarray*}
\underline{\omega}\oplus \underline{\omega}\oplus -\underline{\omega}&\,& ([\delta_1\gamma_1] \oplus [\delta_1\gamma_2]\oplus [\delta_1\gamma_3], [\delta_2\gamma_1] \oplus [\delta_2\gamma_2]\oplus [\delta_2\gamma_3])\\
&=& \underline{\omega}([\delta_1\gamma_1, \delta_1\gamma_2])+\underline{\omega}([\delta_2\gamma_1, \delta_2\gamma_2])-\underline{\omega}([\delta_1\gamma_1*\delta_1\gamma_2], [\delta_2\gamma_1*\delta_2\gamma_2]),
\end{eqnarray*}
that is zero by the additivity property for $\omega$.
This implies that $\underline{L_3}$ is isotropic. Now, by counting dimensions, we get that
the compatibility condition for $\gamma_1$ and $\gamma_2$ give $3\dim (M)$ independent equations (for the initial, final and coinciding point of $\gamma_1, \gamma_2$ and $\gamma_3$).
Hence,
\[\dim(\underline{L_3})= 6\dim(M)-3\dim (M)= 1/2 \dim (\underline{TC}\oplus \underline{TC}\oplus \underline{TC}).\]
This implies that $\underline{TL_3}$ is Lagrangian. By Proposition \ref{Coisotropic} we conclude that $TL_3$ is Lagrangian, as we wanted.

\end{proof}

\section{Relational symplectic groupoids for Poisson manifolds} 
This section is devoted to illustrate with example the construction previously described. In the integrable case, we show how they give rise to the usual s-fiber simply connected symplectic groupoid integrating $(M,\Pi)$.
\subsection{The zero Poisson case}
In this section we consider $M$ an $n$-dimensional manifold and $\Pi$ is the zero Poisson bivector. The space $\mathcal G$ can be described as 
$$\mathcal G= \{(X,\eta) \mid X\equiv x_0 \in M, \eta \in \Omega^1(I, T^*_{x_0}M). \}$$
In this case, according to the definition of 
$C_{\Pi}$ in Equation \ref{Cois} we obtain

\[C_{\Pi}=\{(X,\eta) \vert X\equiv x_0, \eta \in \Gamma (T^{*}I \otimes T^{*}_{x_0}M)\}, \forall x_0 \in M. \]
and its linearized version gives rise to the following description for $TC_{\Pi}$ in local coordinates:
$$TC_{\Pi}=\{(\delta X^i, \delta \eta_i)\mid \delta X^i=0, \delta \eta_i \in \mathcal C^{k}(I, \mathbb R^n).\}$$
Now, it is possible to compute (componentwise) the symplectic orthogonal complement of the space $TC_\Pi$:
\[(TC_\Pi^{\perp})_i= \{ (\tilde{\delta X^i}, \delta \tilde{\eta_i}) \vert \int_I \delta \tilde{X^i}\eta_i + \int_I X^i \delta \tilde{\eta_i}=0, \forall \delta X^i, \delta \eta_i \in (C_{\Pi})_i\}.\]
that is,
\begin{equation}\label{cperp}(TC_\Pi^{\perp})_i=\{(\delta X^i, \delta \eta_i)\mid \delta X^i\equiv 0, \delta \eta_i= d \beta,  \beta: I \to T_{x_0}M, \beta(0)=\beta(1)=0\}.
\end{equation}

that is contained in $TC_{\Pi}$, as expected.
As a corollary, we obtain that the reduced space is the expected one:
\[\underline{C_\Pi}= T^*M.\]

Observe that the space $L_1$ is obtained by gauge equivalent paths to the space
\[C= \{(X,\eta)\vert X\equiv x_0, \eta\equiv 0\}\]
and since the gauge transformations leave $C_\Pi$ invariant and the $\eta$ component change modulo an exact form, 
it is possible to conclude that
\[L_1= \{(X,\eta) \in C_{\Pi}\mid X\equiv x_0, \eta=d\beta, \beta: I \to T^*_{x_0}M, \beta(0)=\beta(1)=0\}. \]
Here, we can observe that $C_{\Pi}^{\perp} \subset L_1 \subset C_{\Pi}$ and in addition, the reduced space $\underline{L_1}$ 
corresponds to the zero section of the cotangent bundle $T^{*}M$ that is Lagrangian.\\
It is possible to prove the previous statement directly: Computing the symplectic orthogonal space to $TL_1$ we get:
\begin{eqnarray}
TL_1^{\perp}&=& \{(\delta \tilde{X},\delta \tilde{\eta})\vert \int_I \delta \tilde {X} \delta \eta + \int_I \delta X \delta \tilde{\eta}=0, \forall (X,\eta) \in L_1 \\
&=& \{(\delta \tilde{X},\delta \tilde{\eta})\vert \int_I \delta \tilde{X} \delta \eta + \delta x\int_I \delta \tilde{\eta}=0, \forall x \in \mathbb{R}^n, \eta= df, f\in \mathcal{C}^{\infty}(\mathbb{R}^n)\}.  
\end{eqnarray}
To check that $L_1$ is isotropic, we observe that 
\[\int_I\delta \tilde{X}\eta + \int_I \delta X\delta \tilde{\eta}= \delta \tilde{x}\int_I \delta \eta+ \delta x\int_I \delta \tilde{\eta}= 0+0, \forall x,\delta \tilde{x} \in \mathbb{R}^n,\] hence $TL_1 \subset TL_1^{\perp}$.
To check that $L_1$ is coisotropic, let $f\equiv 0$. This implies that $\int_I \delta \tilde{\eta}=0$ and by a similar argument as before, 
$\delta \tilde{\eta}=d\beta, \beta(0)=\beta(1)=0$. Hence, $\int_I\delta \tilde{X} df=0$ and by integration by parts, $\int_I d\delta \tilde{X}f, 
\forall f \in \mathcal{C}^{\infty}(\mathbb{R}^n)$, which implies that $d\delta \tilde{X}=0$, therefore $\tilde{X}$ is constant and this 
proves that $L_1^{\perp} \subset L_1$, as we wanted.\\
It is also possible to prove that $L_2$ is Lagrangian. We have that
\begin{eqnarray}\label{ele2}
L_2=\{(X_1,\eta_1),(X_2,\eta_2) \in C_{\Pi}\times C_{\Pi} &\mid& X_1=X_2=x \in \mathbb{R}^n,\\&& \eta_1-\eta_2= d\beta,\\&& \beta: I \to T^*_{x_0}M, \beta(0)=\beta(1)=0\}.
\end{eqnarray}
From Equations \ref{cperp} and \ref{ele2} we obtain that
\[(TC_{\Pi} \oplus TC_{\Pi})^{\perp} \subset L_2 \subset (TC_{\Pi}\oplus TC_{\Pi}).\]
Using the fact that the reduced space $\underline{L_2}$ is precisely $\bigtriangleup (T^*M)$ that is Lagrangian subspace and by 
lemma \ref{Coisotropic} the space $L_2$ is Lagrangian.\\
To describe the space $L_3$, observe that the two composable $T^*M-$ paths have to be constant and the product is taken modulo gauge transformation, i.e. 
modulo an exact form. Hence,
\[L_3=\{(X_1,\eta_1),(X_2,\eta_2), (X_3,\eta_3)\mid X_1=X_2=X_3, \int_I\eta_3-\eta_2-\eta_1=0\}, \]
A similar observation as before proves that $L_3$ is Lagrangian.
To summarize, the relational symplectic groupoid associated to the zero Poisson structure is given by the following data:
\begin{eqnarray*}
\mathcal{G}&=& T^*PM  \\
L&=&\{(X_1,\eta_1),(X_2,\eta_2), (X_3,\eta_3)\mid X_1=X_2=X_3, \int_I\eta_1+\eta_2+\eta_3=0\}\\
I&=&\{(X,\eta)\mapsto (X\circ \phi, -\eta)\},
\end{eqnarray*}
with $\phi$ as in Equation \ref{phi}.

In this example it is straightforward to check that the induced inclusion maps of the spaces $L_i$ are smooth embeddings. 
In addition, it is possible to study the image of these spaces under reduction. $\underline{L_1}$ is precisely the zero section of the cotangent bundle, 
that is the quotient space $L_1 / L_2$ as well. $\underline{L_2}$ by definition corresponds to $\triangle(T^*M)$ that is precisely the 
graph of the identity in the reduced space $G=T^*M.$ In the case of $L_3$, we obtain that the reduced space $\underline{L_3}=L_3 / L_2$ 
corresponds to
\[(x,\alpha, \beta) \in M\times \Omega^1(M)\times \Omega^1(M)\]
that is identified with
\[\underline{L_3}= \{(x,\alpha),(x,\beta),(x,\alpha+ \beta)\in T^*M \times \overline{ T^*M\times T^*M} \},\]
that is the graph of fiber wise multiplication in $G$ as expected.\\
Therefore, to summarize, the groupoid can be recovered out of the relational groupoid construction:
\begin{eqnarray*}
G&=& \underline{C_{\Pi}}= T^*M. \\
\underline{L_1}&=& L_1 / L_2= \underline{L_1}=M.\\
\underline{L_2}&=& \triangle T^*M = Gr(id_{T^*M}).\\
\underline{L_3}&=& L_3 /L_2 = Gr(m_{T^*M}).
\end{eqnarray*}
Here $m_{T^*M}$ denotes the fiber wise sum in the cotangent bundle and the structure maps $s,t,\varepsilon, \iota$ factor naturally through the quotient.

\subsection{The symplectic case} In this example we assume for simplicity that
$M= \mathbb{R}^{2n}$ equipped with the canonical symplectic structure $\omega=dx^i \wedge dy_i.$ \footnote{The argument can be extended to general symplectic manifolds.}From equation (\ref{Coi}), we obtain, 
since $\Pi^{ij}=\delta^{ij}$ , that $\eta_i = \dot{X}_i.$ Therefore the space $C_{\pi}$ is diffeomorphic to the path space $PM$. In order to show that this 
space is coisotropic, observe that the symplectic orthogonal space to $C_{\Pi}$ is given by:

\[C_{\Pi}^{\perp}=\{(\tilde{X},\tilde{\eta})\vert \int_I \langle X, \tilde{\eta} \rangle + \langle \tilde{X}, \dot{X} \rangle =0,\, \forall X \in PM\}.\]
Now, substituting $\tilde{\eta}= \dot{\tilde{X}} + \gamma$ we get that integral can be rewritten as:
\[ \int_I d \langle X, \dot{\tilde{X}} \rangle + \int_I \langle X, \gamma \rangle,\]
implying that $\int_I \langle X, \gamma \rangle=0, \forall X \in PM$ and from here we can deduce that $\gamma \equiv 0.$ In this way, 
$(\tilde{X},\tilde{\eta})=(\tilde{X},\dot{\tilde{X}}) $ and hence, $C_{\Pi}^{\perp} \subset C_{\Pi}$.
It is easy to observe that, given a path $X$, the gauge transformations corresponds in this case to 
diffeomorphisms of $\mathbb{R}^{2n}$ leaving $X(0)$ and $X(1)$ constant.
With this observation in mind, we compute the space $L_1$:
\[L_1= \{(X, \eta)\vert X= X_0 + \beta,\, \eta= d\beta, \beta \in \mathcal{C}^{\infty}_0(\mathbb{R}^{2n}), X_0 \in \mathbb{R}^{2n}\},\]
which implies that $L_1$ is diffeomorphic to the loop space $L(\mathbb{R}^{2n})$. It is possible to compute the symplectic orthogonal to $L_1$ and to check that in 
fact is a Lagrangian subspace:
\[L_1^{\perp}= \{(X,\eta) \vert \int_I \langle X, d\beta \rangle + \langle \beta, \eta  \rangle\}\]
and using again the substitution $\eta= dX + \gamma$ we conclude that $\gamma= 0$ and then $\eta= dX$, 
with $X(0)=X(1)=X_0$, as we wanted.\\
To characterize the space $L_2$, we consider the quotient map  $\pi\colon  P(\mathbb{R}^{2n})\to \mathbb{R}^{2n} \times \mathbb{R}^{2n}$
and given the description of the gauge transformations before, we conclude that:
\[L_2=P(\mathbb{R}^{2n}) \times_{(\pi,\pi)} P(\mathbb{R}^{2n}) .\]\\
The space $L_3$ can be written as a fibered product in the following way:
\[L_3= (P(\mathbb{R}^{2n}) \times_{(t,s)} P(\mathbb{R}^{2n}))\times_{(s\times t)\times(s,t)}P(\mathbb{R}^{2n}). \]\\
The relational symplectic groupoid associated to a symplectic space is therefore given by:
\begin{eqnarray*}
\mathcal{G}&=& T^*P\mathbb{R}^{2n}.  \\
L&\cong& (P(\mathbb{R}^{2n}) \times_{(t,s)} P(\mathbb{R}^{2n}))\times_{(s\times t)\times(s,t)}P(\mathbb{R}^{2n}). \\
I&=&\{(X,\eta)\mapsto (X\circ \phi, -\eta)\},
\end{eqnarray*}
and the same structure maps $\varepsilon, \iota, s$ and $t$ as in the previous example. Furthermore, the spaces $L_i$ 
are embedded Lagrangians as expected.

After reduction, we observe that the space $L_1/L_2$ is isomorphic to $\mathbb{R}^{2n}$. The image of $L_2$ 
after reduction is, as expected, the graph of the identity in the pair groupoid $M\times M$ and the image of $L_3$ is precisely the graph of 
the pair multiplication $m\colon \triangle_3 (M) \subset (M\times M)\times (M\times M) \to M\times M$ given by $m((a,b),(b,c))=(a,c)$. Hence,

\begin{eqnarray*}
G&=& \underline{C_{\Pi}}= M \times M. \\
\underline{L_1}&=& L_1 / L_2= \mathbb{R}^{2n}=M.\\
\underline{L_2}&=& \triangle (M \times M) = Gr(id_{M\times M}).\\
\underline{L_3}&=& L_3 /L_2 = Gr(m_{M\times M}).
\end{eqnarray*}

\subsection{The constant case} In this case, we consider $M=\mathbb{R}^n$ and coordinates $x_1,\ldots, x_{2k},\ldots, x_n$,  in such way that $\Pi$ can be written in the form

$$\Pi^{ij} = \begin{pmatrix} \omega^{ij}&0\\ 0&0 \end{pmatrix},$$
where $\omega$ is a $2k\times 2k$ invertible constant matrix. Combining the previous examples we obtain that 
\[C_{\Pi}= C_{sym}(\mathbb{R}^{2k}) \times C_{0}(\mathbb{R}^{n-2k}) ,\]
where the first component is the space of solutions of the constraint equation for the symplectic structure $\omega$ 
and the second component correspond to the space of solutions for the zero Poisson structure. Therefore, If $(\mathcal G, L, I)_{ct}$ denotes the relational symplectic groupoid associated to a constant Poisson structure, 
$$(\mathcal G,L,I)_{ct}=(\mathcal G,L,I)_{sym}\times (\mathcal G,L,I)_0.$$

\subsection{The linear case}\label{linear}
We start with this example, where $M= \mathfrak{g}^{*}$ is the dual of a finite dimensional Lie algebra  
equipped with a natural Poisson structure, also known as the Kirillov-Kostant structure. To be more precise, the Poisson structure in this case is given by the 
structure constant for the Lie algebra  $\mathfrak{g}$:

$$\Pi= c^{ij}_k x^k \frac{\partial}{\partial x_i}\wedge \frac{\partial}{\partial x_j} $$
and the constraint equation looks like:
\[dX^i= c^{ij}_k X^k \eta_j.\]
Considering $\eta$ as a connection in coadjoint bundle on $I$ associated to $G$:$E=\mathfrak{g}^{*}\times I$ 
($G$ is the Lie group integrating $\mathfrak{g}$ 
and the action is the coadjoint one), the previous equation can be rewritten as: 

\begin{equation}\label{K}
\triangledown_{\eta}X=0 
\end{equation}

and the gauge transformations as:                          
\begin{eqnarray}                        
\delta X&=& ad^{*}_{\beta} X \label{Ca}\\ 
\delta \eta&=& \triangledown_{\eta}\beta,
\end{eqnarray}
with $X \in \Gamma(E), \beta \in \mathcal{C}^{\infty}(I, \mathfrak{g}).$\\
From equation (\ref{Coi}), observe that a solution is completely characterized with the choice of the 1-form $\eta$ and the initial condition $X(0)$. 
Therefore,
\[C_{\Pi}= \Omega^1(I, \mathfrak{g})\otimes \mathfrak{g}^{*}.\]
Now, in order to describe the spaces $L_i$, we observe that the infinitesimal gauge transformations leave invariant the initial point 
of the paths in $\mathfrak{g}^{*}$. Therefore, the gauge transformations only change the 1-form $\eta$ by an exact form and hence
$L_1$ corresponds to the space of connections gauge equivalent to the trivial connection on $E$. 
The idea is to prove the following:
\begin{proposition} \emph{Let $L_e^c(G)$ denote the space of connected loops starting and ending at the identity $e$. Then $L_e^c(G)\cong L_1.$}
\end{proposition}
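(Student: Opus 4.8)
The plan is to establish the isomorphism $L_e^c(G)\cong L_1$ by exhibiting a concrete map between the two spaces using the gauge-theoretic description of $C_\Pi$ already developed in this section. Recall that in the linear case $C_\Pi = \Omega^1(I,\mathfrak g)\otimes\mathfrak g^*$, an element of $C_\Pi$ being a pair $(X,\eta)$ with $\eta\in\Omega^1(I,\mathfrak g)$ a connection on the trivial bundle $E=\mathfrak g^*\times I$ and $X$ a parallel section, i.e. $\nabla_\eta X=0$. Since a parallel section is determined by $\eta$ together with the initial value $X(0)$, and since the gauge transformations \eqref{Ca} fix $X(0)$, the space $L_1$ consists precisely of those connections $\eta$ that are gauge-equivalent to the trivial connection, with arbitrary compatible $X$. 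The key classical fact I would invoke is that a connection $\eta$ on the trivial bundle over $I=[0,1]$ is gauge-equivalent to the trivial connection by a gauge transformation $g\colon I\to G$ with $g(0)=e$ if and only if the parallel transport (holonomy) of $\eta$ from $0$ to $1$ — equivalently the endpoint $g(1)$ of the solution of the ODE $\dot g = -\eta\cdot g$, $g(0)=e$ — equals $e$; and $\eta$ lies in $L_1$ (gauge equivalent to the trivial connection by a loop) exactly when this holonomy is trivial.

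First I would write down the map $\Psi\colon L_1 \to L_e^c(G)$ sending a connection $\eta$ (together with its parallel sections) to the loop $g_\eta\colon I\to G$ obtained by integrating the ODE $\dot g_\eta(t) = -\eta(t)\,g_\eta(t)$ with $g_\eta(0)=e$. When $\eta\in L_1$, i.e. $\eta = \nabla_\theta \beta$ for the trivial connection $\theta$ and some $\beta\colon I\to\mathfrak g$ with $\beta(0)=\beta(1)=0$, one checks $g_\eta(1)=e$, so $g_\eta$ is indeed a loop based at $e$, and it is connected (to the constant loop) because $\beta$ can be scaled to zero within the prescribed class. Conversely I would define $\Phi\colon L_e^c(G)\to L_1$ by sending a based loop $g$ to the connection $\eta_g := -\dot g\, g^{-1}$ (the pullback of the Maurer--Cartan form, up to sign), which is manifestly gauge-equivalent to the trivial connection via the gauge transformation $g$ itself, the basepoint condition $g(0)=g(1)=e$ guaranteeing that the gauge transformation is a loop fixing $e$; one then completes $\eta_g$ to an element of $C_\Pi$ by transporting an arbitrary initial covector. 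The main verification is that $\Psi$ and $\Phi$ are mutually inverse, which is the standard uniqueness for solutions of the defining ODE, and that both are smooth as maps of Banach manifolds, which follows from smooth dependence of solutions of ODEs on parameters (in the relevant $\mathcal C^{k+1}$ / $\mathcal C^k$ regularity) together with the charts for path and loop spaces.

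The step I expect to require the most care is not the set-theoretic bijection — that is essentially the well-known equivalence between connections on $I$ modulo based gauge and based paths in $G$ — but rather the bookkeeping of the $X$-component and the precise identification of ``$L_1$'' with the gauge orbit of the trivial connection rather than of an arbitrary flat connection. Concretely, I must be careful that $L_1$ as defined via $A$-homotopy (Equation \eqref{l1}: paths $T^*M$-homotopic to the constant path $(X\equiv x_0,\eta\equiv 0)$) really coincides with the ``gauge equivalent to the trivial connection on $E$'' description; this uses the identification, already established earlier in the section, of the characteristic foliation of $C_\Pi$ with the gauge orbits, so that $T^*M$-homotopy classes are exactly gauge orbits. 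Granting that, the isomorphism $L_1\cong L_e^c(G)$ reduces to the holonomy description above. I would then remark that under this identification the reduction $\underline{L_1}=L_1/L_2$ recovers $\mathfrak g^*\cong M$ and that $\underline{C_\Pi}$ recovers the action groupoid $T^*G/\!\!\sim\,\cong G\ltimes\mathfrak g^*$, consistent with the integrability of linear Poisson structures, but the proof of the stated proposition itself ends with the construction of the mutually inverse smooth maps $\Psi$ and $\Phi$.
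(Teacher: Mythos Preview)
Your proposal is correct and follows essentially the same route as the paper: both arguments identify $L_1$ with the gauge orbit of the trivial connection and then set up the standard dictionary between connections on $I$ and based paths in $G$ via the Maurer--Cartan form and parallel transport. The paper defines the single map $\phi(\gamma)=\gamma^{-1}d\gamma$ and proves it bijective (surjectivity from the gauge-action formula $\eta\mapsto\gamma^{-1}d\gamma+\mathrm{Ad}_\gamma\eta$ evaluated at $\eta=0$, injectivity from an ODE argument for $\tau:=\tilde\gamma^{-1}\gamma$), whereas you package the same content as a pair of mutually inverse maps $\Phi,\Psi$ and invoke ODE uniqueness directly; the only substantive difference is a left/right convention in the logarithmic derivative, which is immaterial.
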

\begin{proof} Consider the map $\phi\colon L_e^c(G) \to L_1 $ defined by \[\phi(\gamma)= \gamma^{-1} d \gamma.\] where 
$\gamma\colon  I \to G$ and $\gamma^{-1}(t)= (\gamma(t))^{-1}.$ First of all, observe that this map is surjective since for this example, 
the global symmetries come from paths in the Lie group $G$ in the connected component of the identity. The change of the 
connection $\eta$ along $\gamma$ is given by:
\[ \eta(\gamma)= \gamma^{-1} d\gamma + Ad_{\gamma} \eta.\]
Setting $\eta=0,$ we obtain all the gauge equivalent connections to the trivial one.
To prove that $\phi$ is injective let consider to paths $\gamma$ and $\tilde{\gamma}$ 
in such way that $\gamma^{-1} d \gamma = \tilde{\gamma}^{-1} d \tilde{\gamma}.$ Define $\tau:= \tilde{\gamma}^{-1} \gamma.$ Differentiating this equation   we get:
\begin{eqnarray*}
d\tau&=& d\tilde{\gamma}^{-1} \gamma + \tilde{\gamma}^{-1} d\gamma \\
     &=& -\tilde{\gamma}^{-1}d\tilde{\gamma}\tilde{\gamma}^{-1} \gamma + \tilde{\gamma}^{-1}d\gamma\\
     &=& -\gamma^{-1}d\gamma \tilde{\gamma}^{-1} \gamma + \tilde{\gamma}^{-1} d\gamma\\
     &=& \gamma^{-1} d\gamma (e- \tilde{\gamma}^{-1}\gamma)\\
     &=& \gamma^{-1}d\gamma (e- \tau).
\end{eqnarray*}
We have obtained a linear ODE, where $\gamma$ is independent of $\tau$ and it satisfies the Liptchitz condition of uniqueness. Since $\tau \equiv e$ 
satisfies the equation it is possible to conclude that $\gamma= \tilde{\gamma}$.  \\
\end{proof}
To describe $L_2$, note that the gauge transformations are given by paths $\gamma$ on the connected component of the identity and it can be proven 
(see \cite{Cat}) that the transformation only depends on the final point of $\gamma$. More precisely, defining the holonomy map 
\[\mbox{Hol}\colon \Omega^1(I, \mathfrak{g})\to G \]
as the the parallel transport of $e$ on the connection given by $\eta$ (using the flat section $X$), we get that
\[L_2\cong \{(g_1,g_2,\eta)\in \mathfrak{g}^{*}\otimes \mathfrak{g}^{*} \otimes  \Omega^1(I, \mathfrak{g})\mid \eta=d\beta, \beta(0)=\beta(1)=0\}. \] \\
In a similar way, it is possible to describe $L_3$ in terms of the holonomy map and, therefore we get  the following relational symplectic groupoid:

\begin{eqnarray*}
\mathcal{G}&=& T^*P\mathfrak{g}^{*}.  \\
L&=& \{(g_1,\eta_1), (g_2,\eta_2),(g_3,\eta_3)\mid g_i\in \mathfrak g^{*}, \eta_i \in  \Omega^1(I,\mathfrak{g}), \eta_3= d\beta, \beta(0)=\beta(1)=0\}.\\
I&=&\{(X,\eta)\mapsto (X\circ \phi, -\eta)\}.
\end{eqnarray*}
After reduction we obtain:
\begin{eqnarray*}
G&=& T^{*}\mbox{\textbf{G}}\simeq \mathfrak{g}^{*}\times \mbox{\textbf{G}}.   \\
\underline{L_1}&=& \mathfrak{g}^{*}.\\
\underline{L_2}&=& Gr(id_{T^*G}).\\
\underline{L_3}&=& Gr(m_{T^*G}), 
\end{eqnarray*}
where $m_{T^*G}$ is defined as 
\[m_{T^{*}G}((\alpha, g),(Ad^*_{g^{-1}} (\alpha), h))=(\alpha, gh).\]

\subsection{A non integrable example}\label{nonintegrable}
This is an example (see \cite{Weinstein}) of a non integrable Poisson manifold. The idea is to exhibit explicitly 
the corresponding immersed canonical relations in this example.
We consider $M= \mathbb{R}^3 / \{ 0 \}$ equipped with Poisson structure

\[\Pi^{ij}(x)= f(\lvert x \rvert) \varepsilon ^{ij}_k x^k \frac{\partial} {\partial x^i} \wedge \frac{\partial} {\partial x^j} \]

and $f$ is a function such that $f(R)> 0, \forall R >0$. It is possible to write the constraint equation as:
\[X^{'}+ f(\vert X \vert) \eta \times X =0,\]
where $X$ and $\eta$ are considered as smooth paths in $\mathbb{R}^3$ 
(after identifying $\mathbb{R}^3$ and $\mathbb{R}^{*3}$ via the Euclidean metric) and $\times$ denotes the cross product. The infinitesimal gauge 
transformations are described by (see \cite{Cat}):
\begin{eqnarray*}
\delta X&=& -f(\vert X\vert) \beta \times X.\\
\delta \eta&=& \dot{\beta} + f(\vert X \vert) \eta \times \beta + \frac{f^{'}(\vert X \vert )}{\vert X \vert} (X \cdot \eta \times \beta)X, 
\end{eqnarray*}
where $\beta \in \mathcal{C}^{\infty} (I, \mathbb{R}^3)$ and $\cdot$ is the Euclidean dot product. 
Considering the radial and tangent components of $\eta$ and $\beta$ with respect to $X$ (denoted by $\eta_r,\eta_t, \beta_r,\beta_t$ respectively), the constraint equation and the symmetries can be rewritten as:
\begin{eqnarray*}
X^{'}&+&f(\vert X \vert)\eta_t \times X=0.\\
\delta X &=& -f(\vert X \vert)\beta_t \times X.\\
\delta \eta_r&=& \dot{\beta}^{'}_r-\frac{f(\vert X \vert)}{\vert X \vert} (1- \frac{\vert X \vert \cdot f^{'}(\vert X \vert)}{f\vert X \vert})(X \cdot \eta_t \times \beta_t).\\
\delta \eta_t&=& \dot{\beta}^{'}_t+ f(\vert X \vert)\eta_t \times \beta_t + \frac{f(\vert X \vert)}{\vert X \vert ^2 }(X \cdot \eta_t \times \beta_t) X.
\end{eqnarray*}
Setting $f\equiv 1$, the constraint equations and symmetries corresponds to the linear case where, $M=\mathfrak{s}\mathfrak{u}(2)^{*}\setminus \{0\}.$ Therefore, $C_{\Pi}=\Omega^1(I,\mathfrak{su}(2))\otimes \mathfrak{su}(2)^{*}$ and in addition, 
the subspaces $L_i$ are immersed Lagrangian submanifolds of $\mathcal{G}^i=(T^{*}P\mathbb{R}^3)^i$.

Denoting $\vert X \vert$ by R and setting $C(R):= \frac{R\cdot f^{'}(R)}{f(R)},\, A(R)=\frac{4\pi R}{f(R)}$, we can redefine coordinates 
(for the case where $C(R)\neq 1$ or equivalently $A^{'}(R)\neq 0$) with tangential and radial components as follows:
\begin{eqnarray*}
a_r&=&\frac{f(\vert X \vert)}{1-C(\vert X \vert)}\eta_r,\,\, a_t=f(\vert X \vert )\eta_t,\\
b_r&=&\frac{f(\vert X \vert)}{1-C(\vert X \vert)}\beta_r,\,\, b_t=f(\vert X \vert )\beta_t.
\end{eqnarray*}
The constraints and the gauge transformations look like:
\begin{eqnarray*}
X^{'}&+&a_t \times X=0,\\
a_r^{'}&=& b^{'}_t- \frac{1}{\vert X \vert}(X \cdot a_t \times b_t),\\
a_t^{'}&=& b^{'}_t+ a_t \times b_t + \frac{1}{\vert X \vert ^2}(X\cdot a_t \times b_t)X.
\end{eqnarray*}
that is precisely, the ones controlling the linear case, where $\mathfrak g= \mathfrak {su} (2)$. Following \cite{Cat}, in the case where $A(R)=0$, since we are not allowed to use the previously defined change of coordinates, we restrict ourselves to the invariant functions 
\[x\colon =X(0), y\colon = \frac{X(1)}{\vert x \vert}, \pi:= \int_I \eta_r .\]
Thus, we conclude that for the singular points, the gauge equivalent solutions of the E-L equation cannot differ by the value of $\eta_r$ and they only depend on the initial and final values of $X$.  
Therefore, we have the following description for the spaces $L_i$
\begin{eqnarray*}
\mathcal{G}&=& T^*P(\mathbb R ^3 / \{ 0 \}).  \\
L&\cong& \{(X_1,X_2,X_3,\eta_1,\eta_2,\eta_3) \in \mathbb R ^3 \setminus \{ 0 \})^3 \times \Omega^1(I, \mathfrak{su}(2))^3\mid \eta_3=d\beta, \beta(0)=\beta(1)=0\\
&\sqcup&(X_1,X_2,X_3^{'}, \eta^{'})\in \mathbb R ^3 \setminus \{ 0 \})^3 \times \Omega^1(I, \mathfrak{su}(2))\mid \eta^{'}=d\beta, \beta(0)=\beta(1)=0\}.\\
I&=&\{(X,\eta)\mapsto (X\circ \phi, -\eta)\}.
 \end{eqnarray*}
 The reduced phase space in this case corresponds to a singular space, if $A$ vanishes at some point \cite{Cat}, therefore one obtains a topological groupoid over $M$ that is singular on the critical points of $A$.
\subsection{A non regular example}\label{torus}
The following corresponds to an example of a relational symplectic groupoid that fails to be regular. Consider the manifold
$$M= T^*(S^1 \times S^1)\equiv S^1 \times S^1\times \mathbb R \times \mathbb R$$
with local coordinates $(\theta_1, \theta_2, p_1, p_2)$ and canonical symplectic structure. Now consider the space $$C_{\alpha}:= \{(\theta_1, \theta_2, p_1, p_2)\mid p_2 = \alpha p_1\},$$
for some $\alpha$ in $\mathbb R$.
It can be easily checked that $C_{\alpha}$ is a coisotropic submanifold of $M$ and that
$$\underline  {C_{\alpha}} \mbox{ is smooth if and only if } \alpha \in \mathbb Q.$$
Now, regarding $M$ as a Poisson manifold and by choosing the  set of coordinates $(\Theta^i, P_i, \eta_i, \xi^i)$ for $T^*(PM)$, the constraint equations look like 
\begin{eqnarray}\label{constraint1}
d\theta^i&=& \xi^{i}\\
dP_i&=& -\eta_i. \label{constraint2}
\end{eqnarray}
Now, we restrict to the solutions of Equations \ref{constraint1} and \ref{constraint2} which are paths on the submanifold $C_{\alpha}$. This in coordinates reads as
\begin{eqnarray} \label{Ccons1}
P_2&=& \alpha P_1\\
\eta_2&=& \alpha \eta_1 \label{Ccons2}
\end{eqnarray}
 Therefore we can define the subspace $L_1^{\alpha}$ of $T^*(PM)$ as the space of solutions of Equations \ref{constraint1} and\ref{constraint2} such that at time $t=0,1$ the Equations \ref{Ccons1} and \ref{Ccons2} are satisfied. 
 This allows us to parametrize $L_1^{\alpha}$ as follows. We have the following relations between the coordinates we chose for $T^*(PM)$:
 \begin{eqnarray}
 \Theta^I&=& \theta^i + \int_0^t \xi^i\\
 P_i&=& p_i - \int_0^t \eta_i.
 \end{eqnarray}
 The relations
 \begin{eqnarray}
 p_2&=& \alpha p_1\\
 \int (\eta_2 - \alpha \eta_1)dt&=&0
  \end{eqnarray}
  imply that $P_2(1)= \alpha P_1(1)$
  Then we choose the parametrization $(\theta^1, p_i, \xi^i, \eta_i)$ such that for the manifold
  \[ S_1^{\alpha}= C_{\alpha} \times \Omega^1(I)\otimes \mathbb R ^2 \times \Gamma^{\alpha},\]
  where
  \[\Gamma^{\alpha}= \{(\eta_1, \eta_2)\in \Omega^1(I)\otimes \mathbb R ^2 \mid \int_I \eta_2 - \alpha \eta_1=0\}\]
  is a closed subspace of $\Omega^1(I)\otimes \mathbb R ^2$. This implies that $S_1^{\alpha}$ is a manifold and the inclusion map $\iota\colon  S_1^{\alpha} \to T^*(PM)$ is an embedding with image $L_1^{\alpha}$.
  We have the following
  \begin{proposition}\label{Lag}
  \emph{
  $L_1^{\alpha}$ is a Lagrangian submanifold of $T^*(PM)$.}
  \end{proposition}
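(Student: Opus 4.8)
The plan is to mimic the strategy already used for $L_1$, $L_2$, $L_3$ in the general case (Propositions \ref{L1Lag}, and the accompanying propositions for $L_2$ and $L_3$): show that $TL_1^{\alpha}$ sits between $TC^{\perp}$ and $TC$ for an appropriate coisotropic $C$, check that the reduced space $\underline{TL_1^{\alpha}}$ is Lagrangian by a dimension count, and then invoke Proposition \ref{Coisotropic} to conclude that $TL_1^{\alpha}$ itself is Lagrangian. First I would fix $C:=C_{\Pi}$, the space of $T^*M$-paths solving \eqref{constraint1}--\eqref{constraint2}; by Proposition \ref{Coi} this is a coisotropic Banach submanifold of $T^*(PM)$ of finite codimension. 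Using the explicit parametrization $S_1^{\alpha}=C_{\alpha}\times \Omega^1(I)\otimes\mathbb R^2\times\Gamma^{\alpha}$ and the embedding $\iota\colon S_1^{\alpha}\to T^*(PM)$ with image $L_1^{\alpha}$, I would differentiate the defining equations to write down $T_{\gamma}L_1^{\alpha}$ in the coordinates $(\Theta^i,P_i,\eta_i,\xi^i)$ explicitly, analogous to the formula $T_{\gamma}L_1=\{(\delta X(t)+v,\delta\eta(t))\mid (\delta X,\delta\eta)\in TC^{\perp},\, v\in T_{\gamma(0)}M\}$ obtained in the proof of Proposition \ref{L1Lag}.

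The first substantive step is to prove $TL_1^{\alpha}$ is isotropic. Using the symplectic form \eqref{symplectic}, $\omega_{\gamma}(\delta_1\gamma,\delta_2\gamma)=\int_0^1(\delta_1\Theta^i\delta_2\eta_i-\delta_2\Theta^i\delta_1\eta_i+\delta_1 P_i\delta_2\xi^i-\delta_2 P_i\delta_1\xi^i)\,dt$ (with the appropriate signs for the cotangent-of-cotangent coordinates), I would split each tangent vector into its $C^{\perp}$-part and the finite-dimensional boundary part coming from $T_{\gamma(0)}C_{\alpha}$; the $C^{\perp}$-$C^{\perp}$ pairing vanishes because $C$ is coisotropic, the $C^{\perp}$-boundary pairing reduces to boundary terms of the form $\eta(1)-\eta(0)$ which vanish on the constraint locus, and the boundary-boundary pairing vanishes precisely because $C_{\alpha}$ is coisotropic (indeed Lagrangian-like: $p_2=\alpha p_1$ forces $\omega|_{TC_\alpha}$ to degenerate in the right way). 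This is essentially the same bookkeeping as in Proposition \ref{L1Lag}, with $C_{\alpha}$ playing the role that the zero-section-fiber played there.

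Next I would pass to the reduction $\underline{C_{\Pi}}$ and identify $\underline{TL_1^{\alpha}}$. The gauge transformations fix the endpoints of the base path and shift the $\eta$-components by exact forms, so after reduction $\underline{TL_1^{\alpha}}$ is isomorphic to $T_{(\theta_1,\theta_2,p_1,p_2)}C_{\alpha}$, which is a subspace of $T_xM\cong\mathbb R^4$ cut out by the single linear condition $p_2=\alpha p_1$ (on the momentum directions); one checks $\dim T_xC_\alpha=3$. Meanwhile $\underline{C_{\Pi}}$ has dimension $2n=8$ locally (or rather: the relevant reduced symplectic space in which $\underline{L_1^\alpha}$ lives), and $C_\alpha$ is coisotropic of codimension $1$, so $\underline{C_\alpha}$ — which is exactly what $\underline{L_1^\alpha}$ projects to — is $2$-dimensional, i.e.\ half the dimension of $\underline{\underline{C}}$ in the appropriate sense. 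Carrying out the count carefully shows $\underline{TL_1^{\alpha}}$ is Lagrangian in the reduced space. Then Proposition \ref{Coisotropic}, applied to $TC_\Pi^{\perp}\subset TL_1^{\alpha}\subset TC_\Pi$ with $\underline{TL_1^{\alpha}}$ Lagrangian in $\underline{TC_\Pi}$, gives that $TL_1^{\alpha}$ is Lagrangian in $T(T^*(PM))$ at every point, which is the claim.

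The main obstacle I anticipate is not the isotropy computation but getting the dimension/codimension count exactly right in the infinite-dimensional setting: one must be careful that $\Gamma^{\alpha}$ is a \emph{closed} subspace of $\Omega^1(I)\otimes\mathbb R^2$ of ``codimension $1$'' in the Banach sense (it is the kernel of the continuous functional $(\eta_1,\eta_2)\mapsto\int_I(\eta_2-\alpha\eta_1)$), and that the finite-dimensional boundary constraint $p_2=\alpha p_1$ interacts correctly with the reduction so that no dimensions are double-counted. The irrationality of $\alpha$ plays no role here — $L_1^{\alpha}$ is a perfectly good immersed (indeed embedded) Lagrangian for every $\alpha$; it is only the \emph{reduced} space $\underline{C_\alpha}$ that fails to be smooth for $\alpha\notin\mathbb Q$, which is exactly why this example is non-regular even though the Lagrangianity statement still holds.
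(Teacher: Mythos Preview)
Your overall strategy matches the paper's exactly: the paper's entire proof is the one-sentence remark that the argument is equivalent to Proposition~\ref{L1Lag}, that $L_1^\alpha$ is isotropic ``from the construction'', and the dimension count $\dim\underline{TL_1^\alpha}=\dim\underline{C_\alpha}=2=\tfrac12\dim M$. So your plan (isotropy, then dimension count on the reduction, then Proposition~\ref{Coisotropic}) is precisely the intended route.

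However, your execution of the isotropy step has a real gap. You assert that ``the boundary--boundary pairing vanishes precisely because $C_\alpha$ is coisotropic (indeed Lagrangian-like\ldots)'', but coisotropicity means only $TC_\alpha^\perp\subset TC_\alpha$, not $\omega_M|_{TC_\alpha}=0$. Here $C_\alpha$ has codimension~$1$ in the $4$-dimensional symplectic manifold $M$, so $\omega_M|_{TC_\alpha}$ has rank~$2$ and is far from zero. On $C_\Pi$ the integration by parts you describe collapses the symplectic form to the endpoint term $\omega_M(\delta_1X(0),\delta_2X(0))-\omega_M(\delta_1X(1),\delta_2X(1))$, and merely requiring the endpoint variations to lie in $TC_\alpha$ does not kill this. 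Your dimension count is similarly unsettled: you first identify $\underline{TL_1^\alpha}$ with $TC_\alpha$ (dimension~$3$), then with $\underline{C_\alpha}$ (dimension~$2$), whereas a Lagrangian in $\underline{C_\Pi}\cong M\times\bar M$ would have dimension~$4$. The paper simply records the answer $2=\tfrac12\dim M$ without elaboration, so before the argument can close you should pin down exactly which coisotropic reduction is being invoked and whether additional constraints on the cotangent directions at $t=0,1$ are implicitly part of the definition of $L_1^\alpha$.
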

  \begin{proof}
  The proof of this Proposition is equivalent to Proposition \ref{L1Lag}; it follows from the construction that $L_1^{\alpha}$ is isotropic and we observe that
  $$\dim \underline {TL_1^{\alpha}}= \dim \underline {C_{\alpha}}= 2= 1/2 \dim M.$$
  \end{proof}
 Now, the description of $L_2^{\alpha}$ is slightly different. We consider $T^*M$- paths$\gamma_1$ and $\gamma_2$ in $C_{\Pi}$ such that $\gamma_i(0)$ and $\gamma_i(1)$ belong to $C$ and the $T^*M$- homotopy connecting $\gamma_i$ satisfy that, in the boundary, $\gamma_t(0)$ and $\gamma_t(1)$ are Lie algebroid morphisms from $TI$ to $N^*C$, where $N^*C$ is the conormal bundle of $C$.
 For this equivalence relation $L_2^{\alpha}$ we have the following parametrization:
 \[S_2^{\alpha}= S^1\times S^1 \times \mathbb R \times V_{\alpha}\times V_{\alpha}\times \Gamma_{\alpha}\times \Gamma_{\alpha} \times \Lambda_{\alpha},\]
 where 
 $$V_{\alpha}= \{(p_1,p_2) \in \mathbb R ^2 \mid p_2=\alpha p_1 \}$$
 and
 $$\Lambda_{\alpha}= \{(\xi ^1, \xi ^2, \xi^3, \xi^4)\in \Omega^1(I)\otimes \mathbb R ^2 \mid \int_I \xi^2+ \alpha \xi^1= \int_I \xi^4+ \alpha \xi^3 \}.$$
 We get that $S_2^{\alpha}$ is an embedded submanifold of $T^*(PM)\times T^*(PM)$ whose image under the inclusion is $L_2^{\alpha}$. An argument similar to the one used in Proposition \ref{Lag} proves that $L_2^{\alpha}$ is Lagrangian. For $L_3^{\alpha}$, we obtain the following parametrization
 $$S_3^{\alpha}= S^1\times S^1 \times S^1 \times \mathbb R \times \mathbb R \times S_2^{\alpha}$$
 that is a submanifold of $T^*(PM)\times T^*(PM)\times T^*(PM)$ with image $L_3^{\alpha}$.
 This shows that, if we pick $\alpha$ an irrational number, the relational symplectic groupoid $(\mathcal G^{\alpha}, L^{\alpha}, I^{\alpha})$ given by
 \begin{eqnarray}
 \mathcal G^{\alpha}&=& T^*(PM)\\
L^{\alpha}&=& I^{\alpha}\circ L_3^{\alpha}\\
I^{\alpha}&\colon & \gamma \mapsto \gamma^{-1} 
 \end{eqnarray} 
 is non regular since $L_1^{\alpha} / L_2^{\alpha}= \underline{C_{\alpha}}$.

\section{The extension to the Fr\'echet category}
This section is devoted to discuss further generalization of the construction of relational symplectic groupoids in the context of Fr\'echet manifolds. The complications here are of analytical type, i.e. the absence of the inverse function theorem for Fr\'echet manifolds and the change of behavior of the ODE describing the E-L equation. However, studying the geometrical features of the relational symplectic groupoid for Poisson manifolds it is possible to describe the smoothness of the defining spaces of $(\mathcal G, L, I)$.
The aim is to prove the following

\begin{proposition}
\emph{The construction on $T^*PM$ of the relational  symplectic groupoid can be extended to 
the category of Fr\'echet manifolds.}
\end{proposition}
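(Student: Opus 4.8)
The plan is to revisit the proof of Theorem~\ref{Relational} and its companion smoothness results (Section~\ref{smoothness}) and check that, whenever we invoked a Banach-manifold tool, either the same argument survives in the Fr\'echet setting or it can be replaced by a geometric argument that does not rely on it. Concretely, we work with the $\mathcal C^{\infty}$ version of the space of boundary fields, so $\mathcal G := T^*(PM)$ is now the Fr\'echet manifold $\mathcal C^{\infty}(I,T^*M)$, equipped with the same weak symplectic form $\omega$ given by Equation~\ref{symplectic}; closedness and weak nondegeneracy of $\omega$ are checked exactly as before, since those computations are purely pointwise in $t$ and do not use completeness of any norm. First I would record that $C_\Pi$, defined by Equation~\ref{Cois}, is still a closed subset cut out by the Lie-algebroid-morphism PDE, and that it is a Fr\'echet submanifold of finite codimension with coisotropic tangent spaces: here the key point is that the constraint map $(X,\eta)\mapsto dX-\Pi^\#(X)\eta$ is a \emph{tame} smooth map between tame Fr\'echet spaces, so one can use the Nash--Moser inverse function theorem in the Hamilton sense in place of the Banach inverse function theorem, exactly where \cite{Cat} used the latter. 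The coisotropicity and the codimension count are linear-algebraic statements about $TC_\Pi$ and $TC_\Pi^{\perp}$ and transfer verbatim.

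Next I would treat the three canonical relations $L_1, L_2, L_3$. The Lagrangianity statements (the propositions in the subsection ``Lagrangianity of $L_i$'') are again pointwise and linear: they reduce, via Proposition~\ref{Coisotropic}, to computing $\dim \underline{TL_i}$ against $\dim \underline{C_\Pi}$, and these finite-dimensional dimension counts are insensitive to which function-space topology we chose. So the only genuinely new work is smoothness, i.e.\ producing the immersed-manifold (covering) structures. For $L_2$ I would re-run the holonomy-groupoid construction of \cite{Brown, Brown2}: the characteristic foliation $\mathcal F$ of $C_\Pi$ has finite codimension, the Frobenius theorem for Fr\'echet manifolds along \emph{finite-codimensional} integrable distributions still applies (the complementary directions are finite-dimensional, so the relevant ODE/flow arguments and the Lebesgue covering lemma used in the Remark after Theorem~\ref{Locally} go through), and the locally-Lie-groupoid globalisation (Theorem~\ref{holo}) is a purely topological/groupoid statement. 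For $L_1$ I would use the same gauge-theoretic description: $C=\{(X\equiv x_0,\eta\equiv 0)\}$ is a finite-dimensional transversal meeting $\mathcal F$ neatly, and $L_1$ is the saturation of $C$, so it inherits the covering $Hol(L_2,W(\Lambda))|_{L_1}$. For $L_3$ I would keep the fibered-product-plus-foliation description and the smooth concatenation map $\tilde\beta$ from Section~\ref{smoothness}, noting that concatenation and reparametrisation by bump functions are smooth operations on $\mathcal C^{\infty}$ path spaces.

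The hard part will be the analytic input that was quietly hidden in the Banach treatment: (i) that $C_\Pi$ really is a submanifold, i.e.\ that the E--L constraint defines a regular submanifold in the Fr\'echet category, where there is no general implicit function theorem and one must check the tameness/estimates hypotheses of Nash--Moser; and (ii) the change of behaviour of the ODE defining the $T^*M$-holonomy flows and the gauge action (Equations in the subsection on $L_1$), since solving these ODEs smoothly in all parameters is what underlies both the foliation charts and the maps $\tilde\beta$. I would isolate these into a lemma asserting that the moment map $\mu$ of the $P_0\Omega^1(M)$-action and the constraint map are tame smooth, and that its linearisation admits a tame right inverse along $C_\Pi$; granting that lemma, the rest of the construction — Lagrangianity, the axioms A.1--A.9, and hence regularity — follows by the same arguments as in the Banach case, and the Fr\'echet relational symplectic groupoid $(\mathcal G,L,I)$ with $L=I_{rel}\circ L_{rel}^{\dagger}$ (as in the statement of Theorem~\ref{Relational}) is obtained. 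I expect the remaining features (the Poisson structure on $M=\underline{L_1}$ via Theorem~\ref{Theo1Poisson1}, and the equivalence with usual symplectic groupoids in the integrable case) to hold ``where the analysis permits'', which is exactly the hedge the paper anticipates.
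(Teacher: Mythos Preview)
Your plan is reasonable but takes a route the paper explicitly avoids. You propose to work \emph{directly} in the Fr\'echet category: use Nash--Moser to show $C_\Pi$ is a submanifold, invoke a Fr\'echet Frobenius theorem for the finite-codimensional characteristic distribution, and then re-run the holonomy-groupoid arguments of Section~\ref{smoothness}. The paper's own text flags exactly these two inputs as unavailable (``Since we lack the inverse function Theorem\ldots'' and ``since the use of Frobenius Theorem for involutive distributions is out of reach''), and instead of proving a tameness lemma it sidesteps the issue entirely.

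The paper's approach is an ILB (inverse-limit-of-Banach) filtration. It first \emph{redefines} the target category: in $\mbox{\textbf{Symp}}^{Ext}_{F}$ a morphism is not just an immersed Lagrangian but a pair $(L,\{(L_i,\phi_i)\}_{i\in\mathbb N})$ where each $(L_i,\phi_i)$ is an honest Banach immersed canonical relation for the $\mathcal C^{k}$ truncation, and $L=\bigcap_i L_i$ carries an ILB structure. Then the Fr\'echet relational symplectic groupoid is simply $(\mathcal G_\infty,L_\infty,I_\infty)$ with $L_\infty=\bigcap_k L_k$, so all Banach smoothness work from Section~\ref{smoothness} is reused wholesale. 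The only new analytic content is Proposition~\ref{inter}, which identifies the ``smooth'' characteristic distribution $D^\infty$ with $D^0\cap TC_\Pi^\infty$; this is proved not by ODE estimates but by the smooth-representative Lemma~\ref{gauge} from \cite{Cat} (every $\mathcal C^0$ gauge class contains a $\mathcal C^\infty$ element), which lets one construct smooth paths inside $\mathcal C^0$ leaves.

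What each approach buys: yours is conceptually cleaner if the tameness lemma holds, and would give $C_\Pi^\infty$ as a genuine Fr\'echet submanifold with its own foliation charts; but you have left that lemma as a black box, and your assertion that Fr\'echet Frobenius ``still applies'' in finite codimension needs justification. The paper's approach is more indirect and requires enlarging the notion of morphism, but it needs no new hard analysis beyond what was already done for each fixed $k$.
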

Before proving this statement, some preliminary work on Fr\'echet manifolds has to be done.

\begin{definition} (See \cite{Michor}) \emph{A Fr\'echet space is a locally convex space with a countable basis of seminorms.}
\end{definition}
An example to have in mind is the following:

\begin{example} \label{Fre} \emph{The space $\mathcal{C}^{\infty}(I, \mathbb{R})$ of $\mathcal{C}^\infty$- maps from the interval $I=[0,1]$ to the real line is equipped with a natural 
Fr\'echet structure given by the following basis of seminorms:
\[ \lVert f \rVert_k:= \sup_{x\in I}f^{(k)}(x) \]
where $f^{(k)}$ denotes the $k$-th derivative.}
\end{example}
 Observe that in this example, by truncating the sequence of seminorms, we obtain a filtration of Banach spaces 
 $$\cdots \mathcal{C}^{k}(I, \mathbb{R})\subset \mathcal{C}^{k-1}(I, \mathbb{R}) \subset \mathcal{C}^{k-2}(I, \mathbb{R})\subset \cdots \subset \mathcal{C}^{0}(I, \mathbb{R}).$$

\begin{definition} \emph{A Fr\'echet manifold corresponds to a topological Hausdorff space $X$ equipped to an atlas $(X,U)$ in such way 
that there are homeomorphims $\phi_{\alpha}\colon  U_\alpha \to F$ where $F$ is a Fr\'echet space and the transition functions $F_{\alpha \beta}$ are smooth mappings with respect to the Gateaux derivative \cite{Michor}.}

\end{definition}
\begin{definition}\emph{
We define the categroid $\mbox{\textbf{Symp}}^{Ext}_{F}$ which objects are Fr\'echet manifolds equipped with a weak symplectic structure and a morphism between two objects $(M, \omega_m)$ and $(N, \omega_N)$ is a pair $(L, \{(L_i, \phi_i)\}_{i\in \mathbb N})$ such that
\begin{enumerate}
\item If we truncate the sequence of seminorms for $M$ and $N$ up to the index $i$, the underlined sets denoted by $M_i$ and $N_i$ respectively can be regarded as Banach manifolds.
\item $(L_i, \phi_i)\colon M_i \nrightarrow N_i$ is a morphism for $\mbox{\textbf{Sym}}^{Ext}$ as in Definition \ref{Extended}.
\item As sets, 
\[ L= \bigcap_0^{\infty}L_i\]
can be equipped with an Inverse Limit Banach (ILB) structure (for further details on ILB manifolds see \cite{Omori}),
and 
\[T L= \bigcap_0^{\infty}TL_i\]
is a Lagrangian subspace of $T(\overline M\times N)$.
\end{enumerate}
}
\end{definition}
The spaces of smooth mappings between finite dimensional manifolds are examples of objects in $\mbox{\textbf{Symp}}^{Ext}_{F}$ due to the following
\begin{theorem} (see \cite{Michor}). 
Let $M$ and $N$ be smooth finite dimensional manifolds. 
The space $\mathcal{C}^{\infty}(M,N)$ 
of all smooth mappings from $M$ to $N$ is a smooth (Fr\'echet) manifold modeled on spaces of smooth sections with compact support of pullback bundles along 
$f\colon M \to N $ over $M$, denoted by $\mathcal{C}_c^{\infty}(M \leftarrow f^*TN)$
\end{theorem}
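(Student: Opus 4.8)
The plan is to construct an explicit atlas on $\mathcal{C}^{\infty}(M,N)$ with one chart centered at each smooth map $f$ (so that the origin corresponds to $f$), and to identify the model space at $f$ with $\Gamma_c(f^*TN)=\mathcal{C}_c^{\infty}(M\leftarrow f^*TN)$. First I would fix an auxiliary structure on the target: choose a spray on $N$ (equivalently a torsion-free connection, or simply a Riemannian metric) and let $\exp$ denote the associated exponential map. The resulting \emph{local addition} $\tau:=(\pi_N,\exp)\colon TN\supseteq U\to N\times N$, defined on an open neighborhood $U$ of the zero section, is a diffeomorphism onto an open neighborhood $V$ of the diagonal $\triangle(N)$. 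For a fixed $f\in\mathcal{C}^{\infty}(M,N)$ I then define the candidate chart
\[u_f(s)(x):=\exp_{f(x)}\bigl(s(x)\bigr),\]
for sections $s$ of the pullback bundle $f^*TN$ (i.e.\ vector fields along $f$) whose image lies in $U$ and which vanish off a compact set; note $u_f(0)=f$. The inverse candidate sends a map $g$ lying close to $f$ to the section $x\mapsto\tau_{f(x)}^{-1}(g(x))$, using the fiberwise inverse of $\exp_{f(x)}$.

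The next step is to pin down the chart domains and verify that $u_f$ is a bijection onto an open set. The natural domain is the collection $\mathcal{U}_f$ of compactly supported sections $s\in\Gamma_c(f^*TN)$ with $s(x)\in U$ for all $x$, whose image is the set of maps $g$ satisfying $(f(x),g(x))\in V$ for all $x$ together with $g=f$ outside a compact set. I would topologize $\mathcal{C}^{\infty}(M,N)$ with the Whitney (fine) $\mathcal{C}^{\infty}$-topology, in which precisely these sets are open, so that each $u_f$ is a homeomorphism onto an open subset; bijectivity is then immediate, since $\tau$ is a fiberwise diffeomorphism and $u_f^{-1}$ is assembled pointwise from $\tau^{-1}$.

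The heart of the argument is the smoothness of the transition maps. Given $f_0,f_1$ with overlapping charts, the change of coordinates $u_{f_1}^{-1}\circ u_{f_0}$ sends a section $s_0$ to the section $x\mapsto\tau_{f_1(x)}^{-1}\bigl(\exp_{f_0(x)}(s_0(x))\bigr)$. This has the form $\Omega_*$, the pushforward of sections by a smooth fiber-respecting map $\Omega$ between the relevant open subbundles of $f_0^*TN$ and $f_1^*TN$ over $\mathrm{id}_M$. The decisive technical input, which I expect to be the main obstacle, is the general lemma that such a pushforward $\Omega_*\colon\mathcal{U}\subseteq\Gamma_c(f_0^*TN)\to\Gamma_c(f_1^*TN)$ is smooth as a map of convenient (resp.\ Fr\'echet or (LF)) vector spaces. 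Establishing this rests on the cartesian-closedness and exponential law of the convenient calculus: smoothness of $\Omega_*$ is tested on smooth curves of sections, each of which corresponds to a single smooth map on $M\times\mathbb{R}$ to which $\Omega$ may be applied directly, whence $\Omega_*$ of the curve is again smooth. I would invoke this lemma from \cite{Michor} rather than reprove it.

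It remains to record that the model spaces are of the asserted type and that the manifold is Hausdorff. When $M$ is compact, every section is automatically globally supported and $\Gamma_c(f^*TN)$ is a genuine Fr\'echet space under the seminorms of Example \ref{Fre}, read off in local trivializations and jet coordinates; when $M$ is non-compact the compact-support requirement produces a strict inductive limit over a compact exhaustion, i.e.\ an (LF)-space, which is the precise meaning of ``modeled on $\mathcal{C}_c^{\infty}(M\leftarrow f^*TN)$'' in the convenient setting. Hausdorffness follows because two distinct smooth maps are separated in the fine $\mathcal{C}^{\infty}$-topology, and the smooth compatibility of the atlas established above then endows $\mathcal{C}^{\infty}(M,N)$ with the desired Fr\'echet-manifold structure.
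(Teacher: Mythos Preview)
The paper does not actually prove this theorem: it is quoted with the attribution ``(see \cite{Michor})'' and no argument is given. Your sketch is essentially the standard proof from the cited reference (Kriegl--Michor, \S42), building charts via a local addition $(\pi_N,\exp)$ coming from a spray on $N$, identifying the model space at $f$ with $\Gamma_c(f^*TN)$, and deducing smoothness of transitions from the exponential law for the convenient calculus. That is correct and is exactly the approach of \cite{Michor}, so there is nothing to compare against in the present paper.

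One small clarification is worth making explicit: when $M$ is non-compact the model spaces $\Gamma_c(f^*TN)$ are strict (LF)-spaces rather than Fr\'echet spaces, so the parenthetical ``(Fr\'echet)'' in the statement is literally accurate only for compact $M$; you already flag this in your last paragraph, and it matches how \cite{Michor} phrases the result.
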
 

\begin{corollary} \emph{Let $M$ be a smooth finite dimensional (compact) manifold. Then the path space $PM$ is equipped with a Fr\'echet manifold structure. }
 
\end{corollary}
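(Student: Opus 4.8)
The plan is to deduce this directly from the preceding theorem of Kriegl--Michor applied to the pair $I=[0,1]$ and $M$, after dealing with the one subtlety that $I$ has boundary. First I would identify $PM$ set-theoretically with $\mathcal{C}^{\infty}(I,M)$. Since $I$ is compact, the space of smooth sections with compact support of any pullback bundle, $\mathcal{C}^{\infty}_{c}(I\leftarrow f^{*}TM)$, is just the full space $\mathcal{C}^{\infty}(I,f^{*}TM)$ of smooth sections, and the latter is a Fr\'echet space: fixing a finite trivializing atlas of $f^{*}TM$ realizes it as a closed subspace of a finite product of copies of $\mathcal{C}^{\infty}(I,\mathbb{R}^{k})$, whose Fr\'echet topology is the one of Example \ref{Fre}, given by the seminorms $\lVert g\rVert_{k}=\sup_{t\in I}\lvert g^{(k)}(t)\rvert$.

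Next I would exhibit the atlas explicitly. Fix a Riemannian metric on $M$ with exponential map $\exp$. For $f\in PM$ put
\[
U_{f}:=\{\, g\in PM \mid \text{$g(t)$ lies in a normal neighbourhood of $f(t)$ for every } t\in I \,\},
\]
and define $u_{f}\colon U_{f}\to \mathcal{C}^{\infty}(I,f^{*}TM)$ by $u_{f}(g)(t)=\exp_{f(t)}^{-1}(g(t))$. Each $u_{f}$ is a bijection onto an open subset of the model Fr\'echet space, and the transition maps $u_{f'}\circ u_{f}^{-1}$ are fibrewise composition operators built from smooth maps between open subsets of the total spaces of $f^{*}TM$ and $f'^{*}TM$; such pushforward/composition operators are smooth in the sense of the Gateaux (convenient) calculus, which is exactly the notion of smoothness used in the definition of Fr\'echet manifolds in this section. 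This produces the atlas and hence the Fr\'echet manifold structure on $PM$.

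The one genuine point needing care --- and the main obstacle --- is that $I=[0,1]$ is a compact manifold with boundary, whereas the cited theorem is stated for $\mathcal{C}^{\infty}(M,N)$ with source without boundary. I would resolve this in one of two equivalent ways: either invoke the version of the manifold-structure/exponential-law theorem in \cite{Michor} valid for source manifolds with corners (smoothness at the boundary meaning all one-sided derivatives extend continuously), or fix an embedding of $I$ as a closed subinterval of an open interval $J$, note that $\mathcal{C}^{\infty}(J,M)$ is a Fr\'echet manifold by the theorem as stated, and show that the restriction map $\mathcal{C}^{\infty}(J,M)\to\mathcal{C}^{\infty}(I,M)$ is a surjective submersion with Fr\'echet model fibre, using a continuous linear extension operator $\mathcal{C}^{\infty}(I,\mathbb{R}^{k})\to\mathcal{C}^{\infty}(J,\mathbb{R}^{k})$ of Whitney--Seeley type. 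Either route transports the charts above to honest charts on $PM$; the remaining verifications are routine bookkeeping with the seminorm estimates, together with the already recorded fact that $P M$ also carries the coarser topology of uniform convergence, with respect to which the evaluation maps are continuous.
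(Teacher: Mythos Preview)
Your proposal is correct. The paper gives no explicit proof of this corollary, treating it as an immediate consequence of the cited Kriegl--Michor theorem; your argument supplies exactly the details one would want, and in fact mirrors closely the argument the paper \emph{does} give immediately afterward for $T^{*}PM$ (Proposition~\ref{Path}): pick a Riemannian metric, use the exponential map to identify a neighbourhood of a path with an open set in the sections of a pullback bundle, then trivialize that bundle over $I$ to land in a Fr\'echet space modeled on $\mathcal{C}^{\infty}(I,\mathbb{R}^{n})$.

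Your attention to the boundary issue---that $I=[0,1]$ is not a manifold without boundary and so the theorem as quoted does not literally apply---is a genuine point that the paper glosses over entirely, both here and in Proposition~\ref{Path}. Either of your two resolutions (invoking the corners version from \cite{Michor}, or the extension-operator argument) is valid; the first is cleaner and is what is actually covered in the reference.
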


We prove the following
\begin{proposition} \label{Path}\emph{
The space $T^{*}PM$ as defined above, but now considering $X$ and $\eta$ as $\mathcal{C}^{\infty}$- maps, 
is equipped with a Fr\'echet manifold structure.}
\end{proposition}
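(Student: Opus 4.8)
The plan is to exhibit $T^*PM$ as the total space of a smooth vector bundle over the Fr\'echet manifold $PM=\mathcal C^{\infty}(I,M)$, with fibre over a path $X$ the Fr\'echet space $T^*_X(PM)=\Gamma^{\infty}\bigl(I,\,T^*I\otimes X^*T^*M\bigr)$, and then to invoke the general fact that the total space of a locally trivial vector bundle with Fr\'echet fibres and smooth transition cocycle over a Fr\'echet manifold is again a Fr\'echet manifold. First I would recall, from the theorem of Michor cited above, that $PM=\mathcal C^{\infty}(I,M)$ is a Fr\'echet manifold modelled on the spaces $\mathcal C^{\infty}_c(I\leftarrow X_0^*TM)$: since $I$ is compact the support condition is vacuous, and since $I$ is contractible the pullback bundle $X_0^*TM$ is trivial, so the model space is just $\mathcal C^{\infty}(I,\mathbb R^n)$ with the Fr\'echet topology of Example \ref{Fre}. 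After fixing a Riemannian metric on $M$, the chart around $X_0$ is $\xi\mapsto\bigl(t\mapsto\exp_{X_0(t)}\xi(t)\bigr)$, defined on a neighbourhood $U_{X_0}$ of the zero section.

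Next I would trivialize the candidate bundle over each such chart domain. Fixing a linear connection on $T^*M$, for $X\in U_{X_0}$ corresponding to $\xi$, parallel transport along the short geodesics $s\mapsto\exp_{X_0(t)}(s\xi(t))$ gives, for each $t\in I$, a linear isomorphism $T^*_{X(t)}M\to T^*_{X_0(t)}M$; tensoring with $\operatorname{id}_{T^*_tI}$ and passing to sections yields a linear isomorphism $\Psi_X\colon T^*_X(PM)\to T^*_{X_0}(PM)$. This produces a bijection
\[
T^*PM\big|_{U_{X_0}}\;\longrightarrow\;U_{X_0}\times T^*_{X_0}(PM),\qquad (X,\eta)\mapsto(X,\Psi_X\eta),
\]
and composing with the $PM$-chart and a trivialization of $X_0^*T^*M$ over $I$ identifies the target with an open subset of $\mathcal C^{\infty}(I,\mathbb R^n)\times\mathcal C^{\infty}(I,\mathbb R^n)$, a Fr\'echet space. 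These charts are the candidate atlas, and Hausdorffness is inherited from $PM$ and the fibres.

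It then remains to verify that the transition maps between two such charts are smooth in the sense of calculus on Fr\'echet (convenient) manifolds. The $PM$-component is the transition map of $PM$, already smooth; the fibre component $\eta\mapsto\Psi^{(2)}_X\circ(\Psi^{(1)}_X)^{-1}\eta$ is linear in $\eta$, and its dependence on $X$ is controlled by the solution of the parallel-transport ODE with parameters, composed with the exponential map. The hard part will be precisely this parameter dependence: I would handle it by the exponential law / Cartesian closedness of the convenient category, which reduces smoothness of a map into a space of sections over $I$ to smoothness of the associated map on the product with $I$, combined with the standard smooth dependence of ODE solutions on initial data and parameters. Granting this, the transition cocycle is smooth and local triviality endows $T^*PM$ with a Fr\'echet atlas. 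Finally I would remark that truncating the basis of seminorms of $\mathcal C^{\infty}(I,\mathbb R^n)$ recovers the Banach manifolds $\mathcal C^{k+1}(I,T^*M)$ of the earlier sections, exhibiting $T^*PM$ as an inverse limit of Banach manifolds and placing it within the framework of $\mbox{\textbf{Symp}}^{Ext}_{F}$.
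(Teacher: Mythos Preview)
Your argument is correct, but the paper takes a shorter route. Rather than building $T^{*}PM$ as a locally trivial vector bundle over $PM$ with Fr\'echet fibres and checking smoothness of the parallel-transport transition cocycle, the paper invokes the identification $T^{*}PM\cong P(T^{*}M)=\mathcal C^{\infty}(I,T^{*}M)$ (already used in the Banach setting earlier in the chapter) and thereby reduces the problem to showing that the path space of the finite-dimensional manifold $T^{*}M$ is a Fr\'echet manifold. For this it picks a Riemannian metric on $T^{*}M$, and for each $\gamma\in P(T^{*}M)$ uses the exponential map to identify a neighbourhood of the zero section of $\gamma^{*}(TT^{*}M)\to I$ with an open set in $P(T^{*}M)$; since bundles over $I$ are trivial, a trivialization of $\gamma^{*}(TT^{*}M)$ then gives a chart with values in $\mathcal C^{\infty}(I,\mathbb R^{2n})$. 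This is essentially just the Michor construction applied to $T^{*}M$ in place of $M$.

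The trade-off is clear: the paper's argument is one line once the identification with $P(T^{*}M)$ is recalled, and it sidesteps entirely the ODE/parameter-dependence verification you flagged as the hard part. Your approach, on the other hand, makes the fibration $T^{*}PM\to PM$ and the weak cotangent structure explicit from the start, which is conceptually closer to how the space is actually used; and your closing remark on the ILB structure as an inverse limit of the Banach manifolds $\mathcal C^{k+1}(I,T^{*}M)$ is exactly what the paper needs for its $\mbox{\textbf{Symp}}^{Ext}_{F}$ framework.
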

\begin{proof}
First of all, some observations. Given the coordinate description of $T^{*}PM$, this space is the same as $PT^{*}M$, the path space 
of the cotangent bundle of $M$ and the change of coordinates is given by
\[(X,\eta) \to (t\to (X(t),\eta(t))) \]
where in the right hand side, $\eta(t) \in T^{*}_{X(t)}M$. Now, for $PT^{*}M= \mathcal{C}^{\infty}(I,T^{*}M)$, we pick a Riemannian 
metric on $T^{*}M$ and for any $\gamma \in PT^{*}M$ we construct the pull-back bundle $\gamma^{*}(TT^{*}M) \to I$. 
Now, consider $\varepsilon $ sufficiently small in such way that the exponential map, exp:  $T_{\varepsilon} \to T^{*}M$, 
where $T_{\varepsilon}$ denotes the (open) set of vectors in $\gamma^{*}(TT^{*}M)$ with norm less than $\varepsilon$ (with respect to the induced metric), 
is a diffeomorphism in its image.
This map induces an identification of $\mathcal{C}^{\infty}(I, T_{\varepsilon})$ with an open set in $PT^{*}M$.
Since $\gamma^{*}(TT^{*}M)$ is a bundle with base the unit interval, 
it is a trivializable bundle. Let $\phi\colon \gamma^{*}(TT^{*}M) \to I \times \mathbb{R}^n $ be a trivialization. 
Therefore $\phi$, restricted to $T_{\varepsilon}$ induces a map $\overline{\phi}\colon \mathcal{C}^{\infty}
(I,T_{\varepsilon}) \to \mathcal{C}^{\infty}(I,\mathbb{R})$, where the right hand side, due to Example \ref{Fre}, is a Fr\'echet space and 
therefore, $(T_{\varepsilon}, \overline{\phi})$ corresponds to a Fr\'echet chart for $T^{*}PM$ as we wanted.
\end{proof}
The next step is to establish the suitable notion of smoothness for the space of solutions of the constraint equation in the Fr\'echet context. Since we lack the inverse function Theorem to determine whether such space is equipped with a Fr\'echet manifold structure, we construct a compatible family of Banach manifolds which projects to the space of smooth solutions and are compatible with the characteristic distribution.

\begin{remark}\emph{
Let $C_{\Pi}^k$ be the Banach manifold corresponding to the space of solutions $(X, \eta)$ of regularity type $(k+1,k)$ of the equation
\begin{equation}\label{Constraint}
dX_i=\Pi^{\#}(X)^{ij}\eta_j
\end{equation}
and let $C_{\Pi}^{\infty}$of $T^{*}PM$ be the space of smooth solutions of Equation \ref{Constraint}. Then
\[ C_{\Pi}^{\infty}= \bigcap_{k=0}^{\infty} C_{\Pi}^k.\]
}
\end{remark}
The following Proposition allows us to decompose $C_{\Pi}^{\infty}$ into "leaves" coming from the characteristic foliations for $C_{\Pi}^k$.
\begin{proposition}\label{decom}\emph{
The subspace $C_{\Pi}^{\infty}$ is coisotropic in $T^*PM$ and it carries a decomposition into disjoint subspaces coming from the intersections of that leaves of the characteristic foliations, for each $k\geq 0$.}
\end{proposition}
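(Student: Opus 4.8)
The plan is to reduce the statement to the Banach pieces $C_{\Pi}^{k}$, whose characteristic foliations $\mathcal F^{k}$ are already understood, and then to check that the operations involved preserve smoothness. For each finite $k$, Proposition \ref{Coiso} (whose proof in \cite{Cat} is carried out for the space of boundary fields as a Banach manifold, hence applies verbatim in the $C^{k}$ category) says that $C_{\Pi}^{k}$ is a coisotropic Banach submanifold of the corresponding weak symplectic Banach manifold of fields of regularity $(k+1,k)$, with characteristic foliation $\mathcal F^{k}$ of codimension $2n$; moreover, by \cite{Cat} together with the explicit formulas for the infinitesimal gauge action recalled in Section \ref{smoothness}, the leaves of $\mathcal F^{k}$ are exactly the orbits of the gauge group generated by the Hamiltonian vector fields of the functions $H_{\beta}$ with $\beta$ of class $C^{k+1}$ and $\beta(0)=\beta(1)=0$. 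Using the inverse-limit-Banach structure from Proposition \ref{Path} and the surrounding discussion, $C_{\Pi}^{\infty}=\bigcap_{k\geq 0}C_{\Pi}^{k}$ is an ILB submanifold of $T^{*}PM$, and at a point $x\in C_{\Pi}^{\infty}$ one has $T_{x}C_{\Pi}^{\infty}=\bigcap_{k\geq 0}T_{x}C_{\Pi}^{k}$ inside the Fr\'echet tangent space $T_{x}(T^{*}PM)=\bigcap_{k\geq 0}T_{x}(T^{*}P^{k}M)$. It therefore suffices to prove (i) the leafwise decomposition and (ii) that $(T_{x}C_{\Pi}^{\infty})^{\perp}\subseteq T_{x}C_{\Pi}^{\infty}$, the symplectic orthogonal being taken with respect to $\omega$ in the Fr\'echet tangent space.

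For (i), the infinitesimal gauge action sends $(X,\eta)$ to $(\delta_{\beta}X,\delta_{\beta}\eta)$ with $\delta_{\beta}X^{i}=-\Pi^{ij}(X)\beta_{j}$ and $\delta_{\beta}\eta_{i}=d\beta_{i}+\partial_{i}\Pi^{jk}(X)\eta_{j}\beta_{k}$, so a smooth $\beta$ acting on a smooth pair $(X,\eta)$ yields a smooth variation; integrating, the flow of a smooth gauge generator preserves $C_{\Pi}^{\infty}$. Hence $C_{\Pi}^{\infty}$ is saturated by the orbits of the smooth gauge group, and each such orbit lies inside $\bigl(\bigcap_{k\geq 0}\mathcal L^{k}\bigr)\cap C_{\Pi}^{\infty}$ for a unique choice of a leaf $\mathcal L^{k}$ of $\mathcal F^{k}$ in each $C_{\Pi}^{k}$. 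Since for every fixed $k$ the leaves of $\mathcal F^{k}$ partition $C_{\Pi}^{k}\supseteq C_{\Pi}^{\infty}$, the sets $\bigl(\bigcap_{k\geq 0}\mathcal L^{k}_{\alpha}\bigr)\cap C_{\Pi}^{\infty}$, indexed by the (mostly incompatible) choices of a leaf in each $C_{\Pi}^{k}$, are pairwise disjoint, cover $C_{\Pi}^{\infty}$, and are each saturated by smooth gauge orbits; this is the decomposition asserted in the statement.

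For (ii), fix $x\in C_{\Pi}^{\infty}$ and $v\in (T_{x}C_{\Pi}^{\infty})^{\perp}$, i.e.\ a smooth vector along $x$ with $\omega_{x}(v,w)=0$ for all $w\in T_{x}C_{\Pi}^{\infty}$. The key analytic input is that $T_{x}C_{\Pi}^{\infty}$ is $C^{0}$-dense in $T_{x}C_{\Pi}^{k}$ for every $k$: a tangent vector to $C_{\Pi}^{k}$ at $x$ is the solution of the linearized constraint ODE determined by a variation of $X(0)$ and a variation of $\eta$ of class $C^{k}$, smooth $1$-forms are $C^{0}$-dense among $C^{k}$ ones, and the solution operator of the linearized ODE is continuous. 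Because $v$ is smooth, hence bounded on $I$, the linear functional $w\mapsto\omega_{x}(v,w)=\int_{0}^{1}\bigl(v^{X,i}w^{\eta}_{i}-w^{X,i}v^{\eta}_{i}\bigr)\,dt$ is continuous for the $C^{0}$-topology on $T_{x}C_{\Pi}^{k}$; vanishing on the dense subspace $T_{x}C_{\Pi}^{\infty}$, it vanishes on all of $T_{x}C_{\Pi}^{k}$. Therefore $v\in (T_{x}C_{\Pi}^{k})^{\perp}\subseteq T_{x}C_{\Pi}^{k}$ by coisotropy of $C_{\Pi}^{k}$, and intersecting over $k$ gives $v\in\bigcap_{k\geq 0}T_{x}C_{\Pi}^{k}=T_{x}C_{\Pi}^{\infty}$. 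Together with (i) this proves the proposition.

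The main obstacle is step (ii), and within it two analytic points: the identification $T_{x}C_{\Pi}^{\infty}=\bigcap_{k}T_{x}C_{\Pi}^{k}$ together with the $C^{0}$-density of $T_{x}C_{\Pi}^{\infty}$ in each $T_{x}C_{\Pi}^{k}$ (this is exactly where the ILB structure of the constraint manifold is genuinely used), and the fact that $\omega$ restricted to the Fr\'echet tangent space is still weakly non-degenerate so that the orthogonal complement behaves as expected, which follows from the same ``set one component of the test variation to zero'' argument used earlier in the excerpt, now applied with smooth test variations. I would emphasize that nothing stronger is claimed: $C_{\Pi}^{\infty}$ is not asserted to be a genuine Fr\'echet submanifold admitting a smooth symplectic reduction, only to be coisotropic and to carry the indicated decomposition by generalized leaves, which is what is needed in order to set up the relational symplectic groupoid over $T^{*}PM$ in the Fr\'echet setting.
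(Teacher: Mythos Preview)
Your proof is correct for the statement as written, but the route differs from the paper's in both halves. For coisotropicity the paper simply remarks that the local computation for $C_{\Pi}^{k}$ carries over verbatim to smooth data; you instead pass through the ILB filtration via a density-plus-continuity argument, which is valid and makes the role of the inverse limit explicit, at the price of being less direct. For the decomposition, the gap in method is more interesting. You give the purely set-theoretic partition by intersections of $\mathcal F^{k}$-leaves and observe that each piece is saturated by smooth gauge orbits; this suffices for the proposition as stated. The paper goes further and proves, inside this proof, the tangent-level statement $D^{\infty}=D^{0}\cap TC_{\Pi}^{\infty}$ (Proposition~\ref{inter}), relying on the nontrivial smooth-representative lemma from \cite{Cat} (every $\mathcal F^{k}$-leaf contains a smooth point) together with a path-lifting lemma showing that any two smooth points in the same $\mathcal F^{0}$-leaf are joined by a path lying entirely in $C_{\Pi}^{\infty}$. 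This upgrades your inclusion ``smooth orbit $\subseteq$ intersection of leaves'' to an equality and identifies the tangent spaces of the pieces with $D^{\infty}$. Your argument is more elementary and avoids invoking Lemma~\ref{gauge}, but the paper's extra content is precisely what is used in the subsequent proposition to assemble the Fr\'echet relational groupoid, so you would still need to supply Proposition~\ref{inter} separately.
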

\begin{proof}

First, observe that the coisotropicity, being a local condition, can be proven directly by using the same argument to show that $C_{\Pi}^k$ is coisotropic. However, since the use of Frobenius Theorem for involutive distributions is out of reach, we have to construct the leaf decomposition of  
$C_{\Pi}^{\infty}$ as an intersection of leaves of the underlined Banach manifolds.

As we mentioned before, the foliation charts for $C_{\Pi}$ used in the previous proofs are given by 
the integrability of the coisotropic distribution.  Here, the leaves of the characteristic foliation are precisely the orbits of the infinitesimal action of  a Lie algebra on $T^*PM$, 
following the same argument as stated in \cite{Cat}.
\subsubsection{The Hamiltonian foliation }
In the previous constructions, the foliation charts for $T^*PM$ were used in order to prove the smoothness conditions for immersed canonical relations
$L_i$. In this subsection, we discuss the geometry of the intersection of the characteristic leaves, that in general does not have to be a smooth manifold, since Frobenius 
theorem does not hold in the usual way for Fr\'echet manifolds, i.e. there are involutive distributions which are not integrable. \\

The construction goes as follows. Let $\beta\colon  I \to T^{*}M$ be a $\mathcal{C}^{\infty}$ function such that $\beta (0)= \beta (1)=0$. 
This defines a Hamiltonian function $H_\beta$ defined in the following way:
\[H_\beta= \int_0^1\langle dX+ \Pi^{\#}\eta, \beta \rangle \]

The Hamiltonian vector fields associated to these functions are given in local coordinates by:

\[ \xi_{\beta}X^i(u)= \Pi^{ij}(X(u))\beta_j(X(u),u)\]
for the coordinate in the direction of $X$ and

\[\xi_{\beta}\eta_i(u)= d_u\beta_i(X(u),u) + \partial_i \Pi^{jk}(X(u))\eta_j(u)\beta_k(X(u),u)\]

for the direction along $\eta$.\\
In \cite{Cat} it is proven that this is a distribution of closed subspaces of codimension $2n$, where each subspace leaves in $T_{\tilde{X}}T^{*}PM$, with 
$\tilde{X} \in \mathcal{C}_{\Pi}$. The idea is to prove that this argument can be extended in the case where $\beta \in \Gamma ^{\infty}(X^{*}T^{*}M)$ and such that 
$\beta(0)=\beta(1)=0.$ In order to do this, consider the linear map 
\[\beta \to \xi_{\beta}.\]
This map is injective: its kernel corresponds to the solution of the following homogeneous ODE:
\[ d_u \beta(u)= A(u) \beta(u) \]
where $A(u)_{ij}= \partial_i \Pi ^{kj}\eta_j$ and with initial conditions $\beta(0)=\beta(1)=0$, which implies that the map $\beta$ is identically zero.\\
If a vector $(\dot{X},\dot{\eta})$ belongs to the distribution generated by $\xi$, the $\eta$ direction satisfies the equation:
\begin{equation} \label{K}
\dot{\eta}(u)= d_u\beta (u)+ A(u)\beta(u). 
\end{equation}
By using the auxiliary equation
\[d_uV(u)= V(u)A(u),\]
equation (\ref{K}) can be rewritten as
\[V(u)\dot{\eta}(u)= d_u(V(u)\beta(u))\]
with initial conditions $V(0)=1$ and integrating in both sides (and using the vanishing conditions for $\beta$) we obtain that $(\dot{X},\dot{\eta})$ satisfies
\begin{enumerate} 
 \item 
\begin{equation}
\dot{X}(0)=0.
\end{equation} 
\item 
\begin{equation}\label{P}
\int_I V(u)\dot{\eta}(u)=0.
\end{equation}

\end{enumerate}
which corresponds to a system of $2n$ independent equations. Conversely, for any solution $(\dot{X}, \dot{\eta})$ 
 to the previous system, it is possible to find a $\beta$ such that $(\dot{X}, \dot{\eta})= \xi_\beta$. 
Now, here it is important to make a distinction: The distributions spanned by the vector fields $\xi_{\beta}$ depend on the regularity of 
$\beta$. Being more precise, let $D^0$ be the distribution generated by the vector fields $\xi_{\beta^0}$, where $\beta^0 \in \mathcal{C}_0^1(I, X^{*}(T^{*}M))$ and 
$D^{\infty}$ the one generated by the vector fields $\xi_{\beta^{\infty}}$, 
where $\beta^{\infty} \in \mathcal{C}^{\infty}(I, X^{*}(T^{*}M))$. We distinguish in a similar way between $\mathcal{C}^0_{\Pi}$ and 
$\mathcal{C}_{\Pi}^{\infty}$ depending wether we consider smooth solutions of the constraint equation or not.\\
We prove the following
\begin{proposition} \label{inter} \emph{With the previous notation,
\[D^{\infty}= D^0 \cap TC^{\infty}_{\Pi}\]}
\end{proposition}
\begin{proof}
The fact that $D^{\infty} \subset D^0 \cap TC^{\infty}_{\Pi} $ 
is easy to check. For the other direction, it is sufficient to show that $D^{\infty}$ is tangent to $D^{0}$. In order to do this, we will use 
the following lemma, proven in \cite{Cat}:

\begin{lemma} \label{gauge}\emph{ Every leaf in the characteristic foliation of $C^{k}_{\Pi}$ admits a smooth representative, i.e, for every solution 
$(X,\eta) \in C^{k}_{\Pi}$ there exists a solution $(X,\eta)^{\infty} \in C^{\infty}_{\Pi}$ gauge equivalent to it.}
\end{lemma}

For the proof of this Lemma, a sequence of smooth gauge transformations is constructed in a $\mathcal{C}^{0}$ 
neighborhood of $(X,\eta) \in C^{0}_{\Pi}$, dividing the path $X$ in an odd number of subintervals and constructing in each one of them a
parameter $\beta$ which generates the gauge transformation.\\
Now, by means of this Lemma, it is possible to show that

\begin{lemma}\label{path}\emph{
Let $(X_0,\eta_0)^{\infty}$ and  $(X_1,\eta_1)^{\infty}$ be to points of $C^{0}_{\Pi}$ in the same 
leaf of the characteristic foliation.Then, there exists a path $\gamma\colon  I \to C^{0}_{\Pi} $ such that $\gamma(0)=(X_0,\eta_0)^{\infty},\, 
\gamma(1)=(X_1,\eta_1)^{\infty}$ and $\gamma(t) \in C_{\Pi}^{\infty}, \forall t \in I.$}
\end{lemma}

\begin{proof}First, we pick a path $\tilde{\gamma}$ in $C_{\Pi}^{0}$ with initial and final points the given solutions. 
Without loss of generality, we assume that $(X_1,\eta_1)^{\infty}$ belongs to the same $\mathcal{C}^0$- neighborhood 
of $(X_0,\eta_0)^{\infty}$, otherwise we partition the path into subintervals, 
each of them living in a $\mathcal{C}^{0}$- neighborhood. Using the notation of lemma \ref{gauge}, the path $\gamma$ is constructed 
in the following way:
\[\gamma(t):= \beta (\tilde{\gamma}(t))\]

where $\beta (\cdot)$ denotes the smooth gauge transformation which is defined in \cite{Cat}. In this way we prove that $D^0 \cap TC^{\infty}_{\Pi} \subset D^{\infty}$, as we 
wanted.
\end{proof} 
Thus, Proposition \ref{inter} follows from Lemma \ref{gauge} and Lemma \ref{path}.
\end{proof}
It also follows that the tangent spaces to the `leaves" of $C_{\Pi}^{\infty}$ are generated by $D^{\infty}$, which completes the proof of Proposition \ref{decom}.
\end{proof}


From the previous facts, we can easily check the following 
\begin{proposition}\emph{
Denoting $(\mathcal G_k, L_k, I_k)$ the regular relational symplectic groupoids defined fixing the regularity type $(k+1,k)$ for $(X,\eta)$, then the data $(\mathcal G_{\infty}, L_{\infty}, I_{\infty})$ where 
\begin{enumerate}
\item $\mathcal G_{\infty}:= T^*PM$
\item $L_{\infty}:= \bigcap _0^{\infty} L_k$
\item $I_{\infty}:= \bigcap_0^{\infty} I_k$
\end{enumerate}
corresponds to a relational symplectic groupoid over $\mbox{\textbf{Sym}}^{Ext}_{F}$.
}
\end{proposition}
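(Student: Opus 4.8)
The plan is to verify that the triple $(\mathcal G_\infty, L_\infty, I_\infty)$ satisfies the axioms of a relational symplectic groupoid in the categroid $\mbox{\textbf{Sym}}^{Ext}_F$, leveraging the fact that the corresponding data $(\mathcal G_k, L_k, I_k)$ satisfy these axioms at every finite regularity level $k$, together with the analytic facts established in Propositions \ref{Path}, \ref{decom} and \ref{inter}. First I would observe that $\mathcal G_\infty = T^*PM$ (with smooth $(X,\eta)$) is a weak symplectic Fréchet manifold: Proposition \ref{Path} gives the Fréchet structure, the form $\omega$ of Equation \ref{symplectic} is closed and weakly nondegenerate by the same computation as in the Banach case (which only uses the integral pairing and does not depend on the regularity class), and truncating the seminorms recovers the Banach manifolds $\mathcal G_k = T^*P^{(k)}M$. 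So the first condition in the definition of a morphism in $\mbox{\textbf{Sym}}^{Ext}_F$ is immediate for each of $L_\infty$, $I_\infty$ and the derived relations $L_1^\infty, L_2^\infty, L_3^\infty$.

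Next I would check the set-theoretic (algebraic) axioms \textbf{A.1}--\textbf{A.6}. These are purely combinatorial statements about concatenation and inversion of $T^*M$-paths up to $T^*M$-homotopy, and the arguments given in the proof of Theorem \ref{Relational} (cyclicity via the remark on $*$-compatibility with $\sim$, involutivity of $I$, associativity of concatenation, the identification of $L_1$ with the homotopy-trivial paths, etc.) go through verbatim with ``$\mathcal C^{k+1}$'' replaced by ``$\mathcal C^\infty$'', since smoothness is preserved under concatenation after reparametrization by a bump function and under the involution $\phi$ of Equation \ref{phi}. Thus as sets $L_1^\infty = \bigcap_0^\infty L_1^k$, $L_2^\infty = \bigcap_0^\infty L_2^k$ and $L_3^\infty = \bigcap_0^\infty L_3^k$, and the algebraic relations among them hold.

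The substantive work is the third condition in the definition of a morphism of $\mbox{\textbf{Sym}}^{Ext}_F$: one must show that each $L_i^\infty = \bigcap_0^\infty L_i^k$ carries an ILB-manifold structure (in the sense of \cite{Omori}) with $TL_i^\infty = \bigcap_0^\infty TL_i^k$ a Lagrangian subspace of the relevant tangent space. For $L_1^\infty$ and $L_2^\infty$ I would use the holonomy-groupoid description from Subsection \ref{smoothness} applied level by level: the characteristic foliations of $C^k_\Pi$ and their holonomy groupoids $\mathrm{Hol}(L_2^k, W(\Lambda))$ form a compatible (inverse) system of Banach manifolds, and Proposition \ref{inter} identifies the intersection of the leaves with the leaves of the smooth distribution $D^\infty$, so the inverse limit of these coverings is the desired ILB structure. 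Lagrangianity of $TL_i^\infty$ then follows from the Banach-level Lagrangianity proved in the Lagrangianity-of-$L_i$ subsection: since $T L_i^\infty = \bigcap_k T L_i^k$ and each $TL_i^k$ is Lagrangian, the intersection is isotropic, and the reduced spaces $\underline{C^\infty_\Pi}$, $\underline{L_1^\infty}$, etc., coincide with the finite-dimensional reductions computed before, so the dimension count (as in Propositions \ref{L1Lag} and the $L_3$ proposition) again forces maximality; Proposition \ref{Coisotropic} then upgrades isotropic to Lagrangian. Finally, regularity in the sense of axioms \textbf{A.7}--\textbf{A.9} follows because $C^\infty_\Pi$ is coisotropic of finite codimension by Proposition \ref{decom}, $\underline{L_1^\infty} \cong M$ as before, and the source relation $S$ is again the graph of the evaluation map $\gamma \mapsto X(0)$, which is a submersion by Lemma \ref{mani} and its corollary.

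The main obstacle I expect is precisely the ILB-manifold claim: one must show that the inverse system of Banach charts for the $L_i^k$ (coming from the foliation atlases and holonomy groupoids, which a priori are chosen independently at each level) can be arranged to be \emph{compatible}, i.e. that the transition between the level-$k$ and level-$(k+1)$ foliation charts restricts correctly, so that the intersection genuinely inherits an inverse-limit-of-Banach-manifolds structure rather than being merely a set. This is where Lemma \ref{gauge} (every leaf of $C^k_\Pi$ has a smooth representative) and Lemma \ref{path} are essential, since they guarantee that the smooth leaves are ``dense enough'' in each Banach leaf to pin down the ILB charts; the rest of the verification is a routine, if lengthy, transcription of the Banach-level arguments.
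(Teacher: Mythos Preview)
Your proposal is correct and follows essentially the same approach as the paper: the paper in fact gives no detailed proof at all, merely stating ``From the previous facts, we can easily check the following'' and relying on Propositions~\ref{Path}, \ref{decom}, \ref{inter} together with Lemmas~\ref{gauge} and~\ref{path} to supply the ILB structure and Lagrangianity, exactly as you outline. Your identification of the compatibility of the level-$k$ foliation charts as the nontrivial point is apt and matches the role those lemmas play; note only that the statement as written does not assert \emph{regularity} in the Fr\'echet setting, so your last paragraph on A.7--A.9 is additional (though harmless).
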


\chapter{Categorical and algebraic features}\label{Frobenius}

The objective on this chapter is to introduce several constructions for more general categories (not just $\mbox{\textbf{Symp}}^{ext}$ or $\mbox{\textbf{Symp}}^{ext}_F$), which resemble the construction of relational symplectic groupoids. It turns out that some of the algebraic axioms defining $(\mathcal G, L,I)$ appear as natural generalizations of the axioms defining Frobenius algebras, monoid structures or H*-algebras, and they correspond, in some way, to the version `before reduction" of such structures, in the same way that relational symplectic groupoids appear as symplectic groupoids before symplectic reduction.\\

This discussion yields to the case of relational symplectic groupoids over linear spaces that is an intermediate space towards the quantization of Poisson manifolds via relational symplectic groupoids. (See Section \ref{Perspectives}).\\

In the sequel we consider a category $\mathcal C$ which admits products and  with a final object denoted by $pt$. 
\section{Weak monoids}
\begin{definition}\emph{ A weak monoid in $\mathcal C$ corresponds to the following data:
\begin{enumerate}
\item An object $X$.
\item A morphism $L_1\colon  pt \to X$
\item A morphism $L_3\colon  X\times X \to X,$
\end{enumerate}
satisfying the following axioms
\begin{itemize}
\item (Associativity). $$L_3 \circ (L_3 \times Id)=L_3 \circ (Id \times L_3)$$
\item (Weak unitality). $$L_3\circ(L_1 \times Id)= L_3\circ (Id \times L_1)=:L_2$$
and $L_2\circ L_2=L_2$.
\end{itemize}
We call $L_1$ a weak unit and $L_2$ a projector.}
\end{definition}
\begin{example}\emph{
Any monoid object in $\mathcal C$ is a weak monoid with $L_1$ being the unit and $L_2$ being the identity morphism.}
\end{example}
\begin{example} \emph{
As we will se later, any relative Frobenius algebra $X$ in \textbf{Rel} (see Section \ref{Frobe}) is by definition a weak monoid.}
\end{example}

\begin{example}\emph{
A commutative monoid $(X,m,1)$ equipped with a special element  $p$ such that $m(p,p)=1$, can be made into a weak monoid. In this case,  $L_3=m$, $L_1=p$ and $L_2\colon  x \mapsto m(p,x)$. Since in general $L_2$ is not the identity morphism, this is not an example of an usual monoid, but for a commutative monoid in \textbf{Set} it can be checked that the quotient $X / L_2$ is a monoid.}
\end{example}
\begin{remark} \emph{
The last example does not yield in general a monoid if we start with a commutative monoid in a category different from \textbf{Set}. For instance, if we take the monoid $(\mathbb R, \cdot, 1)$ and the special element $p=-1$, the quotient space $\underline X=[0, \infty)$ is a monoid object in \textbf{Set} but it is not an object in \textbf{Man}, the category of smooth manifolds and smooth maps. However the weak monoid $(\mathbb R, \cdot, 1)$ is a smooth manifold.}

\end{remark}

\section{Weak *-monoids}
As a special case of weak monoids are those which are compatible with adjoints. 

\begin{definition}\emph{
Let $\mathcal C$ be a dagger category, that is, a category equipped with an involutive, identity on objects functor $\dagger: \mathcal C^{op}\to \mathcal C$,  which has also products, adjoints and a special object $pt$.
A weak *-monoid in $\mathcal C$ consists of the following data:
\begin{enumerate}
\item An object $X$
\item A morphism $L_3\colon  X \times X \to X$
\item A morphism $\psi\colon  X \to X^{\dagger}$
\end{enumerate}
such that the following axioms hold
\begin{itemize}
\item (Associativity). $$L_3 \circ (L_3 \times Id)=L_3 \circ (Id \times L_3)$$
\item (Involutivity). $\psi^{\dagger}\psi =\psi=\psi^{\dagger}= \Id$
\item  Defining $\psi_R$ the (unique) induced morphism $\psi_R\colon  pt \to X \times X$, then
$$L_1:=L_3\circ \psi_R$$ determines a weak monoid $(X,L_1,L_3)$
\end{itemize}
}
\end{definition}

\begin{example}
\emph{
Consider $\mathcal C$ the category $\mbox{\textbf{Vect}} ^{Ext}$ of vector spaces (possibly infinite dimensional) whose morphisms are linear subspaces. The dagger structure is the identity in objects and the relational converse for morphisms. Let $\phi$ be an involutive diffeomorphism of $M$. If $X = \mathcal C^{\infty} (M)$, then $(X, +, \phi^*)$ is a weak *-monoid.
To check this, first observe that 
\begin{eqnarray*}
L_1&=&\{f + \phi^*(f), \, f \in X\}\\
L_2&=&\{(g, g+h+\phi^*h), \, g,h \in X \}\\
L_2 \circ L_2 &=&\{(g, g+h+h^{'}+\phi^*h+\phi^*h^{''}), \, g,h, h^{'} \in X \}.
\end{eqnarray*}
Setting $h^{'}\equiv 0$ we get that $L_2 \subset L_2 \circ L_2$ and by linearity of $\phi$ $L_2 \circ L_2 \subset L_2$. Associativity and unitality follow from the additive structure of $X$.
}
\end{example}

\begin{example}
\emph{(Deformation quantization).\label{Def}
Let $\mathcal C=\mbox{\textbf{Vect}} ^{Ext}$ and consider a Poisson manifold $M$. Let $X= \mathcal C ^{\infty}(M, \mathbb C)$ be the algebra of smooth complex valued functions on $M$.  By deformation quantization for Poisson manifolds (see, for example, \cite{Bayen}), given a Poisson structure $\Pi$ on $M$, there exists an associative $\mathbb C [\varepsilon]]$- linear product in $X [[ \varepsilon]]$ \cite{Kontsevich1}, denoted by $\star$, such that \footnote{in this case that we are considering complex valued functions we set $\varepsilon= i\hbar / 2$}
\begin{enumerate}
\item $1\star f= f\star 1 =f, \forall f \in X[[ \varepsilon]] $
\item $$f\star g= fg + \varepsilon B_1(f,g)+ \varepsilon^2 B_2(f,g)+\cdots,$$
with $f,\, g \in X \subset X[[\varepsilon]] $ and $B_i$ are bidifferential operators, where 
$$\Pi(df,dg)= B_1(f,g).$$
\end{enumerate}
It can be easily checked that $(X[[ \varepsilon]], \star, \overline{\cdot})$ is a weak-* monoid, where $\overline{\cdot}$ denotes complex conjugation.
}
\end{example}
\section{Cyclic weak *-monoids}
This is the type of weak *-monoids which are compatible with the cyclicity axiom for relational symplectic groupoids. More precisely,
\begin{definition}\emph{
Let $\mathcal C$ be  a dagger category with products and adjoints and such that every object $X$ admits a \emph{canonical pairing} $\phi:= X\times X^{\dagger}\to pt$.
A cyclic weak *-monoid in $\mathcal C$ consists of the following data:
\begin{enumerate}
\item An object $X$
\item A morphism $\psi\colon  X \to X^{\dagger}$
\item A morphism $L\colon  X \times X \to X^{\dagger}$
\end{enumerate}
such that
\begin{itemize}
\item (Cyclicity). For the associated morphism $L_R: (\phi_X\circ (L\times \Id)^t)\colon  pt \to X^3$
$$L_R= \sigma \circ L_R= \sigma\circ \sigma \circ L_R$$
where 
\begin{eqnarray}
\sigma\colon  X^3 &\to& X^3\\
(a,b,c)&\mapsto& (c,a,b)
\end{eqnarray}
\item If $L_3:= \psi ^{\dagger} \circ L$, then $(X, \psi, L_3)$ is a weak *-monoid.
\end{itemize}
}
\end{definition}

\begin{remark}
\emph{
For the case of a unimodular Poisson manifold $(M, \Pi)$, it can be proven, following Example \ref{Def} that deformation quantization gives rise to a cyclic weak *-monoid, where $\psi= \phi^*$, with $\phi$ being a $\Pi$- invariant diffeomorphism. This is part of some work in progress and is related to work on 
traces in deformation quantization (Section \ref{Perspectives}).
}
\end{remark}
\begin{example}
\emph{(Relational symplectic groupoids). Following Chapter \ref{PSMmain}, we consider $\mathcal C= \mbox{\textbf{Symp}}^{ext}$ and $M$ an arbitrary Poisson manifold.
\begin{proposition} \emph{The following data
\begin{eqnarray*}
X&:&= T^*(PM)\\
\psi&\colon & (x, \eta) \mapsto (i^* \circ x,i^* \circ \eta)\\
&\mbox{                   }& i: t \mapsto 1-t\\
L&\colon &= \{(x_1, \eta_1), (x_1, \eta_1),(x_3, \eta_3) \vert (x_1* x_2, \eta_1* \eta_2) \sim \psi((x_3, \eta_3))\},
\end{eqnarray*}
where $\sim$ denotes the equivalence relation by $T^*M$- homotopy of $T^*M$-paths,
corresponds to a cyclic weak $*$- monoid.
In this case,
\begin{eqnarray*}
L_1&=&\{(x, \eta) \in X \vert (x,\eta)\sim (x\equiv x_0, \eta \equiv 0), x_0 \in M\}\\
L_2&=&\{(x_1, \eta_1), (x_2, \eta_2) \in X \times X \vert (x_1,\eta_1)\sim (x_2, \eta_2)\}.
\end{eqnarray*}
}
\end{proposition}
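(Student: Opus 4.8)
The statement asserts that the triple $(X,\psi,L)$ with $X=T^*(PM)$, $\psi$ the path-reversal $(x,\eta)\mapsto(i^*\circ x,\,i^*\circ\eta)$, and $L$ the $T^*M$-homotopy relation described, is a cyclic weak $*$-monoid in $\mbox{\textbf{Symp}}^{ext}$, with the indicated $L_1$ and $L_2$. The key observation is that essentially all of the work has already been done in the proof of Theorem \ref{Relational}: there we verified that the data $(\mathcal G,L,I)$ with $\mathcal G=T^*(PM)$, the same $L$ (as $L=I_{rel}\circ L_{rel}$ after the cyclic-relation bookkeeping) and $I=\psi$ constitute a relational symplectic groupoid, and in particular that axioms A.1--A.6 hold and that $L_1$, $L_2$ have exactly the descriptions restated here. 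So the proof should be a short reduction: show that \emph{every} relational symplectic groupoid, when the structure maps are repackaged via the canonical pairing $\phi_X\colon X\times \overline X\nrightarrow pt$ (which in $\mbox{\textbf{Symp}}^{ext}$ is simply the diagonal Lagrangian read as a relation to the point), yields a cyclic weak $*$-monoid, and then invoke Theorem \ref{Relational}.

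First I would spell out the dictionary between the two sets of axioms. A cyclic weak $*$-monoid needs: (i) cyclicity of the associated morphism $L_R=\phi_X\circ(L\times\Id)^t\colon pt\nrightarrow X^3$ under the cyclic permutation $\sigma$; (ii) that $L_3:=\psi^\dagger\circ L$ together with $\psi$ forms a weak $*$-monoid, which in turn unwinds to associativity of $L_3$ (A.4), involutivity $\psi^\dagger\psi=\psi\psi^\dagger=\Id$ (A.2, using that $\psi$ is an antisymplectomorphism so $\psi^\dagger=\psi^{-1}=\psi$), and the weak-unitality $L_3\circ(L_1\times\Id)=L_3\circ(\Id\times L_1)=:L_2$ with $L_2\circ L_2=L_2$ (A.6 together with Corollary \ref{invariance2}), where $L_1:=L_3\circ\psi_R$ with $\psi_R\colon pt\nrightarrow X\times X$ the graph of $\psi$ read as a ``state'' — this is precisely the $L_I$ of Axiom A.5, so $L_1=L_3\circ L_I$ matches. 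The one genuinely new-looking item is (i), cyclicity of $L_R$; but unwinding $L_R=\phi_X\circ(L\times\Id)^t$ in $\mbox{\textbf{Symp}}^{ext}$ shows $L_R$ is just $L$ itself viewed as a Lagrangian submanifold of $X^3$ (the pairing with $\overline X$ collapses the last slot into a point while recording the constraint), so cyclicity of $L_R$ under $\sigma$ is literally Axiom A.1, which we proved for $T^*(PM)$ by the path-algebra manipulation $x*y\sim z^{-1}\Leftrightarrow z*x\sim y^{-1}$.

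Concretely the steps are: (1) recall from Theorem \ref{Relational} that $(T^*(PM),L,\psi)$ satisfies A.1--A.6 and that $L_1,L_2$ are as stated; (2) observe $\psi^\dagger=\psi$ from A.2 and the fact (proved under A.2 in the excerpt) that $\psi$ is an antisymplectomorphism, hence involutivity holds; (3) identify $L_3$ of the present statement with $L_3=I_{rel}\circ L_{rel}$ of the relational groupoid — equivalently $\psi^\dagger\circ L$ — and transport A.4 to get associativity; (4) identify $L_1=L_3\circ\psi_R$ with $L_3\circ L_I$ and transport A.5, A.6, Corollary \ref{invariance2} to get weak unitality and $L_2\circ L_2=L_2$; (5) unwind the canonical pairing to see $L_R\cong L\subset X^3$ and transport A.1 to get cyclicity. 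Each transport is a one-line rewriting of a composition of relations, using only that in $\mbox{\textbf{Symp}}^{ext}$ composition of relations is composition of sets and that the immersed-Lagrangian (smoothness) conditions were already checked in Section \ref{smoothness}.

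The main obstacle — really the only place one must be careful — is step (5): making precise that the ``canonical pairing'' $\phi_X\colon X\times X^\dagger\to pt$ exists in $\mbox{\textbf{Symp}}^{ext}$ for the infinite-dimensional weak symplectic Banach manifold $T^*(PM)$ and that the bending isomorphism $L\rightsquigarrow L_R$ it induces is an immersed canonical relation rather than merely a set. This is where the weak (non-surjective $\omega^\#$) symplectic structure could in principle bite, but the pairing here is just the graph of $\Id_{X}$ reinterpreted via the cyclic structure of $X^3$, so Proposition \ref{projection}-style arguments (or the direct smoothness results of Section \ref{smoothness} already established for $L$) cover it. Everything else is bookkeeping, so the proof in the paper will legitimately be the one-line ``direct consequence of Theorem \ref{Relational}'' once this dictionary is recorded.
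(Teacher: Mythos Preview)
Your proposal is correct and matches the paper's approach: the paper does not give an explicit proof of this proposition but states it as an example ``Following Chapter \ref{PSMmain}'', i.e., as a direct consequence of Theorem \ref{Relational} and the verification of axioms A.1--A.6 and the descriptions of $L_1$, $L_2$ carried out there. Your dictionary translating the relational symplectic groupoid axioms into the cyclic weak $*$-monoid axioms (A.1 $\leadsto$ cyclicity of $L_R$, A.2 $\leadsto$ involutivity of $\psi$, A.4 $\leadsto$ associativity, A.5/A.6/Corollary \ref{invariance2} $\leadsto$ weak unitality and idempotency of $L_2$) is exactly what the paper leaves implicit.
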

}
\end{example}

\begin{example} \label{functions}\emph{ (The space of functions on a manifold).
For this example, we consider $M$ a smooth manifold and $\phi$ a given diffeomorphism of $M$ such that $ \phi^2 =\Id $.
We consider the vector space $\mathcal G$ of smooth functions on $M$ equipped with the product
$$f *g := 1/4 (fg+f \phi^*g + \phi^* f g + \phi^* f \phi^* g)$$
and the special element 1.
In turns out that $(\mathcal G, *, \phi^*)$ has the structure of a cyclic weak *-monoid. Moreover, the induced morphism $L_2$ corresponds to the projection to the $\phi$-even functions. \\
We obtain also that $(\mathcal G, *, \phi^*)$ can be equipped with a Frobenius algebra structure, considering the inner product 
$$\langle f,g \rangle:= \int_M fg.$$
}
\end{example}

\section{ Frobenius and H*-algebras}\label{Frobe}
The rest of this Chapter is devoted to some results concerning the relation between groupoids and Frobenius algebras. The generalization of these results turns out to give 
an algebraic characterization of the relational symplectic groupoid and its connection to Frobenius and $H^*-$ algebras. \\

In \cite{FrobeniusChris}, the connection between groupoids and Frobenius algebras is made precise. Namely, there is a way to understand groupoids in the category \textbf{Set} as what we called \emph{Relative Frobenius algebras}, a special type of dagger Frobenius 
algebra in the category \textbf{Rel}, where the objects are sets and the morphisms are relations.\\
In addition, a correspondence between semigroupoids 
(a more relaxed version of groupoids where the identities or inverses do  not necessarily exist) and $H^*-$ algebras, a structure similar to Frobenius algebras 
but without unitality conditions and a more relaxed Frobenius relation. 
\section{Groupoids and relative Frobenius algebras}
In this section, we consider a groupoid in \textbf{Set} as a category internal to the category \textbf{Set} of sets as objects and maps as morphisms.
Now, consider the category \textbf{Rel} with sets and relations. In addition, this category carries an involution 
$\dagger\colon \mbox{\textbf{Rel}}^{op} \to \mbox{\textbf{Rel}}$ given by the transpose of relation;  this is a contravariant involution 
and is the identity in objects, therefore, \textbf{Rel} is a dagger symmetric monoidal category that contains \textbf{Set} as a subcategory.
In \textbf{Rel} we define what we call \textit{relative Frobenius algebra}, a special dagger Frobenius algebra.
\begin{definition}
\emph{A morphism $m\colon  X\times X \nrightarrow X$ in \textbf{Rel} is called a special dagger Frobenius algebra or shortly, relative Frobenius algebra, if it satisfies the following axioms
\begin{itemize}
 \item (F) $(1\times m) \circ (m^{\dagger} \times 1)= m^{\dagger} \circ m= (m\times 1) \circ (1 \times m^{\dagger}),$
\item (M) $m\circ m^{\dagger}=1,$
\item (A) $m\circ (1 \times m)= m \circ (m\times 1),$
\item (U) $\exists u\colon  1 \nrightarrow X \vert m\circ (u \times 1)=1= m\circ(1\times u)$.
\end{itemize}
} 
\end{definition}
\begin{remark}
\emph{If such $u$ exists, it is unique.} 
\end{remark}
\begin{figure}
\psfrag{Rq}{$\mathbb{R}^q$}
\centering%
\center{\includegraphics[scale=0.55]{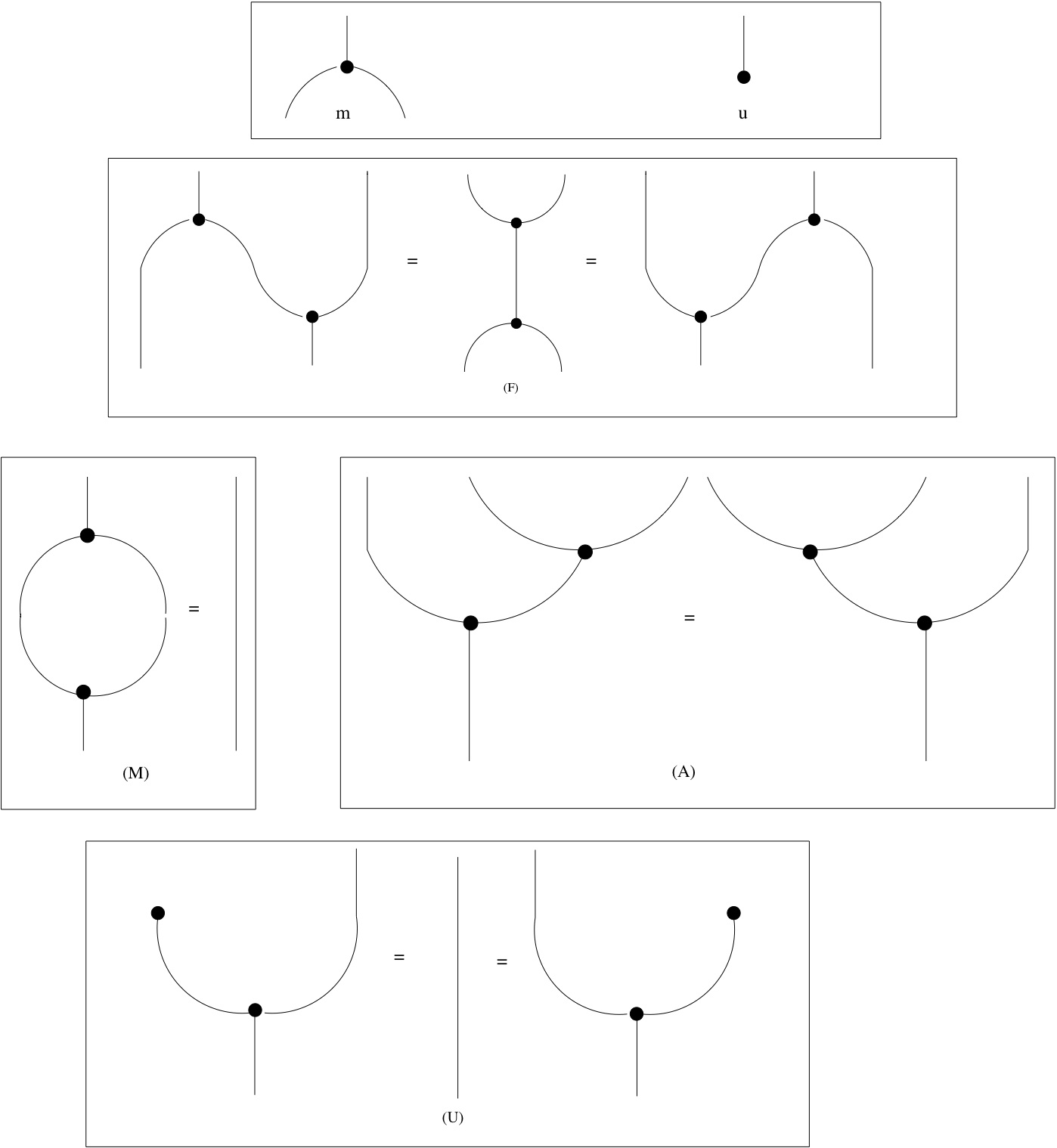}}
\caption{Relative Frobenius algebra: Diagrammatics}
\label{fig:FigureFrobenius}
\end{figure}

\subsection{From relative Frobenius algebras to groupoids}
Here, from a given relative Frobenius algebra we construct a groupoid, but first of all, we give precise meaning of the axioms defined above. We will use the notation $f=hg$ when $((h,g)f)\in m$. First of all, observe that axiom (M) implies that $m$ is single valued  and 
that 
\[\forall f \in X \, \exists g,\, h \in X \vert f=hg.\]
The axiom (F) means that for all $a,\,b,\,c,\,d\, \in X$
\[ab=cd \Longleftrightarrow \exists \, e \in X \vert b= ed,\, c= ae \Longleftrightarrow \exists \, e \in X \vert d= eb,\, a= ce.\]
The axiom (A) is associativity, i.e. $(fg)h= f(gh)$. For the last axiom, after identifying the morphism $u\colon  1 \nrightarrow X$ with a subset 
$U \subseteq X$, we get that (U) is equivalent to the following assertions
\begin{center}
\begin{eqnarray*}
\forall f \in X &\exists& \, u \in U \vert fu= f\\
\forall f \in X &\exists& \, u \in U \vert uf= f\\
\forall f \in X  &\forall& \,u \in U \vert f \mbox { and } u \mbox{ are composable} \Longrightarrow fu=f\\
\forall f \in X  &\forall& \,u \in U \vert u \mbox { and } f \mbox{ are composable} \Longrightarrow uf=f.
\end{eqnarray*}
\end{center}

From these data, we are able to give explicitly a groupoid in \textbf{Set}.
\begin{definition}\label{groupoid}
\emph{ Given a relative Frobenius algebra $(X,m)$, we define the following objects and morphisms in \textbf{Rel}:}
\begin{eqnarray*}
G_1&=& X,\\
G_2&=&\{(g,f) \in X^2\vert g \mbox { and } f \mbox{ are composable}\},\\
G_0&=&U,\\
\varepsilon&=& U \times U\colon  G_0 \nrightarrow G_1,\\
s&=&\{(f,x) \in G_1 \times G_0 \vert  f \mbox { and } x \mbox{ are composable}\}\colon  G_1 \nrightarrow G_0\\
t&=&\{(f,y) \in G_1 \times G_0 \vert  y \mbox { and } f \mbox{ are composable} \}\colon  G_1 \nrightarrow G_0\\
\iota&=& \{(g,f) \in G_2 \vert gf \in G_0, \, fg \in G_0 \}: G_1 \nrightarrow G_1.
\end{eqnarray*}
\end{definition}
We will prove that the following data 
\\
\xymatrixrowsep{4pc} \xymatrixcolsep{3pc} \xymatrix{
    &\,\,\,\,\;\;\,\,\;\,G_2  \ar[r]^{\,\,\,\,\,\;\;\;\;\;\;\;\;m}  & G_1\ar[r]^{\iota}   &G \ar@/_/[r]_t  \ar@/^/[r]^s & G_0 \ar[l]_{\varepsilon}  & 
    }
\\ 
correspond to a groupoid in \textbf{Set}.

First we prove the following lemmas
\begin{lemma}\emph{ For $f \in X$ and $u,v \in U:$
\begin{enumerate}
 \item if $f$ and $u$ are composable, then $u$ is composable with itself;
\item if $f$ and $u$ are composable and $f$ and $v$ are composable then $u$ and $v$ are composable;
\item if if $f$ and $u$ are composable and $f$ and $v$ are composable then $u=v$.
\end{enumerate}
}
\end{lemma}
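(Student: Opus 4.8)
The plan is to unpack the three Frobenius-algebra axioms (F), (M), (U) into the combinatorial statements about composability of elements already listed before the lemma, and then argue each item by an elementary chain of implications. Throughout, "composable" has the concrete meaning: $g$ and $f$ are composable iff there is some $h$ with $(h,(g,f))\in m$, i.e. the product $gf$ is defined, and axiom (M) guarantees such $h$, when it exists, is unique. The key fact I would extract first from axiom (U) is the two one-sided statements: for every $f\in X$ there is $u\in U$ with $fu=f$ (a right unit) and there is $v\in U$ with $vf=f$ (a left unit), together with the absorption property that any $u\in U$ composable with $f$ on the appropriate side satisfies $fu=f$ (resp. $uf=f$).

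For item (1): suppose $f$ and $u$ are composable, so $fu$ is defined and, by the absorption clause of (U), $fu=f$. Now I want to show $u$ is composable with itself. Apply axiom (F) in the form $fu = fu$: writing $a=f$, $b=u$, $c=f$, $d=u$, the equivalence $ab=cd \Leftrightarrow \exists e\, (b=ed,\ c=ae)$ produces an $e\in X$ with $u=eu$ and $f=fe$. From $u=eu$ the product $eu$ is defined; but I actually want $u$ composable with $u$. Here I would instead use that $u=eu$ together with the fact (from (U)) that $e$ has a left unit in $U$; chasing one more application of (F) or (A) on the string $u = eu$ and the left-unit relation for $u$ forces the relevant product $uu$ to be defined. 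The cleanest route is probably: since $u\in U$ and (U) gives some $u'\in U$ with $u'u=u$, and separately some $u''\in U$ with $uu''=u$; then using (F) on $u'u = uu''$ (both equal $u$) yields an $e$ making $u$ composable with itself. I expect this to be the main obstacle — getting from "there exist units on each side" to "a single $u\in U$ is idempotent/self-composable" requires one careful diagram chase, and the precise choice of which instance of (F) to feed in is the delicate point.

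For item (2): assume $f,u$ composable and $f,v$ composable, so $fu=f=fv$ by absorption. Apply (F) to $fu = fv$ (i.e. $a=f,b=u,c=f,d=v$): there is $e\in X$ with $u=ev$ and $f=fe$. In particular $u=ev$ shows $e$ and $v$ are composable; combining with item (1) (now that we know $u$ is self-composable, hence by symmetry the units behave well) and a further application of (F)/(A), one deduces $u$ and $v$ are composable. Alternatively, from $u=ev$ and $e$ having a unit in $U$ on the left that must coincide appropriately, one shows the product $uv$ (or $vu$) is defined. For item (3): once $u$ and $v$ are composable (item 2) and both absorb, we get $uv=u$ (since $v\in U$ absorbs on the right of $u$) and simultaneously $uv=v$ (since $u\in U$ absorbs on the left of $v$), hence $u=v$. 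So item (3) is immediate from item (2) plus the two-sided absorption clause of (U), and the only real work is items (1) and (2); I would present (1) first, then note (2) uses (1), then (3) as a one-line corollary.

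Finally, I would remark that these three facts together say precisely that $U$ is a set of local units, pairwise "orthogonal" in the sense that a given element has a unique left unit and a unique right unit in $U$ — which is exactly what is needed to make $s$ and $t$ in Definition \ref{groupoid} single-valued maps $G_1\to G_0$, so this lemma is the technical heart of showing the constructed data form an honest groupoid in \textbf{Set}.
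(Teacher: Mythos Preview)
Your approach to item (3) is correct and matches the paper, and your setup for item (2) via (F) applied to $fu=fv$ is the right starting point. The genuine gap is item (1): you never actually complete the argument, and the route you sketch through (F) is unnecessarily indirect. Applying (F) to $fu=fu$ only gives you some $e$ with $u=eu$, which is no closer to $uu$ being defined; your fallback of taking separate left and right units $u',u''\in U$ for $u$ and feeding $u'u=uu''$ into (F) produces an $e$ with $u=eu''$ and $u=u'e$, which again does not exhibit $uu$. You yourself flag this as ``the main obstacle'' and leave it open.

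The missing idea is to use associativity (A) directly rather than (F). From $fu=f$ and the hypothesis that $f$ and $u$ are composable, the product $(fu)u$ is defined (since $fu=f$). Axiom (A), read as the relational equality $m\circ(m\times 1)=m\circ(1\times m)$, says that $(ab)c$ is defined if and only if $a(bc)$ is defined; so $(fu)u$ defined forces $f(uu)$ defined, and in particular $uu$ is defined. That is the whole proof of (1). For (2) the paper then uses (1) to know $vv$ is defined (and equals $v$ by absorption); since $u=ev$ gives $e$ composable with $v=vv$, associativity again makes $(ev)v=uv$ defined. So once you see the one-line (A) argument for (1), item (2) closes immediately by the same device, and your proposed chase through further instances of (F) is not needed.
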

\begin{proof} If $f$ and $u$ are composable, then, by axiom (U), $fu=f$, therefore, $(fu)u=f$. By (A), we get that $u$ is composable with itself, 
as we want in (1). For (2), we assume that $f$ and $u$ are composable, as well as $f$ and $v$. Then $fu=f=fv$ and by axiom (F) we have that $u= ev$, 
for some $e \in X$, hence $uv= ev^2$ (in particular, $e$ and $v^2$ are compatible). For (3), observe that if $f$ and $u$ are compatible, as well as $f$ and $v$, then, 
by axiom (U), $u=uv=v$.  
\end{proof}
\begin{corollary}
\emph{$s$ (and in a similar way $t$) is (the graph of) a function.} 
\end{corollary}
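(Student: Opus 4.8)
The plan is to show that the relation $s \colon G_1 \nrightarrow G_0$ defined in Definition \ref{groupoid} is single-valued and total, i.e.\ that it is the graph of a genuine function $G_1 \to G_0$; the argument for $t$ is entirely symmetric (exchanging the order of composability throughout). Recall $s = \{(f,x) \in X \times U \mid f \text{ and } x \text{ are composable}\}$.

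First I would establish totality: for every $f \in X$ there exists $x \in U$ with $(f,x) \in s$. This is immediate from the unitality axiom (U), which in the unpacked form recorded just above the corollary asserts precisely that $\forall f \in X\ \exists u \in U$ such that $f$ and $u$ are composable (with $fu = f$). Hence $s$ is defined on all of $G_1$.

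Next I would establish single-valuedness: if $(f,u) \in s$ and $(f,v) \in s$, then $u = v$. But this is exactly part (3) of the preceding Lemma: if $f$ is composable with $u$ and $f$ is composable with $v$, then applying (U) twice (first $fu = f$, then since $f = fu$ is composable with $v$ we can also use the relation among $u$, $v$, or more directly invoke the Lemma's conclusion) yields $u = v$. Since the Lemma has already been proved, this step is a direct citation. Combining totality and single-valuedness, $s$ is the graph of a function $G_1 \to G_0$, and the identical reasoning — using the symmetric clauses of (U) about left composability — shows the same for $t$.

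There is essentially no obstacle here: the corollary is a formal consequence of the Lemma (for uniqueness) and of axiom (U) (for totality), both of which are already in hand. The only point requiring a little care is making sure the direction of composability in the definition of $s$ matches the correct clause of the unpacked axiom (U) — namely the clauses ``$fu = f$'' and ``$f$ and $u$ composable $\Rightarrow fu = f$'' — and that for $t$ one instead uses ``$uf = f$''. Once the bookkeeping on which side the unit sits is done correctly, the proof is two lines. I would present it as: ``Totality of $s$ follows from axiom (U); single-valuedness follows from part (3) of the Lemma. Hence $s$ is a function, and symmetrically so is $t$.''
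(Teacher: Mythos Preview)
Your proposal is correct and matches the paper's approach exactly: the paper states this as an immediate corollary of the preceding Lemma without further proof, and your argument---totality from the unpacked axiom (U) and single-valuedness from part (3) of the Lemma---is precisely the intended reasoning.
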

\begin{lemma}\emph{
The pullback $G\times_{(s,t)}G$ is isomorphic (as sets) to $G_2$.} 
\end{lemma}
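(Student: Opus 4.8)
The plan is to show that the pullback $G\times_{(s,t)}G$ and the set $G_2 = \{(g,f)\in X^2 \mid g \text{ and } f \text{ are composable}\}$ consist of exactly the same pairs, so that the natural map between them is a bijection. Recall that $G\times_{(s,t)}G = \{(g,f)\in G_1\times G_1 \mid s(g)=t(f)\}$, where, by the previous corollary, $s$ and $t$ are honest functions: $s(g)$ is the unique $u\in U$ composable with $g$ on the right (i.e. $gu=g$), and $t(f)$ is the unique $v\in U$ composable with $f$ on the left (i.e. $vf=f$). So the claim amounts to the equivalence
\[
g \text{ and } f \text{ are composable} \iff s(g)=t(f).
\]

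First I would prove the forward direction. Suppose $g$ and $f$ are composable, so $gf$ is defined. Let $u=s(g)$, so $gu=g$ is defined, and let $v=t(f)$, so $vf=f$ is defined. From $g=gu$ and composability of $g$ with $f$ we get $gf=(gu)f$, and by associativity (axiom (A)) this means $u$ and $f$ are composable; similarly from $f=vf$ we get that $g$ and $v$ are composable. Now I would use axiom (U): since $u\in U$ is composable with $f$, we have $uf=f$; but also $vf=f$, and $v\in U$. Applying part (3) of the preceding lemma to $f$ (with the two units $u$ and $v$ both composable with $f$ on the left — here one should state and use the left-handed analogue of that lemma, whose proof is symmetric) yields $u=v$, i.e. $s(g)=t(f)$, as desired.

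For the converse, suppose $s(g)=t(f)=:u\in U$. Then $gu=g$ is defined, so $g$ and $u$ are composable; and $uf=f$ is defined, so $u$ and $f$ are composable. I would then invoke axiom (F) in the form $gu=g=g(uf)$... more carefully: we want to produce $gf$. Since $gu$ is defined and $uf$ is defined, associativity (A) gives that $g(uf)=(gu)f$ is defined whenever either side is; because $uf=f$, the left side is $gf$, and because $gu=g$, the right side is $gf$ — and the point is that composability of $g$ with $u$ together with composability of $u$ with $f$ forces $g(uf)$, hence $gf$, to be defined. (One must check that the partial-multiplication axioms, specifically (A), are exactly the statement that $g(uf)$ defined $\Leftrightarrow (gu)f$ defined, so that definedness of the pair $(gu,f)$ — which holds since $gu=g$ and $g,f$ were not yet known composable — is not circular; the correct route is: $u,f$ composable and $g,u$ composable, and by (A) applied to the triple $(g,u,f)$ the product $gf$ is defined.) Thus $(g,f)\in G_2$.

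Having established the set equality $G\times_{(s,t)}G = G_2$, the isomorphism is then just the identity relation on this common set, which is trivially a bijection. The main obstacle I anticipate is bookkeeping with the left/right asymmetry: the lemma as stated in the excerpt is phrased for right-composability with units, and I will need to either cite or re-derive its mirror image for left-composability, and to be scrupulous that the application of associativity (A) in the converse direction is genuinely licensed by the partial-composition conventions of Definition \ref{groupoid} rather than assumed. Once that is pinned down, the argument is a short diagram chase through axioms (A), (F), (U) and the uniqueness lemma.
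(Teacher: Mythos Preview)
Your forward direction is correct and more detailed than the paper's (very terse) proof. The gap is in your converse direction. You want to show that if $s(g)=t(f)=u\in U$, so that $gu$ and $uf$ are both defined, then $gf$ is defined. You appeal to axiom (A) ``applied to the triple $(g,u,f)$'', but (A), as an equality of relations $m\circ(1\times m)=m\circ(m\times 1)$, only says: \emph{if} $g(uf)$ is defined then $(gu)f$ is defined (and conversely), and then they agree. It does \emph{not} say that definedness of $gu$ together with definedness of $uf$ forces either bracketed triple product to exist. Since $uf=f$ and $gu=g$, both associations collapse to $gf$, and your argument becomes circular: you are assuming exactly what you want to prove. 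You flag this worry in your parenthetical remark but then dismiss it with the very claim that fails.

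The repair uses (F), not just (A). By (U) there is $v\in U$ with $vg=g$, hence $vg=gu$. Axiom (F) (in the second form) then produces $e\in X$ with $u=eg$ and $v=ge$. Now $eg=u$ is defined and $u,f$ are composable, so $(eg)f=uf=f$ is defined; by (A) this forces $e(gf)$ to be defined, and in particular $gf$ is defined. With this extra step your argument goes through. The paper's own proof is essentially a sketch that unpacks the definitions of $s$ and $t$ and leaves the equivalence implicit, so once you patch the converse you will in fact have supplied the details the paper omits.
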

\begin{proof}
We have that $G\times_{(s,t)}G= \{(g,f) \in X \vert s(g)=t(f)\}.$ Also, $s(g)$ is the unique $y \in U$ such that $g$ and $y$ 
are composable and similarly, $t(f)$ is the unique $y^{'}\in U$ such that $y^{'}$ and $f$ are composable.  
\end{proof}

\begin{lemma}
 \emph{The following diagram in \textbf{Rel} commutes
\[\xymatrix{
    G_1 \ar_-{\Delta}[d] \ar^-{s}[rr] && G_0 \ar^-{e}[d] \\
    G_1 \times G_1 \ar_-{1 \times i}[r] & G_1 \times G_1 \ar_-{m}[r] & G_1
  }\]
  Here, $\Delta$ is (the graph of) the diagonal function $x \mapsto (x,x)$.}
\begin{proof}
 We have that 
\begin{align*}
    e \circ s 
    &=& \{(f,g) \in G_1 \times G_1 \mid \exists u \in U .\, g=u, f\mbox{and }u\mbox{ are composable}  \}\\
    &=& \{ (f,u) \in X \times U \mid f\mbox{and }u\mbox{ are composable} \},
\end{align*}
  and
  \begin{align*}
    m \circ (1 \times i) \circ \Delta 
    & = \{ (f,h) \in G_1^2 \mid \exists g \in G_1 .\, fg \in U, gf \in U, h=gf \}\\
    & = \{ (f,gf) \in G_1^2 \mid g \in G_1 ,\, fg \in U \ni gf \}.
  \end{align*}
\end{proof}
\end{lemma}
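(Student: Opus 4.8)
The plan is to verify commutativity of the displayed square by computing both composite relations $G_1 \nrightarrow G_1$ as subsets of $G_1 \times G_1$ and checking they coincide, using the axioms (F), (A), (U) together with the lemmas already proved. This is the same bookkeeping style used for the earlier diagram (the one relating $e\circ s$ and $m\circ(1\times i)\circ\Delta$), so I would follow that template. First I would unwind $m^{\dagger}\circ m^{\dagger}$... wait, rather: I would unwind the left-down-right path. Reading the square, the claim is $\varepsilon\circ s = m\circ (1\times \iota)\circ \Delta$ as relations $G_1 \nrightarrow G_1$ (note $\Delta$ here goes $G_1 \to G_1\times G_1$, then $1\times\iota$, then $m$ lands in $G_1$; and $s\colon G_1\nrightarrow G_0$ followed by $\varepsilon\colon G_0\nrightarrow G_1$).

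For the left-hand side, unwind using the explicit description of $s$, $\varepsilon$ from Definition \ref{groupoid}:
\begin{align*}
\varepsilon\circ s &= \{(f,h)\in G_1\times G_1 \mid \exists\, x\in G_0 .\ (f,x)\in s,\ (x,h)\in \varepsilon\}\\
&= \{(f,h) \mid \exists\, u,v\in U .\ f\ \text{and}\ u\ \text{composable},\ h=v\}.
\end{align*}
By part (3) of the Lemma on composability of units, $u$ is uniquely determined by $f$, and by the definition of $\varepsilon = U\times U$ any $v\in U$ is allowed once some such $u$ exists; hence $\varepsilon\circ s = \{(f,v)\in X\times U \mid v\in U,\ \exists u\in U,\ f\ \text{and}\ u\ \text{composable}\}$. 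Since every $f\in X$ is composable with some unit by axiom (U), this simplifies to $\{(f,v)\mid f\in X,\ v\in U\} = X\times U$.

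For the right-hand side, I would unwind $m\circ(1\times\iota)\circ\Delta$ just as in the preceding lemma's proof:
\begin{align*}
m\circ(1\times\iota)\circ\Delta &= \{(f,h)\in G_1^2 \mid \exists\, g\in G_1.\ (f,g)\in\iota\ \text{(as}\ g\ \text{inverse of}\ f\text{)},\ h = gf\}\\
&= \{(f,gf)\in G_1^2 \mid fg\in U,\ gf\in U\}.
\end{align*}
Actually the cleaner route: by definition $\iota = \{(g,f)\in G_2 \mid gf\in G_0,\ fg\in G_0\}$ viewed as a relation $G_1\nrightarrow G_1$, so $(f,g)\in\iota^{?}$ — I would match orientations carefully against the earlier lemma, where the analogous computation gave $\{(f,gf)\mid g\in G_1,\ fg\in U\ni gf\}$. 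The key point is that $gf\in U$ whenever $g$ is a (the) inverse of $f$, and conversely every $f$ admits such a $g$: existence of $g$ with $fg,gf\in U$ follows from (M) (so $f=hk$ for some $h,k$) combined with (F) and (U) to manufacture the inverse — this is exactly the groupoid-inverse-exists argument that (F)+(M)+(U) encode. Thus the right-hand side is also $\{(f, u)\mid f\in X,\ u\in U\} = X\times U$, matching the left-hand side.

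The main obstacle — and the step I'd be most careful about — is getting the orientations of $s$, $\iota$, $\Delta$ and the $\dagger$'s exactly right so that both sides genuinely land as subsets of $G_1\times G_1$ with the same ordering of coordinates; sign/direction errors in relational-converse bookkeeping are the easy place to slip. The substantive mathematical content, that for every $f$ there is a unique unit $u$ it absorbs and a two-sided inverse $g$ with $gf = u$, is already essentially contained in the unpacking of axioms (F), (M), (U) given right before the statement, so once the diagram is read correctly the verification is a short diagram chase rather than new work. I would present it in the same two-column \texttt{align*} format as the prior lemma for consistency.
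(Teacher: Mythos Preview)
Your computation of the right-hand side goes wrong at the last step. You correctly arrive at
\[
m\circ(1\times\iota)\circ\Delta = \{(f,gf)\mid g\in G_1,\ fg\in U,\ gf\in U\},
\]
but then conclude this equals $X\times U$. That is false unless $U$ is a singleton. For a fixed $f$, any $g$ with $fg,gf\in U$ produces a unit $gf$ which $f$ is composable with (since $(fg)f=f(gf)$ by (A) and $(fg)f=f$ by (U)); by part~(3) of the preceding lemma that unit is \emph{uniquely} determined by $f$. So as $g$ ranges over inverses of $f$ you only ever get one value of $gf$, not all of $U$. The right-hand side is therefore the graph of the function $f\mapsto s(f)$, not $X\times U$.

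The mismatch with your left-hand side comes from reading $\varepsilon=U\times U$ literally. The paper's own computation in this proof treats $\varepsilon$ as the diagonal inclusion $u\mapsto u$ of $G_0=U$ into $G_1$ (as a groupoid unit map should be), giving $e\circ s=\{(f,u)\in X\times U\mid f\ \text{and}\ u\ \text{composable}\}$, which again by uniqueness of units is the graph of $f\mapsto s(f)$. The actual content of the lemma is that these two descriptions of $s(f)$ agree; the missing ingredient in your sketch is the \emph{existence} direction, namely that for each $f$ there is some $g$ with $fg,gf\in U$, which one extracts from (M) and (F). Once you correct $\varepsilon$ to the diagonal and drop the $X\times U$ claim, your diagram-chase template is fine and matches the paper's approach.
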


\begin{lemma}
\emph{The relation $i$ is (the graph of)  a function.} 
\end{lemma}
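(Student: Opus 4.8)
The plan is to show that the relation $i=\{(g,f)\in G_{2}\mid gf\in G_{0},\ fg\in G_{0}\}$, viewed as a relation $G_{1}\nrightarrow G_{1}$, is both total and single-valued, and hence the graph of a function. Write $s(g)$ (resp.\ $t(g)$) for the unique unit in $U$ that is a right (resp.\ left) identity of $g$, i.e.\ $g\,s(g)=g$ and $t(g)\,g=g$; existence comes from (U) and uniqueness from the Corollary above. I would first record three elementary consequences of the axioms. First, $u\,u=u$, hence $s(u)=t(u)=u$, for every $u\in U$: some element is composable with $u$, so by part~(1) of the Lemma above $u$ is composable with itself, and then $u\,u=u$ by (U). Second, if $gh$ is defined then $s(gh)=s(h)$ and $t(gh)=t(g)$: indeed $g\,(h\,s(h))=gh$ is defined, so by (A) the product $(gh)\,s(h)$ is defined and equals $gh$, whence $s(gh)=s(h)$ by uniqueness of right identities, and symmetrically for $t$. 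Third, if $gh$ is defined then $s(g)=t(h)$: from $(g\,s(g))\,h=gh$ defined, (A) gives that $s(g)\,h$ is defined and, by (U), $s(g)\,h=h$, so $s(g)$ is a left identity of $h$, i.e.\ $t(h)=s(g)$ (this is also the forward half of the earlier identification of $G\times_{(s,t)}G$ with $G_{2}$).

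The crux of the argument is the characterization
\[
(g,f)\in i\quad\Longleftrightarrow\quad gf=t(g)\ \text{ and }\ fg=s(g).
\]
The implication $\Leftarrow$ is immediate: both products are then defined and lie in $U$. For $\Rightarrow$: if $gf\in U$ then $gf$ is a unit, so $gf=t(gf)=t(g)$, using the first and second preliminary facts; likewise $fg\in U$ gives $fg=s(fg)=s(g)$.

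Granting the characterization, both required properties follow with one axiom application each. For totality, given $g\in G_{1}$ we have $t(g)\,g=g=g\,s(g)$, hence $t(g)\,g=g\,s(g)$; applying the Frobenius relation (F) in the form ``$ab=cd\iff\exists e\,(d=eb,\ a=ce)$'' with $a=t(g)$, $b=c=g$, $d=s(g)$ yields some $e\in X$ with $eg=s(g)$ and $ge=t(g)$, so $(g,e)\in i$ by the characterization. For single-valuedness, suppose $(g,f),(g,f')\in i$; then $gf=t(g)=gf'$ and $fg=s(g)=f'g$. Apply (A) to the triple $(f,g,f')$. On one side, $f\,(gf')=f\,t(g)$ is defined---since $s(f)=t(g)$ by the third fact applied to the defined product $fg$, and $s(t(g))=t(g)$ by the first---and it equals $f$, because $t(g)=s(f)$ exhibits $t(g)$ as the right identity of $f$. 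On the other side, $(fg)\,f'=s(g)\,f'$ is therefore also defined, by (A), and it equals $f'$, because $s(g)=t(f')$ by the third fact applied to the defined product $gf'$ exhibits $s(g)$ as the left identity of $f'$. By (A) the two expressions coincide, so $f=f'$.

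Hence $i$ is total and single-valued, i.e.\ the graph of a function $G_{1}\to G_{1}$, and moreover the element $e$ produced in the totality step is a two-sided inverse of $g$, so this function is the inversion map of the groupoid. The step I expect to be the main obstacle is the bookkeeping of partiality: every composability assertion entering the associativity computation has to be justified through the source/target rules of the preliminary facts, so the genuine content of the proof is in setting those rules up and in the characterization of $i$; once that is in place, totality costs a single application of (F) and single-valuedness a single application of (A).
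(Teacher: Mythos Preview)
Your proof is correct, and it is genuinely different from the paper's argument in both halves.

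For \emph{totality}, the paper appeals to the preceding commuting-diagram lemma (the one equating $e\circ s$ with $m\circ(1\times i)\circ\Delta$) to extract an inverse; you instead apply (F) directly to the identity $t(g)\,g=g\,s(g)$, which produces $e$ with $ge=t(g)$ and $eg=s(g)$ in a single stroke. This is cleaner and more self-contained, especially since the paper's proof of that preceding lemma only computes the two relations without actually verifying the inclusion that would yield existence.

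For \emph{uniqueness}, the paper argues $fgf=f=fg'f$, invokes uniqueness of the right unit of $f$ (the earlier lemma on units) to conclude $gf=g'f$, and then unwinds via $g=gfg=g'fg=g'$. You instead run a single associativity computation $f(gf')=(fg)f'$ which collapses immediately to $f=f'$. Both are short; yours avoids the detour through the unit-uniqueness lemma at the cost of a slightly more delicate composability check (which you handle correctly, since $t(g)=s(f)$ literally \emph{is} the right identity of $f$, so $f\cdot t(g)$ is defined by definition of $s(f)$).

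The characterization $(g,f)\in i\iff gf=t(g)$ and $fg=s(g)$ is not isolated in the paper but is the organizing observation that makes your version tidy.
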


\begin{proof}
  We need to prove that to each $f \in X$ there exists a unique $g \in X$ with $gf \in U \ni fg$.
  We already have existence of such a $g$ by the previous lemma, now we will prove uniqueness. 
Assume that $gf \in U \ni fg$ and $g'f \in U \ni fg'$. Then $f$ and $g$ are composable, as well as $g$ and $f$, 
so that by (A) also $f$, $g$ and $f$ are composable, similarly for  $f$, $g'$ and $f$. 
Now (U) implies $fgf=f=fg'f$, so that by the previous conjecture $gf=g'f$. But
  then $g=gfg=g'fg=g'$.  
\end{proof}
Finally, after some straightforward verification of the rest of the groupoid axioms, it is possible to state the following 
\begin{theorem}\label{Fro}
If  $(X,m)$ is a relative Frobenius algebra, then the object stated in Definition \ref{groupoid} is a groupoid in \textbf{Set}.
\end{theorem}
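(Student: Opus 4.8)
The plan is to verify, one by one, the six defining axioms (A.1)--(A.6) in Definition \ref{groupoid} for the data $(G_0, G_1, G_2, \varepsilon, s, t, \iota, m)$ assembled from the relative Frobenius algebra $(X,m)$, using the four properties (F), (M), (A), (U) and the structural lemmas already proved (single-valuedness of $m$; $s,t,\iota$ are honest functions; $G\times_{(s,t)}G\cong G_2$; the commuting square relating $s$, $\Delta$ and $m\circ(1\times\iota)$). The groupoid we claim is internal to \textbf{Set}, so the real content is that every relation listed in Definition \ref{groupoid} is actually the graph of a function and that these functions satisfy the groupoid identities; much of this has already been isolated in the preceding lemmas, so the proof is mostly a bookkeeping assembly.

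First I would record the domain/codomain housekeeping: by (M) and the lemma that $s,t$ are functions, $G_1 = X$ is a set, $G_0 = U$ is a subset of $X$, and the lemma identifying $G\times_{(s,t)}G$ with $G_2$ lets us regard $m$ as a map $G_2 \to G_1$. Then (A.1), $s\circ\varepsilon = t\circ\varepsilon = \mathrm{id}_{G_0}$, follows from the characterization of $U$: for $u\in U$, $\varepsilon(u) = $ the set of identities, but composed with $s$ or $t$ one lands back on $u$ by uniqueness of identities (the final clause of the unpacking of (U), together with the lemma that composable identities coincide). For (A.2), $\mu(g,h) \in G_{(x,z)}$ when $g\in G_{(x,y)}$, $h\in G_{(y,z)}$: this is the statement that $s(fg) = s(g)$ and $t(fg) = t(f)$ for composable $f,g$, which I would deduce from associativity (A) plus the identity clauses of (U) — e.g. if $u = s(g)$ then $gu = g$ so $(fg)u = f(gu) = fg$, giving $s(fg) = u = s(g)$. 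Axiom (A.3), $\mu(\varepsilon\circ s\times \mathrm{id}) = \mu(\mathrm{id}\times\varepsilon\circ t) = \mathrm{id}$, is exactly the two identity laws of (U) rewritten with the source/target maps inserted, using (A.2) to see the composites are defined. Axiom (A.6) is associativity (A) of $m$ itself, essentially verbatim.

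The steps I expect to cost the most are (A.4) and (A.5), the inverse axioms $\mu(\mathrm{id}\times\iota) = \varepsilon\circ t$ and $\mu(\iota\times\mathrm{id}) = \varepsilon\circ s$, because here one must genuinely use the Frobenius relation (F) rather than just associativity and units. The key input is the already-proved commuting square
\[
m\circ(1\times i)\circ\Delta = e\circ s,
\]
which says precisely that $f\cdot\iota(f) = $ an identity and, chasing which identity, that it equals $t(f)$; the symmetric square (obtained by applying the mirror half of (F), the clause ``$\exists e\mid d = eb,\ a = ce$'') gives $\iota(f)\cdot f = s(f)$. The delicate point is that (F) is what guarantees $\iota$ is total and single-valued in the first place — existence of $g$ with $fg, gf \in U$ comes from the square, and uniqueness comes from the (U)-cancellation argument $g = gfg = g'fg = g'$ — so I would make sure to cite the lemma that $\iota$ is a function before writing these equalities, so that $\mu(\mathrm{id}\times\iota)$ and $\mu(\iota\times\mathrm{id})$ are legitimately maps out of $G_1$. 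Once all six axioms are checked, I would close by noting that all structure maps were shown to be (graphs of) functions, so the internal category lives in \textbf{Set} and has all morphisms invertible, i.e. it is a groupoid, which is the assertion of Theorem \ref{Fro}.
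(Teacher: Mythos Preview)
Your proposal is correct and matches the paper's approach exactly: the paper proves the same preparatory lemmas (that $s,t,\iota$ are functions, that $G\times_{(s,t)}G\cong G_2$, and the commuting square for inverses) and then declares the remaining groupoid axioms a ``straightforward verification,'' which is precisely the checklist you spell out. One small wrinkle: you cite the square as $m\circ(1\times i)\circ\Delta = e\circ s$ but then conclude $f\cdot\iota(f)=t(f)$; the conclusion is the right one for axiom A.4, so either the lemma in the paper has a typo ($s$ for $t$) or your citation does---just make the labels consistent when you write it up.
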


\subsection{From groupoids to relative Frobenius algebras}
Here we fix a groupoid 
\\
\xymatrixrowsep{4pc} \xymatrixcolsep{3pc} \xymatrix{
    &\,\,\,\,\;\;\,\,\;\,G_2  \ar[r]^{\,\,\,\,\,\;\;\;\;\;\;\;\;m}  & G_1\ar[r]^{\iota}   &G \ar@/_/[r]_t  \ar@/^/[r]^s & G_0 \ar[l]_{\varepsilon}  & 
    }
\\ 
in \textbf{Set}. 
\begin{definition}\emph{
  For a groupoid $G_1$, define $X=G_1$, and let $m \colon G_1 \times G_1 \nrightarrow G_1$ be
  the graph of the function $m$.}
\end{definition}

We will prove that $m$ is a relative Frobenius algebra. 
First of all,  it follows directly from associativity of composition in the groupoid that $m$ satisfies (A).

\begin{lemma}\emph{
  The morphism $m$ of \textbf{Rel} satisfies the axiom (U).}
\end{lemma}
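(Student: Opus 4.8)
The plan is to write down the unit relation explicitly and to verify the two required identities directly. Take $u \colon 1 \nrightarrow G_1$ to be the relation which, under the canonical identification $1 \times G_1 \cong G_1$, has graph the set of units $U := \varepsilon(G_0) \subseteq G_1$; concretely $u = \{(\ast,\varepsilon(x)) \mid x \in G_0\}$, where $\ast$ is the unique element of the terminal set $1$. With this choice one must show $m \circ (u \times 1) = \Delta_{G_1}$ and $m \circ (1 \times u) = \Delta_{G_1}$, where $\Delta_{G_1}$ is the diagonal relation on $G_1$ (the morphism written $1$ in the Frobenius axioms).

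First I would unfold the composite $m \circ (u \times 1)$. Since $m$ is the graph of the groupoid multiplication $\mu$, a pair $(f,g) \in G_1 \times G_1$ lies in $m \circ (u \times 1)$ exactly when there is a unit $v \in U$ such that $(v,f)$ is a composable pair and $\mu(v,f) = g$. Writing $v = \varepsilon(x)$ and using $s(\varepsilon(x)) = t(\varepsilon(x)) = x$ (axiom A.1), composability of $(v,f)$ forces $x = s(f)$, so $v = \varepsilon(s(f))$ is uniquely determined, and then $\mu(v,f) = \mu(\varepsilon(s(f)),f) = f$ by axiom A.3. Hence $(f,g) \in m \circ (u \times 1)$ iff $g = f$, i.e. $m \circ (u \times 1) = \Delta_{G_1}$. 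The other identity follows by the mirror computation: $(f,g) \in m \circ (1 \times u)$ iff there is $v = \varepsilon(x) \in U$ with $(f,v)$ composable, which now forces $x = t(f)$, so $v = \varepsilon(t(f))$, and $\mu(f,\varepsilon(t(f))) = f$ again by axiom A.3; thus $m \circ (1 \times u) = \Delta_{G_1}$.

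To finish, I would record the uniqueness of $u$ claimed in the Remark after the definition: any $u'$ satisfying both identities has the property that every element in its image acts as a left identity wherever composable, and symmetrically as a right identity, and in a groupoid this pins the image down to $\varepsilon(G_0)$, whence $u' = u$. The argument uses only axioms A.1 and A.3 together with the unwinding of relational composition, so there is no real obstacle here; the only point requiring care is keeping track of which argument slot the unit occupies — so that the left-unit case is controlled by $\varepsilon \circ s$ and the right-unit case by $\varepsilon \circ t$ — and applying the identification $1 \times G_1 \cong G_1$ consistently.
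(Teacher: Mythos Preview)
Your argument is correct and is essentially the paper's proof: you choose the same unit relation $u=\{(\ast,\varepsilon(x))\mid x\in G_0\}$ and verify $m\circ(u\times 1)=1=m\circ(1\times u)$ by a direct unfolding of relational composition together with the groupoid unit axiom, just as the paper does (the paper writes the computation in one line and cites only the symmetric case). The closing paragraph on uniqueness is extra --- the lemma only asks for existence --- but it does no harm; also be aware that the paper's Chapter~4 proof uses the opposite source/target convention (so $x=t(f)$ appears where you write $x=s(f)$), which is purely notational and does not affect the argument.
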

\begin{proof}
  Define a relation $u \colon 1 \nrightarrow X$ by $u = \{ (*,e(x)) \mid x \in G_0 \}$.  Then
  \begin{align*}
    m \circ (u \times 1) 
    & = \{ (f,e(x)f) \mid f \in G_1, x=t(f) \in G_0 \} \\
    & = \{ (f,et(f)f) \mid f \in G_1 \} = 1.
 \end{align*}
  The symmetric condition also holds, and so (U) is satisfied.
\end{proof}

\begin{lemma}\emph{
  The morphism $m$ of $\Cat{Rel}$ satisfies (M).
 }
\end{lemma}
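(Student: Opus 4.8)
The goal is to verify $m \circ m^{\dagger} = 1$ in $\mathbf{Rel}$, where $m \colon X \times X \nrightarrow X$ is the graph of groupoid composition (defined on the set $G_2$ of composable pairs, with $X = G_1$) and $1$ denotes the diagonal relation $\Delta_X \colon X \nrightarrow X$. First I would unwind the transpose: by definition of $\dagger$ as relational converse, $m^{\dagger} = \{(f,(g,h)) \mid (g,h) \in G_2,\ gh = f\} \colon X \nrightarrow X \times X$. Then, applying the composition rule $S \circ R = \{(a,c) \mid \exists b,\ (a,b)\in R,\ (b,c)\in S\}$ with $R = m^{\dagger}$ and $S = m$, one computes directly
\[
m \circ m^{\dagger} = \{(f,f') \in X \times X \mid \exists (g,h) \in G_2,\ gh = f \text{ and } gh = f'\}.
\]

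The next step is to show this set equals $\Delta_X$, by proving the two inclusions. For $m \circ m^{\dagger} \subseteq \Delta_X$: if $(f,f')$ lies in the left-hand side, there is a composable pair $(g,h)$ with $gh = f$ and $gh = f'$; since groupoid composition is an honest (single-valued) function, $f = gh = f'$, so $(f,f') \in \Delta_X$. For the reverse inclusion $\Delta_X \subseteq m \circ m^{\dagger}$: given any $f \in G_1$, I would exhibit an explicit factorization, namely take $g = \varepsilon(t(f))$ and $h = f$ (or symmetrically $g = f$, $h = \varepsilon(s(f))$); this pair lies in $G_2$ because $s(\varepsilon(t(f))) = t(f)$, and $gh = \varepsilon(t(f))\, f = f$ by the unit axiom of the groupoid. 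Hence $(f,f) \in m \circ m^{\dagger}$, and both inclusions give $m \circ m^{\dagger} = 1$, which is exactly axiom (M).

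I do not expect any genuine obstacle here: the statement is essentially the observation that composition in a groupoid is a surjective function onto $G_1$ (every arrow factors through an identity) together with its single-valuedness. The only point requiring a little care is bookkeeping of the relational composition conventions — in particular which of $s$, $t$ matches which leg of a composable pair in Definition~\ref{groupoid} — and recording that single-valuedness of $m$ is precisely what forces the ``diagonal'' inclusion, while the unit axiom forces the ``fullness'' inclusion.
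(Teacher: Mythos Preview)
Your proposal is correct and follows essentially the same approach as the paper: compute the relational composite $m \circ m^{\dagger}$ and use the unit $\varepsilon(t(f))$ (the paper writes $e(t(f))$) to exhibit a factorization of every arrow, giving the diagonal. The paper is slightly terser in that it writes the composite directly as $\{(f,f)\mid \exists\, g,h\ldots\}$, absorbing the single-valuedness step you spell out, but the argument is the same.
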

\begin{proof}
  We have
  \[
    m \after m^\dag = \{ (f,f) \in G_1^2 \mid \exists g,h \in G_2 .\, s(h)=t(g) ,\, f=hg \}.
  \]
  Because we can always take $g=f$ and $h=e(t(f))$, this relation is equal to $\{(f,f) \in
  G_1^2 \mid f \in G_1\}=1$. Thus (M) is satisfied.
\end{proof}

\begin{lemma}\emph{
  The morphism $m$ of $\Cat{Rel}$ satisfies (F).}
\end{lemma}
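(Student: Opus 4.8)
The plan is to verify the Frobenius axiom (F) directly by unwinding both sides of the claimed equality as relations in $\Cat{Rel}$, using the explicit description of composition and transpose of relations together with the groupoid laws. Recall (F) states
\[
(1 \times m) \after (m^\dag \times 1) = m^\dag \after m = (m \times 1) \after (1 \times m^\dag).
\]
Since the category has a $\dagger$ that reverses composition and is the identity on objects, it suffices to prove one of the two equalities, say $m^\dag \after m = (1 \times m) \after (m^\dag \times 1)$; the other follows by applying $\dagger$ to both sides (noting $m^\dag{}^\dag = m$ and that $(\,\cdot\,)^\dag$ turns $(1\times m)\after(m^\dag\times 1)$ into $(m \times 1)\after(1 \times m^\dag)$, and fixes $m^\dag \after m$ because $(m^\dag \after m)^\dag = m^\dag \after m$). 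So the real content is a single set-theoretic identity between two subsets of $G_1^2 \times G_1^2$.

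First I would compute the left-hand side. By definition of composition of relations,
\[
m^\dag \after m = \{\, ((a,b),(c,d)) \in G_1^2 \times G_1^2 \mid \exists\, f \in G_1,\ (a,b) \in m^\dag(f) \text{-preimage},\ f = cd \,\},
\]
which, spelling out $m$ as the graph of groupoid multiplication, becomes $\{((a,b),(c,d)) \mid ab \text{ defined},\ cd \text{ defined},\ ab = cd\}$. Next I would compute the right-hand side: $(1 \times m)\after(m^\dag \times 1)$ consists of those $((a,b),(c,d))$ for which there exists an intermediate pair $(x,y,z)$ with $((a,b),(x,y,z)) \in m^\dag \times 1$ — i.e. $ab$ is defined, $x = ab$ wait, more precisely $m^\dag \times 1$ relates $(a,b)$ (after first re-bracketing) appropriately — so I need to be careful about the bracketing of the triple product $G_1 \times G_1 \times G_1$. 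The upshot should be: there exists $e \in G_1$ such that $a = ce$ and $b = ed$ (with all indicated products defined). Then the identity to prove is
\[
ab = cd \iff \exists\, e \in G_1 \text{ with } a = ce,\ b = ed.
\]
The backward direction is immediate: if $a = ce$ and $b = ed$ then $ab = (ce)(ed) = c(e(ed))$... actually $ab = cee^{-1}$— let me instead say $ab = ce\,ed$ and use associativity plus $e^{-1}e$; the clean statement is that composability of $a$ with $b$ forces the interior to cancel. The forward direction is where the groupoid structure (existence of inverses and the source/target matching) is essential: given $ab = cd$, set $e := c^{-1}a$ (which requires checking $s(c) = s(a)$, which follows from $t$ of the products matching) and verify $b = ed$ using left-cancellation in the groupoid.

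The main obstacle — and the only genuinely fiddly point — will be bookkeeping the associator/bracketing isomorphisms implicit in writing $m^\dag \times 1$ and $1 \times m$ as relations $G_1^2 \nrightarrow G_1^2$, since $\Cat{Rel}$ is only symmetric monoidal and the two composites live a priori in $G_1 \times G_1 \times G_1$ with different parenthesizations. I would handle this by fixing once and for all an identification $G_1^2 \times G_1^2 \cong G_1^4$ and writing every relation as a subset of $G_1^4$, so that (F) becomes a plain equality of subsets of $G_1^4$; then the verification reduces to the equivalence displayed above, which is a short exercise in groupoid cancellation. Once (F), (M), (A) and (U) are all checked, I would conclude with the statement that $(X,m)$ is a relative Frobenius algebra, completing the correspondence in the other direction from Theorem~\ref{Fro}.
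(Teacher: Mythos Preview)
Your approach is essentially the paper's: unwind both composites as subsets of $G_1^2 \times G_1^2$ and verify the set equality using groupoid inverses to manufacture the witness $e$. Your dagger reduction to a single equality is correct and slightly slicker than the paper's ``the symmetric condition is verified analogously''.

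However, you have a bookkeeping slip in the unwinding, and it is exactly what derails your ``backward direction''. Tracing $(1\times m)\after(m^\dag\times 1)$ carefully: from $(a,b)$, the relation $m^\dag\times 1$ produces $((c,e),b)$ with $ce=a$; then $1\times m$ sends $(c,(e,b))$ to $(c,eb)$. So the condition on $((a,b),(c,d))$ is
\[
\exists\, e\in G_1:\quad a=ce\ \text{and}\ d=eb,
\]
not $a=ce$ and $b=ed$. With the correct pair the backward direction is genuinely immediate: $cd=c(eb)=(ce)b=ab$. For the forward direction your witness $e=c^{-1}a$ now works cleanly (composability since $t(a)=t(ab)=t(cd)=t(c)$): $ce=cc^{-1}a=a$ and $eb=c^{-1}ab=c^{-1}cd=d$. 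The paper instead computes the other side $(m\times 1)\after(1\times m^\dag)$, obtaining the condition $\exists e:\ c=ae,\ b=ed$, and uses the witness $e=bd^{-1}$; either route is fine once the unwinding is right.
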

\begin{proof}
  First compute
  \begin{align*}
    m^\dag \after m & = \{ ((a,b),(c,d)) \in G_2^2 \mid ab=cd \}, \\
    (m \times 1) \after (1 \times m^\dag) & = \{ ((a,b),(c,d)) \in G_2^2 \mid \exists e \in G_1 .\, ed=b,\, ae=c \}.
 \end{align*}
  If $ed=b$ and $ae=c$, then $cd=aed=ab$. Hence $(m \times 1) \after (1 \times m^\dag)
  \subseteq m^\dag \after m$. Conversely, suppose that $((a,b),(c,d)) \in m^\dag \after m$.
  Taking $e=bd^{-1}$, then $ed=bdd^{-1}=b$, and $ae=abd^{-1}=cdd^{-1}=c$. Therefore
  $m^\dag \after m \subseteq (1 \times m^\dag) \after (m \times 1)$. The symmetric
  condition is verified analogously. Thus (F) is satisfied.
\end{proof}

Therefore, we have proven the following 
\begin{theorem}\label{gpdisfrob}
  If $\cat{G}$ is a groupoid, then $m$ is a relative Frobenius algebra.
  \qed
\end{theorem}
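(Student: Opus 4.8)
The plan is to verify the four defining axioms (F), (M), (A), (U) for the relation $m$ obtained from the groupoid composition, following the same bookkeeping already carried out in the paper for the individual lemmas. Since Theorem~\ref{gpdisfrob} is stated as an immediate corollary of the four lemmas established just before it (satisfaction of (A), (U), (M), (F) by $m$), the "proof" is essentially an assembly step: I would simply invoke those lemmas in turn.

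First I would recall the setup: we fix a groupoid $\cat{G}$ in \textbf{Set} with objects $G_0$, arrows $G_1$, composable pairs $G_2 = G_1 \times_{(s,t)} G_1$, composition $m\colon G_2 \to G_1$, unit $\varepsilon\colon G_0 \to G_1$, source $s$, target $t$, and inverse $\iota$. We set $X := G_1$ and regard $m$ as a relation $X \times X \nrightarrow X$, namely the graph of the partial function $(h,g) \mapsto hg$ defined on $G_2$. The domain condition is exactly that $s(h) = t(g)$.

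Then I would go through the axioms. Axiom (A), associativity $m \circ (1 \times m) = m \circ (m \times 1)$, follows directly from associativity of composition of arrows in the groupoid, together with the fact that the composability conditions on both sides coincide (both reduce to $f$, $g$, $h$ being pairwise composable in order). Axiom (U) is handled by taking $u\colon 1 \nrightarrow X$ to be $\{(*,\varepsilon(x)) \mid x \in G_0\}$ and checking $m \circ (u \times 1) = 1 = m \circ (1 \times u)$ using $\varepsilon(t(f))\circ f = f$ and $f \circ \varepsilon(s(f)) = f$; this is exactly the lemma proved above. Axiom (M), $m \circ m^{\dagger} = 1$, holds because every arrow $f$ factors as $f = \varepsilon(t(f)) \circ f$, so the relation $m \circ m^{\dagger}$ contains the diagonal, and it is contained in the diagonal since $m$ is single-valued; again this is the lemma already given. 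Axiom (F), the Frobenius relation $(1 \times m)\circ(m^{\dagger}\times 1) = m^{\dagger}\circ m = (m\times 1)\circ(1\times m^{\dagger})$, is the one requiring the use of inverses: given $ab = cd$ one produces the mediating arrow $e = b d^{-1} = a^{-1} c$, and conversely $ed = b$, $ae = c$ force $ab = aed = cd$; this is precisely the content of the last lemma. Having all four axioms in hand, $(X,m)$ is by definition a relative Frobenius algebra, which is the assertion of the theorem.

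There is no real obstacle here: the theorem is a packaging of the preceding four lemmas, and the only "work" is to state that the groupoid composition relation satisfies each axiom, each of which is already dispatched. If anything, the mildly delicate point — already addressed in the (F) lemma — is the correct use of the groupoid inverse to supply the mediating element $e$, and the symmetric verification of the second half of the Frobenius condition, which is formally dual and proceeds by the same substitution $e = b^{-1}d$ read in the other order. So the proof I would write is a short one: "Immediate from the four preceding lemmas, which establish that $m$ satisfies (A), (U), (M) and (F); hence $(X,m)$ is a relative Frobenius algebra. \qed"
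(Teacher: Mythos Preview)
Your proposal is correct and follows exactly the paper's approach: the theorem is stated with an immediate \qed because it is nothing more than the assembly of the four preceding lemmas establishing (A), (U), (M), and (F) for $m$, and your summary of each lemma's content (including the use of $e = bd^{-1}$ in the (F) verification) matches the paper's arguments.
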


\subsection{Functoriality}\label{subsec:frob:functoriality}

Notice that the constructions $m \mapsto \cat{G}$ and $\cat{G} \mapsto
m$ of the previous two sections are each other's inverse. This subsection
proves that the assignments extend to an isomorphism of
categories under various choices of morphisms: one that is natural for
groupoids, one that is natural for relations, and one that is natural
for Frobenius algebras. 

Recall that the category $\cat{Rel}$ is rigid (with $\overline{X}=X$) and therefore in
particular closed
. Hence every
morphism $r \colon X \relto Y$ has a transpose $\name{r} \colon 1
\relto \overline{X} \times Y$. Explicitly, it is given by $\name{r}
= \{(*,(x,y)) \mid (x,y) \in r\}$. Furthermore $\cat{Rel}$ 
is dagger symmetric monoidal closed, giving a natural swap isomorphism $\swapmap
\colon X \times Y \to Y \times X$ with $\swapmap^{-1}=\swapmap^\dag$.

We start by considering a choice of morphisms that is natural from the
point of view of relations: namely, morphisms between groupoids should
be subgroupoids of the product.

\begin{definition}\emph{
  The category $\Cat{Frob}(\Cat{Rel})\ext$ has relative Frobenius
  algebras as objects. A morphism $(X,m_X) \to (Y,m_Y)$ is a morphism
  $r \colon X \relto Y$ in $\Cat{Rel}$ satisfying
  \begin{equation}
    (m_X \times m_Y) \after (1 \times \swapmap \times 1) \after
    (\name{r} \times \name{r}) = \name{r}.
    \tag{R}   
  \end{equation}
  }
\end{definition}

This gives a well-defined category. Identities $1_X = \{(x,x) \mid x
\in X \} \colon X \relto X$ satisfy (R), and if $r \colon X \relto Y$
and $s \colon Y \relto Z$ satisfy (R), then so does $s \after r$:
\begin{align*}
         \name{s \after r}
  & = \{(*, x'',z'') \in 1 \times X \times Z \mid \exists y'' \in Y
          .\, (x'',y'') \in R, (y'',z'') \in s \} \\
  & = \{(*, xx', zz') \mid x,x' \in X, z,z' \in Z, \exists y,y' \in Y . \\
  & \qquad\qquad\qquad\quad\; (x,y) \in r, (x',y') \in r, (y,z) \in s, (y',z') \in s \} \\
  & = (m_X \times m_Z) \after (1 \times \swapmap \times 1) \after
         (\name{s \after r} \times \name{s \after r}),
\end{align*}
since we may take $x=x'', x'=1$.

\begin{definition}\emph{
  The category $\Cat{Gpd}\ext$ has groupoids as objects. Morphisms $\cat{G}
  \to \cat{H}$ are subgroupoids of $\cat{G} \times \cat{H}$.}
\end{definition}

That this is a well-defined category will follow from the following
theorem. Identities are the diagonal subgroupoids, and composition of
subgroupoids $\cat{R} \subseteq \cat{G} \times \cat{G}'$ and $\cat{S}
\subseteq \cat{G}' \times \cat{G}''$ is the groupoid $S_1 \after R_1
\rightrightarrows S_0 \after R_0$.

\begin{theorem}\label{gpdfrobext}\emph{
  There is an isomorphism of categories $\Cat{Frob}(\Cat{Rel})\ext
  \cong \Cat{Gpd}\ext$.}
\end{theorem}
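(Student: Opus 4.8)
The plan is to upgrade the object-level bijection already established in Sections~\ref{subsec:frob:functoriality} (the constructions $m\mapsto\cat G$ and $\cat G\mapsto m$ from Theorems~\ref{Fro} and~\ref{gpdisfrob}) to a functorial isomorphism by checking that the two notions of morphism match up under this bijection. Concretely, I would define $F\colon \Cat{Frob}(\Cat{Rel})\ext\to\Cat{Gpd}\ext$ on objects exactly as in the previous subsections (sending a relative Frobenius algebra $(X,m)$ to the groupoid of Definition~\ref{groupoid}), and on a morphism $r\colon (X,m_X)\relto(Y,m_Y)$ satisfying (R), I would set $F(r)$ to be the subset $r\subseteq X\times Y = (G_1)_{\cat G}\times(G_1)_{\cat H}$ together with $s(r)\subseteq G_0^{\cat G}\times G_0^{\cat H}$ as its object part. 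The inverse functor sends a subgroupoid $\cat R\subseteq\cat G\times\cat H$ to the relation $R_1\subseteq G_1\times H_1$. Since on objects the two assignments are already known to be mutually inverse, it remains only to verify: (i) if $r$ satisfies (R) then the image $F(r)$ is in fact a subgroupoid of $\cat G\times\cat H$ (closed under the product multiplication, containing the relevant units, closed under inversion); (ii) conversely, every subgroupoid $\cat R$ gives a relation $R_1$ satisfying (R); (iii) the two passages are mutually inverse on morphisms; and (iv) composition and identities are preserved — but (iv) was already essentially checked in the excerpt (identities go to diagonal subgroupoids, and the displayed computation of $\name{s\after r}$ shows composites satisfy (R), with the analogous set-level computation showing $S_1\after R_1$ is the correct composite).

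The conceptual core is to decode condition (R). Unwinding $\name{r}=\{(*,(x,y))\mid (x,y)\in r\}$ and the definition of $m_X,m_Y$ in $\Cat{Rel}$, the equation
\[
(m_X\times m_Y)\after(1\times\swapmap\times 1)\after(\name{r}\times\name{r})=\name{r}
\]
says precisely: $(f,g)\in r$ if and only if there exist $(f_1,g_1),(f_2,g_2)\in r$ with $f=f_1f_2$ in $\cat G$ and $g=g_1g_2$ in $\cat H$ (the swap re-pairs the $X$-components with the $X$-components and the $Y$-components with the $Y$-components before multiplying). The ``if'' direction is exactly closure of $r$ under the componentwise multiplication of $\cat G\times\cat H$ (using composability, which is forced because $f_1f_2$ and $g_1g_2$ are defined); the ``only if'' direction says every element of $r$ factors as a product of two elements of $r$ — and, combined with Theorem~\ref{Fro}'s analysis of axiom (M) (every morphism factors, and $s,t,\iota$ are recovered from $m$), this forces $r$ to contain the units $\varepsilon^{\cat G}(x)\times\varepsilon^{\cat H}(y)$ for the appropriate object pairs and to be closed under inversion. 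So I would prove the implication in both directions by this factorization bookkeeping, leaning on the lemmas already established (that $s$, $t$, $\iota$ are graphs of functions, that $G\times_{(s,t)}G\cong G_2$, etc.).

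The main obstacle, and the step deserving the most care, is item (ii): showing a \emph{subgroupoid} $\cat R\subseteq\cat G\times\cat H$ yields a relation satisfying (R) in full. Closure of $R_1$ under products gives the ``if'' half of the decoded condition immediately, but the ``only if'' half — that every $(f,g)\in R_1$ factors \emph{within} $R_1$ — requires the observation that $(f,g)=(f,g)\cdot(\mathrm{id}_{s(f)},\mathrm{id}_{s(g)})$ and that a subgroupoid contains all identities on its objects; here one must check that $R_0$ (the object part of $\cat R$) is exactly $s(R_1)=t(R_1)$ and that these identity pairs lie in $R_1$, which is automatic for a subgroupoid but must be spelled out. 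A secondary subtlety is making sure the object-component of a morphism is uniquely determined (so that the functor is well-defined and faithful on the nose rather than up to choice): this follows because in the groupoid built from $(X,m)$ the object set $U$ is recovered canonically as the set of idempotents composable with everything, so $R_0$ is forced by $R_1$. Once these two points are nailed down, mutual inverseness on morphisms is a formality — $F$ and its inverse both act as the identity on the underlying relations $r\leftrightarrow R_1$ — and preservation of composition and identities is the computation already displayed in the excerpt, so the isomorphism of categories follows.
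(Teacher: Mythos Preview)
Your strategy differs from the paper's in how the forward direction is handled: you decode (R) elementwise and plan to verify the subgroupoid axioms directly, whereas the paper equips $r$ itself with the restricted multiplication $m_r=(m_X\times m_Y)\after(1\times\swapmap\times1)$, argues that $(r,m_r)$ is again a relative Frobenius algebra, and then invokes Theorem~\ref{Fro} to produce the groupoid $\cat R$ in one stroke. The paper's route is slicker because it recycles the object-level theorem rather than repeating the element-level checks.

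There is, however, a genuine gap at precisely the step you flag as delicate. You assert that the ``only if'' half of (R) (every element of $r$ factors inside $r$) ``forces $r$ to contain the units \dots\ and to be closed under inversion''. This does not follow. Take $\cat G=\cat H$ to be the group $(\mathbb R,+)$ and $r=\{(x,x)\mid x>0\}$: it is closed under addition and every element factors, for instance $(x,x)=(\tfrac{x}{2},\tfrac{x}{2})+(\tfrac{x}{2},\tfrac{x}{2})$, so (R) holds; yet $r$ contains neither the identity $(0,0)$ nor any inverses, and is therefore not a subgroupoid of $\cat G\times\cat H$. The same example shows that $(r,m_r)$ fails (U) (since $r\cap(U_X\times U_Y)=\emptyset$) and fails (F) (the witness $e$ demanded by the Frobenius law need not lie in $r$), so the paper's argument glosses over exactly the same obstruction. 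Condition (R) by itself records only that $r$ is a subsemigroup satisfying the analogue of (M); it does not force units or inverses, and no amount of ``factorization bookkeeping leaning on the lemmas already established'' will close this gap.
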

\begin{proof}
  Let $(X,m_X)$ and $(Y,m_y)$ be relative Frobenius algebras, inducing
  groupoids $\cat{G}$ and $\cat{H}$.
  First, notice that if $r \colon X \relto Y$ satisfies (R), then
  \begin{align*}
    m_r & = (m_X \times m_Y) \after (1 \times \swapmap \times 1) \\
    & = \{ (((a,b),(c,d)),(ac,bd)) \mid a,b,c,d \in X \}
    \colon r \times r \relto r
  \end{align*}
  is a well-defined morphism in $\Cat{Rel}$. In fact, since $(X,m_X)$ and
  $(Y,m_Y)$ are relative Frobenius algebras, so is $(r,m_r)$: one
  readily verifies that it satisfies (M), (A), and (F). Also, (U) is
  satisfied by the pullback $1 \relto R$ of $u_X \times u_Y \colon 1
  \relto X \times Y$ and $\name{r} \colon 1 \relto X \times Y$, that is,
  the intersection of $r$ with $U_X \times
  U_Y$. Theorem~\ref{Fro} thus shows that $r$ induces a groupoid
  $\cat{R}$. It is a subgroupoid of $\cat{G} \times \cat{H}$: we have
  $R_1 \subseteq (G \times H)_1$ by construction, and if $u \in U_R$,
  then $u=u^{-1}$, so $u \in (G \times H)_0$. The structure maps
  of $\cat{R}$ are easily seen to be restrictions of those of $\cat{G}
  \times \cat{H}$.

  Conversely, if $\cat{R}$ is a subgroupoid of $\cat{G} \times
  \cat{H}$, then clearly $R_1 \subseteq X \times Y$ is a morphism in
  $\cat{Rel}$ satisfying (R). It now suffices to observe that these
  constructions are inverses.
\end{proof}

Next, we consider a choice of morphisms that is natural to groupoids,
namely functors. The category $\cat{Rel}$ is in fact a 2-category,
where there is a single 2-cell $r \Rightarrow s$ when $r \subseteq s$. Hence it makes
sense to speak of adjoints of morphisms in $\Cat{Rel}$. A morphism has
a right adjoint if and only if it is (the graph) of a function.

\begin{definition}\emph{
  The category $\Cat{Frob}(\Cat{Rel})$ has relative Frobenius algebras
  as objects. Morphisms $(X,m_X) \to (Y,m_Y)$ are morphisms $R \colon
  X \relto Y$ that preserve both the multiplication and the
  comultiplication:
  \[
    r \after m_X = m_Y \after (r \times r), \quad
    m_Y^\dag \after r = (r \times r) \after m_X^\dag.
  \]
  We denote by $\Cat{Frob}(\Cat{Rel})\func$ the subcategory of all
  morphisms $r$ for which the pullback of $\name{r}$ and $u_X \times
  u_Y$ has a right adjoint.}
\end{definition}

\begin{lemma}\emph{
  Morphisms in $\Cat{Frob}(\Cat{Rel})$ satisfy (R). Hence we have
  subcategories
  \[
    \Cat{Frob}(\Cat{Rel})\func
    \hookrightarrow
    \Cat{Frob}(\Cat{Rel})
    \hookrightarrow
    \Cat{Frob}(\Cat{Rel})\ext.
  \]
  }
\end{lemma}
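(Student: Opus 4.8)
The claim to establish is that any morphism $r\colon (X,m_X)\to(Y,m_Y)$ in $\Cat{Frob}(\Cat{Rel})$ automatically satisfies condition (R), so that the two stated subcategory inclusions make sense. The plan is to unwind (R) into an elementwise statement about the relation $r\subseteq X\times Y$ and then to derive it from the two preservation identities $r\after m_X=m_Y\after(r\times r)$ and $m_Y^\dag\after r=(r\times r)\after m_X^\dag$, using the fact that $(X,m_X)$ and $(Y,m_Y)$ are relative Frobenius algebras (so in particular both are ``multivalued groupoids'' in the sense spelled out after Theorem \ref{Fro}).

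First I would expand both sides of (R) as relations $1\relto \overline X\times Y$. Writing $\name r=\{(*,(x,y))\mid (x,y)\in r\}$, the left-hand side of (R) unfolds to
\[
(m_X\times m_Y)\after(1\times\swapmap\times 1)\after(\name r\times\name r)
=\{(*,(x x',\,y y'))\mid (x,y)\in r,\ (x',y')\in r\},
\]
where $xx'$ ranges over all products available in the multivalued multiplication $m_X$ (and similarly $yy'$ in $m_Y$). So (R) says precisely: $(a,b)\in r$ if and only if there exist $(x,y),(x',y')\in r$ with $a\in xx'$ (as a set, via $m_X$) and $b\in yy'$ (via $m_Y$). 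The ``$\Leftarrow$'' direction is where preservation of multiplication enters: if $(x,y),(x',y')\in r$ and $a=xx'$, $b=yy'$ are defined, then $r\after m_X=m_Y\after(r\times r)$ forces $(a,b)=(xx',yy')\in r$. The ``$\Rightarrow$'' direction is the one requiring a genuine choice of witnesses: given $(a,b)\in r$, I would take $x=a$, $x'$ a left unit $u\in U_X$ composable with $a$ (so $xx'=a$ in $m_X$, using axiom (U) for the relative Frobenius algebra $X$), and then produce a matching pair on the $Y$ side. The comultiplication-preservation identity $m_Y^\dag\after r=(r\times r)\after m_X^\dag$ is exactly what lets me ``split'' $(a,b)$: applying $m_X^\dag$ to $a$ gives the decomposition $a=a\cdot u$, and the identity transports this to a decomposition $b=y\cdot y'$ with $(a,y),(u,y')\in r$; since $u\in U_X$ is a unit, one checks $y'$ lands in $U_Y$, closing the argument. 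Thus (R) holds, and the first displayed inclusion $\Cat{Frob}(\Cat{Rel})\func\hookrightarrow\Cat{Frob}(\Cat{Rel})$ is simply the observation that it is a full subcategory on the same objects, which is immediate from its definition.

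The main obstacle I anticipate is bookkeeping the multivaluedness carefully: $m_X$ and $m_Y$ are only relations, not functions, so ``the product $xx'$'' is a subset of $X$, and the equalities of relations in the preservation axioms must be read as equalities of subsets of the relevant products. One must be disciplined about when composites are \emph{defined} (i.e. nonempty) versus \emph{equal}, and one must invoke the precise form of axiom (U) — that a unit composable on the appropriate side exists and is unique for each element — rather than pretending there is a global two-sided identity. Once the elementwise translation of (R) is written down correctly, the derivation from the two preservation identities plus (U), (M), (A), (F) is a routine diagram chase, but getting that translation exactly right (including the swap map and the bar on $X$) is the step that needs care. With (R) in hand, the conclusion that we have the chain of subcategories is formal.
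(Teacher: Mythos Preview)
Your elementwise unpacking of (R) is correct, and the inclusion LHS $\subseteq \name r$ via $r\after m_X=m_Y\after(r\times r)$ is fine. The wrinkle is in the reverse inclusion. You fix the decomposition $a=a\cdot u$ on the $X$-side and then appeal to the comultiplication identity $m_Y^\dag\after r=(r\times r)\after m_X^\dag$ to produce $y,y'$ with $b=yy'$ and $(a,y),(u,y')\in r$. Read elementwise, that identity says: for fixed $(y,y')$, one has $\big[\exists\, b':(a,b')\in r,\ b'=yy'\big]$ iff $\big[\exists\, x,x':a=xx',\ (x,y),(x',y')\in r\big]$. Starting from an $X$-side decomposition you cannot pin down the \emph{given} $b$; you only produce some $b'$ with $(a,b')\in r$. (If you instead fix a $Y$-side decomposition $b=b\cdot v$ first, the same identity does deliver witnesses $x,x'$ with $a=xx'$, $(x,b),(x',v)\in r$, and the argument closes.)

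In fact your chosen witnesses $x=a$, $x'=u$ already work if you invoke the \emph{multiplication} axiom instead: from $a=a\cdot u$ and $(a,b)\in r$ one has $((a,u),b)\in r\after m_X = m_Y\after (r\times r)$, hence there exist $y,y'$ with $(a,y),(u,y')\in r$ and $yy'=b$. This is exactly the paper's proof in elementwise form. The paper carries it out as a short diagrammatic rewriting: factor $\name r=(1\times r)\after\{(*,(z,z))\mid z\in X\}$, use $m_Y\after(r\times r)=r\after m_X$ to reduce the left side to $(1\times r)\after(m_X\times m_X)$ applied to the double diagonal, and observe $\{(*,(xy,xy))\mid x,y\text{ composable}\}=\{(*,(z,z))\mid z\in X\}$ by (M) (or by taking $y$ a unit). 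In particular only the multiplication-preservation half of the $\Cat{Frob}(\Cat{Rel})$ hypotheses is needed, so (R) follows from a weaker assumption than you use.
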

\begin{proof}
  Let $r \colon X \relto Y$ be a morphism in $\Cat{Frob}(\Cat{Rel})$. Then
  \begin{align*}
    & (m_X \times m_Y) \after (1 \times \swapmap \times 1) \after
    (\name{r} \times \name{r}) \\
    & = (m_X \times m_Y) \after (1 \times r \times r) \after (1 \times
    \swapmap \times 1) \after \{ (*, (x,x,y,y)) \mid x,y \in X \} \\
    & = (1 \times r) \after (m_X \times m_X) \after \{(*,(x,y,x,y))
    \mid x,y \in X \} \\
    & = (1 \times r) \after \{(*,(xy,xy)) \mid x\mbox{ and }y \mbox{ are composable. } \} \\
    & = (1 \times r) \after \{(*,(z,z)) \mid z \in X \} \\
    & = \name{r},
  \end{align*}
  because we can always choose $x=z$ and $y=1$.
\end{proof}

Write $\Cat{Gpd}$ for the category of groupoids and functors.

\begin{theorem}\label{gpdfrobfunc}
  There is an isomorphism of categories $\Cat{Frob}(\Cat{Rel})\func \cong \Cat{Gpd}$.
\end{theorem}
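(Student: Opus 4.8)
The plan is to leverage the two constructions already established in Subsections on ``From relative Frobenius algebras to groupoids'' and ``From groupoids to relative Frobenius algebras'', together with Theorem \ref{gpdfrobext}, and simply check that under the choice of morphisms given by $\Cat{Frob}(\Cat{Rel})\func$ the induced subgroupoids of the product are precisely the graphs of functors. So the construction on objects is already in place: a relative Frobenius algebra $(X,m_X)$ yields a groupoid $\cat{G}$ (Theorem \ref{Fro}), a groupoid yields a relative Frobenius algebra (Theorem \ref{gpdisfrob}), and these are mutually inverse. What remains is the morphism correspondence, and here the key observation is that $\Cat{Frob}(\Cat{Rel})\func$ was defined precisely as the subcategory of $\Cat{Frob}(\Cat{Rel})$ on those morphisms $r$ for which the pullback of $\name{r}$ and $u_X\times u_Y$ has a right adjoint, i.e. is (the graph of) a function on objects.

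The steps I would carry out, in order: first, observe via the previous Lemma that $\Cat{Frob}(\Cat{Rel})\func \hookrightarrow \Cat{Frob}(\Cat{Rel}) \hookrightarrow \Cat{Frob}(\Cat{Rel})\ext$, so by Theorem \ref{gpdfrobext} every morphism $r\colon (X,m_X)\to (Y,m_Y)$ in $\Cat{Frob}(\Cat{Rel})\func$ induces a subgroupoid $\cat{R}\subseteq \cat{G}\times\cat{H}$. Second, I would show that the multiplication- and comultiplication-preservation conditions $r\after m_X = m_Y\after(r\times r)$ and $m_Y^\dag\after r = (r\times r)\after m_X^\dag$ translate exactly into: $\cat{R}$ is the graph of a functor $F\colon \cat{G}\to\cat{H}$. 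Concretely, preservation of $m$ says $R_1$ is closed under multiplication and $(g,h),(g,h')\in R_1$ with $g\in\cat{G}$ forces... — more precisely, the comultiplication condition forces that for each $g\in G_1$ the set $\{h\mid (g,h)\in R_1\}$ is a singleton, i.e. $R_1$ is the graph of a map $F_1\colon G_1\to H_1$, and then the multiplication condition makes $F_1$ multiplicative; the condition on the pullback with $u_X\times u_Y$ having a right adjoint makes $F_0$ well-defined as a map $G_0\to H_0$ (total and single-valued on units), and compatibility with $s,t,\varepsilon,\iota$ follows automatically since those structure maps of $\cat{R}$ are restrictions of those of $\cat{G}\times\cat{H}$, as already noted in the proof of Theorem \ref{gpdfrobext}. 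Third, conversely, given a functor $F\colon \cat{G}\to\cat{H}$, its graph is a subgroupoid of $\cat{G}\times\cat{H}$, hence by Theorem \ref{gpdfrobext} a relation $r\colon X\relto Y$ satisfying (R); I would verify that functoriality yields precisely the two preservation equations and that the graph of $F_0$ is exactly the pullback of $\name{r}$ with $u_X\times u_Y$, which is a function and hence has a right adjoint. Fourth, check that these assignments are inverse to each other and strictly functorial (identity functors $\leftrightarrow$ identity relations, composition of functors $\leftrightarrow$ composition of relations), which is immediate from the object-level isomorphism and the explicit descriptions.

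The main obstacle I expect is the bookkeeping in the second step: showing that ``$r$ preserves comultiplication'' is equivalent to ``$R_1$ is single-valued as a relation $G_1\relto H_1$'', and that ``the pullback of $\name{r}$ with $u_X\times u_Y$ has a right adjoint'' is equivalent to ``the induced relation $R_0\colon G_0\relto H_0$ is total and single-valued'', i.e. really captures the behaviour of $F$ on objects rather than just on arrows. This is a diagram-chase in $\Cat{Rel}$ using the explicit description $\name{r}=\{(*,(x,y))\mid (x,y)\in r\}$, the formula for $m_X^\dag$, and the fact that right adjoints in the $2$-category $\Cat{Rel}$ exist exactly for graphs of functions; none of it is deep, but getting the quantifiers to line up cleanly and matching them against the groupoid-functor axioms is where the care is needed. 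Everything else reduces to the already-proved Theorems \ref{Fro}, \ref{gpdisfrob}, and \ref{gpdfrobext}.
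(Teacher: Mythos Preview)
Your overall architecture is right: use Theorem~\ref{gpdfrobext} to pass to subgroupoids of the product, then argue that the extra conditions cut down exactly to graphs of functors. But the decomposition you propose for the second step is wrong in a way that matters.

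You claim that preservation of the comultiplication $m_Y^\dag\after r = (r\times r)\after m_X^\dag$ by itself forces $R_1$ to be single-valued as a relation $G_1\relto H_1$, with the right-adjoint condition on units only needed separately to define $F_0$. This cannot be correct: Theorem~\ref{Chris1} shows that the full category $\Cat{Frob}(\Cat{Rel})$ (morphisms preserving both $m$ and $m^\dag$) is isomorphic to $\Cat{Gpd}\mfunc$, the category of groupoids and \emph{multi-valued} functors. So comultiplication preservation does not give single-valuedness on arrows; the function-on-units condition is not a separate bookkeeping item for $F_0$ but the essential ingredient that upgrades multi-valued to single-valued on all of $G_1$.

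The paper's argument uses the function-on-units condition twice in a bootstrapping way. First, from comultiplication preservation one shows that $r$ is closed under inverses: if $(x,y)\in r$ then $(x^{-1},y^{-1})\in r$. The proof produces some $z$ with $(z^{-1},y^{-1})\in r$ and then, crucially, uses that $r$ restricted to identities is a function to force a certain element to equal $1$, whence $x=z$. Second, for single-valuedness on $G_1$: given $(x,y),(x,y')\in r$, one passes to $(x^{-1},y^{-1})\in r$, obtains $(y'y^{-1},1)\in m_Y\after(r\times r)=r\after m_X$ evaluated appropriately, and again uses the function-on-units condition to conclude $y'y^{-1}=1$. Totality on $G_1$ comes from $((x,x^{-1}),1)\in r\after m_X = m_Y\after(r\times r)$. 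So the role of the $\func$ condition is not merely to make $F_0$ well-defined but to drive the entire single-valuedness argument through inverses. Your plan will work once you reorganise the second step along these lines.
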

\begin{proof}
  Let $(X,m_X)$ and $(Y,m_Y)$ be relative Frobenius algebras, inducing
  groupoids $\cat{G}$ and $\cat{H}$. Let $r \colon m_X \to m_Y$ be a
  morphism in $\Cat{Frob}(\Cat{Rel})\func$; by Theorem~\ref{gpdfrobext} it
  induces a subgroupoid $\cat{R}$ of $\cat{G} \times \cat{H}$. By
  definition $r$ is (the graph of) a function $G_0 \to H_0$ when
  restricted to $U_X \times U_Y$.  

  We now show that if $(x,y) \in r$, then also $(x^{-1},y^{-1}) \in
  r$. Let $(x,y) \in r$. Then $((y,y^{-1}),1) \in r^\dag \after m_Y =
  m_X \after (r^\dag \times r^\dag)$, so there is $z \in X$ with
  $(z^{-1},y^{-1}) \in r$. But then $((x,z^{-1}),(y,y^{-1})) \in r
  \times r$ and of course $((y,y^{-1}),1) \in m_Y$. So there exists $w
  \in X$ with $xz^{-1}=w$ and $(w,1) \in r$. But the latter means
  $w=1$ because $r$ is a function on identities, giving $x=z$, and
  hence $(x^{-1},y^{-1}) \in r$. 

  Next, we show that $r$ is (the graph of) a function $G_1 \to
  H_1$. First notice that $((x,x^{-1}),1) \in r \after m_X = m_Y
  \after (r \times r)$ for any $x \in X$. Hence for each $x \in X$
  there is $y \in Y$ with $(x,y) \in r)$ (and $(x^{-1},y^{-1}) \in
  r$). Finally, assume that $(x,y) \in r$ and $(x,y') \in r$. Then
  also $(x^{-1},y^{-1}) \in r$. Hence $((y',y^{-1}),1) \in m_X \after
  (r^\dag \times r^\dag) = r^\dag \after m_Y$. So there exists $y''
  \in Y$ such that $(y'', 1) \in r^\dag$ and $y'y^{-1}=y''$. But then
  we must have $y''=1$. Thus $y'y^{-1}=1$, or in other words, $y'=y$.

  Relational composition of graphs coincides with composition
  of functions, so that this assignment is functorial. Conversely, it
  is easy to see that a functor between groupoids induces a morphism
  in $\Cat{Frob}(\Cat{Rel})\func$. Finally, these two constructions
  are inverse to each other. 
\end{proof}

Finally, we can consider a choice of morphisms that is natural from
the point of view of Frobenius algebras, namely the category
$\Cat{Frob}(\Cat{Rel})$. 
There is a category between $\Cat{Gpd}$ and $\Cat{Gpd}\ext$, 
corresponding to the middle subcategory in 
$
    \Cat{Frob}(\Cat{Rel})\func
    \hookrightarrow
    \Cat{Frob}(\Cat{Rel})
    \hookrightarrow
    \Cat{Frob}(\Cat{Rel})\ext
$, 
as follows.

\begin{definition}\emph{
  A \emph{multi-valued functor} $\cat{G} \to \cat{H}$ between
  categories is a multi-valued map $F \colon G_1 \to H_1$ that
  preserves identities and composition: 
  \begin{align*}
    &\mbox{for } g,f \in G_1 \times_{G_0} G_1:
    && g \after f \ni h \;\Rightarrow\; F(g) \after F(f) \ni F(h), \\
    &\mbox{for } x \in G_0: && F(e(x)) \ni H_0.
  \end{align*}
  We denote the category of groupoids and multi-valued functors by
  $\Cat{Gpd}\mfunc$.
  }
\end{definition}

\begin{theorem}\label{Chris1}
  There is an isomorphism of categories $\Cat{Frob}(\Cat{Rel}) \cong \Cat{Gpd}\mfunc$.
\end{theorem}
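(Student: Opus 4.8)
The plan is to promote the isomorphism of categories established in Theorems~\ref{gpdfrobext} and \ref{gpdfrobfunc} by showing that the intermediate subcategory of $\Cat{Frob}(\Cat{Rel})\ext$ consisting of morphisms preserving both multiplication $m$ and comultiplication $m^\dagger$ (i.e.\ $\Cat{Frob}(\Cat{Rel})$) corresponds precisely to the category $\Cat{Gpd}\mfunc$ of groupoids with multi-valued functors. Since by Theorem~\ref{gpdfrobext} every relative Frobenius algebra $(X,m_X)$ is already identified with a groupoid $\cat G$ and every morphism satisfying (R) with a subgroupoid of the product, it suffices to check that, under this identification, the extra conditions
\[
    r \after m_X = m_Y \after (r \times r), \qquad
    m_Y^\dag \after r = (r \times r) \after m_X^\dag
\]
on a relation $r\colon X\relto Y$ are equivalent to the two clauses defining a multi-valued functor: preservation of composition ($g\after f\ni h \Rightarrow F(g)\after F(f)\ni F(h)$) and preservation of identities ($x\in G_0 \Rightarrow F(e(x))\ni H_0$). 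So the proof is really a translation exercise between the relational-algebraic language and the categorical language, entirely parallel to the proof of Theorem~\ref{gpdfrobfunc} but with the function/graph hypotheses dropped.

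First I would unwind $r\after m_X = m_Y\after(r\times r)$ set-theoretically. The right-hand side is $\{((x,x'),z)\mid \exists (y,y')\in r\times r,\ xx'\text{ defined},\ z\in yy'\text{ and }(xx',z')\in r\ \dots\}$ — more carefully, $r\after m_X = \{((x,x'),w)\mid xx'\text{ defined},\ (xx',w)\in r\}$ and $m_Y\after(r\times r)=\{((x,x'),w)\mid \exists y,y'\ (x,y),(x',y')\in r,\ yy'\text{ defined},\ w=yy'\}$. Reading the inclusion $\subseteq$ gives exactly: if $h=gf$ in $\cat G$ and $(g,g'')\in F$, then there exist images of $g$ and $f$ composing to $(h,\text{something in }F)$ — which, once one also uses the reverse inclusion to control domains, is precisely $F(g)\after F(f)\ni F(h)$, i.e.\ preservation of composition. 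The comultiplication condition $m_Y^\dag\after r=(r\times r)\after m_X^\dag$ unwinds to: $(w,(y,y'))$ lies in the left side iff $(z,w)\in r$ for some $z$ with $z=yy'$... and matching this against the right side forces that whenever $w$ splits in $\cat H$ as $yy'$ and $w=F(z)$, the element $z$ splits compatibly; together with the multiplicativity condition this pins down exactly that $F$ is a well-defined multi-valued functor (as opposed to merely a lax one), and in particular handles the identity clause via the unit $u_X$ (which by Theorem~\ref{gpdfrobext} is the intersection $r\cap(U_X\times U_Y)$ and so automatically relates units to units). I would then observe that relational composition of these morphisms agrees with composition of multi-valued functors (composition of multi-valued maps is union-of-fibers composition, which is literally relational composition), so the assignment is functorial, and that the construction in the reverse direction — a multi-valued functor gives a relation preserving $m$ and $m^\dagger$ — is its two-sided inverse, all of which follows formally from the corresponding statements already proved for the $\ext$ and $\func$ versions.

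The main obstacle, and the step deserving genuine care rather than a wave of the hand, is verifying that the two Frobenius-morphism equations are \emph{jointly} equivalent to the multi-valued functor axioms and not to something weaker or stronger. In particular one must check that preservation of $m^\dagger$ is exactly what upgrades a lax/oplax multi-valued assignment to an honest multi-valued functor, and that the unitality bookkeeping (the pullback of $\name r$ with $u_X\times u_Y$) really does reduce to ``$F$ sends object-identities into object-identities'' without imposing single-valuedness there — this is the place where the analogous argument in Theorem~\ref{gpdfrobfunc} used the hypothesis that $r$ restricts to a function on identities, which we are now \emph{not} assuming. I would therefore isolate a short lemma, in the spirit of the lemmas preceding Theorem~\ref{Fro}, stating that for a relation $r\colon X\relto Y$ between relative Frobenius algebras, $r$ preserves $m$ and $m^\dagger$ if and only if the induced partial relation on the underlying groupoids is a multi-valued functor, and then deduce Theorem~\ref{Chris1} by combining this with the object-level bijection of Theorem~\ref{gpdfrobext}. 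Apart from this lemma, everything is a routine diagram chase identical in structure to the earlier functoriality proofs.
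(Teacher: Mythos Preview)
Your overall plan is sound and tracks the paper's proof, but you are doing more work than the paper does. The paper's argument is three lines: since a morphism $r$ in $\Cat{Frob}(\Cat{Rel})$ satisfies (R) (by the lemma immediately preceding), Theorem~\ref{gpdfrobext} already hands you $r$ as a subgroupoid of $\cat G\times\cat H$; then the \emph{totality} portion of the proof of Theorem~\ref{gpdfrobfunc} (the part that does not use the ``func'' hypothesis, namely that every $x\in X$ is related to some $y\in Y$) shows $r$ is a total multi-valued map $G_1\to H_1$; and a total subgroupoid of $\cat G\times\cat H$ is by inspection a multi-valued functor. So rather than unwinding $r\after m_X=m_Y\after(r\times r)$ and $m_Y^\dag\after r=(r\times r)\after m_X^\dag$ separately and matching each to a clause of the multi-valued functor definition, the paper packages both conditions at once into the subgroupoid structure already established, and only extracts totality as the extra ingredient beyond Theorem~\ref{gpdfrobext}.

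This buys efficiency: the identity/unit bookkeeping you worry about is absorbed into the statement that $r$ is a subgroupoid (so $r\cap(U_X\times U_Y)$ is its object set and identities are automatically handled), and the $m^\dag$-preservation never has to be unpacked on its own. Your direct elementwise translation would also work, and has the virtue of making the converse direction (multi-valued functor $\Rightarrow$ $\Cat{Frob}(\Cat{Rel})$-morphism) explicit, which the paper leaves entirely implicit. The concern you flag---that the two equations might be jointly equivalent to something stronger or weaker than the stated multi-valued functor axioms---is exactly the right thing to be careful about if you go the direct route; the paper sidesteps it by passing through subgroupoids, where the question does not arise.
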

\begin{proof}
  Let $m_X$ and $m_Y$ be relative Frobenius algebras, inducing
  groupoids $\cat{G}$ and $\cat{H}$. Let $r \colon m_X \to m_Y$ be a
  morphism in $\Cat{Frob}(\Cat{Rel})$; by Theorem~\ref{gpdfrobext} it
  induces a subgroupoid of $\cat{G} \times \cat{H}$. By the
  argument of the proof of Theorem~\ref{gpdfrobfunc}, $r$ is a
  multi-valued map $G_1 \to H_1$. But then it is precisely a
  multi-valued functor.
\end{proof}

\section{Relative $H^*-$ algebras and semigroupoids}
Relative Frobenius algebras can be relaxed to
so-called H*-algebra structures, essentially by dropping
units. First, we define properly this relaxation: it turns out
that H*-algebras in the category of sets and relations correspond to
so-called locally cancellative regular semigroupoids. The
correspondence is functorial, but now it only gives an adjunction
instead of an isomorphism of categories. At the end  we discuss a universal way to pass from
H*-algebras to Frobenius algebras, \textit{i.e.}\ from semigroupoids to
groupoids. 

The generalization to H*-algebras is useful for an application to
geometric quantization that will be presented in subsequent work,
where one is forced to work with semigroupoids instead of
groupoids. Rather than in the category of sets and relations, this
plays out in the smooth setting of symplectic manifolds and canonical
relations, corresponding to Lie groupoids. One could imagine similar
applications in a topological or localic
setting \cite{etale}.

\subsection{From relative $H^*-$ algebras to semigroupoids}
\begin{definition}
 \emph{A relative H*-algebra is a morphism $m \colon X \times X
  \relto X$ in $\Cat{Rel}$ satisfying (M), (A), and the following axiom:
\begin{itemize} 
\item There is an involution $* \colon
      \Cat{Rel}(1,X) \to \Cat{Rel}(1,X)$  such that 
\[\tag{H}  m \after (1 \times x^*) = (1 \times x^\dag) \after m^\dag \text{  and  }
      m \after (x^* \times 1) = (x^\dag \times 1) \after m^\dag\]
       for all $ x \colon 1 \relto X$.
\end{itemize}
  }
\end{definition}

\begin{definition}\emph{
  Given a relative H*-algebra $m \colon X \times X \relto X$, define
  $\cat{G}$ by
  \begin{align*}
    G_0 & = \{ f \in X \mid m(f,f)=f \}, \\
    G_1 & = X, \\
    s & = \{ (f,f^*f) \mid f^* \emph{ is a pseudoinverse of } f \}
    \colon G_1 \relto G_0 \\
    t & = \{ (f,ff^*) \mid f^* \emph{ is a pseudoinverse of } f \}
    \colon G_1 \relto G_0. 
  \end{align*}}
\end{definition}

\begin{lemma}\label{regular}\emph{
   For each element $a$ in a relative H*-algebra there exists $a^* \in \{a\}^*$
   satisfying $a^*aa^*=a^*$ and $aa^*a=a$.}
\end{lemma}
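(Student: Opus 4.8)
The plan is to exploit the axioms (M), (A), and (H) of a relative H*-algebra, mimicking the standard argument (as in von Neumann regular semigroups) that one can always \emph{rectify} an arbitrary pseudoinverse into a genuine one satisfying both the idempotency relations $a^*aa^* = a^*$ and $aa^*a = a$. Fix $a \in X$. First I would extract from axiom (H), together with (M), the existence of \emph{some} $b \in \{a\}^*$ which is a pseudoinverse of $a$: indeed, (M) says $m \after m^\dag = 1$, so $m$ is single-valued and every element factors as a product; applying this together with the involution $*$ on $\Cat{Rel}(1,X)$ provided by (H), one reads off (exactly as in the verification that $m \after m^\dag = \{(f,f) : f \in X\}$ in the proof of Theorem~\ref{gpdisfrob}) that for each $a$ there is $b$ with $aba = a$ and $b \in \{a\}^*$. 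The point of (H) is that the relational identities $m \after (1 \times x^*) = (1 \times x^\dag) \after m^\dag$ translate into the algebraic statement that $b^*$ behaves as a two-sided reflection of the multiplication, which is what forces products like $aba$ to collapse.

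Next I would perform the standard two-step rectification. Given $b$ with $aba = a$, set $a^* := bab$. Then a direct computation using associativity (A) gives $aa^*a = a(bab)a = (aba)(ba) = a(ba) = aba = a$, and $a^*aa^* = (bab)a(bab) = b(aba)(bab) = b(a)(bab) = (bab)(ab) \cdot$, which on regrouping via (A) again equals $b(aba)b = bab = a^*$. So $a^*$ satisfies both required identities. The only remaining point is to check $a^* = bab \in \{a\}^*$, i.e.\ that the new candidate still lies in the $*$-image associated to $a$; this follows because $\{a\}^*$ is closed under the relational operations appearing in (H) — concretely, $bab$ is built from $b \in \{a\}^*$ by left/right multiplication by $a$, and axiom (H) relates exactly such composites $m \after (1 \times x^*)$ and $m \after (x^* \times 1)$ back to $m^\dag$, keeping us inside the prescribed set of pseudoinverses.

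The main obstacle I expect is the bookkeeping in the last step: carefully interpreting axiom (H) — which is phrased as an equation between morphisms $1 \relto X$ in $\Cat{Rel}$ — at the level of elements, so as to justify both (i) the existence of the initial pseudoinverse $b$ and (ii) the membership $bab \in \{a\}^*$. Unwinding $m \after (1 \times x^\dag) = (1 \times x^\dag) \after m^\dag$ into pointwise statements about when triples lie in $m$ requires the same kind of element-chasing used throughout Section~\ref{subsec:frob:functoriality}, and one must be attentive that the involution $*$ is defined on \emph{subsets} $x \colon 1 \relto X$ rather than on elements, so ``$a^*$'' should really be read as ``some element of the set $\{a\}^*$.'' Once this dictionary is set up, the algebraic identities $aa^*a=a$ and $a^*aa^*=a^*$ drop out of (A) as above, and the lemma is proved.
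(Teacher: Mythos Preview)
Your approach has a genuine gap in step 3. The rectification trick $a^* := bab$ does produce an element satisfying both identities $aa^*a=a$ and $a^*aa^*=a^*$ (and one can indeed show $bab$ is defined, by applying (A) to $((ab)a)b = ab$ and then again to deduce $b(ab)$ is defined). The problem is the membership claim $bab \in \{a\}^*$. The involution $*$ is an \emph{abstract} involution on $\Cat{Rel}(1,X)$, i.e.\ on subsets of $X$; nothing in the axioms characterizes $\{a\}^*$ intrinsically. Axiom (H), once unpacked, says only
\[
  xa=y \iff \exists\, c \in \{a\}^*,\; yc=x,
  \qquad
  ax=y \iff \exists\, c \in \{a\}^*,\; cy=x.
\]
These are existential statements: they let you \emph{produce} some unspecified member of $\{a\}^*$ from a multiplicative relation, but they never tell you that a \emph{given} element (such as $bab$) lies in $\{a\}^*$. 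Your assertion that ``$\{a\}^*$ is closed under the relational operations appearing in (H)'' misreads the axiom: the equality $m \after (1 \times x^*) = (1 \times x^\dag) \after m^\dag$ compares two relations $X \relto X$, and says nothing about $\{a\}^*$ being closed under left or right multiplication by $a$.

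The paper's argument avoids this obstacle entirely. Rather than manufacturing a new candidate and then trying to argue membership, it uses the biconditional in (H) repeatedly to stay inside $\{a\}^*$ throughout: first obtain some $a' \in \{a\}^*$ with $aa'$ defined; then use (H) to get $a'' \in \{a\}^*$ with $a''aa'=a'$; then apply the reverse implication of (H) (with $a' \in \{a\}^*$ as the witness) to deduce $a'a=a''a$, whence $a'aa'=a'$; finally one more application of (H) gives $aa'a=a$. The element $a^*=a'$ was in $\{a\}^*$ from the start. Your plan would work in an ordinary regular semigroup, but here the statement demands $a^* \in \{a\}^*$, and that is precisely the datum the rectification trick loses.
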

\begin{proof}
  By (M), we have $\forall y \in X \,\exists a,x \in X .\,
  y=ax$. Applying (H) with $A=X$ gives $\forall a \in X
  \,\exists x \in X .\, x\mbox{ and }a \mbox{ are composable} $. 
  Now let $a \in X$. If we substitute $A=\{a\}$, then (H) becomes
  \begin{align*}
    \forall x,y \in X & \big[ xa=y \iff 
    \exists a^* \in \{a\}^* .\, ya^*=x \big] \\
    \forall x,y \in X & \big[ ax=y \iff 
    \exists a^* \in \{a\}^* .\, a^*y=x \big] 
  \end{align*}
  As above, there exists $x \in X$ with $x$ and $a$ composable. So 
  by the first condition above, there is $a' \in \{a\}^*$ with $a$ and $a'$ composable.  Hence, by the second condition, there is $a'' \in
  \{a\}^*$ with $a'' a a'=a'$. Applying the first condition again, now
  with $x=a'$ and $y=a''a$, gives $a'a=a''a$. Therefore we have
  $a^*=a^*aa^*$ for $a^*=a' \in \{a\}^*$. Finally, applying the first
  condition again, this time with $x=aa^*$ and $y=a$, we find that
  also $aa^*a=a$.
\end{proof}
\emph{
\begin{lemma}\label{semigroupoid}\emph{
  The data $\cat{G}$ form a well-defined semigroupoid.}
\end{lemma}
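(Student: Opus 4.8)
The plan is to verify directly that the structure maps $s$ and $t$ on $\cat{G}$ are well defined (single-valued), that they have the right domains and codomains, and that composition is associative on the fiber product $G_1 \times_{(s,t)} G_1$, thereby matching the definition of a semigroupoid (a groupoid in the sense of Definition \ref{groupoid} but without the unit $\varepsilon$ and the inverse $\iota$ being required). First I would use Lemma \ref{regular}: for each $a \in X$ there is a pseudoinverse $a^* \in \{a\}^*$ with $a^*aa^*=a^*$ and $aa^*a=a$. This gives existence of the elements $a^*a$ and $aa^*$ appearing in the definitions of $s$ and $t$, and it also shows $a^*a, aa^* \in G_0$, since $(a^*a)(a^*a) = a^*(aa^*a) = a^*a$ by (A), and symmetrically for $aa^*$; so $s$ and $t$ do land in $G_0$ as relations $G_1 \relto G_0$.

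The key step is single-valuedness of $s$ and $t$. Here I would argue as follows: suppose $a^*$ and $a^\sharp$ are two pseudoinverses of $a$ (in the sense that both lie in $\{a\}^*$ and are ``reflexive'', i.e. satisfy the two identities of Lemma \ref{regular}); I want $a^*a = a^\sharp a$. This should follow by imitating the uniqueness argument already used in the Frobenius case (the proof that $\iota$ is a function) together with the two biconditionals extracted from axiom (H) when one substitutes the singleton $A=\{a\}$, exactly as displayed in the proof of Lemma \ref{regular}. Concretely: from $aa^\sharp a = a$ and the first biconditional with $x = a^\sharp a$, $y = a$, one gets $a^\sharp a = a^* a$ once one also uses that $a^*$ and $a^\sharp$ both lie in $\{a\}^*$ and that $*$ is an involution on $\Cat{Rel}(1,X)$, which forces the relevant idempotents to coincide. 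I would spell this out carefully since it is the only genuinely delicate point. Associativity of $m$ restricted to composable triples is immediate from (A), and $m$ is single-valued on $G_1 \times_{(s,t)} G_1$ by (M) (which, as noted after Definition \ref{groupoid} of the relative Frobenius algebra, makes $m$ single-valued). Finally I would check that if $s(g) = t(f)$ then $g$ and $f$ are composable in the sense that $m(g,f)$ is defined, and that $s(m(g,f)) = s(f)$, $t(m(g,f)) = t(g)$; these are routine manipulations with the pseudoinverse identities and (A).

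\textbf{Main obstacle.} The hard part will be proving that $s$ and $t$ are single-valued, i.e. that the element $a^*a$ (resp. $aa^*$) does not depend on the choice of reflexive pseudoinverse $a^*$ — this is where one must use axiom (H) in its full strength, including the involution $*$, rather than just the weak cancellation consequences of (M) and (A). Everything else (domains, codomains, associativity, compatibility of $s,t$ with composition) reduces to bookkeeping with the identities $aa^*a = a$ and $a^*aa^* = a^*$ and the associativity axiom (A), in close parallel to the arguments already carried out for relative Frobenius algebras in Theorems \ref{Fro} and \ref{gpdisfrob}.
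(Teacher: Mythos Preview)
Your plan is essentially the paper's own: associativity from (A), composability on the fiber product from idempotency of $g^*g$, and single-valuedness of $s,t$ from axiom (H) applied to a singleton. One execution detail worth flagging: your suggested substitution $A=\{a\}$, $x=a^\sharp a$, $y=a$ does not go through as written (you would need $(a^\sharp a)a = a$, which is not available). The paper instead takes $A=\{f^*\}$ --- i.e.\ it applies (H) at the \emph{pseudoinverse} rather than at $f$ --- and uses that $f \in \{f^*\}^*$ by symmetry of the reflexive pseudoinverse relation; then $x=f$, $y=ff'$, $a=f^*$, $a^*=f$ gives $ya^* = ff'f = f = x$, whence $ff^* = xa = y = ff'$. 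With that one adjustment your outline matches the paper exactly.
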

\begin{proof}
  By (A), the condition $m(m\times 1)=m(1 \times m)$ is clearly
  satisfied. It remains to prove that $m$, $s$ and $t$ are
  well-defined functions. 
 The former means that $(g,f) \in G_1 \times_{G_0} G_1$
  implies that $g$ and $f$ are composable. Assume $s(g)=t(f)$, \textit{i.e.}~$g^*g=ff^*$ for some
  pseudoinverses $g^*$ and $f^*$ of $g$ and $f$. Because $g^*g$ is
  idempotent, we have $g^*gff^* = g^*gg^*g = g^*g$, and
  therefore also $g$and $f$ are composable. Hence $m$ is well-defined.
  As for $t$, suppose that $f^*$ and $f'$ are both pseudoinverses of
  $f$, so that $(f,ff^*) \in s$ and $(f,ff') \in s$. Then $ff^*f = f =
  ff'f$. Set $A=\{f^*\}$, $a=f^*$, $x=f$, and $y=ff'$. By
  Lemma~\ref{regular}, we obtain $f \in A^*$, and so $ya^*=x$ for $a^*=f$.
  Now it follows from (H) that $ff^*=xa=y=ff'$. 
  Similarly, $s$ is a well-defined function. 
\end{proof}
}

\begin{theorem}\label{locallycancellative}
  If $m$ is a relative H*-algebra, then $\cat{G}$ is a locally
  cancellative regular semigroupoid.
\end{theorem}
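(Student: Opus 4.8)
The statement to prove is that when $m\colon X\times X\relto X$ is a relative $H^*$-algebra, the semigroupoid $\cat{G}$ constructed from it is a \emph{locally cancellative regular semigroupoid}. By Lemma~\ref{semigroupoid} we already know $\cat{G}$ is a well-defined semigroupoid, and by Lemma~\ref{regular} every element $a$ of $X=G_1$ admits a pseudoinverse $a^*$ with $a^*aa^*=a^*$ and $aa^*a=a$; since composition in $\cat{G}$ is exactly the relation $m$ read as a partial function on composable pairs, this says precisely that $\cat{G}$ is \emph{regular} as a semigroupoid. So the only remaining thing to establish is \emph{local cancellativity}: for composable arrows, if $gf=gf'$ (with $s(g)=t(f)=t(f')$) then $f=f'$, and symmetrically if $fg=f'g$ then $f=f'$.

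\textbf{Key steps.} First I would unwind what $s$ and $t$ being well-defined functions (from Lemma~\ref{semigroupoid}) buys us: each $f\in X$ has a pseudoinverse $f^*$ which, although not unique as an element, gives a \emph{well-defined} source $s(f)=f^*f$ and target $t(f)=ff^*$ independent of the chosen $f^*$. Next, to prove left cancellativity, suppose $g$ and $f$ are composable, $g$ and $f'$ are composable, and $gf=gf'$. Choose a pseudoinverse $g^*$ of $g$ with $g^*gg^*=g^*$, $gg^*g=g$ (Lemma~\ref{regular}). The composability hypothesis $s(g)=t(f)$ means $g^*g=ff^{*}$ is the idempotent serving as the common object, so $g^*g$ acts as a left identity on $f$: indeed $g^*g\cdot f=ff^*f=f$, and likewise $g^*g\cdot f'=f'$. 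Then
\begin{equation}
f = g^*g f = g^*(gf) = g^*(gf') = g^*g f' = f',
\end{equation}
where the middle equality uses the hypothesis $gf=gf'$ and the outer ones use associativity (A) together with the idempotent-identity computation. The only subtlety is that these products must all be \emph{defined}, which follows from the composability constraints and the fact (used in the proof of Lemma~\ref{semigroupoid}) that $g^*g f f^* = g^*gg^*g=g^*g$ forces the relevant pairs to be composable; I would spell this out carefully. Right cancellativity is the mirror image: from $fg=f'g$, pick a pseudoinverse $g^*$ so that $gg^*=t(g)$ is a right identity on $f$ and $f'$, and compute $f=f g g^* = (fg)g^* = (f'g)g^* = f'gg^* = f'$.

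\textbf{Main obstacle.} The genuinely delicate point is not the algebraic chain of equalities but the \emph{definedness} (composability) bookkeeping at each step: in a semigroupoid, $g^*g f$ only makes sense once one checks $(g^*,g)$, $(g^*g,f)$ etc.\ lie in the appropriate fiber products, and one must be sure the ambient idempotent $g^*g=ff^*$ is genuinely a unit for $f$ on the correct side. All of this is already implicit in the work done for Lemma~\ref{regular} and Lemma~\ref{semigroupoid} (especially the identity $g^*gff^*=g^*g$ used there), so the proof is essentially an application of those lemmas plus (A) and (H); I would therefore present it as: regularity is immediate from Lemma~\ref{regular}, and local cancellativity follows from the above two symmetric computations, each justified by the composability arguments already isolated in the proof of Lemma~\ref{semigroupoid}. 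Hence $\cat{G}$ is a locally cancellative regular semigroupoid, which is the assertion of Theorem~\ref{locallycancellative}. $\qquad\blacksquare$
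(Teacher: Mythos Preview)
Your argument is correct but takes a different route from the paper. You prove ordinary left/right cancellativity ($gf=gf'\Rightarrow f=f'$) by using a pseudoinverse of $g$ to manufacture a local left unit: from $s(g)=t(f)$ you get $g^*g=ff^*$, hence $f=(g^*g)f=g^*(gf)=g^*(gf')=(g^*g)f'=f'$, relying only on Lemma~\ref{regular} and associativity~(A). The paper instead establishes the (equivalent) condition in the form ``$fhh^*=gh^*\Rightarrow fh=g$'' by a single direct appeal to axiom~(H) with $A=\{h\}$: the equality $gh^*=fhh^*$ exhibits an $a^*\in\{h\}^*$ with $ga^*=fhh^*$, so the biconditional in~(H) yields $(fhh^*)h=g$, i.e.\ $fh=g$. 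Your approach is more hands-on and avoids invoking~(H) explicitly after the two lemmas; the paper's is essentially a one-liner because~(H) is precisely this cancellation principle packaged as an axiom. One small simplification of your write-up: you do not actually need the identity $g^*gff^*=g^*g$ from the proof of Lemma~\ref{semigroupoid}; once $g^*g=ff^*$, the step $(g^*g)f=f$ is just the pseudoinverse relation $ff^*f=f$, and the remaining definedness issues are forced by~(A).
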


\begin{proof}
  Regularity is precisely Lemma~\ref{regular}. Suppose that
  $fhh^*=gh^*$ for a pseudoinverse $h^*$ of $h$. Applying (H) to
  $A=\{h\}$, $x=fhh^*$, $y=g$, $a=h$ and $a^*=h^*$ yields
  $fh=fhh^*h=xa^*=y=g$. Hence $\cat{G}$ is locally cancellative.
\end{proof}

\subsection{From semigroupoids to relative $H^*$- algebras}
\begin{definition}\emph{
  Given a locally cancellative regular semigroupoid $\cat{G}$, define
  \begin{align*}
    X & = G_1, \\
    m & = \{ (g,f,gf) \mid s(g)=t(f) \} \colon G_1 \times G_1 \relto G_1, \\
    A^* & = \{a^* \in X \mid a^*aa^*=a^* \mbox{ and } aa^*a=a \mbox{ for all } a \in A \}.
  \end{align*}
  }
\end{definition}

\begin{theorem}
  If $\cat{G}$ is a locally cancellative regular semigroupoid, then
  $m$ is a relative H*-algebra.  
\end{theorem}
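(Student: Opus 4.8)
The plan is to verify the three defining axioms (M), (A), (H) of a relative $H^{*}$-algebra for the relation $m = \{(g,f,gf)\mid s(g)=t(f)\}\colon X\times X\relto X$ with $X=G_{1}$, reusing the facts about locally cancellative regular semigroupoids established above.

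Axiom (A) is the routine part. Writing out the two triple composites as relations $X^{3}\relto X$, one gets $m\after(m\times 1)=\{((a,b,c),(ab)c)\mid s(a)=t(b),\ s(b)=t(c)\}$ and $m\after(1\times m)=\{((a,b,c),a(bc))\mid s(a)=t(b),\ s(b)=t(c)\}$; the composability side-conditions $s(a)=t(b)$, $s(b)=t(c)$ are exactly the domain of definition on both sides, and associativity of composition in the semigroupoid gives $(ab)c=a(bc)$, so the relations agree. For (M): composition in a semigroupoid is a partial function, hence $m$ is single-valued and $m\after m^{\dagger}\subseteq 1$; for the reverse inclusion, given $f\in X$ choose (by regularity) a pseudoinverse $f^{*}$, so that $f^{*}f$ is an idempotent composable with $f$ and $f=m(f,f^{*}f)$, giving $(f,f)\in m\after m^{\dagger}$. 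Thus $m\after m^{\dagger}=1$.

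The substance is in (H). Identify $\Cat{Rel}(1,X)$ with the power set of $X$ and take as candidate involution $A\mapsto A^{*}$, with $A^{*}$ defined as in the statement. The first thing to prove is that this is indeed an involution. The inclusion $A\subseteq(A^{*})^{*}$ is immediate, since the two defining relations $a^{*}aa^{*}=a^{*}$ and $aa^{*}a=a$ are symmetric in $a$ and $a^{*}$. The reverse inclusion $(A^{*})^{*}\subseteq A$ is the delicate point: in a merely regular semigroupoid an element can have several pseudoinverses, and one must use local cancellativity (together with regularity) to show that an element which is a pseudoinverse of \emph{every} pseudoinverse of \emph{every} element of $A$ already lies in $A$; concretely one cancels the idempotent factors $aa^{*}$, $a^{*}a$ occurring in the pseudoinverse identities, and local cancellativity forces the potential ambiguity to collapse.

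Granting that $*$ is an involution, the two displayed identities of (H) are checked by unwinding both sides as explicit relations $X\relto X$. For the first one, the left-hand side $m\after(1\times x^{*})$ is $\{(p,pb)\mid b\in x^{*},\ s(p)=t(b)\}$ and the right-hand side $(1\times x^{\dagger})\after m^{\dagger}$ is $\{(q,r)\mid \exists e\in x:\ q=re,\ s(r)=t(e)\}$; one then matches these two descriptions using the pseudoinverse identities $ebe=e$ and $beb=b$ (valid for $b\in x^{*}$, $e\in x$), the idempotency of $be$ and $eb$, and once more local cancellativity to move these idempotents past $p$ and $q$. The second displayed identity is handled the same way with the factors on the other side. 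The main obstacle, then, is the involution property $(A^{*})^{*}=A$ — equivalently, extracting from local cancellativity enough rigidity of pseudoinverses to make the naive definition of $*$ self-inverse — together with carrying out the relational computations uniformly over arbitrary subsets $x\subseteq X$ rather than singletons.
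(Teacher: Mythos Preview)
Your handling of (A) and (M) matches the paper exactly, including the factorisation $f=f\cdot(f^{*}f)$ via a pseudoinverse for the nontrivial inclusion in (M). For the (H) \emph{equations}, the paper's argument is a one-liner rather than the case analysis you sketch: given $a\in A$ and $x$ composable with $a$, choose a pseudoinverse $a^{*}\in A^{*}$, observe $xa=xa a^{*}a$, and apply local cancellativity once to obtain $xaa^{*}=x$; this gives one inclusion, and the rest is declared symmetric. Your unwinding of both sides as explicit relations leads to the same computation.

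The genuine gap is in your treatment of the involution property, which you call ``the main obstacle''. First, the paper simply does not verify it. Second, your argument cannot work as stated: you read $A^{*}$ as the set of \emph{common} pseudoinverses of all elements of $A$, but under that reading, as soon as $A$ contains two elements with different pseudoinverses one has $A^{*}=\varnothing$ and hence $(A^{*})^{*}=X\neq A$. What actually rescues the situation (with the ``for some $a\in A$'' reading of $A^{*}$, which the paper uses in the subsequent functoriality proof) is that two-sided local cancellativity forces pseudoinverses to be \emph{unique}: if $b,c$ are both pseudoinverses of $a$, then from $aba=a=aca$ one cancels on the right (with $h=b$, $h^{*}=a$) to get $ab=ac$, symmetrically $ba=ca$, and then $b=bab=bac=cac=c$. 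This uniqueness is the missing idea; once you have it, $A\mapsto A^{*}$ is the involution $a\mapsto a^{*}$ applied elementwise and $(A^{*})^{*}=A$ is immediate. Your sketch neither isolates this step nor adopts the reading of $A^{*}$ under which it helps.
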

\begin{proof}
  Clearly, (A) is satisfied. Because
  \[
    m^\dag \after m 
   = \{ (f,f) \in G_1^2 \mid \exists (g,h) \in G_2 \,.\, f=hg \}
  \]
  we have $m^\dag \after m \subseteq 1$. Conversely, if $f \in G_1$,
  setting $g=f$ and $h=f^*f$ for some pseudoinverse $f^*$ of $f$, then
  $f=gh$. Hence (M) is satisfied. 

  Finally, we verify (H). Let $A \subseteq X$ be given, let $a \in
  A$ and $x \in X$, and suppose that $x$ and $a$ are composable. That means that
  $s(f)=t(a)$. By regularity, $a$ has a pseudoinverse $a^* \in A^*$,
  and we have $xa=xaa^*a$. Setting $f=xa$, $g=x$, $h=a$ and $h^*=a^*$
  in the definition of local cancellativity yields $xaa^*=x$. The
  symmetric condition is verified similarly. Hence (H) is satisfied. 
\end{proof}
\subsection{Functoriality}

This subsection proves that the assignments $m \mapsto \cat{G}$ and
$\cat{G} \mapsto m$ extend functorially to an adjunction. We only
consider the relative choice of morphisms, because the lack of units
make the other two choices of morphisms from
Subsection~\ref{subsec:frob:functoriality} very difficult to work 
with. The following two definitions give well-defined categories, just
as in Subsection~\ref{subsec:frob:functoriality}. 

\begin{definition}\emph{
  The category $\Cat{Hstar}(\Cat{Rel})\ext$ has relative H*-algebras
  as objects. A morphism $(X,m_X) \to (Y,m_Y)$ is a morphism $r \colon
  X \relto Y$ in $\Cat{Rel}$ satisfying (R).
  This gives a well-defined category.}
\end{definition}

\begin{definition}\emph{
  The category $\Cat{LRSgpd}\ext$ has locally cancellative regular
  semigroupoids as objects. Morphisms $\cat{G} \to \cat{H}$ are
  locally cancellative regular subsemigroupoids of $\cat{G} \times \cat{H}$.}
\end{definition}

\begin{proposition}\emph{
  The assignments $$m \mapsto \cat{G}$$ and $$\cat{G} \mapsto m$$ extend to functors
  $$\Cat{Hstar}(\Cat{Rel})\ext \to \Cat{LRSgpd}\ext$$ and
  $$\Cat{LRSgpd}\ext \to \Cat{Hstar}(\Cat{Rel})\ext$$  respectively.}
\end{proposition}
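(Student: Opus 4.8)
The plan is to mirror the treatment of the Frobenius/groupoid correspondence in Theorem~\ref{gpdfrobext}, replacing relative Frobenius algebras by relative H*-algebras and groupoids by locally cancellative regular semigroupoids, and using Theorem~\ref{locallycancellative} in place of Theorem~\ref{Fro}. Since the statement asserts only functoriality (not an isomorphism or an adjunction), it suffices, for each of the two assignments, to define its action on morphisms and then to check that identities and composition are preserved.

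For the functor $F\colon \Cat{Hstar}(\Cat{Rel})\ext \to \Cat{LRSgpd}\ext$, I would set $F(X,m_X)=\cat{G}$ on objects, as produced by Theorem~\ref{locallycancellative}. Given a morphism $r\colon X\relto Y$ satisfying (R), equip the set $r\subseteq X\times Y$ with the componentwise multiplication $m_r=(m_X\times m_Y)\after(1\times\swapmap\times 1)$, sending $((a,b),(c,d))\mapsto (ac,bd)$; axiom (R) is precisely the statement that $m_r$ takes values back in $r$, so $m_r\colon r\times r\relto r$ is well defined. The first step is to verify that $(r,m_r)$ is again a relative H*-algebra: (A) and (M) follow componentwise from the corresponding axioms for $(X,m_X)$ and $(Y,m_Y)$ by the same direct computation used in Theorem~\ref{gpdfrobext}, and the involution on $\Cat{Rel}(1,r)$ is obtained componentwise from those on $X$ and $Y$, with (H) inherited pairwise. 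By Theorem~\ref{locallycancellative}, $(r,m_r)$ then induces a locally cancellative regular semigroupoid $\cat{R}$; since $R_1=r\subseteq X\times Y=(\cat{G}\times\cat{H})_1$ and its source, target and multiplication are restrictions of those of $\cat{G}\times\cat{H}$, it is a subsemigroupoid of $\cat{G}\times\cat{H}$, i.e.\ a morphism $\cat{G}\to\cat{H}$ in $\Cat{LRSgpd}\ext$. This defines $F$ on morphisms.

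Functoriality of $F$ is then checked as follows. The identity $1_X=\{(x,x)\mid x\in X\}$ trivially satisfies (R), and its induced subsemigroupoid is the diagonal of $\cat{G}\times\cat{G}$, which is the identity morphism of $\cat{G}$. For composition, one shows that if $r\colon X\relto Y$ and $s\colon Y\relto Z$ satisfy (R) then so does $s\after r$: this is exactly the computation displayed after the definition of $\Cat{Frob}(\Cat{Rel})\ext$ in the excerpt, and it uses only (M) and (A), both available for H*-algebras. Comparing arrow sets, the subsemigroupoid induced by $s\after r$ has arrows $S_1\after R_1$ and objects $S_0\after R_0$, which is precisely the composite of the subsemigroupoids $F(s)$ and $F(r)$ in $\Cat{LRSgpd}\ext$, so $F(s\after r)=F(s)\after F(r)$. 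For the reverse functor $K\colon \Cat{LRSgpd}\ext\to\Cat{Hstar}(\Cat{Rel})\ext$, set $K(\cat{G})=m$ on objects; given a locally cancellative regular subsemigroupoid $\cat{R}\subseteq\cat{G}\times\cat{H}$, take $R_1\subseteq X\times Y$ as a relation $K(\cat{R})\colon X\relto Y$, and note that closure of $\cat{R}$ under the componentwise multiplication inherited from $\cat{G}\times\cat{H}$ is exactly (R). Identities map to diagonals and relational composition of arrow sets agrees with composition of subsemigroupoids, giving functoriality of $K$.

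The main obstacle I expect lies in the verification of (H) for the product structure $(r,m_r)$: unlike (A) and (M), it genuinely involves the involutions, and because there are no units to exploit as in the Frobenius case, one must take care that the componentwise pseudoinverse of an element of $r$ again lies in $r$. Establishing this closure---via regularity (Lemma~\ref{regular}) together with axiom (R)---is the delicate point, after which (H) follows pairwise from the H*-structures on $X$ and $Y$. This is also exactly the place where one sees that the two functors fail to be mutually inverse, consistent with the fact that the theorem claims only functoriality and not an equivalence.
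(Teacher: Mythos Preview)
Your plan mirrors the paper's proof closely: define $m_r$ as the componentwise product on $r$, inherit (A) and (M) from $X$ and $Y$, verify (H), then invoke Theorem~\ref{locallycancellative}; for the reverse direction, send a subsemigroupoid $\cat{R}\subseteq\cat{G}\times\cat{H}$ to its arrow set $R_1$, which satisfies (R) by closure under multiplication. Your treatment of identities and composition is also what the paper does, only spelled out more fully.

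The one substantive difference is in how (H) is handled. You anticipate needing closure: that the componentwise pseudoinverse of an element of $r$ again lies in $r$. The paper does \emph{not} establish this. It simply declares, for $A\subseteq r$,
\[
  A^* \;=\; \{\,(x^*,y^*)\mid (x,y)\in A,\ x^*\in\{x\}^*,\ y^*\in\{y\}^*\,\}
\]
and then verifies the (H) identity $(1\times A^\dag)\after m_r^\dag = m_r\after(1\times A^*)$ by a direct one--line computation using (H) componentwise on $X$ and $Y$. So the closure question you flag is bypassed rather than confronted: the paper checks only the equality of relations $r\relto r$, not that $A^*\subseteq r$. This is enough to feed into the proof of Theorem~\ref{locallycancellative}, whose argument really only uses the (H) identity to produce pseudoinverses componentwise. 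Your route, trying to first prove closure via Lemma~\ref{regular} and (R), is more ambitious and may well be harder than necessary.
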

\begin{proof}
  Let $(X,m_X)$ and $(Y,m_Y)$ be relative H*-algebras, inducing
  locally cancellative regular semigroupoids $\cat{G}$ and
  $\cat{H}$. Given $r \colon m_X \to m_Y$, define $m_r \colon r \times
  r \relto r$ as in the proof of Theorem~\ref{gpdfrobext}; it
  satisfies (A) and (M). It also satisfies (H), as we now verify. For
  $A \subseteq r$, take
  $
    A^* = \{ (x^*,y^*) \mid (x,y) \in A, x^* \in \{x\}^*, y^* \in
    \{y\}^* \}
  $.
  \begin{align*}
    & (1 \times A) \after m_r^\dag \\
    & \: = \{((x,y),(a,b)) \in r \times r \mid \exists (c,d) \in A \,.\, y=bd, x=ac\} \\
    & \stackrel{\mbox{\tiny{(H)}}}{=} \{ ((x,y),(a,b)) \in r \times r \mid \exists
    (c,d) \in A, c^* \in \{c\}^*, d^* \in \{d\}^* \,.\, a=xc^*, b=yd^*
    \} \\
    & \: = m_r \after (1 \times A^*).
  \end{align*}
  Theorem~\ref{locallycancellative} now shows that $m_r$ induces a
  subsemigroupoid of $\cat{G} \times \cat{H}$.
  Conversely, if $\cat{R}$ is a subsemigroupoid of $\cat{G} \times
  \cat{H}$, then $R_1 \colon G_1 \relto H_1$ clearly satisfies
  (R). Finally, the identity relation $r \colon m_X \relto m_Y$
  corresponds to the diagonal subsemigroupoid, which is indeed regular
  and locally cancellative. 
\end{proof}

\begin{theorem}\label{Chris2}
  The functors from the previous proposition form an adjunction.
  \[\xymatrix@C+3ex{
    \Cat{LRSgpd}\ext \ar@<1ex>[r] \ar@{}|-{\perp}[r]
    & \Cat{Hstar}(\Cat{Rel})\ext \ar@<1ex>[l]
  }\]
\end{theorem}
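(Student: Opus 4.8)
The plan is to exhibit the two functors from the preceding proposition, call them $\mathsf{F}\colon \Cat{LRSgpd}\ext \to \Cat{Hstar}(\Cat{Rel})\ext$ (sending $\cat{G} \mapsto m$) and $\mathsf{H}\colon \Cat{Hstar}(\Cat{Rel})\ext \to \Cat{LRSgpd}\ext$ (sending $m \mapsto \cat{G}$), and to prove that $\mathsf{H}$ is left adjoint to $\mathsf{F}$ by producing a unit natural transformation $\eta\colon \Id \Rightarrow \mathsf{F}\mathsf{H}$ and a counit $\varepsilon\colon \mathsf{H}\mathsf{F} \Rightarrow \Id$ satisfying the triangle identities. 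The key observation, already implicit in the Frobenius case (Theorem \ref{gpdfrobext}), is that passing from a locally cancellative regular semigroupoid to its H*-algebra and back is the identity on the nose: given $\cat{G}$, the underlying set $G_1$ with the partial multiplication is exactly recovered, so $\mathsf{F}\mathsf{H} = \Id_{\Cat{Hstar}(\Cat{Rel})\ext}$ and the counit $\varepsilon$ is an isomorphism. Thus the adjunction degenerates to a \emph{coreflection}: $\Cat{Hstar}(\Cat{Rel})\ext$ sits inside $\Cat{LRSgpd}\ext$ as a coreflective subcategory, and the only real content is the unit $\eta_{\cat{G}}\colon \cat{G} \to \mathsf{H}\mathsf{F}(\cat{G})$.

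First I would pin down $\mathsf{H}\mathsf{F}(\cat{G})$ concretely: starting from $\cat{G}$, the H*-algebra $m = \mathsf{F}(\cat{G})$ has underlying set $G_1$, and $\mathsf{H}(m)$ is the semigroupoid with object set $\{f \in G_1 \mid m(f,f)=f\}$ and source/target built from pseudoinverses as in the definition preceding Theorem \ref{locallycancellative}. The point is that the idempotents $m(f,f)=f$ of the H*-algebra $G_1$ are precisely the identity morphisms $e(x)$, $x \in G_0$, of $\cat{G}$ (one inclusion is immediate; the reverse uses that $fff=f$ together with local cancellativity forces $f$ to be an identity). So $\mathsf{H}\mathsf{F}(\cat{G})$ has the \emph{same} set of objects and morphisms as $\cat{G}$, and the structure maps agree; hence $\eta_{\cat{G}}$ is the diagonal subsemigroupoid $\{(f,f) \mid f \in G_1\} \subseteq \cat{G} \times \mathsf{H}\mathsf{F}(\cat{G})$, which is the identity morphism in $\Cat{LRSgpd}\ext$. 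In other words $\mathsf{H}\mathsf{F} = \Id$ as well, and both $\eta$ and $\varepsilon$ are identities.

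Given this, I would then argue that the adjunction is the trivial one attached to an isomorphism-up-to-equivalence: both composites are (isomorphic to) the identity, so the triangle identities $\mathsf{F}\varepsilon \circ \eta\mathsf{F} = \Id_{\mathsf{F}}$ and $\varepsilon\mathsf{H} \circ \mathsf{H}\eta = \Id_{\mathsf{H}}$ hold trivially; naturality of $\eta$ and $\varepsilon$ in the extended-morphism categories is a direct check, using that a morphism $\cat{G}\to\cat{H}$ in $\Cat{LRSgpd}\ext$ is a subsemigroupoid $\cat{R} \subseteq \cat{G}\times\cat{H}$ and that $\mathsf{F}$ applied to it is just $R_1 \colon G_1 \relto H_1$, which satisfies (R), while $\mathsf{H}$ applied to a morphism in $\Cat{Hstar}(\Cat{Rel})\ext$ reconstructs the corresponding subsemigroupoid — exactly the content of the preceding proposition. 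Concretely one checks $\mathsf{F}(\eta_{\cat{G}}) = 1_{G_1}$ and that for any $r\colon m_X \to m_Y$ the square relating $r$ with $R_1 = \mathsf{F}\mathsf{H}(r)$ commutes because $\mathsf{F}\mathsf{H}(r)=r$.

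The main obstacle, and the step I would spend the most care on, is verifying that $\mathsf{F}\mathsf{H}$ really is the identity \emph{on morphisms} and not merely on objects — i.e.\ that the subsemigroupoid of $\cat{G}\times\cat{H}$ reconstructed from the relation $R_1$ via the H*-algebra machinery coincides with the original $\cat{R}$. This requires showing $R_1$, regarded as a subset of $X\times Y$ with the componentwise partial product, is itself a locally cancellative regular semigroupoid whose object-idempotents are exactly the identities inherited from $\cat{R}$; the regularity and local cancellativity follow from those of $\cat{G}$ and $\cat{H}$ componentwise, but one must check that condition (R) on $r$ is equivalent to $\cat{R}$ being a \emph{sub}semigroupoid, paralleling the argument in the proof of Theorem \ref{gpdfrobext}. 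Once that equivalence is in hand the adjunction — in fact the coreflection — is immediate, and I would remark that this explains why, unlike the groupoid case, one obtains only an adjunction: $\Cat{Hstar}(\Cat{Rel})\ext$ is the full image of a section of $\mathsf{F}$, but $\mathsf{F}$ is not essentially surjective onto all of $\Cat{LRSgpd}\ext$ because not every locally cancellative regular semigroupoid arises with a global involution $*$ on $\Cat{Rel}(1,X)$ compatible with (H).
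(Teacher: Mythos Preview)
Your argument has a genuine gap: you claim that both composites $\mathsf{H}\mathsf{F}$ and $\mathsf{F}\mathsf{H}$ are the identity, so that the adjunction is really an isomorphism of categories. This is precisely what fails in the H*-algebra setting, and it is the reason the theorem gives only an adjunction rather than the isomorphism obtained in the Frobenius case.

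Concretely, neither round trip is the identity. First, a semigroupoid need not have identity morphisms at all, so your appeal to ``the identity morphisms $e(x)$'' is illegitimate; even when $\cat{G}$ has some objects $G_0$, these need not coincide with the idempotents $\{f\in G_1\mid f^2=f\}$ that $\mathsf{H}$ uses as its object set. Your claim that local cancellativity forces an idempotent to be an identity does not follow from the definition (local cancellativity only lets you cancel $hh^*$ on the right for a pseudoinverse $h^*$). Second, starting from a relative H*-algebra $m$, passing to the associated semigroupoid and back yields the strictly smaller multiplication
\[
\{(g,f,gf)\mid \exists\, g^*\in\{g\}^*,\ f^*\in\{f\}^*\ \text{with}\ g^*g=ff^*\},
\]
because in the reconstructed semigroupoid composability is governed by $s(g)=t(f)$, i.e.\ by matching pseudoinverse idempotents, whereas in the original $m$ the condition ``$g$ and $f$ are composable'' need not force such a match. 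The paper's proof takes exactly these two observations as the unit and counit: the unit is the inclusion of this subrelation into $m$, and the counit is the comparison map from $\cat{G}$ into the semigroupoid whose objects are all idempotents of $G_1$; naturality and the triangle identities are then routine. Your proposed ``identity'' unit and counit do not exist in general.
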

\begin{proof}
  Starting with a relative
  H*-algebra $m \colon X \times X \relto X$, we end up with 
  \[
  \{(g,f,gf) \mid \exists g^* \in \{g\}^* \exists f^* \in \{f\}^*
  \,.\, g^*g=ff^* \} \colon X \times X
  \relto X.
  \]
  Clearly this is a subrelation of $m$, and the inclusion forms the
  unit of the adjunction.

  Starting with a locally cancellative regular semigroupoid $\cat{G}$,
  we end up with
  \[\xymatrix@C+3ex{
    \{ f \in G_1 \mid f^2=f \}
    & G_1 \ar@<-.6ex>|-{s'}[l] \ar@<.6ex>|-{t'}[l] 
    & G_1 \times_{s',t'} G_1 \ar|-{m}[l]
  }\]
  where $s'(f)=f^*f$ and $t'(f)=ff^*$. Clearly, the original $\cat{G}$
  maps into this, giving the counit of the adjunction.
  Naturality and the triangle equations are easy.
\end{proof}

Notice that we merely get an adjunction, and not an isomorphism as in
Theorem~\ref{gpdfrobext}. Indeed, $gf \defined$ need not imply
$g^*g=ff^*$ for some pseudoinverses $f^* \in \{f\}^*$ and $g^* \in
\{g\}^*$, and $G_0$ need not coincide with the idempotents of $G_1$ at
all. 

\subsection{Semigroupoids Vs groupoids: reduction}
\label{sec:quotient}

The forgetful functor $\Cat{Groupoid} \to \Cat{Cat}$ has a left
adjoint, that freely adds inverses. Similarly, the forgetful functor
$\Cat{Cat} \to \Cat{Semigroupoid}$ has a left adjoint, that freely
adds identities. The image of the latter left adjoint consists precisely
of those categories in which the only isomorphisms are identities.
Hence there is a functor $\Cat{Semigroupoid} \to \Cat{Groupoid}$
giving the free groupoid on a semigroupoid. Restricting it gives a
functor that turns a locally cancellative regular semigroupoid into a
groupoid.

\section{Extensions and perspectives}\label{Perspectives}

This section is devoted to discuss possible applications and perspectives of the construction of relational symplectic groupoids. Some of this work is still in progress and it gives rise to many questions and possible developments.
\subsection{Quantization}
A natural questions appearing in the study of symplectic groupoids is their quantization. In our setting, we have studied relational symplectic groupoids on the categories of vector spaces, the hope is to obtain the relational symplectic groupoid $(\mathcal G, L, I)$ as a dequantized version of the relational symplectic groupoid in the category \textbf{Hilb} of Hilbert spaces.

\subsubsection{Linear relational symplectic groupoids Versus Quantization}
in Chapter 4, we already studied some cases of relational symplectic groupoids where $\mathcal G$ is a vector space and also we considered more general structures in fagger categories. In this subsection we  discuss the connection between the linear version of relational symplectic groupoids and quantization.


As we showed in Example {\ref{functions}}, the space of smooth functions on a manifold $M$ can be equipped with a weak *-monoid structure and with a suitable inner product, it is a Frobenius algebra.


The dequantized version of such Frobenius algebra would correspond to the symplectic manifold
$T^*M$ with some induced Lagrangian subspaces that in general fail to be submanifolds.  It turns out that one get immersed submanifolds when the diffeomorphism $\phi$ has no fixed points. More precisely, the subspace $L_2$  is the union of two Lagrangian submanifolds $L_2^{Id}$ and $L_2^{\phi}$, where $L_2^{Id}$ is the graph of the identity and $L_2^{\phi}$ would correspond to the quadruples of the form $(p,\phi(x),\phi^*p,x)$, for all $x \in M, p \in \mathcal C ^{\infty}(M)$. One example to illustrate this situation is the following. Take $M=\mathbb R$ and 
\begin{eqnarray*}
\phi: \mathbb R &\to& \mathbb R\\
x&\mapsto& -x.
\end{eqnarray*}
In this case the spaces $L_i$ are embedded submanifolds and correspond to
\begin{eqnarray*}
L_1&=&\{\mbox{even positive functions in } \mathbb R \}. \\
L_2&=&\{(f(x), h(x)\frac{(f(x)+f(-x))}{2})\mid h(x) \in L_1\}\\
&=&\{\mbox{projection to even functions in } \mathbb R \}.\\
L_3&=&\{f(x),g(x), f*g(x)\}.
\end{eqnarray*}

However, this relational symplectic groupoid is not regular, since the space of objects is $\mathbb R \mod \phi= [0, \infty)$.






\subsubsection{Preunital Poisson manifolds and Frobenius algebras}
The structure of relational symplectic groupoid might be reformulated in the category \textbf{Hilb} of Hilbert spaces and in principle it 
should yield a \emph{relational version} of a Frobenius algebra, that we may call preunital Frobenius algebra. 
In that case, the relational symplectic groupoid may be seen as the dequantization of this structure. 
The hard problem consists in going the other way around, namely, in quantizing a relational symplectic groupoid.

In the finite dimensional examples, methods of geometric quantization might be available, 
the problem being that of finding an appropriate polarization compatible with the structures. This question, in the case of a symplectic groupoid, has been addressed by Weinstein \cite{Alan} and Hawkins \cite{Hawkins}. The relational structure might allow more flexibility.

In the infinite-dimensional case, notably the example in 3.1.4., perturbative functional  integral techniques might be available, following the procedure for perturbative quantization on manifolds with boundary \cite{AlbertoPavel2, AlbertoPavel3}. The reduced algebra should give back a deformation quantization of the underlying Poisson manifold.

Finally notice that quantization might require loosening up a bit the notion of preunital Frobenius algebra, allowing for example non associative products, but maybe still associative up to homotopy. However, one expects that the reduced algebra should always be associative.





\subsection{Possible extensions}

\subsubsection{Relational Lie groupoids}
In this example we drop the weak symplectic structure on the construction of relational symplectic groupoids and we consider  relational Lie groupoids as cyclic weak *-monoid in the category $\mbox{\textbf{Man}}^{Ext}$ of smooth manifolds and immersed submanifolds as morphisms. Then $G\rightrightarrows M$ is a relational Lie groupoid regarded as the triple $(G, Graph(\mu), \iota)$. 
In this setting, given a Lie groupoid $G \rightrightarrows M$ it is possible to equip $G \times G$ with a relational Lie groupoid structure as follows.
\newline
Considering the graph of the multiplication $\mu$ a morphism $Graph (\mu)\colon  G\times G \nrightarrow G$ in $\mbox{\textbf{Man}}^{Ext}$, we induce a weak *-monoid structure on $G\times G$ in such way that $Graph (\mu)$ is an equivalence of relational Lie groupoids between $G \times G$ and $G$. This is a priori a relational Lie groupoid structure on $G\times G$ different from the one given by the power of Lie groupoids (see Example \ref{power}).

\subsubsection {Relational symplectic groupoids and the (extended) Poisson category}
We conjecture that there is an equivalence of categroids between 
\textbf{RRelGpd}, the categroid of regular relational symplectic groupoids, with morphisms given by Definition \ref{morphism}  
and $\mbox{\textbf{Poiss}}^{Ext}$, the extended category of Poisson manifolds, with morphism corresponding to (possibly certain class of) coisotropic submanifolds of the product of two Poisson manifolds 
\footnote{Here, as before,  we abuse the language and we call the categroid $\mbox{\textbf{Poiss}}^{Ext}$ a category. The functors that determine the equivalence are defined with respect to the partial
 composition of morphisms.  }. The functor 
$F\colon \mbox{\textbf{Poiss}} \to \mbox{\textbf{RSG}}$ is the one given by the cotangent of the path space, discussed previously 
and the functor $G\colon \mbox{\textbf{RSG}} \to \mbox{\textbf{Poiss}}$ is given by the projection to the base space. 
In this direction, some completeness for morphisms might be needed (see e.g. \cite{Benoit4}).

\subsubsection{Extension for general algebroids}
Using the fact that for any Lie algebroid $A$, the manifold $A^*$ is Poisson, one would like to extend the integration of Poisson manifolds via relational symplectic groupoids to general Lie algebroids, following, for example \cite{Algebroid}.

\subsubsection{Dual pairs and PSM with branes}
The construction of relational symplectic groupoids for Poisson manifolds seems to be helpful to understand the case of PSM with branes and to give a precise
interpretation of the spaces of partially reduced boundary fields.
This is connected with what is known as cohomological resolution of
the reduced phase space for topological field theories and the results proven in \cite{Branes}.


\subsection{The presence of handles}
We define an additional structure that appears in example of relational symplectic groupoids coming from Poisson manifolds.
\begin{definition}
\emph{Let $\mathcal G$ be a relational symplectic groupoid. A relation $H\colon \mathcal G \nrightarrow \mathcal G$ (not necessarily a canonical relation) is called a \emph{handle} of $\mathcal G$ if it satisfies the following conditions:
\begin{enumerate}
\item $L_2 \circ H= H\circ L_2= H$
\item $H \circ L_1,\, (H \times Id)\circ L_3$ and $(Id \times H)\circ L_3$ are immersed submanifolds of $\mathcal G$ and $\mathcal G^3$ respectively.
\item $H^n:= H\circ H \cdots H$ is an immersed submanifold of $\mathcal G \times \mathcal G, \, \forall n \in \mathbb N$.
\end{enumerate}
}
\end{definition}

\subsubsection{Example: Linear Poisson structure in PSM with genus}
It is possible to extend the construction of the phase space before reduction in the case where the source manifold in PSM has genus. More precisely, adding a handle to the disc (that is homeomorphic to a punctured torus) gives rise to a relational symplectic groupoid with a handle, that is, an extension of the relational symplectic groupoid, 
where $\Sigma$ is a disc with an attached handle. This extension includes singularities for the defining spaces $L_i$ and their reductions $\underline{L_i}$.
More precisely, the construction of $L_1$ in the Example $\ref{linear}$ must be adapted to the presence of non trivial holonomies around the boundary circle. 
It can be checked that, for example, in the case of $\mathfrak {su}(2)$, the reduced space $\underline {L_1}$ would correspond to the union of the zero section of the cotangent bundle $T^*(G/G)$, where $G=SU(2)$ and a cotangent fiber for $G/G$ (modulo the action of the Weyl group) at the point $1$.


In the case of $\mathfrak g=\mathfrak{SU}(3)$, it is conjectured that the reduction of $L_1$ would correspond to the union of zero-section of $T^*(G/G)$, the conormal bundle $N^*(K/G )$, where $K$ is a codimension 1 submanifold of $G=SU(3)$ and a full cotangent fiber for $G/ G$ at 1.

\subsubsection{Relational groupoids, symplectic microgeometry and double groupoids}\label{cotangent}
One possible extension of this work is to study the construction of the relational symplectic groupoid in a different version of the symplectic category, where the space of objects and morphisms are replaced by certain equivalence classes which encode local information, this point of view follows the work of Cattaneo, Dherin and Weinstein on what is called \emph{symplectic microgeometry} (see \cite{Benoit, Benoit2, Benoit3}).\\
Also related, relational symplectic groupoids could be compared with the recent notion of \emph{symplectic hopfoid} developped by Ca\~nez in his doctoral thesis \cite{Canes}, as an attempt to undertsand the concept of symplectic stacks and the cotangent functor. The  version of the relational symplectic groupoid in higher categorical terms, i.e. considering a more general notion for immersed canonical relations that allows defining $2-$ morphisms,, could be compared with already existing objects such as double symplectic groupoids \cite{Canes, Mehta}.\\

\newpage

\appendix
\chapter{Dirac structures}\label{Dirac}
This Appendix includes some notions and definitions for Dirac structures, in a way to deal at the same time with pre-symplectic and Poisson structures. The content is quite standard; it is mainly based on \cite{Bursztyn}, where a more detailed overview is done, and includes the extension of the notions of Poisson maps for Dirac structures, important for the discussion in Chapter \ref{PSMmain}.
\section{Definitions and examples}
\begin{definition}\emph{
The generalized tangent bundle is defined as
\[ \mathbb TM:= TM\oplus T^*M \]
and it is equipped with natural projections
\[pr_{T}\colon \mathbb TM \to TM, \, pr_{T^*}\colon \mathbb TM \to T^*M.\]
}
\end{definition}

In addition, the generalized tangent bundle is equipped with a non degenerate, symmetric fiber wise linear form $\langle , \rangle$ defined by
\[\langle (X, \alpha) ,(Y, \beta) \rangle:= \beta (X)+ \alpha (Y),\]
where $X,Y \in T_xM, \alpha, \beta \in T^*_xM$.
It is also endowed with the \emph{Courant bracket} $[[\cdot , \cdot ]]\colon \Gamma (\mathbb T M)\times \Gamma (\mathbb T M) \to \Gamma (\mathbb T M)$, defined by 
\[ [[(X,\alpha),(Y, \beta)]]= ([X,Y], \mathcal L_X\beta-\mathcal L_Y\alpha+ \frac 1 2 d(\alpha(Y)-\beta(X))).\]
\begin{definition}\emph{
A \emph{Dirac structure} on $M$ is a vector subbundle $L \subset \mathbb TM$ such that
\begin{enumerate}
\item $L$ is maximally isotropic, i.e. $L=L^{\perp}$, with respect to $\langle \cdot, \cdot \rangle$.
\item $L$ is involutive w.r.t. the Courant bracket, i.e. $[[ \Gamma(L), \Gamma(L)]]\subset \Gamma (L).$
\end{enumerate}
}
\end{definition}
Some natural examples for Courant structures are Poisson and plesymplectic structures where the subbundles $L$ are controlled by the bivector $\Pi$ and the 2-form $\omega$ respectively.

\begin{example} \emph{(\textbf{Poisson structures}). A bivector field $\Pi \in \Lambda^2(TM)$ induces a subbundle of $\mathbb TM$ given by the graph of the map
\[ \Pi ^{\sharp}\colon  T^*M \to TM,\]
namely,
$$L_{\Pi}:=\{(\Pi^{\sharp}(\alpha), \alpha) \mid \alpha \in T^*M\}.$$
It can be checked that $L_{\Pi}$ is involutive if and only if the bivector field $\Pi$ is Poisson. Therefore, a subbundle $L$ determines a Poisson structure if and only if $L_{\Pi}\cap TM = \{ 0\}$.
}\end{example}
\begin{example}
\emph{(\textbf{Pre-symplectic structures}). A 2-form $\omega \in \Omega^2(M)$ induces a subbundle of $\mathbb TM$ determined by the graph of the bundle map
\[\omega^{\sharp}\colon  TM \to T^*M,\] i.e.
\[L_{\omega}:= \{(X, \omega^{\sharp}(X)) \mid X \in TM.\} \]
It can be checked that in this case $L$ is involutive if and only if $d\omega=0$, i.e. $\omega$ is pre-symplectic. Analogously, a involutive subbundle $L$ determines a presymplectic structure if and only if $L \cap T^*M= \{ 0\}$.
}
\end{example} 
\begin{example} \emph{\textbf{(Regular foliations}). Consider a regular distribution $D \subset TM$, that is equivalent to consider a subbundle of $TM$.  Define
$$L_D= D \oplus D^{\circ},$$ where $D^{\circ} \subset T^*M$ denotes the annihilator of $D$.
Then, by definition, $L_D$ is maximally isotropic and the fact that $L_D$ is involutive in the Dirac sense is equivalent to the fact that the distribution $D$ is involutive with respect to the Lie bracket on vector fields and by Frobenius Theorem, it is equivalent to the integrability of $D$.
}
\end{example}
\section{Morphisms of Dirac manifolds}
This section describe the notion of morphisms compatible with Dirac structures, as a way to generalize the notion of Poisson map (which is covariant) and symplectomorphism (which is contravariant). This leads to the notion of \textit{forward} and \textit{bacward} Dirac maps.
First of all, we study the linear case. Observe that a Dirac structure on a vector space $V$ corresponds to a Lagrangian subspace $L \subset V \oplus V^*$ with respect to the natural symmetric pairing
$$\langle (v_1, \alpha_1),(v_2, \alpha_2) \rangle= \alpha_2(v_1)+ \alpha_1(v_2). $$
Now, consider a linear map $\phi: V \to W.$ Given a 2- form $\omega \in \wedge ^2 W^*$, this can be pulled back to $V$ via $\phi$:
$$ \phi^*\omega(v_1,v_2)= \omega(\phi(v_1), \phi(v_2)),$$
where $v_i \in V$.
In a similar fashion, a bivector $\Pi \in \wedge^2 V$ can be pushed forward to $W$ via $\phi$:
$$\phi_* \Pi(\beta_1, \beta_2)= \Pi(\phi^*\beta_1, \phi^* \beta_2),$$
where $\beta_i \in W^*$.
The analogue for this behavior in the Dirac world is the following
\begin{definition} \emph{
Let $L_W$ be a Dirac structure on $W\oplus W^*$. The \emph{backward image} of $L_W$ under $\phi$ is defined by:
$$\mathfrak B_{\phi}(L_W):=\{(v, \phi^* \beta) \mid (\phi(v), \beta) \in L_W\} \subset V \oplus V^*.$$
In a similar way, let $L_V$ be a Dirac structure on $V \oplus V^*$. The \emph{forward image} of $L_V$ under $\phi$ is given by:
$$\mathfrak F_{\phi}(L_V):=\{(\phi(v), \beta)\mid (v, \phi^* \beta) \in L_V \} \subset W \oplus W^*.$$
}
\end{definition}
It can be proven that
\begin{proposition}\emph{\cite{Bursztyn}. $\mathfrak B_{\phi}(L_W)$ and $\mathfrak F_{\phi}(L_V)$ are Dirac structures.
}
\end{proposition}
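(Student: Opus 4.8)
The statement to prove is that the backward image $\mathfrak{B}_{\phi}(L_W)$ and the forward image $\mathfrak{F}_{\phi}(L_V)$ are again Dirac structures. This splits into two independent verifications, each requiring two things: (i) maximal isotropy with respect to the canonical symmetric pairing, and (ii) involutivity under the Courant bracket. I will treat the backward image in detail; the forward image is dual and I would organize it symmetrically. The whole argument is essentially a bookkeeping computation, so the plan is to isolate the one genuinely nontrivial input and reduce everything else to linear algebra and Leibniz-type identities.

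\textbf{Step 1: the linear statement.} First I would prove the pointwise (linear-algebraic) assertion: if $L_W \subset W \oplus W^*$ is Lagrangian for $\langle\,,\,\rangle_W$, then $\mathfrak{B}_{\phi}(L_W) = \{(v,\phi^*\beta) : (\phi(v),\beta) \in L_W\}$ is Lagrangian in $V \oplus V^*$. Isotropy is immediate: for $(v_1,\phi^*\beta_1),(v_2,\phi^*\beta_2)$ in $\mathfrak{B}_\phi(L_W)$ one has $\langle(v_1,\phi^*\beta_1),(v_2,\phi^*\beta_2)\rangle_V = (\phi^*\beta_2)(v_1)+(\phi^*\beta_1)(v_2) = \beta_2(\phi(v_1))+\beta_1(\phi(v_2)) = \langle(\phi(v_1),\beta_1),(\phi(v_2),\beta_2)\rangle_W = 0$. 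For maximality one counts dimensions: writing $K = \ker\phi$ and using that $\dim \mathfrak{B}_\phi(L_W)$ equals $\dim(L_W \cap (\operatorname{im}\phi \oplus W^*)) + \dim K - (\text{correction from }\ker\phi^* = 0)$, a short exact-sequence chase gives $\dim \mathfrak{B}_\phi(L_W) = \dim V$, which combined with isotropy yields $\mathfrak{B}_\phi(L_W) = \mathfrak{B}_\phi(L_W)^{\perp}$. (The cleanest route is to recognize $\mathfrak{B}_\phi$ as composition with the canonical relation of $\phi$ in the extended linear category and invoke that composition of a Lagrangian with such a graph is Lagrangian, which is Corollary~\ref{Lag projection} / Proposition~\ref{Composition} in spirit; but I would spell out the dimension count to be safe with the $\pm$ subtleties.)

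\textbf{Step 2: involutivity.} This is the part I expect to be the main obstacle, since the Courant bracket is not tensorial and one must be careful with the sections chosen. Given sections $(X_i,\alpha_i)$ of $\mathfrak{B}_\phi(L_W)$, by definition $(X_i, \alpha_i) = (X_i, \phi^*\beta_i)$ with $(\phi(X_i),\beta_i)$ tangent to $L_W$ along $\operatorname{im}\phi$ — here one must first argue that $\phi$-related sections can be chosen, using that $\phi$ is (locally) of constant rank or, in the situations of interest, a submersion/immersion, so that $\beta_i$ extends to an honest section of $L_W$. Then one computes $[[(X_1,\phi^*\beta_1),(X_2,\phi^*\beta_2)]]$ and checks it is again of the form $(Y,\phi^*\gamma)$ with $(\phi(Y),\gamma)$ a section of $L_W$. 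The bracket components are $[X_1,X_2]$ (which $\phi$-relates to $[\phi(X_1),\phi(X_2)]$) and $\mathcal{L}_{X_1}\phi^*\beta_2 - \mathcal{L}_{X_2}\phi^*\beta_1 + \tfrac12 d((\phi^*\beta_1)(X_2) - (\phi^*\beta_2)(X_1))$; using naturality of pullback, $\mathcal{L}_{X}\phi^*\beta = \phi^*\mathcal{L}_{\phi(X)}\beta$ for $\phi$-related data and $d\phi^* = \phi^* d$, this equals $\phi^*$ applied to the corresponding combination of $\beta_1,\beta_2,\phi(X_1),\phi(X_2)$, i.e. exactly the $T^*$-component of $[[(\phi(X_1),\beta_1),(\phi(X_2),\beta_2)]]$. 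Since $L_W$ is involutive, that combined element lies in $L_W$, so its backward image lies in $\mathfrak{B}_\phi(L_W)$. The technical care needed is to handle points where $\phi$ drops rank or where $\phi$-related lifts do not exist globally; the honest fix is to note the claim is local and to work on the open set where $\phi$ has locally constant rank (or simply assume, as the applications do, that $\phi$ is a submersion or an immersion), and to check that the resulting subbundle is smooth.

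\textbf{Step 3: the forward image.} For $\mathfrak{F}_\phi(L_V)$ I would run the exact dual argument: isotropy and maximality by the same dimension count with the roles of $\phi$ and $\phi^*$ interchanged, and involutivity by pushing forward vector fields and pulling back forms, using $\phi_*[X_1,X_2] = [\phi_*X_1,\phi_*X_2]$ for $\phi$-related fields and the same Leibniz identities for $\mathcal{L}$ and $d$. The smoothness of $\mathfrak{F}_\phi(L_V)$ as a subbundle again needs a rank hypothesis on $\phi$, and I would state it explicitly. Having verified maximal isotropy and Courant-involutivity in both cases, the proposition follows, and I would remark that this is precisely the Dirac-geometric incarnation of the fact that pullback of (pre)symplectic forms and pushforward of Poisson bivectors are the two natural functorialities, which is the reason ``forward/backward Dirac map'' is the right common generalization of Poisson map and symplectomorphism used in Chapter~\ref{PSMmain}.
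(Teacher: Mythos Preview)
The paper does not give its own proof of this proposition; it is stated with a citation to \cite{Bursztyn} and nothing more. So there is no paper argument to compare against directly.

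More importantly, you have misread the scope of the statement. Look at the surrounding text: the paper has explicitly restricted to the linear case (``First of all, we study the linear case''), where $\phi\colon V\to W$ is a linear map between vector spaces and a Dirac structure on $V$ is, by the sentence immediately preceding the definition, simply a Lagrangian subspace of $V\oplus V^*$ for the symmetric pairing. There is no Courant bracket, no involutivity condition, and no smoothness issue in this setting. The proposition is purely the linear-algebraic assertion that backward and forward images of Lagrangian subspaces are again Lagrangian. Your Step~1 is exactly what is required, and your isotropy computation together with a dimension count (or the composition-of-Lagrangian-relations viewpoint you mention) is the complete proof. Steps~2 and~3 on Courant involutivity, $\phi$-related sections, constant-rank hypotheses, and smoothness of the resulting subbundle are all irrelevant to the proposition as stated; they pertain to the global manifold version, which the paper treats only through the subsequent \emph{definitions} of backward and forward Dirac maps, not as something to be proved here.

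So your Step~1 is correct and sufficient; the rest is a solution in search of a problem.
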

The global version of this Definition corresponds to the notion of backward and forward Dirac map.
\begin{definition}\emph{
Let $(M, L_M)$ and $(N,L_N)$ be Dirac manifolds and $\phi\colon \to N$ be a smooth map. $\phi$ is called a \emph{backward Dirac map} if $L_M$ coincides with $\mathfrak B_{\phi}(L_N)$, i.e.
$$(L_M)_x= \mathfrak B_{\phi}(L_N)_x= \{ X, d\phi^*(\beta) \mid (d\phi(X), \beta)\in (L_N)_{\phi(x), }\}, \forall x \in M.$$
}
\end{definition}

\begin{definition}\emph{
The map $\phi$ is called a \emph{forward Dirac} map if $\phi^*L_N$ (the pullback of $L_N$ to $N$) coincides with $\mathfrak F_{\phi}(L_M)$, that means
$$(L_N)_{\phi(x)}=\mathfrak F_{\phi}(L_M)_x= \{(d\phi(X), \beta)\mid (X, d\phi^*(\beta))\in (L_M)_x\}, \forall x \in M. $$
}
\end{definition}
\addcontentsline{toc}{chapter}{References}

\end{document}